\newtheorem{theorem}{Theorem}[part]
\newtheorem{proposition}[theorem]{Proposition}
\newtheorem{lemma}[theorem]{Lemma}
\newtheorem{corollary}[theorem]{Corollary}
\newtheorem{openproblem}[theorem]{Open Problem}
\theoremstyle{remark}
\newtheorem{remark}[theorem]{Remark}
\theoremstyle{definition}
\newtheorem{definition}[theorem]{Definition}
\newtheorem{construction}[theorem]{Construction}
\newtheorem{example}[theorem]{Example}
\newcommand{\defs}{\stackrel{\mathrm{def}}{=}}
\newcommand{\tdef}[1]{{\color{green!50!black}\emph{#1}}}
\newcommand{\ie}{i.e.\;}
\newcommand{\Symmetric}{\mathfrak{S}}
\newcommand{\inv}{\mathrm{inv}}
\newcommand{\Tamari}{\mathcal{T}}
\newcommand{\NC}{N\!C}
\newcommand{\minbracket}{\mathbf{b}^{\mathrm{min}}}
\newcommand{\bracket}{\mathbf{b}}
\newcommand{\bracketred}{\underline{\mathbf{b}}}
\newcommand{\horiz}{\mathrm{horiz}}
\newcommand{\Dyck}{\mathcal{D}}
\newcommand{\Lattice}{\mathcal{L}}
\newcommand{\Poset}{\mathcal{P}}
\newcommand{\least}{\hat{0}}
\newcommand{\grtst}{\hat{1}}
\newcommand{\JI}{\mathcal{J}}
\newcommand{\MI}{\mathcal{M}}
\newcommand{\Galois}{\mathrm{Gal}}
\newcommand{\MO}{\mathrm{MO}}
\newcommand{\LAC}{L\!A\!C}
\newcommand{\LAClabel}{\widetilde{L\!A\!C}}
\newcommand{\Trees}{\mathbb{T}}
\newcommand{\vertical}{\operatorname{Vert}} 
\newcommand{\diagonal}{\operatorname{Diag}} 
\newcommand{\dinv}{\mathrm{dinv}}
\newcommand{\area}{\mathrm{area}}
\newcommand{\bounce}{\mathrm{bounce}}
\newcommand{\steep}{\mathrm{steep}}
\newcommand{\SP}{\mathcal{SP}} 
\newcommand{\BP}{\mathcal{BP}} 
\newcommand{\SPlabel}{\vertical(\widetilde{\mathcal{SP}}_n)} 
\newcommand{\BPlabel}{\diagonal(\widetilde{\mathcal{BP}}_n)} 
\newcommand{\bij}{\Xi}							
\newcommand{\invbij}{\Lambda}					
\newcommand{\lactoperm}{\bij_{\mathrm{perm}}}
\newcommand{\lactodyck}{\bij_{\mathrm{dyck}}}
\newcommand{\lactonc}{\bij_{\mathrm{nc}}}
\newcommand{\lactonn}{\bij_{\mathrm{nn}}}
\newcommand{\permtolac}{\invbij_{\mathrm{perm}}}
\newcommand{\dycktolac}{\invbij_{\mathrm{dyck}}}
\newcommand{\nctolac}{\invbij_{\mathrm{nc}}}
\newcommand{\nntolac}{\invbij_{\mathrm{nn}}}
\newcommand{\lactosteep}{\bij_{\mathrm{steep}}}
\newcommand{\steeptolac}{\invbij_{\mathrm{steep}}}
\newcommand{\lactobounce}{\bij_{\mathrm{bounce}}}
\newcommand{\bouncetolac}{\invbij_{\mathrm{bounce}}}
\newcommand{\steeppairtobouncepair}{\Gamma}
\newcommand{\lactosteeplabel}{\widetilde\bij_{\mathrm{steep}}}
\newcommand{\steeptolaclabel}{\widetilde\invbij_{\mathrm{steep}}}
\newcommand{\lactobouncelabel}{\widetilde\bij_{\mathrm{bounce}}}
\newcommand{\bouncetolaclabel}{\widetilde\invbij_{\mathrm{bounce}}}
\newcommand{\steeppairtobouncepairlabel}{\widetilde\Gamma}
\newcommand{\dycktosteeppair}{\phi}
\let\orgdescriptionlabel\descriptionlabel
\renewcommand*{\descriptionlabel}[1]{%
  \let\orglabel\label
  \let\label\@gobble
  \phantomsection
  \protected@edef\@currentlabel{#1\unskip}%
  \let\label\orglabel
  \orgdescriptionlabel{(#1)}%
}
\def\part{\@startsection{part}{1}%
\z@{.7\linespacing\@plus\linespacing}{.8\linespacing}%
{\LARGE\sffamily\centering}}
\def\l@section{\@tocline{1}{5pt}{0pc}{}{}}
\let\oldtocpart=\tocpart
\renewcommand{\tocpart}[2]{\bf\large\oldtocpart{#1}{#2}}
\let\oldtocsection=\tocsection
\renewcommand{\tocsection}[2]{\bf\oldtocsection{#1}{#2}}
\title{The Steep-Bounce Zeta Map in Parabolic Cataland}
\author{Cesar Ceballos}
\address{Faculty of Mathematics, University of Vienna, 1090 Vienna, Austria.} 
\email{cesar.ceballos@univie.ac.at}
\thanks{Cesar Ceballos was supported by the Austrian Science Foundation FWF, grant F 5008-N15, in the framework of the Special Research Program Algorithmic and Enumerative Combinatorics.}
\author{Wenjie Fang}
\address{Universit\'e Paris-Est Marne-la-Vallée, LIGM (UMR 8094), CNRS, ENPC, ESIEE Paris, France.} 
\email{wenjie.fang@u-pem.fr}
\thanks{Wenjie Fang was supported by the Austrian Science Foundation FWF, grant P27290 and I2309.}
\author{Henri M{\"u}hle}
\address{Technische Universit{\"a}t Dresden, Institut f{\"u}r Algebra, Zellescher Weg 12--14, 01069 Dresden, Germany.}
\email{henri.muehle@tu-dresden.de}
\keywords{parabolic Tamari lattice, $\nu$-Tamari lattice, bijection, left-aligned colorable tree, zeta map.}
\subjclass[2010]{}
\begin{document}

\begin{abstract}
	As a classical object, the Tamari lattice has many generalizations, including $\nu$-Tamari lattices and parabolic Tamari lattices.  In this article, we unify these generalizations in a bijective fashion. We first prove that parabolic Tamari lattices are isomorphic to $\nu$-Tamari lattices for bounce paths $\nu$. We then introduce a new combinatorial object called ``left-aligned colorable tree'', and show that it provides a bijective bridge between various parabolic Catalan objects and certain nested pairs of Dyck paths.  As a consequence, we prove the Steep-Bounce Conjecture using a generalization of the famous zeta map in $q,t$-Catalan combinatorics.
A generalization of the zeta map on parking functions, which arises in the theory of diagonal harmonics, is also obtained as a labeled version of our bijection.
\end{abstract}

\maketitle

\tableofcontents

\definecolor{c1}{HTML}{FF9999}
\definecolor{c2}{HTML}{FFD27F}
\definecolor{c3}{HTML}{99C199}
\definecolor{c4}{HTML}{9999FF}
\definecolor{c5}{HTML}{99D1D1}
\definecolor{c6}{HTML}{D9A6F9}


\newcommand{\ncA}[1]{
	\includegraphics[scale=#1,page=1]{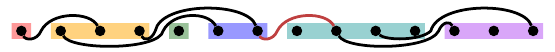}
}

\newcommand{\permA}[1]{
	\includegraphics[scale=#1,page=2]{fig/figures-std.pdf}
}

\newcommand{\pathA}[2]{
	\ifthenelse{\equal{#2}{1}}
	{
		\includegraphics[scale=#1,page=4]{fig/figures-std.pdf}
	}{
		\includegraphics[scale=#1,page=3]{fig/figures-std.pdf}
	}
}

\newcommand{\ncB}[1]{
	\includegraphics[scale=#1,page=5]{fig/figures-std.pdf}
}

\newcommand{\permB}[1]{
	\includegraphics[scale=#1,page=6]{fig/figures-std.pdf}
}

\newcommand{\pathB}[2]{
	\ifthenelse{\equal{#2}{1}}
	{
		\includegraphics[scale=#1,page=8]{fig/figures-std.pdf}
	}{
		\includegraphics[scale=#1,page=7]{fig/figures-std.pdf}
	}
}

\newcommand{\pathBReversed}[1]{
	\includegraphics[scale=#1,page=9]{fig/figures-std.pdf}
}

\newcommand{\paraTam}[1]{
	\includegraphics[width=#1,page=10]{fig/figures-std.pdf}
}

\newcommand{\paraTamDual}[1]{
	\includegraphics[width=#1,page=11]{fig/figures-std.pdf}
}

\newcommand{\nuTam}[1]{
	\includegraphics[width=#1,page=12]{fig/figures-std.pdf}
}

\newcommand{\nuTamvec}[1]{
    \includegraphics[width=#1, page=13]{fig/figures-std.pdf}
}

\newcommand{\galoisoncells}[1]{
	\includegraphics[scale=#1, page=14]{fig/figures-std.pdf}
}

\newcommand{\pathBReversednodot}[1]{
	\includegraphics[scale=#1,page=15]{fig/figures-std.pdf}
}


\newcommand{\lactreeB}[1]{
	\includegraphics[scale=#1, page=4]{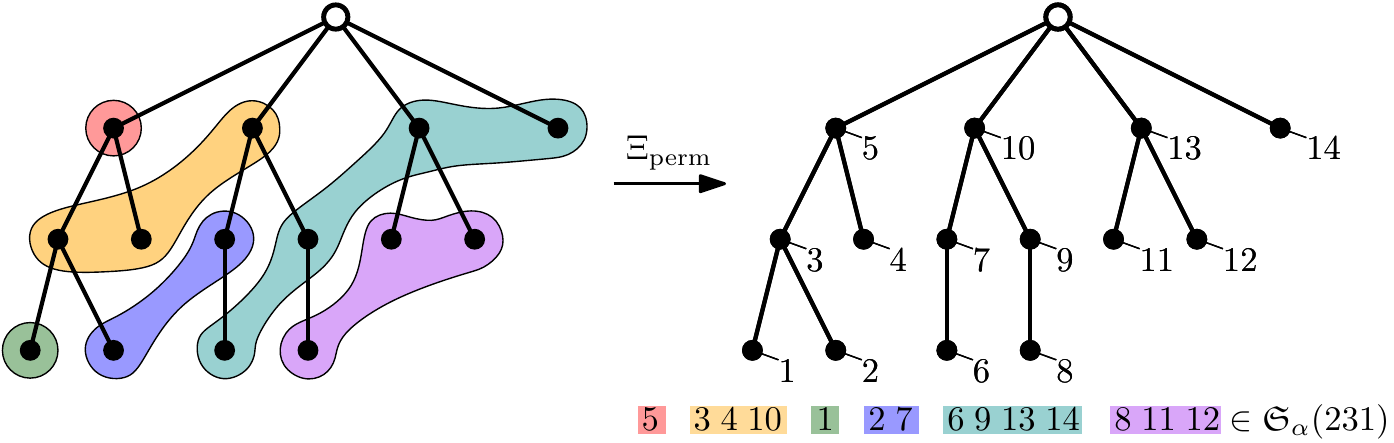}
}

\newcommand{\lactreeBdot}[1]{
	\includegraphics[scale=#1, page=5]{fig/ipe-fig.pdf}
}

\newcommand{\treepostfix}[1]{
	\includegraphics[scale=#1, page=6]{fig/ipe-fig.pdf}
}

\newcommand{\markeddyck}[1]{
	\includegraphics[scale=#1, page=7]{fig/ipe-fig.pdf}
}

\newcommand{\quadrantwalk}[1]{
	\includegraphics[scale=#1, page=8]{fig/ipe-fig.pdf}
}

\newcommand{\steeppair}[1]{
	\includegraphics[scale=#1, page=9]{fig/ipe-fig.pdf}
}

\newcommand{\lactreeBnc}[1]{
	\includegraphics[scale=#1, page=10]{fig/ipe-fig.pdf}
}

\newcommand{\bouncepair}[1]{
	\includegraphics[scale=#1, page=20]{fig/ipe-fig.pdf}
}

\section*{Introduction}
	\label{sec:introduction}
\renewcommand{\thetheorem}{\Roman{theorem}}

The Tamari lattice can be realized as a partial order on various Catalan objects.  It has been studied widely from various perspectives, leading to numerous generalizations. Two of its recent variants are the parabolic Tamari lattices of Williams and the third author~\cite{muehle18tamari} and the $\nu$-Tamari lattices of Pr{\'e}ville-Ratelle and Viennot~\cite{preville17enumeration}. 
The parabolic Tamari lattices are defined as certain lattice quotients of the weak order in parabolic quotients of the symmetric group, which generalizes a construction by Bj{\"o}rner and Wachs~\cite{bjorner97shellable}.
The $\nu$-Tamari lattices are, in contrast, partial orders defined by manipulating lattice paths. They generalize the $m$-Tamari lattices which were initially motivated by connections 
to trivariate diagonal harmonics~\cite{BergeronPrevilleRatelle}, and now have remarkable applications to the theory of multivariate diagonal harmonics~\cite{ceballos_hopf_2018} and bijective links to other objects in combinatorics \cite{fpr17nonsep}. 

\medskip

In this article, we reunite the objects in these two variants of the Tamari lattice. 
More precisely, we introduce a new family of parabolic Catalan objects which we call \emph{left-aligned colorable trees} (or simply \emph{LAC trees}). These trees connect the parabolic generalizations of 231-avoiding permutations, noncrossing set partitions, and Dyck paths introduced in \cite{muehle18tamari}, all of which can be recovered easily from the tree. We have the following result, with precise definitions delayed to subsequent sections.

\begin{restatable}{theorem}{bijections}\label{thm:main_bijections}
	For every integer composition $\alpha$, there is an explicit bijection from the set~$\Trees_{\alpha}$ of $\alpha$-trees to each of the following: the set $\Symmetric_{\alpha}(231)$ of $(\alpha,231)$-avoiding permutations, the set $\NC_{\alpha}$ of noncrossing $\alpha$-partitions, and the set $\Dyck_{\alpha}$ of $\alpha$-Dyck paths.
\end{restatable}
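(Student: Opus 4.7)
The plan is to establish the theorem by constructing, for each target set, an explicit map from $\Trees_{\alpha}$ together with an explicit inverse: namely $\lactoperm: \Trees_\alpha \to \Symmetric_\alpha(231)$, $\lactonc: \Trees_\alpha \to \NC_\alpha$, $\lactodyck: \Trees_\alpha \to \Dyck_\alpha$, with inverses $\permtolac, \nctolac, \dycktolac$. One could instead define a single bijection (say $\lactoperm$) and then compose with already-known bijections among $\Symmetric_\alpha(231)$, $\NC_\alpha$, $\Dyck_\alpha$, but defining all three directly from the tree has the strategic advantage that the three ``faces'' of an $\alpha$-tree can later be compared side-by-side, which appears to be crucial for the steep-bounce / zeta-map application announced in the abstract.

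First, I would identify, for each target set, a canonical reading of an $\alpha$-tree that produces the corresponding combinatorial object. A depth-first traversal (postfix or prefix) that respects the color classes prescribed by $\alpha$ is the natural choice for the permutation; the sibling-within-a-color structure should yield the blocks of a noncrossing $\alpha$-partition; and the recursive branching structure, translated into up/right steps weighted by $\alpha$, should produce an $\alpha$-Dyck path. In each case, the \emph{left-aligned} colorability condition of an LAC tree is precisely what forces the output to lie in the correct parabolic Catalan set. I would then verify well-definedness: that the permutation avoids the generalized $(\alpha,231)$-pattern, that the partition is $\alpha$-noncrossing, and that the path has the shape dictated by $\alpha$, each by induction on the tree after establishing the correspondence between ``left-aligned'' and the relevant forbidden configuration. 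The inverses would be constructed by reversing the readings (ascent-run decomposition of the permutation, block decomposition of the partition, bounce/recursive decomposition of the path), and both compositions would again be checked to be the identity by induction on $\lvert\alpha\rvert$ or on the size of the tree.

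The main obstacle will be showing that the single combinatorial condition of left-aligned colorability is \emph{simultaneously} equivalent, on the nose, to the three structural constraints defining the target parabolic Catalan sets. Once the dictionary between left-alignment and $(\alpha,231)$-avoidance is fixed, the analogous dictionaries for $\alpha$-noncrossing partitions and $\alpha$-Dyck paths should follow by parallel reasoning, since the three classical Catalan avatars are already linked by standard bijections in the unrefined case $\alpha=(1,1,\dots,1)$, and the parabolic refinement should be transparent through the coloring. A useful sanity check along the way is to verify that, when $\alpha=(1,1,\dots,1)$, all three of our maps specialize to well-known bijections from planar trees to the classical Catalan sets, which both constrains the construction and provides a template for the general $\alpha$ case.
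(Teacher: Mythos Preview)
Your proposal is correct and matches the paper's approach: the paper proves the theorem precisely by constructing three direct bijections $\lactoperm$, $\lactonc$, $\lactonn$ (the last is what you call $\lactodyck$; the paper reserves that name for a different map to level-marked Dyck paths in Part~\ref{part:zeta}) together with explicit inverses, verifying well-definedness and that both compositions are the identity in each case. A couple of your guessed specifics differ from the paper's actual constructions---the noncrossing partition is read off via parent/rightmost-child pairs rather than ``sibling-within-a-color'' structure, and the inverse $\permtolac$ is a binary-search-tree-style insertion rather than an ascent-run decomposition---but the architecture and the inductive verification strategy you describe are exactly what the paper does.
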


\begin{figure}[!bhtp]
	\centering
	\resizebox{\textwidth}{!}{
	\begin{tikzpicture}\small
		\def\dx{4.5};
		\def\dy{3};
		\draw(.75*\dx,3*\dy) node{Parabolic Dyck path};
		\draw(.75*\dx,2.35*\dy) node(nn)[outer sep=-1.2cm]{\pathBReversednodot{0.85}};
		\draw(1.85*\dx,2.8*\dy) node(perm){\permB{1}};
		\draw(1.85*\dx,3*\dy) node{Parabolic 231-avoiding permutation};
		\draw(3.25*\dx,2.8*\dy) node(nc){\ncB{1}};
		\draw(3.25*\dx,3*\dy) node{Parabolic noncrossing partition};
		\draw(1.75*\dx,2*\dy) node(tree){\lactreeB{0.7}};
		\draw(2.3*\dx,1.9*\dy) node{LAC tree};
		\draw(.75*\dx,.85*\dy) node(bounce){\bouncepair{0.7}};
		\draw(.9*\dx,.62*\dy) node{Bounce pair};
		\draw(1.85*\dx,.85*\dy) node(walk){\markeddyck{0.7}};
		\draw(2*\dx,.62*\dy) node{Level-marked};
		\draw(1.94*\dx,.47*\dy) node{Dyck path};
		\draw(3*\dx,.85*\dy) node(steep){\steeppair{0.7}};
		\draw(3.15*\dx,.62*\dy) node{Steep pair};
		\draw[-latex](tree) -- (perm) node[right,midway] {\scriptsize $\lactoperm$};
		\draw[-latex](tree) -- (nc) node[above,midway] {\scriptsize $\lactonc$};
		\draw[-latex](tree) -- (nn) node[above,midway] {\scriptsize $\lactonn$};
		\draw[-latex](tree) -- (walk) node[right,midway] {\scriptsize $\lactodyck$};
		\draw[-latex](tree) -- (steep) node[above,midway] {\scriptsize $\lactosteep$};
		\draw[-latex](tree) -- (bounce) node[left,midway] {\scriptsize $\lactobounce$};
		\begin{pgfonlayer}{background}
			\draw[line width=2pt,opacity=.5,gray](.25*\dx,1.65*\dy) -- (1.25*\dx,1.65*\dy);
			\draw[line width=2pt,opacity=.5,gray](1.87*\dx,1.65*\dy) -- (3.95*\dx,1.65*\dy);
			\draw(3.3*\dx,2.2*\dy) node{$\alpha = (1,3,1,2,4,3)$};
			\draw(3.8*\dx,1.75*\dy) node{Part~\ref{part:tamari}};
			\draw(3.8*\dx,1.85*\dy) node{Part~\ref{part:parabolic_cataland}};
			\draw(3.8*\dx,1.55*\dy) node{Part~\ref{part:zeta}};
		\end{pgfonlayer}
	\end{tikzpicture}
	}
	\caption{An overview of the bijections presented in this article.}
	\label{fig:big_picture}
\end{figure}
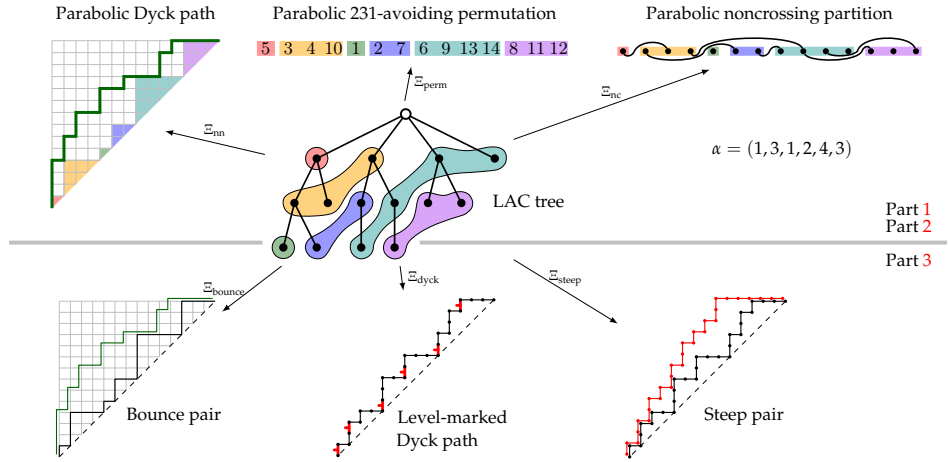

We continue with a structural study of the two variants of the Tamari lattice.  
With each composition $\alpha$ one can associate a bounce path $\nu_\alpha$, 
and we show that the corresponding $\nu_\alpha$-Tamari lattice $\Tamari_{\nu_{\alpha}}$ and the
parabolic Tamari lattice $\Tamari_{\alpha}$ are isomorphic. 
This result is particularly interesting because these two lattices provide completely different perspectives.
By definition of $\Tamari_{\nu_{\alpha}}$, it is simple  to check when two elements form a cover relation, but in general it is not easy to check whether they are comparable.  In $\Tamari_{\alpha}$, the situation is quite the opposite: by definition it is easy to check when two elements are comparable, but to determine whether they form a cover relation is a nontrivial task.
 
We exploit the fact that $\Tamari_{\alpha}$ and $\Tamari_{\nu_{\alpha}}$ are extremal lattices, and can therefore be uniquely represented by certain directed graphs.  We show that the bijection between parabolic $231$-avoiding permutations and parabolic Dyck paths from \cite{muehle18tamari} extends to an isomorphism of these graphs, and we may therefore conclude the following result.

\begin{restatable}{theorem}{tamari}\label{thm:parabolic_nu_tamari_isomorphism}
	For every integer composition $\alpha$, the parabolic Tamari lattice $\Tamari_{\alpha}$ is isomorphic to the $\nu_\alpha$-Tamari lattice $\Tamari_{\nu_{\alpha}}$.
\end{restatable}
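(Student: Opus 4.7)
The plan is to exploit the fact, highlighted in the introduction, that both $\Tamari_\alpha$ and $\Tamari_{\nu_\alpha}$ are \emph{extremal} lattices and are therefore determined up to isomorphism by their Galois graphs. Recall that a finite lattice $\Lattice$ is extremal if the number of its join-irreducibles equals the number of its meet-irreducibles, and this common number coincides with the length of a longest maximal chain. Markowsky's theorem then asserts that $\Lattice$ is reconstructible from its \emph{Galois graph} $\Galois(\Lattice)$, a directed graph on $\JI(\Lattice) \sqcup \MI(\Lattice)$ encoding the relation between join- and meet-irreducibles. Thus it suffices to (i)~establish that both $\Tamari_\alpha$ and $\Tamari_{\nu_\alpha}$ are extremal, (ii)~produce a bijection between $\JI(\Tamari_\alpha)$ and $\JI(\Tamari_{\nu_\alpha})$ (and similarly for $\MI$), and (iii)~verify that these bijections preserve the Galois edges.

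Step (i) for $\Tamari_\alpha$ is already known from~\cite{muehle18tamari}, together with an explicit description of $\JI(\Tamari_\alpha)$ and $\MI(\Tamari_\alpha)$ as $(\alpha,231)$-avoiding permutations with exactly one descent, respectively one ascent, compatible with the parabolic structure. For $\Tamari_{\nu_\alpha}$ I would argue extremality directly: identify the join-irreducibles with $\nu_\alpha$-Dyck paths admitting exactly one local rotation (a unique valley in the $\nu$-sense) and the meet-irreducibles dually, and match the common cardinality with the length of a maximal chain. For step (ii), I would invoke the bijection of~\cite{muehle18tamari} between parabolic $231$-avoiding permutations and parabolic Dyck paths, and check by a direct inspection of the combinatorial data that it restricts to the respective sets of irreducibles, sending the unique descent to the unique valley.

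The heart of the proof, and the main obstacle, is step (iii): showing that for $j \in \JI(\Tamari_\alpha)$ and $m \in \MI(\Tamari_\alpha)$ one has $j \le m$ in $\Tamari_\alpha$ if and only if the image paths satisfy the corresponding inequality in $\Tamari_{\nu_\alpha}$. The difficulty is that the two orders are of very different natures: $\Tamari_\alpha$ is a lattice quotient of weak order, where comparability is easy to test in the permutation model but cover relations are subtle, whereas $\Tamari_{\nu_\alpha}$ is defined by local path rotations, making covers transparent but comparability globally nontrivial. My approach would be to reduce the Galois relation on each side to a local combinatorial condition depending only on the single descent of $j$ and the single ascent of $m$ (respectively the valley of the image of $j$ and the peak of the image of $m$), and then match these conditions. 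The LAC trees of Theorem~\ref{thm:main_bijections} provide a natural intermediate object that could streamline this matching. Once the two Galois graphs are identified, Markowsky's reconstruction theorem yields the desired isomorphism $\Tamari_\alpha \cong \Tamari_{\nu_\alpha}$.
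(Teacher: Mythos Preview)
Your high-level strategy is exactly the one the paper follows: both lattices are extremal, so by Markowsky's representation theorem it suffices to exhibit an isomorphism of their Galois graphs, and the bijection from \cite{muehle18tamari} between parabolic $231$-avoiding permutations and parabolic Dyck paths (restricted to irreducibles, which have a single descent resp.\ a single valley) supplies the candidate map.

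A few points where the paper's execution differs from your sketch are worth noting. First, extremality of $\Tamari_{\nu_\alpha}$ is not argued directly but inherited: $\Tamari_{\nu_\alpha}$ is an interval in an ordinary Tamari lattice $\Tamari_N$, and while extremality alone is not preserved under taking intervals, \emph{trimness} is, and $\Tamari_N$ is known to be trim. Second, the paper leans on \emph{congruence uniformity} (also inherited from $\Tamari_N$) to obtain a clean intrinsic description of the Galois graph purely in terms of join-irreducibles, via the relation $k\le k_*\vee i$; this replaces the somewhat awkward choice of a maximal chain in Markowsky's original definition and is what makes step~(iii) tractable. Third, the actual computation of $\Galois(\Tamari_{\nu_\alpha})$ passes through the dual lattice $\Tamari_{\overline{\nu}_\alpha}$ and the bracket-vector model, so the matching is done between the single descent of a join-irreducible permutation and the single valley of a meet-irreducible path in the dual. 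Finally, LAC trees play no role in this part of the paper; the Galois-graph isomorphism is established directly from the explicit coordinate descriptions of the two graphs (Theorems~\ref{thm:parabolic_tamari_galois} and~\ref{thm:nu_tamari_galois}), and the bijection $\Theta$ is checked to intertwine them by a straightforward index computation.
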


In Part~\ref{part:zeta}, we use LAC trees to prove the Steep-Bounce Conjecture of Bergeron, Ceballos, and Pilaud~\cite[Conjecture~2.2.8]{ceballos_hopf_2018}, which connects the graded dimensions of a certain Hopf algebra of pipe dreams to the enumeration of certain family of lattice walks in the positive quarter plane. Our proof is based on a bijection $\steeppairtobouncepair$ between two families of nested Dyck paths via LAC trees. 

\begin{restatable}[The Steep-Bounce Theorem]{theorem}{steepbounce}\label{thm:steep_bounce_theorem}
	For $n>0$ and every $r\in[n]$, the map $\steeppairtobouncepair$ is a bijection from 
	\begin{itemize}
		\item the set of nested pairs $(\mu_{1},\mu_{2})$ of Dyck paths with $2n$ steps, where $\mu_{2}$ is a steep path ending with exactly $r$ east-steps with $y$-coordinate equal to $n$, to
		\item the set of nested pairs $(\mu'_{1},\mu'_{2})$ of Dyck paths with $2n$ steps, where $\mu'_{1}$ is a bounce path that touches the main diagonal $r+1$ times.
	\end{itemize}
\end{restatable}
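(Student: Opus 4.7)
The plan is to prove the theorem by funneling both sides through the central object of $\alpha$-LAC trees. By the bijections $\lactosteep$ and $\lactobounce$ to be constructed in Part~\ref{part:zeta}, each set $\Trees_{\alpha}$ of $\alpha$-LAC trees is in explicit bijection with a class of steep pairs and a class of bounce pairs determined by $\alpha$; the map $\steeppairtobouncepair$ is then defined as the composition $\lactobounce \circ \steeptolac$.

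The first step is to package the restrictions of these bijections by composition. Concretely, $\lactosteep$ restricts to a bijection from $\Trees_{\alpha}$ onto a set $\SP_{\alpha}$ of nested Dyck-path pairs $(\mu_{1},\mu_{2})$ in which $\mu_{2}$ is a specific $\alpha$-steep path, while $\lactobounce$ restricts to a bijection from $\Trees_{\alpha}$ onto a set $\BP_{\alpha}$ of nested pairs $(\mu'_{1},\mu'_{2})$ in which $\mu'_{1}=\nu_{\alpha}$. Composing these gives a bijection $\SP_{\alpha}\to\BP_{\alpha}$ for every composition $\alpha$ of $n$.

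The second step is parameter tracking. By construction of $\nu_{\alpha}$, the bounce path touches the main diagonal exactly $\ell(\alpha)+1$ times, where $\ell(\alpha)$ is the number of parts of $\alpha$; so $\BP_{\alpha}$ contributes to the right-hand set of the theorem precisely when $\ell(\alpha)=r$. On the steep side, I would verify, by inspecting how $\lactosteep$ reads off the top of an $\alpha$-LAC tree, that the $\alpha$-steep path produced always ends with exactly $\ell(\alpha)$ east-steps at height $n$. Granting this, both the source and target sets in the theorem decompose as disjoint unions
\[
\bigsqcup_{\substack{\alpha\vDash n\\\ell(\alpha)=r}}\SP_{\alpha}\qquad\text{and}\qquad\bigsqcup_{\substack{\alpha\vDash n\\\ell(\alpha)=r}}\BP_{\alpha},
\]
so the bijection follows by assembling the bijections $\SP_{\alpha}\to\BP_{\alpha}$.

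The main obstacle will be precisely this parameter-matching step on the steep side: one must pin down how the composition $\alpha$ is encoded in the shape of the $\alpha$-steep path output by $\lactosteep$ and, in particular, identify the trailing east-run at height $n$ with the set of parts of $\alpha$. This is essentially a structural analysis of $\lactosteep$ — reading off the appropriate ``top-level'' information of the tree — rather than a global computation, and once it is in hand the theorem is immediate, since $\steeppairtobouncepair$ simply transports the statistic ``number of parts of $\alpha$'' between two encodings of the same underlying LAC tree.
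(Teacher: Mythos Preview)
Your approach is essentially the paper's: both recognize that $\steeppairtobouncepair=\lactobounce\circ\steeptolac$ is already a bijection $\SP_n\to\BP_n$, so the theorem reduces to the parameter check that $\ell(\alpha)$ shows up as the trailing east-run of $\mu_2$ on the steep side and as one less than the number of diagonal contacts of $\nu_\alpha$ on the bounce side.

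One correction, though it does not derail the argument: your description of $\SP_\alpha$ is inaccurate. The steep path $\mu_2$ is \emph{not} a fixed ``$\alpha$-steep path'' determined by $\alpha$ alone, in contrast to $\nu_\alpha$ on the bounce side. By Construction~\ref{constr:level_to_steeppair}, $\mu_2$ records the pattern of marked versus unmarked north steps of $\mu^\bullet$, i.e.\ which nodes are first-of-their-color in the right-to-left traversal of $T$; this genuinely depends on $T$, not just on $\alpha$. (For instance, with $\alpha=(1,2)$ one obtains both $NENNEE$ and $NNENEE$ as $\mu_2$ from different $\alpha$-trees.) What \emph{is} determined by $\alpha$ is only the number of marked north steps, namely $\ell(\alpha)$, and hence the length of the trailing east-run of $\mu_2$, which is $n-(n-\ell(\alpha))=\ell(\alpha)$. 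That weaker fact is exactly what you say you will verify, and it suffices: the sets $\SP_\alpha$ are disjoint and cover $\SP_n$ simply because $\lactosteep$ is a bijection on $\LAC_n=\bigsqcup_{\alpha\vDash n}\Trees_\alpha$, not because of any intrinsic description of $\SP_\alpha$ in terms of $\mu_2$.
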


Interestingly, we show
that the bijection $\steeppairtobouncepair$ generalizes the famous zeta map in $q,t$-Catalan combinatorics. Given a Dyck path $\mu$, we use two different greedy algorithms to define its associated \emph{steep path} $\mu_{\steep}$ and its \emph{bounce path} $\mu_{\bounce}$. Details are delayed to related sections. With these notions, we have the following result.

\begin{restatable}{theorem}{steepbouncezeta}\label{thm:steep_bounce_zeta_map}
	For $n>0$, the map $\steeppairtobouncepair$ restricts to a bijection from
	\begin{itemize}
		\item the set of pairs $(\mu,\mu_{\steep})$, where $\mu$ is a Dyck path with $2n$ steps, to
		\item the set of pairs $(\nu_{\bounce},\nu)$, where $\nu$ is a Dyck path with $2n$ steps.
	\end{itemize}
	Moreover, if $(\nu_{\bounce},\nu)=\steeppairtobouncepair(\mu,\mu_{\steep})$, then $\nu=\zeta(\mu)$, where $\zeta$ is the zeta map from $q,t$-Catalan combinatorics.
\end{restatable}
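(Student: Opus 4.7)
The plan is to verify that $(\mu,\mu_{\steep})$ is a canonical instance of a steep pair (in the sense of Theorem \ref{thm:steep_bounce_theorem}) and that $(\nu_{\bounce},\nu)$ is a canonical instance of a bounce pair, and then to establish both the restriction property and the identification $\nu=\zeta(\mu)$ via the LAC tree framework. By the greedy definition of $\mu_{\steep}$, the pair $(\mu,\mu_{\steep})$ is nested with steep second coordinate, so it lies in the domain of Theorem \ref{thm:steep_bounce_theorem} for some value of $r$; a symmetric statement holds for $(\nu_{\bounce},\nu)$. Moreover, the parameter $r$ is a statistic of $\mu$ alone on one side (determined by the last touch of $\mu$ at height $n$) and of $\nu$ alone on the other (controlled by the number of main-diagonal touches of $\nu_{\bounce}$), so the two refined families are graded compatibly.

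For the bijectivity of the restriction, write $\steeppairtobouncepair=\lactobounce\circ\lactosteep^{-1}$. It then suffices to characterize the LAC trees $T$ for which $\lactosteep(T)$ has the canonical form $(\mu,\mu_{\steep})$, and to verify that this same class of trees maps under $\lactobounce$ precisely to canonical bounce pairs $(\nu_{\bounce},\nu)$. The condition ``the outer path equals the greedy steep path of the inner one'' should translate to a clean condition on the level-marking in the level-marked Dyck path of Figure \ref{fig:big_picture} (the central object in the commutative diagram), and the symmetry between the steep and bounce constructions should ensure that the same tree-level condition produces the dual condition on the bounce side. Any Dyck path $\mu$ gives rise to such a canonical tree, yielding the restricted bijection.

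The heart of the proof, and the main obstacle, is the identification $\nu=\zeta(\mu)$. Recall the classical property $\bounce(\zeta(\mu))=\area(\mu)$ (together with a diagonal reading procedure) determines $\zeta(\mu)$ completely. My plan is to extract both the area sequence of $\mu$ and the bounce sequence of $\nu$ as explicit readings of the LAC tree $T$ underlying $(\mu,\mu_{\steep})$, and to show that these two readings record the same data in the zigzag order that defines $\zeta$. Concretely, I expect the level-marked Dyck path in Figure \ref{fig:big_picture} to serve as the intermediate encoding: the ``vertical-run'' reading recovers the area sequence on the steep side, while the ``horizontal-run'' reading recovers the bounce sequence on the bounce side, and the switch between these two readings is precisely the content of the classical zeta map. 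Pinning down this equivalence, most likely by reduction to the standard case $\alpha=(1,\ldots,1)$ together with the rigidity of $\zeta$ as an involution on Dyck paths, is the final and most delicate step.
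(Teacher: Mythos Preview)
Your overall architecture is right: factor $\steeppairtobouncepair$ through LAC trees, isolate the subclass of LAC trees hitting the ``canonical'' pairs on each side, and then identify the composite with $\zeta$. But several of the steps you leave as expectations are in fact the entire content of the proof, and one of your proposed tools is simply false.

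First, the intermediate class. You gesture at ``a clean condition on the level-marking'' but never name it. The class is the \emph{horizontal LAC trees}: pairs $(T,\alpha_T)$ where $\alpha_T$ records the number of nodes at each level of $T$, so that colour equals depth. One then proves (Lemma~\ref{lem:zetaproof1}) that $\lactosteep$ sends horizontal LAC trees exactly to pairs $(\mu,\mu_{\steep})$, and (Lemma~\ref{lem:zetaproof2}) that $\lactobounce$ sends them exactly to pairs $(\nu_{\bounce},\nu)$. Neither direction is automatic; the bounce side, for instance, requires checking that each peak of $\nu_{\alpha_T}$ actually touches $\nu$, which uses the specific way Construction~\ref{constr:lac_to_path} places valleys.

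Second, and more seriously, your plan for the identification $\nu=\zeta(\mu)$ contains two errors. The equation $\bounce(\zeta(\mu))=\area(\mu)$ does \emph{not} determine $\zeta(\mu)$: many Dyck paths share a bounce value. And $\zeta$ is \emph{not} an involution, so there is no ``rigidity as an involution'' to invoke. What actually works is a match of \emph{area vectors}, not statistics: one shows $\mathbf{a}(\mu)=\mathbf{a}(T)$ on the steep side (the entries are the levels of the nodes in right-to-left traversal), and $\mathbf{a}(T)=\mathbf{a}\bigl(\zeta^{-1}(\nu)\bigr)$ on the bounce side via the explicit interlacing description of $\zeta^{-1}$ in Construction~\ref{constr:zeta_inverse}. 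Since a Dyck path is determined by its area vector, this yields $\mu=\zeta^{-1}(\nu)$ directly. Your ``vertical-run versus horizontal-run'' heuristic is pointing at the right object (the level sequence of $T$), but the argument you need compares area vectors, restricted to consecutive level pairs $\{k-1,k\}$, against the subpaths of $\nu$ between consecutive peaks of $\nu_{\bounce}$; there is no reduction to $\alpha=(1,\ldots,1)$.
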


The study of $q,t$-Catalan combinatorics originated over twenty years ago in connection to Macdonald polynomials and Garsia--Haiman's theory of diagonal harmonics~\cites{GarsiaHaiman-remarkableCatalanSequence,Haiman-vanishingTheorems,haglund_conjectured_2003,garsia_proof_2002,Haglund-qt-catalan}.
The $q,t$-Catalan polynomial can be obtained as the bi-graded Hilbert series of the alternating component of certain module of diagonal harmonics, whose dimension is equal to the Catalan number~\cite{Haiman-vanishingTheorems}.
The zeta map is a bijection on Dyck paths that explains the equivalence of two combinatorial interpretations of these polynomials~\cite{Haglund-qt-catalan}, which were discovered almost simultaneously by Haglund and Haiman in terms of two pairs of statistics on Dyck paths.

The zeta map was further generalized by Haglund and Loehr~\cite{haglund_conjectured_2005}, who introduced an extension from the set of parking functions to  certain diagonally labeled Dyck paths. As in the $q,t$-Catalan case, their extension explains the equivalence of two combinatorial interpretations of the Hilbert series of the full diagonal harmonics module~\cite[Chapter~5]{Haglund-qt-catalan}~\cite{carlsson_proof_2018}, whose dimension is equal to the number of parking functions~\cite{Haiman-vanishingTheorems}. 
In Corollary~\ref{cor_zeta_parkingfunctions}, we show that this generalized zeta map can be obtained as a labeled version of our bijection $\steeppairtobouncepair$.

The organization of this article is as follows. In the first part, we introduce several families of parabolic Catalan objects, including left-aligned colorable trees, and then describe bijections between these families. In the second part, we dive into the order structure of parabolic Tamari lattices to explain how they are isomorphic to certain $\nu$-Tamari lattices. In the third part, we demonstrate how our bijection $\steeppairtobouncepair$ settles the Steep-Bounce Conjecture while generalizing the zeta map in $q,t$-Catalan combinatorics,
and present a labeled version which gives rise to the extension of the zeta map from parking functions to diagonally labeled Dyck paths.

\renewcommand{\thetheorem}{\arabic{part}.\arabic{theorem}}

\part{Parabolic Cataland}\label{part:parabolic_cataland}

In the first part of this article, we present an overview of the currently explored parts of Parabolic Cataland.  The name ``Cataland'' was coined in \cite{williams13cataland}, and describes the interaction of the various tribes of Catalan families associated with finite Coxeter groups\footnote{By a ``Catalan family'' we mean a family of combinatorial objects enumerated by some Coxeter-Catalan numbers.}. 

Recently, the frontiers of Cataland were pushed back in two different directions.  Firstly, in \cite{stump18cataland}, a positive integral parameter $m$ was introduced, which yields Fu{\ss}-Catalan families associated with finite Coxeter groups.  These families live in ``Fu{\ss}-Cataland''.  Secondly, in \cite{muehle18tamari} (building on \cite{williams13cataland}), the various Catalan families were generalized to parabolic quotients of finite Coxeter groups.  These generalizations constitute ``Parabolic Cataland'', of which only the type $A$ case is reasonably well understood.  In this section, we explore these parabolic Catalan families and their web of connections.

\section{Members of Parabolic Cataland}
	\label{sec:members_parabolic_cataland}
We now introduce the main protagonists of this story, but we first fix some notation.

For a natural number $n>0$, let $[n]\defs\{1,2,\ldots,n\}$, and let $\alpha=(\alpha_{1},\alpha_{2},\ldots,\alpha_{r})$ be a composition of $n$.  Moreover, let $\overline{\alpha}\defs(\alpha_{r},\alpha_{r-1},\ldots,\alpha_{1})$, and $\lvert\alpha\rvert=\alpha_{1}+\alpha_{2}+\cdots+\alpha_{r}$.

For $i\in\{0,1,\ldots,r\}$, we define $s_{i}\defs\alpha_{1}+\alpha_{2}+\cdots+\alpha_{i}$ and $t_{i}\defs n-s_{r-i}$, where we consider $s_{0}=0$.  For $i\in[r]$ the set $\{s_{i-1}+1,s_{i-1}+2,\ldots,s_{i}\}$ is the $i$-th \tdef{$\alpha$-region}.  

\subsection{Parabolic $231$-Avoiding Permutations}
	\label{sec:parabolic_231_avoidings}
Let $\Symmetric_{n}$ be the symmetric group of degree $n$.  The \tdef{parabolic quotient} of $\Symmetric_n$ with respect to $\alpha$ is defined by
\begin{displaymath}
	\Symmetric_{\alpha} \defs \bigl\{w\in\Symmetric_{\lvert\alpha\rvert}\mid w(k)<w(k+1)\;\text{for}\;k\notin\{s_{1},s_{2},\ldots,s_{r-1}\}\bigr\}.
\end{displaymath}

\begin{definition}[\cite{muehle18tamari}*{Definition~3.2}]\label{def:parabolic_231_avoiding_permutation}
	In a permutation $w\in\Symmetric_{\alpha}$, an \tdef{$(\alpha,231)$-pattern} is a triple of indices $i<j<k$ each in different $\alpha$-regions such that $w(k)<w(i)<w(j)$ and $w(i)=w(k)+1$.  A permutation in $\Symmetric_{\alpha}$ without $(\alpha,231)$-patterns is \tdef{$(\alpha,231)$-avoiding}.  
\end{definition}

A pair of indices $(i,k)$ such that $w(i)=w(k)+1$ is a \tdef{descent} of $w$.  We denote the set of all $(\alpha,231)$-avoiding permutations of $\Symmetric_{\alpha}$ by $\Symmetric_{\alpha}(231)$.  An example is shown in the top-left corner of Figure~\ref{fig:big_picture}. In this article, a permutation $w$ is always presented in its one-line notation $w(1) w(2) \cdots w(n)$.

If $\alpha=(1,1,\ldots,1)$, then we obtain precisely the stack-sortable permutations introduced in \cite{knuth73art1}*{Exercise~2.2.1.4}.  

\subsection{Parabolic Noncrossing Partitions}
	\label{sec:parabolic_noncrossings}
For $\lvert\alpha\rvert=n$, an \tdef{$\alpha$-partition} is a set partition of $[n]$ where every block intersects an $\alpha$-region in at most one element.  Let $\Pi_{\alpha}$ denote the set of all $\alpha$-partitions.  A \tdef{bump} of $\mathbf{P}\in\Pi_{\alpha}$ is a pair of consecutive elements in some block of $\mathbf{P}$, or more precisely, a pair $(a,b)$ such that $a$ and $b$ belong to the same block of $\mathbf{P}$, and there exists no $c\in[n]$ with $a<c<b$ such that $a$ and $c$ belong to the same block of $\mathbf{P}$.

\begin{definition}[\cite{muehle18tamari}*{Definition~4.1}]\label{def:parabolic_noncrossing_partition}
	An $\alpha$-partition $\mathbf{P}\in\Pi_{\alpha}$ is \tdef{noncrossing} if it satisfies the following two conditions.
	\begin{description}
		\item[NC1\label{it:nc1}] If two distinct bumps $(a_{1},b_{1})$ and $(a_{2},b_{2})$ of $\mathbf{P}$ satisfy $a_{1}<a_{2}<b_{1}<b_{2}$, then either $a_{1}$ and $a_{2}$ belong to the same $\alpha$-region, or $b_{1}$ and $a_{2}$ belong to the same $\alpha$-region.
		\item[NC2\label{it:nc2}] If two distinct bumps $(a_{1},b_{1})$ and $(a_{2},b_{2})$ of $\mathbf{P}$ satisfy $a_{1}<a_{2}<b_{2}<b_{1}$, then $a_{1}$ and $a_{2}$ belong to different $\alpha$-regions.
	\end{description}
\end{definition}

Let us denote the set of all noncrossing $\alpha$-partitions by $\NC_{\alpha}$.  We represent $\alpha$-partitions graphically by its \tdef{diagram} as follows.  First of all, we draw $n$ vertices on a horizontal line, label them from $1$ through $n$, and group them by $\alpha$-regions.  For $\mathbf{P}\in\Pi_{\alpha}$ we represent a bump $(a,b)$ of $\mathbf{P}$ by a curve that leaves the $a$-th vertex to the bottom, stays below all vertices in the same $\alpha$-region as $a$, then moves up and continues above every vertex in the subsequent $\alpha$-regions, until it reaches the $b$-th vertex, which it enters from the top.  Throughout this article we say ``elements of $\mathbf{P}$'' instead of ``vertices of the diagram of $\mathbf{P}$''.

We can verify that an $\alpha$-partition is noncrossing if and only if it admits a diagram in which no two curves cross.

An example for $\alpha=(1,3,1,2,4,3)$ is the noncrossing $\alpha$-partition 
\begin{displaymath}
	\bigl\{\{1,3\},\{2,6\},\{4,9,12\},\{5\},\{7,8\},\{10,14\},\{11\},\{13\}\bigr\},
\end{displaymath}
whose diagram is shown in the top-central part of Figure~\ref{fig:big_picture}.

If $\alpha=(1,1,\ldots,1)$, then we obtain precisely the noncrossing set partitions studied in \cite{kreweras72sur}.

Let $\mathbf{P}\in\NC_{\alpha}$.  The unique block of $\mathbf{P}$ containing $1$ is the \tdef{first block} of $\mathbf{P}$, and we usually denote it by $P_{\circ}$.  

If $i$ is an element of $\mathbf{P}$, and $(a,b)$ a bump of $\mathbf{P}$ with $a<i\leq b$ such that $a$ and $i$ belong to different $\alpha$-regions, then we say that $i$ \tdef{lies below} $(a,b)$. Moreover, another bump $(a',b')$ of $\mathbf{P}$ \tdef{separates} $i$ from $(a,b)$ if $i$ lies below $(a',b')$ and either $a'<a$ with $a, a'$ in the same $\alpha$-region, or $a<a'$ for $a, a'$ in different $\alpha$-regions. In the diagram, it means that the bump $(a',b')$ is between $i$ and $(a,b)$. 
Finally, $i$ \tdef{lies directly} below $(a,b)$ if either $b=i$ or $i$ is not separated from $(a,b)$ by any other bump of $\mathbf{P}$.

For $P,P'\in\mathbf{P}$ we say that $P'$ \tdef{starts below} $P$ if $\min P'$ lies below some bump of $P$.

\subsection{Parabolic Dyck Paths}
	\label{sec:parabolic_dycks}
A \tdef{Dyck path} is a lattice path in $\mathbb{N}^{2}$ starting from the origin, composed of steps $E\defs(1,0)$ (so-called \tdef{east steps}) and $N\defs(0,1)$ (so-called \tdef{north steps}), ending on the main diagonal while staying always weakly above it.  A Dyck path can be regarded as a word in the alphabet $\{N, E\}$.  For a Dyck path $\nu$, we derive the path $\overline{\nu}$ by reversing $\nu$ and exchanging east and north steps. 

A \tdef{valley} on a Dyck path is a lattice point $\vec{p}$ that is preceded by an east-step and followed by a north-step.  Similarly, a \tdef{peak} is a lattice point $\vec{p}$ that is preceded by a north-step and followed by an east-step.

A Dyck path is \tdef{steep} if it does not have consecutive east-steps before the last north step.  
A Dyck path is \tdef{bounce} if it is of the form $N^{i_{1}}E^{i_{1}}N^{i_{2}}E^{i_{2}}\ldots N^{i_{s}}E^{i_{s}}$, for some strictly positive integers $i_{1},i_{2},\ldots,i_{s}$ (because it ``bounces off'' the main diagonal $s-1$ times).  

The \tdef{$\alpha$-bounce path} is $\nu_{\alpha}\defs N^{\alpha_{1}}E^{\alpha_{1}}N^{\alpha_{2}}E^{\alpha_{2}}\ldots N^{\alpha_{r}}E^{\alpha_{r}}$.  In particular, we have $\overline{\nu}_{\alpha}=\nu_{\overline{\alpha}}$.

\begin{definition}\label{def:parabolic_dyck_path}
	Any Dyck path with $2\lvert\alpha\rvert$ steps that stays weakly above $\nu_{\alpha}$ is an \tdef{$\alpha$-Dyck path}.
\end{definition}

We denote the set of all $\alpha$-Dyck paths by $\Dyck_{\alpha}$.  An example is shown on the left of Figure~\ref{fig:big_picture}.

If $\alpha=(1,1,\ldots,1)$, then we obtain the well-studied Dyck paths.  See for instance \cite{deutsch99dyck} for an enumerative study of these paths.

\subsection{Left-Aligned Colorable Trees}
	\label{sec:lac_trees}
In this section we introduce a new family of combinatorial objects that are naturally parametrized by an integer $n$ and some composition $\alpha$ of $n$.  In Section~\ref{sec:parabolic_cataland_bijections} we show that these objects are in bijection with each the previously mentioned parabolic Catalan objects.  

Let $T$ be a plane rooted tree with vertex set $V$, whose root is denoted by $v_{\circ}$.  Throughout this article we refer to the vertices of a tree as \tdef{nodes}.

A \tdef{partial coloring} of $T$ is a map $C\colon V'\to\mathbb{N}$ for some $V'\subseteq V\setminus\{v_{\circ}\}$.  If $V'=V\setminus\{v_{\circ}\}$, then $C$ is a \tdef{full coloring}.  A \tdef{colored tree} is a pair $(T,C)$, where $T$ is a plane rooted tree, and $C$ is a full coloring of $T$.  We refer to the elements in the image of $C$ as $C$-colors.  

A node $v\in V\setminus\{v_{\circ}\}$ is \tdef{active} with respect to a partial coloring $C$ on $V'\subseteq V$ if $v\notin V'$ and its parent belongs to $V'\cup\{v_{\circ}\}$.

Recall that the \tdef{left-to-right traversal} of a plane rooted tree $T$ is a depth-first search in $T$ starting from the root, where children of the same node are visited \emph{from left to right}. 
The \tdef{LR-prefix order} (resp. \tdef{LR-postfix order}) of $T$ is the linearization of the nodes of $T$, where we record the nodes in order of first (resp. last) visits in left-to-right traversal.

\begin{construction}\label{constr:lac_tree}
	Let $n>0$ and let $\alpha=(\alpha_{1},\alpha_{2},\ldots,\alpha_{r})$ be a composition of $n$.  Let $T$ be a plane rooted tree with $n$ non-root nodes.  
	
	We initialize our algorithm by $\mathcal{T}_{0}=(T,C_{0})$, with $C_0$ the empty coloring.  In the $k$-th step, we denote by $A_{k}$ the set of active nodes of $T$ with respect to the partial coloring $C_{k-1}$.  If $\lvert A_{k}\rvert<\alpha_{k}$, then the algorithm fails.  Otherwise, we let $F_{k}$ be the set of the first $\alpha_{k}$ elements of $A_{k}$ in the LR-prefix order of $T$, and we set $V_{k}=V_{k-1}\uplus F_{k}$.  (Here $\uplus$ denotes disjoint set union.)  We define a partial coloring $C_{k}\colon V_{k}\to\mathbb{N}$ by
	\begin{displaymath}
		C_{k}(v) = \begin{cases}
			C_{k-1}(v), & \text{if}\;v\in V_{k-1},\\
			k, & \text{if}\;v\in F_{k},
		\end{cases}
	\end{displaymath}
	and we set $\mathcal{T}_{k}=(T,C_{k})$.
	
	If the algorithm has not failed after $r$ steps, then we return the colored tree $\mathcal{T}_{r}=(T,C_{r})$.
\end{construction}

Every plane rooted tree for which Construction~\ref{constr:lac_tree} does not fail is \tdef{compatible} with $\alpha$; the corresponding coloring is a \tdef{left-aligned coloring}.  It is clear that every plane rooted tree has at most one left-aligned coloring with respect to $\alpha$.

\begin{definition}\label{def:lac_tree}
	Any plane rooted tree with $\lvert\alpha\rvert$ non-root nodes that is compatible with $\alpha$ is an \tdef{$\alpha$-tree}.
\end{definition}

We denote the set of all $\alpha$-trees by $\Trees_{\alpha}$.  An example is shown in the middle of Figure~\ref{fig:big_picture}.  We remark that every plane rooted tree is compatible with $\alpha=(1,1,\ldots,1)$.  Moreover, if we fix a plane rooted tree $T$, and consider $\alpha_{T}=(\alpha_{1},\alpha_{2},\ldots)$, where $\alpha_{i}$ equals the number of nodes of $T$ whose distance to the root is exactly $i$, then $T$ is compatible with $\alpha_{T}$.  (In every step of Construction~\ref{constr:lac_tree} we simply color all active nodes.)  This composition will play an important role in the last part of this article.

We say that two nodes in a rooted tree are \tdef{comparable} if one is an ancestor of the other. Otherwise, the two nodes are \tdef{incomparable}.

\begin{lemma}\label{lem:lac_property}
	Every $\alpha$-tree has the following properties.
	\begin{enumerate}[(i)]
		\item The color of every node is smaller than the color of its children, and nodes of the same color are incomparable.
		\item The active nodes of each step of the construction will eventually receive colors that are weakly increasing in LR-prefix order.
	\end{enumerate}
\end{lemma}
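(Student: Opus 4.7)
My plan is to establish part (i) by first proving the following auxiliary claim: if a node $v$ receives color $k$ in Construction~\ref{constr:lac_tree}, then every proper non-root ancestor of $v$ has already been colored by the end of step $k-1$, and hence carries a color strictly less than $k$. I would prove this by induction on the depth of $v$. The inductive step is immediate from the definition of ``active'': for $v$ to lie in $A_k$, its parent $p$ must be in $V_{k-1}\cup\{v_\circ\}$; if $p\neq v_\circ$ then $p$ has some color $k_p<k$, and applying the inductive hypothesis to $p$ handles all ancestors above $p$.

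Both statements in (i) follow quickly from this claim. The strict increase of colors from parent to child is immediate: when $v$ receives color $k$, none of its children has yet been activated (their parent $v$ has just joined $V_k$, not $V_{k-1}$), so each of them must acquire a color in $\{k+1,\ldots,r\}$. The incomparability of equal-colored nodes follows by contradiction: if $u$ and $v$ both had color $k$ and $u$ were a proper ancestor of $v$, the auxiliary claim would force $u$ to carry a color strictly less than $k$.

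For part (ii), I would proceed by induction on $k$, going backwards from $k=r$ down to $k=1$. The base case $k=r$ is trivial: in an $\alpha$-tree the algorithm assigns a color to every non-root node in the course of $r$ steps, so necessarily $A_r=F_r$ and every element of $A_r$ receives color $r$. For the inductive step, write $A_k=(a_1,\ldots,a_m)$ in LR-prefix order. The first $\alpha_k$ entries form $F_k$ and all receive color $k$. The remaining entries $a_{\alpha_k+1},\ldots,a_m$ are still uncolored after step $k$ while their parents remain in $V_k\cup\{v_\circ\}$, so they persist into $A_{k+1}$; moreover they appear there as a subsequence in LR-prefix order, since the LR-prefix order of $T$ is a fixed total order and their relative positions are unchanged. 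The inductive hypothesis applied to $A_{k+1}$ says that the final colors of its elements form a weakly increasing sequence, all bounded below by $k+1$; restricting to the subsequence $(a_{\alpha_k+1},\ldots,a_m)$ preserves both weak monotonicity and the lower bound. Concatenating with the constant prefix of $\alpha_k$ copies of $k$ yields the desired weakly increasing sequence of final colors for $A_k$.

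I do not expect a serious obstacle. The only step that requires a little care is the update rule for active sets across consecutive steps, namely the identity $A_{k+1}=(A_k\setminus F_k)\cup(\text{children of }F_k)$ and the fact that LR-prefix order is inherited on the surviving uncolored active nodes. Both follow directly from part (i) -- which ensures that no child of a node in $F_k$ has been colored by step $k$ -- and from the observation that the LR-prefix order is intrinsic to $T$ and independent of the coloring process.
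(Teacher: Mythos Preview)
Your proof is correct and rests on the same two observations as the paper's: a node cannot become active until its parent has been colored (giving (i)), and at each step the first active nodes in LR-prefix order are the ones colored (giving (ii)). The paper dispatches both points in a single sentence each, whereas you spell out (i) via an induction on depth and organize (ii) as a backward induction on $k$; the paper's implicit argument for (ii) is the forward contrapositive (if $a_i$ precedes $a_j$ in $A_k$ and $a_i$ is skipped at step $k$, then so is $a_j$, and both survive into $A_{k+1}$), but the content is the same.
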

\begin{proof}
	The first point follows from the fact that a node becomes active before its children do.  The second point follows from the construction of coloring where active nodes are selected for coloring in LR-prefix order, starting always from the first active node.
\end{proof}

\section{Bijections in Parabolic Cataland}
	\label{sec:parabolic_cataland_bijections}
In this section we describe explicit bijections between the parabolic Catalan objects introduced in Section~\ref{sec:members_parabolic_cataland}.  The bijections in the two following subsections~\ref{sec:noncrossings_to_paths}~and~\ref{sec:permutations_to_noncrossings} are reproduced from \cite{muehle18tamari}, the bijections of the remaining subsections~\ref{sec:trees_to_permutations},~\ref{sec:trees_to_noncrossings}~and~\ref{sec:trees_to_paths} are new.  Figure~\ref{fig:example_parabolic} shows the various parabolic Catalan families for the compositions of $3$.

\begin{figure}
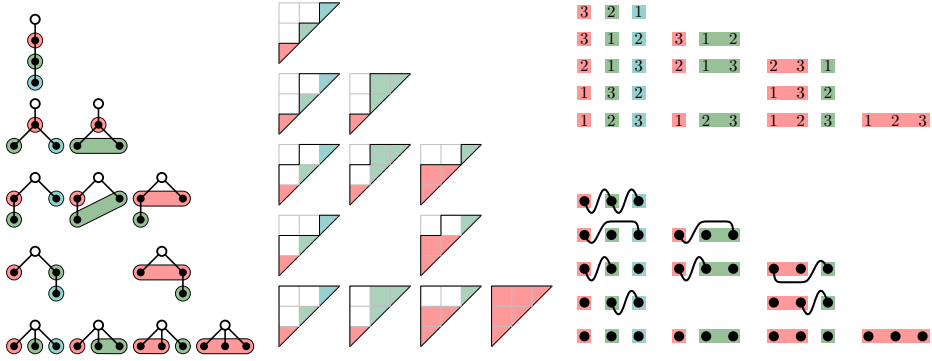

	\includegraphics[width=0.29\textwidth,page=12]{fig/ipe-fig.pdf}\includegraphics[width=0.31\textwidth,page=14]{fig/ipe-fig.pdf}\includegraphics[width=0.4\textwidth,page=15]{fig/ipe-fig.pdf}	
	\caption{Examples of $\alpha$-trees, $\alpha$-Dyck paths, $(\alpha,231)$-avoiding permutations and noncrossing $\alpha$-partitions of size $n=3$. Different columns correspond to different $\alpha$'s.}
	\label{fig:example_parabolic}
\end{figure}

\subsection{Noncrossing Partitions and Dyck Paths}
	\label{sec:noncrossings_to_paths}
The following construction associates a noncrossing $\alpha$-partition with each $\overline{\alpha}$-Dyck path.

\begin{construction}\label{constr:path_to_nc}
	Let $\alpha=(\alpha_{1},\alpha_{2},\ldots,\alpha_{r})$ be an integer composition.  Let $\mu\in\Dyck_{\overline{\alpha}}$, whose valleys are $(p_{1},q_{1}),(p_{2},q_{2}),\ldots,(p_{m},q_{m})$ with $p_{1}<p_{2}<\cdots<p_{m}$.  We set $p_{0}=0$.
	
	We initialize our algorithm with the $\alpha$-partition $\mathbf{P}_{0}$ without any bumps.  In the $i$-th step, we construct the $\alpha$-partition $\mathbf{P}_{i}$ from $\mathbf{P}_{i-1}$ by adding a bump corresponding to the valley $(p_{i},q_{i})$.  To do so, we pick the unique index $0 \leq k < r$ such that $t_{k}\leq q_{i}\leq t_{k+1}$, so that we can write $q_{i}=t_{k}+\ell$ for some $\ell\in\{0,1,\ldots,t_{k+1}-t_{k}\}$.  We then obtain $\mathbf{P}_{i}$ by adding the bump $(a,b)$ to $\mathbf{P}_{i-1}$, where $a$ is the $(\ell+1)$-st element in the $(r-k)$-th $\alpha$-region and $b$ is the $(p_{i}-p_{i-1})$-th element after the $(r-k)$-th $\alpha$-region that is not already below some bump. 
	
	After $m$ steps we return the $\alpha$-partition $\mathbf{P}_{\mu}=\mathbf{P}_{m}$.
\end{construction}

Two examples of this construction are illustrated in the top part of Figure~\ref{fig:cover_map}.  

\begin{theorem}[\cite{muehle18tamari}*{Theorem~5.2}]\label{thm:bijection_paths_nc}
	For $\mu\in\Dyck_{\overline{\alpha}}$ the $\alpha$-partition $\mathbf{P}_{\mu}$ is noncrossing.  Moreover, the map
	\begin{equation}\label{eq:dyck_to_nc}
		\Theta_{1}\colon\Dyck_{\overline{\alpha}}\to\NC_{\alpha},\quad \mu\mapsto\mathbf{P}_{\mu}.
	\end{equation}
	is a bijection that sends valleys to bumps.
\end{theorem}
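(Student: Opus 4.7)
The plan is to proceed in four stages: well-definedness of Construction~\ref{constr:path_to_nc}, verification that the output is in $\Pi_\alpha$, verification of the noncrossing conditions NC1 and NC2, and construction of an explicit inverse. I would process the valleys $(p_1,q_1),\ldots,(p_m,q_m)$ in order of increasing $x$-coordinate and maintain the invariant that, after step $i$, the set of elements ``already below some bump'' corresponds exactly to the cells of the staircase underneath the prefix of $\mu$ up to abscissa $p_i$. The fact that $\mu$ lies weakly above $\nu_{\overline{\alpha}} = \overline{\nu_\alpha}$ is exactly what guarantees that the $(p_i - p_{i-1})$-th free element past the $(r-k)$-th $\alpha$-region exists at step $i$; unwinding the definitions, the number of free elements on the right is precisely the number of east-steps of $\mu$ at heights strictly greater than $q_i$, which is bounded below by the corresponding count for $\nu_{\overline{\alpha}}$.

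Next I would verify that $\mathbf{P}_\mu \in \Pi_\alpha$, i.e. that no two bumps added during the algorithm place left-endpoints or right-endpoints in the same $\alpha$-region. The left endpoint $a$ at step $i$ is the $(\ell+1)$-st entry of the $(r-k)$-th region, and distinct valleys with the same $(r-k)$ necessarily have distinct values of $\ell$, since the valleys are strictly $y$-ordered within a given strip between $t_k$ and $t_{k+1}$. The right endpoint $b$ is chosen as a \emph{fresh} free element, so all $b$'s are pairwise distinct, and the consecutive increments $p_i - p_{i-1} > 0$ guarantee that successive $b$'s sit strictly further to the right within the same region. This also furnishes the valley-to-bump correspondence: the procedure consumes one valley per bump.

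For the noncrossing conditions, I would argue inductively on $i$: when bump $(a_i,b_i)$ is added at step $i$, it must be consistent with the previous bumps $(a_j,b_j)$, $j<i$. A potential crossing $a_j < a_i < b_j < b_i$ is excluded by the greedy ``first-free-after-region'' choice of $b_i$ unless $a_j$ and $a_i$ sit in the same $\alpha$-region (in which case NC1 permits it), while a potential nested pair $a_j < a_i < b_i < b_j$ with $a_i$ and $a_j$ in the same region would force two bumps to have coincident left endpoints in the same region, contradicting the $\Pi_\alpha$ property just established, so NC2 holds. Making this careful requires translating each condition on $(a,b)$-coordinates into a condition on the associated valley coordinates $(p,q)$ and $(p',q')$.

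For the inverse $\Theta_1^{-1}$, I would read the bumps of $\mathbf{P}\in\NC_\alpha$ in the order induced by their left endpoints (grouped and within each group ordered suitably so that the resulting sequence of valleys is increasing in $p$-coordinate), and invert the assignments $(\ell,k)\mapsto a$ and ``count free positions past the region'' $\mapsto b$ to recover each $(p_i,q_i)$. One then reassembles the Dyck path from its valleys, and the invariant above shows it stays weakly above $\nu_{\overline{\alpha}}$. The main obstacle will be the noncrossing step: correctly translating NC1 and NC2 into the greedy-valley language to verify that no crossings are produced, and symmetrically showing that every noncrossing partition is hit, since the greedy rule is quite rigid and a careful case analysis on the relative position of two bumps with respect to the $\alpha$-regions is unavoidable.
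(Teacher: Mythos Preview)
The paper does not give its own proof of this theorem; it is quoted from \cite{muehle18tamari}*{Theorem~5.2} and used as a black box.  What the present paper supplies instead is an \emph{indirect} argument via left-aligned colorable trees: Proposition~\ref{prop:theta_1_equiv} establishes $\Theta_{1}(\overline{\mu}) = \lactonc\circ\nntolac(\mu)$, and since $\lactonc$ and $\nntolac$ are shown to be bijections in Theorems~\ref{thm:lac_to_nc} and~\ref{thm:lac_to_nn}, bijectivity of $\Theta_{1}$ and the noncrossing property follow without ever checking \eqref{it:nc1} or \eqref{it:nc2} directly.  Your direct approach is therefore genuinely different from anything in this paper, and presumably closer to the original argument in \cite{muehle18tamari}.

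That said, two steps in your noncrossing verification do not go through as written.  For \eqref{it:nc1} you claim the configuration $a_{j}<a_{i}<b_{j}<b_{i}$ is excluded ``unless $a_{j}$ and $a_{i}$ sit in the same $\alpha$-region.''  This is false: the configuration also occurs, and is permitted by \eqref{it:nc1}, when $a_{j},a_{i}$ lie in different regions but $a_{i}$ and $b_{j}$ share one.  The correct argument is that when $a_{j},a_{i}$ lie in different regions the bump with the smaller left endpoint is created \emph{later} (because larger $q$ means earlier $\alpha$-region), and at that moment $b_{j}$ must be free, hence not below the already-present bump $(a_{i},b_{i})$; unwinding the definition of ``below'' forces $b_{j}$ into the region of $a_{i}$.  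For \eqref{it:nc2} you assert that a nested pair $a_{j}<a_{i}<b_{i}<b_{j}$ with $a_{j},a_{i}$ in the same region ``would force two bumps to have coincident left endpoints,'' but $a_{j}\neq a_{i}$ by hypothesis, so nothing coincides.  The actual mechanism is that when $a_{j},a_{i}$ share a region the bump $(a_{j},b_{j})$ is created first (since $q_{j}<q_{i}$), after which every element past that region up to $b_{j}$ lies below $(a_{j},b_{j})$ and is no longer free; the greedy choice then forces $b_{i}>b_{j}$, contradicting $b_{i}<b_{j}$.
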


\subsection{$231$-Avoiding Permutations and Noncrossing Partitions}
	\label{sec:permutations_to_noncrossings}
Recall that a binary relation $R\subseteq M\times M$, for $M$ a finite set, is \tdef{acyclic} if there does not exist a sequence $(a_{1},a_{2}),(a_{2},a_{3}),\ldots,(a_{k},a_{1})\in R$ for any $k>0$.  It is well known that an acyclic relation can be extended to a partial order by taking the reflexive and transitive closure.

We now define a binary relation $\vec{R}_{\mathbf{P}}$ on $\mathbf{P}$ by setting $(P,P')\in\vec{R}_{\mathbf{P}}$ if and only if $P'$ starts below $P$.

\begin{lemma}\label{lem:block_relation_acyclic}
	For every $\mathbf{P}\in\NC_{\alpha}$, the relation $\vec{R}_{\mathbf{P}}$ is acyclic.
\end{lemma}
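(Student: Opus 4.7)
The plan is to exhibit a strict monotone statistic along the relation $\vec{R}_{\mathbf{P}}$, from which acyclicity follows immediately. The natural choice is $\min P$, the smallest element of a block.

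First I would prove the single claim: if $(P, P') \in \vec{R}_{\mathbf{P}}$, then $\min P < \min P'$. Suppose $P'$ starts below $P$. By definition, $\min P'$ lies below some bump $(a,b)$ of $P$, which by the definition of ``lies below'' means $a < \min P' \leq b$ with $a$ and $\min P'$ in different $\alpha$-regions. Since $a$ is an endpoint of a bump of $P$, in particular $a \in P$, and hence $\min P \leq a$. Combining $\min P \leq a < \min P'$ gives the desired inequality.

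Once this is established, acyclicity is immediate. Any chain $P_{1} \to P_{2} \to \cdots \to P_{k}$ in $\vec{R}_{\mathbf{P}}$ satisfies $\min P_{1} < \min P_{2} < \cdots < \min P_{k}$; in particular the blocks are pairwise distinct, so a closing arrow $P_{k} \to P_{1}$ would force $\min P_{k} < \min P_{1}$, contradicting $\min P_{1} < \min P_{k}$.

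There is essentially no obstacle here: the only part that requires attention is checking, when unpacking ``$\min P'$ lies below a bump $(a,b)$ of $P$'', that $a$ indeed belongs to $P$ (which is true because bumps are pairs of elements of $P$), and that the strict inequality $a < \min P'$ is built into the definition rather than merely $a \leq \min P'$. Both are immediate from the definitions recalled in Section~2.1.2, so the proof should fit in a few lines.
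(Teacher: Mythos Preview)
Your proposal is correct and takes essentially the same approach as the paper: the paper's proof is the single line ``This follows from the fact that $(P,P')\in\vec{R}_{\mathbf{P}}$ implies $\min P<\min P'$,'' and you have simply unpacked that implication carefully from the definitions.
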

\begin{proof}
	This follows from the fact that $(P,P')\in\vec{R}_{\mathbf{P}}$ implies $\min P<\min P'$.
\end{proof}

As a consequence, we may define a partial order $\vec{O}_{\mathbf{P}}$ on the blocks of $\mathbf{P}$ by taking the reflexive and transitive closure of $\vec{R}_{\mathbf{P}}$.  By construction the first block of $\mathbf{P}$ is a minimal element of the poset $\bigl(\mathbf{P},\vec{O}_{\mathbf{P}}\bigr)$.

\begin{example}\label{ex:nc_to_perm_1}
	Let $\alpha=(1,3,1,2,4,3,1)$, and consider the noncrossing $\alpha$-partition
	\begin{displaymath}
		\mathbf{P} = \bigl\{\underset{P_{1}}{\underset{\rotatebox{90}{=}}{\{1,3,10\}}},\underset{P_{2}}{\underset{\rotatebox{90}{=}}{\{2,6\}}},\underset{P_{3}}{\underset{\rotatebox{90}{=}}{\{4\}}},\underset{P_{4}}{\underset{\rotatebox{90}{=}}{\{5\}}},\underset{P_{5}}{\underset{\rotatebox{90}{=}}{\{7,8\}}},\underset{P_{6}}{\underset{\rotatebox{90}{=}}{\{9,13\}}},\underset{P_{7}}{\underset{\rotatebox{90}{=}}{\{11,14\}}},\underset{P_{8}}{\underset{\rotatebox{90}{=}}{\{12,15\}}}\bigr\}.
	\end{displaymath}
	The diagram of $\mathbf{P}$ is shown below, with its first block highlighted.
	\begin{center}\begin{tikzpicture}\small
		\def\x{.75};
		\def\s{.75*\x};
		\draw(1*\x,1*\x) node[fill,circle,scale=\s](n1){};
		\draw(2*\x,1*\x) node[fill,circle,scale=\s](n2){};
		\draw(3*\x,1*\x) node[fill,circle,scale=\s](n3){};
		\draw(4*\x,1*\x) node[fill,circle,scale=\s](n4){};
		\draw(5*\x,1*\x) node[fill,circle,scale=\s](n5){};
		\draw(6*\x,1*\x) node[fill,circle,scale=\s](n6){};
		\draw(7*\x,1*\x) node[fill,circle,scale=\s](n7){};
		\draw(8*\x,1*\x) node[fill,circle,scale=\s](n8){};
		\draw(9*\x,1*\x) node[fill,circle,scale=\s](n9){};
		\draw(10*\x,1*\x) node[fill,circle,scale=\s](n10){};
		\draw(11*\x,1*\x) node[fill,circle,scale=\s](n11){};
		\draw(12*\x,1*\x) node[fill,circle,scale=\s](n12){};
		\draw(13*\x,1*\x) node[fill,circle,scale=\s](n13){};
		\draw(14*\x,1*\x) node[fill,circle,scale=\s](n14){};
		\draw(15*\x,1*\x) node[fill,circle,scale=\s](n15){};
		\begin{pgfonlayer}{background}
			\fill[c1](.75*\x,1*\x) -- (.75*\x,.8*\x) -- (1.25*\x,.8*\x) -- (1.25*\x,1.2*\x) -- (.75*\x,1.2*\x) -- cycle;
			\fill[c2](1.75*\x,1*\x) -- (1.75*\x,.8*\x) -- (4.25*\x,.8*\x) -- (4.25*\x,1.2*\x) -- (1.75*\x,1.2*\x) -- cycle;
			\fill[c3](4.75*\x,1*\x) -- (4.75*\x,.8*\x) -- (5.25*\x,.8*\x) -- (5.25*\x,1.2*\x) -- (4.75*\x,1.2*\x) -- cycle;
			\fill[c4](5.75*\x,1*\x) -- (5.75*\x,.8*\x) -- (7.25*\x,.8*\x) -- (7.25*\x,1.2*\x) -- (5.75*\x,1.2*\x) -- cycle;
			\fill[c5](7.75*\x,1*\x) -- (7.75*\x,.8*\x) -- (11.25*\x,.8*\x) -- (11.25*\x,1.2*\x) -- (7.75*\x,1.2*\x) -- cycle;
			\fill[c6](11.75*\x,1*\x) -- (11.75*\x,.8*\x) -- (14.25*\x,.8*\x) -- (14.25*\x,1.2*\x) -- (11.75*\x,1.2*\x) -- cycle;
			\fill[orange!80!gray](14.75*\x,1*\x) -- (14.75*\x,.8*\x) -- (15.25*\x,.8*\x) -- (15.25*\x,1.2*\x) -- (14.75*\x,1.2*\x) -- cycle;
		\end{pgfonlayer}
		\draw[very thick,red!50!gray](n1) .. controls (1.1*\x,.75*\x) and (1.4*\x,.75*\x) .. (1.5*\x,1*\x) .. controls (1.75*\x,1.5*\x) and (2.75*\x,1.5*\x) .. (n3);
		\draw[thick](n2) .. controls (2.25*\x,.25*\x) and (4.4*\x,.25*\x) .. (4.6*\x,1*\x) .. controls (4.8*\x,1.5*\x) and (5.75*\x,1.5*\x) .. (n6);
		\draw[very thick,red!50!gray](n3) .. controls (3.25*\x,.5*\x) and (4.2*\x,.5*\x) .. (4.4*\x,1*\x) .. controls (4.5*\x,2*\x) and (9.75*\x,2*\x) .. (n10);
		\draw[thick](n7) .. controls (7.1*\x,.75*\x) and (7.4*\x,.75*\x) .. (7.5*\x,1*\x) .. controls (7.6*\x,1.25*\x) and (7.9*\x,1.25*\x) .. (n8);
		\draw[thick](n9) .. controls (9.25*\x,.25*\x) and (11.4*\x,.25*\x) .. (11.6*\x,1*\x) .. controls (11.8*\x,1.5*\x) and (12.75*\x,1.5*\x) .. (n13);
		\draw[thick](n11) .. controls (11.1*\x,.75*\x) and (11.3*\x,.75*\x) .. (11.4*\x,1*\x) .. controls (11.6*\x,1.75*\x) and (13.75*\x,1.75*\x) .. (n14);
		\draw[thick](n12) .. controls (12.25*\x,.5*\x) and (14.25*\x,.5*\x) .. (14.5*\x,1*\x) .. controls (14.6*\x,1.25*\x) and (14.9*\x,1.25*\x) .. (n15);
	\end{tikzpicture}\end{center}
		
	The block $P_{2}$, for instance, starts below $P_{1}$ because $(1,3)$ is a bump and $\min P_{2}=2$.  The relation $\vec{R}_{\mathbf{P}}$ is the following:
	\begin{displaymath}
		\vec{R}_{\mathbf{P}} = \bigl\{(P_{1},P_{2}),(P_{1},P_{4}),(P_{1},P_{5}),(P_{1},P_{6}),(P_{2},P_{4}),(P_{6},P_{8}),(P_{7},P_{8})\bigr\}.
	\end{displaymath}
	The poset diagram of $(\mathbf{P},\vec{O}_{\mathbf{P}}\bigr)$ is shown below.
	\begin{center}\begin{tikzpicture}
		\def\x{1.5};
		\draw(1*\x,1) node(n1){$\{4\}$};
		\draw(2*\x,1) node(n2){$\{1,3,10\}$};
		\draw(1*\x,2) node(n3){$\{2,6\}$};
		\draw(2*\x,2) node(n4){$\{7,8\}$};
		\draw(3*\x,2) node(n5){$\{9,13\}$};
		\draw(4*\x,2) node(n6){$\{11,14\}$};
		\draw(1*\x,3) node(n7){$\{5\}$};
		\draw(3*\x,3) node(n8){$\{12,15\}$};
		\draw(n2) -- (n3);
		\draw(n2) -- (n4);
		\draw(n2) -- (n5);
		\draw(n3) -- (n7);
		\draw(n5) -- (n8);
		\draw(n6) -- (n8);
	\end{tikzpicture}\end{center}
\end{example}

Now let $\lvert\alpha\rvert=n$, and $\mathbf{P}\in\NC_{\alpha}$ whose first block is $P_{\circ}$.  Further, let $\mathbf{P}'=\mathbf{P}\setminus P_{\circ}$.  We may view $\mathbf{P}'$ as a noncrossing $\alpha'$-partition of $[n]\setminus P_{\circ}$ for some appropriate composition $\alpha'$.  By construction, we obtain $\vec{O}_{\mathbf{P}'}$ by removing every pair of the form $(P_{\circ},\cdot)$ from $\vec{O}_{\mathbf{P}}$.

\begin{construction}\label{constr:nc_to_perm}
	Let $\alpha$ be an integer composition, and let $\lvert\alpha\rvert=n$.  Let $\mathbf{P}\in\NC_{\alpha}$ whose first block is $P_{\circ}=\{i_{1},i_{2},\ldots,i_{s}\}$ with $i_{1}<i_{2}<\cdots<i_{s}$.  (Thus, $i_{1}=1$.)
	
	We construct a permutation $w_{\mathbf{P}}$ recursively such that 
	\begin{displaymath}
		w_{\mathbf{P}}(i_{1}) = w_{\mathbf{P}}(i_{2})+1 = \cdots = w_{\mathbf{P}}(i_{s})+s-1,
	\end{displaymath}
	and $w_{\mathbf{P}}(i_{1})$ is as small as possible.  To achieve that, we determine
	\begin{displaymath}
		D = \bigcup\{P\in\mathbf{P}\mid (P_{\circ},P)\in\vec{O}_{\mathbf{P}}\},
	\end{displaymath}
	and we set $w_{\mathbf{P}}(i_{1})=\lvert D\rvert$.  (In other words, $D$ consists of all elements of $[n]$ that belong to a block of $\mathbf{P}$ that lies weakly above $P_{\circ}$ in $\bigl(\mathbf{P},\vec{O}_{\mathbf{P}}\bigr)$.)
	
	Now we break $\mathbf{P}$ into two pieces, namely 
	\begin{align*}
		\mathbf{P}_{1} & = \bigl\{P\cap D\mid P\in\mathbf{P}\setminus P_{\circ}\bigr\}\setminus\{\emptyset\},\\
		\mathbf{P}_{2} & = \bigl\{P\cap \bigl([n]\setminus D\bigr)\mid P\in\mathbf{P}\setminus P_{\circ}\bigr\}\setminus\{\emptyset\}.
	\end{align*}
	We may view $\mathbf{P}_{1}$ and $\mathbf{P}_{2}$ as noncrossing partitions with respect to some integers $n_{1}$ and $n_{2}$ and compositions $\alpha_{1}$ of $n_{1}$ and $\alpha_{2}$ of $n_{2}$, respectively.  In particular, $n_{1},n_{2}<n$, so that we may determine the missing values of $w_{\mathbf{P}}$ via this recursion.  We will have $w_{\mathbf{P}}(i)>\lvert D\rvert$ if $i\in[n]\setminus D$, and $w_{\mathbf{P}}(i)<\lvert D\rvert+s-1$ if $i\in D\setminus P_{\circ}$.

	The initial condition for this recursion assigns the identity permutation to the $\alpha$-partition without bumps.  
\end{construction}

\begin{example}\label{ex:nc_to_perm_2}
	Let us continue Example~\ref{ex:nc_to_perm_1}.  The first block of $\mathbf{P}$ is $P_{\circ}=\{1,3,10\}$, and we see from the poset diagram above that $D=\{1,2,3,5,6,7,8,9,10,12,13,15\}$.  Therefore we set $w_{\mathbf{P}}(1)=12$, $w_{\mathbf{P}}(3)=11$, and $w_{\mathbf{P}}(10)=10$.  The two smaller partitions are $\mathbf{P}_{1}=\bigl\{\{2,6\},\{5\},\{7,8\},\{9,13\},\{12,15\}\bigr\}$ and $\mathbf{P}_{2}=\bigl\{\{4\},\{11,14\}\bigr\}$ whose diagrams are shown below.  
	\begin{center}\begin{tikzpicture}\small
		\def\x{.75};
		\def\s{.75*\x};
		\draw(1*\x,2.5*\x) node[fill,circle,scale=\s](n1){};
		\draw(2*\x,2.5*\x) node[fill,circle,scale=\s](n2){};
		\draw(3*\x,2.5*\x) node[fill,circle,scale=\s](n3){};
		\draw(4*\x,2.5*\x) node[fill,circle,scale=\s](n4){};
		\draw(5*\x,2.5*\x) node[fill,circle,scale=\s](n5){};
		\draw(6*\x,2.5*\x) node[fill,circle,scale=\s](n6){};
		\draw(7*\x,2.5*\x) node[fill,circle,scale=\s](n7){};
		\draw(8*\x,2.5*\x) node[fill,circle,scale=\s](n8){};
		\draw(9*\x,2.5*\x) node[fill,circle,scale=\s](n9){};
		\draw(10*\x,2.5*\x) node[fill,circle,scale=\s](n10){};
		\draw(11*\x,2.5*\x) node[fill,circle,scale=\s](n11){};
		\draw(12*\x,2.5*\x) node[fill,circle,scale=\s](n12){};
		\draw(13*\x,2.5*\x) node[fill,circle,scale=\s](n13){};
		\draw(14*\x,2.5*\x) node[fill,circle,scale=\s](n14){};
		\draw(15*\x,2.5*\x) node[fill,circle,scale=\s](n15){};
		\begin{pgfonlayer}{background}
			\fill[c1](.75*\x,2.5*\x) -- (.75*\x,2.3*\x) -- (1.25*\x,2.3*\x) -- (1.25*\x,2.7*\x) -- (.75*\x,2.7*\x) -- cycle;
			\fill[c2](1.75*\x,2.5*\x) -- (1.75*\x,2.3*\x) -- (4.25*\x,2.3*\x) -- (4.25*\x,2.7*\x) -- (1.75*\x,2.7*\x) -- cycle;
			\fill[c3](4.75*\x,2.5*\x) -- (4.75*\x,2.3*\x) -- (5.25*\x,2.3*\x) -- (5.25*\x,2.7*\x) -- (4.75*\x,2.7*\x) -- cycle;
			\fill[c4](5.75*\x,2.5*\x) -- (5.75*\x,2.3*\x) -- (7.25*\x,2.3*\x) -- (7.25*\x,2.7*\x) -- (5.75*\x,2.7*\x) -- cycle;
			\fill[c5](7.75*\x,2.5*\x) -- (7.75*\x,2.3*\x) -- (11.25*\x,2.3*\x) -- (11.25*\x,2.7*\x) -- (7.75*\x,2.7*\x) -- cycle;
			\fill[c6](11.75*\x,2.5*\x) -- (11.75*\x,2.3*\x) -- (14.25*\x,2.3*\x) -- (14.25*\x,2.7*\x) -- (11.75*\x,2.7*\x) -- cycle;
			\fill[orange!80!gray](14.75*\x,2.5*\x) -- (14.75*\x,2.3*\x) -- (15.25*\x,2.3*\x) -- (15.25*\x,2.7*\x) -- (14.75*\x,2.7*\x) -- cycle;
		\end{pgfonlayer}
		\fill[white,opacity=.7](.75*\x,2.5*\x) -- (.75*\x,2.3*\x) -- (1.25*\x,2.3*\x) -- (1.25*\x,2.7*\x) -- (.75*\x,2.7*\x) -- cycle;
		\fill[white,opacity=.7](2.5*\x,2.5*\x) -- (2.5*\x,2.3*\x) -- (4.25*\x,2.3*\x) -- (4.25*\x,2.7*\x) -- (2.5*\x,2.7*\x) -- cycle;
		\fill[white,opacity=.7](9.5*\x,2.5*\x) -- (9.5*\x,2.3*\x) -- (11.25*\x,2.3*\x) -- (11.25*\x,2.7*\x) -- (9.5*\x,2.7*\x) -- cycle;
		\fill[white,opacity=.7](13.5*\x,2.5*\x) -- (13.5*\x,2.3*\x) -- (14.25*\x,2.3*\x) -- (14.25*\x,2.7*\x) -- (13.5*\x,2.7*\x) -- cycle;
		\draw[thick](n2) .. controls (2.25*\x,1.75*\x) and (4.4*\x,1.75*\x) .. (4.6*\x,2.5*\x) .. controls (4.8*\x,3*\x) and (5.75*\x,3*\x) .. (n6);
		\draw[thick](n7) .. controls (7.1*\x,2.25*\x) and (7.4*\x,2.25*\x) .. (7.5*\x,2.5*\x) .. controls (7.6*\x,2.75*\x) and (7.9*\x,2.75*\x) .. (n8);
		\draw[thick](n9) .. controls (9.25*\x,1.75*\x) and (11.4*\x,1.75*\x) .. (11.6*\x,2.5*\x) .. controls (11.8*\x,3*\x) and (12.75*\x,3*\x) .. (n13);
		\draw[thick](n12) .. controls (12.25*\x,2*\x) and (14.25*\x,2*\x) .. (14.5*\x,2.5*\x) .. controls (14.6*\x,2.75*\x) and (14.9*\x,2.75*\x) .. (n15);
		\draw(1*\x,1*\x) node[fill,circle,scale=\s](m1){};
		\draw(2*\x,1*\x) node[fill,circle,scale=\s](m2){};
		\draw(3*\x,1*\x) node[fill,circle,scale=\s](m3){};
		\draw(4*\x,1*\x) node[fill,circle,scale=\s](m4){};
		\draw(5*\x,1*\x) node[fill,circle,scale=\s](m5){};
		\draw(6*\x,1*\x) node[fill,circle,scale=\s](m6){};
		\draw(7*\x,1*\x) node[fill,circle,scale=\s](m7){};
		\draw(8*\x,1*\x) node[fill,circle,scale=\s](m8){};
		\draw(9*\x,1*\x) node[fill,circle,scale=\s](m9){};
		\draw(10*\x,1*\x) node[fill,circle,scale=\s](m10){};
		\draw(11*\x,1*\x) node[fill,circle,scale=\s](m11){};
		\draw(12*\x,1*\x) node[fill,circle,scale=\s](m12){};
		\draw(13*\x,1*\x) node[fill,circle,scale=\s](m13){};
		\draw(14*\x,1*\x) node[fill,circle,scale=\s](m14){};
		\draw(15*\x,1*\x) node[fill,circle,scale=\s](m15){};
		\begin{pgfonlayer}{background}
			\fill[c1](.75*\x,1*\x) -- (.75*\x,.8*\x) -- (1.25*\x,.8*\x) -- (1.25*\x,1.2*\x) -- (.75*\x,1.2*\x) -- cycle;
			\fill[c2](1.75*\x,1*\x) -- (1.75*\x,.8*\x) -- (4.25*\x,.8*\x) -- (4.25*\x,1.2*\x) -- (1.75*\x,1.2*\x) -- cycle;
			\fill[c3](4.75*\x,1*\x) -- (4.75*\x,.8*\x) -- (5.25*\x,.8*\x) -- (5.25*\x,1.2*\x) -- (4.75*\x,1.2*\x) -- cycle;
			\fill[c4](5.75*\x,1*\x) -- (5.75*\x,.8*\x) -- (7.25*\x,.8*\x) -- (7.25*\x,1.2*\x) -- (5.75*\x,1.2*\x) -- cycle;
			\fill[c5](7.75*\x,1*\x) -- (7.75*\x,.8*\x) -- (11.25*\x,.8*\x) -- (11.25*\x,1.2*\x) -- (7.75*\x,1.2*\x) -- cycle;
			\fill[c6](11.75*\x,1*\x) -- (11.75*\x,.8*\x) -- (14.25*\x,.8*\x) -- (14.25*\x,1.2*\x) -- (11.75*\x,1.2*\x) -- cycle;
			\fill[orange!80!gray](14.75*\x,1*\x) -- (14.75*\x,.8*\x) -- (15.25*\x,.8*\x) -- (15.25*\x,1.2*\x) -- (14.75*\x,1.2*\x) -- cycle;
		\end{pgfonlayer}
		\fill[white,opacity=.7](.75*\x,1*\x) -- (.75*\x,.8*\x) -- (1.25*\x,.8*\x) -- (1.25*\x,1.2*\x) -- (.75*\x,1.2*\x) -- cycle;
		\fill[white,opacity=.7](1.75*\x,1*\x) -- (1.75*\x,.8*\x) -- (3.5*\x,.8*\x) -- (3.5*\x,1.2*\x) -- (1.75*\x,1.2*\x) -- cycle;
		\fill[white,opacity=.7](4.75*\x,1*\x) -- (4.75*\x,.8*\x) -- (5.25*\x,.8*\x) -- (5.25*\x,1.2*\x) -- (4.75*\x,1.2*\x) -- cycle;
		\fill[white,opacity=.7](5.75*\x,1*\x) -- (5.75*\x,.8*\x) -- (7.25*\x,.8*\x) -- (7.25*\x,1.2*\x) -- (5.75*\x,1.2*\x) -- cycle;
		\fill[white,opacity=.7](7.75*\x,1*\x) -- (7.75*\x,.8*\x) -- (10.5*\x,.8*\x) -- (10.5*\x,1.2*\x) -- (7.75*\x,1.2*\x) -- cycle;
		\fill[white,opacity=.7](11.75*\x,1*\x) -- (11.75*\x,.8*\x) -- (13.5*\x,.8*\x) -- (13.5*\x,1.2*\x) -- (11.75*\x,1.2*\x) -- cycle;
		\fill[white,opacity=.7](14.75*\x,1*\x) -- (14.75*\x,.8*\x) -- (15.25*\x,.8*\x) -- (15.25*\x,1.2*\x) -- (14.75*\x,1.2*\x) -- cycle;
		\draw[thick](m11) .. controls (11.1*\x,.75*\x) and (11.3*\x,.75*\x) .. (11.4*\x,1*\x) .. controls (11.6*\x,1.75*\x) and (13.75*\x,1.75*\x) .. (m14);
	\end{tikzpicture}\end{center}
	
	Recursively, we obtain two permutations $w_{1}=w_{\mathbf{P}_{1}}=3\mid 1\mid 2\;5\mid 4\;9\mid 7\;8\mid 6$ and $w_{2}=w_{\mathbf{P}_{2}}=1\mid 3\mid 2$.  We now embed these permutations into $w_{\mathbf{P}}$, where we have to increase the values of $w_{2}$ by $\lvert D\rvert=12$.  The resulting permutation is
	\begin{displaymath}
		w_{\mathbf{P}} = 12\mid 3\;11\;13\mid 1\mid 2\;5\mid 4\;9\;10\;15\mid 7\;8\;14\mid 6.
	\end{displaymath}
\end{example}

Two more examples of this construction are illustrated in the bottom part of Figure~\ref{fig:cover_map}.

\begin{theorem}[{\cite[Theorem~4.2]{muehle18tamari}}]\label{thm:bijection_nc_perms}
	For $\mathbf{P}\in\NC_{\alpha}$ the permutation $w_{\mathbf{P}}$ is $(\alpha,231)$-avoiding.  Moreover, the map
	\begin{equation}\label{eq:nc_to_perm}
		\Theta_{2}\colon\NC_{\alpha}\to\Symmetric_{\alpha}(231),\quad\mathbf{P}\mapsto w_{\mathbf{P}}
	\end{equation}
	is a bijection that sends bumps to descents.
\end{theorem}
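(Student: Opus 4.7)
The plan is to prove the theorem by strong induction on $n=\lvert\alpha\rvert$. The base case $n=1$ is trivial: there is a unique noncrossing $\alpha$-partition and a unique permutation, both ``identities''. For the inductive step, fix $\mathbf{P}\in\NC_{\alpha}$ with first block $P_{\circ}=\{i_{1}<\cdots<i_{s}\}$, and let $D$, $\mathbf{P}_{1}$, $\mathbf{P}_{2}$ and the compositions $\alpha_{1}$, $\alpha_{2}$ be as in Construction~\ref{constr:nc_to_perm}. By induction, $w_{\mathbf{P}_{1}}\in\Symmetric_{\alpha_{1}}(231)$ and $w_{\mathbf{P}_{2}}\in\Symmetric_{\alpha_{2}}(231)$, and both recursive bijections send bumps to descents. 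It remains to check that the merged permutation $w_{\mathbf{P}}$ lies in $\Symmetric_{\alpha}(231)$, that the process is reversible, and that the bump--descent correspondence extends.

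First I would verify $w_{\mathbf{P}}\in\Symmetric_{\alpha}$, namely the ascent condition at each non-divider position. The elements of $P_{\circ}$ receive the consecutive values $\lvert D\rvert,\lvert D\rvert-1,\ldots,\lvert D\rvert-s+1$, and since $P_{\circ}$ is a block of an $\alpha$-partition these elements lie in pairwise distinct $\alpha$-regions, so no within-region ascent is broken among them. The key structural lemma to prove here is that inside any given $\alpha$-region the elements of $D$ occupy a leftmost contiguous subset, which one obtains from the noncrossing axioms \ref{it:nc1}--\ref{it:nc2} applied to $P_{\circ}$ together with the bumps of $\mathbf{P}$ reaching over the region. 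Granted this, within each region the values at most $\lvert D\rvert$ (assigned in $D$) precede the values exceeding $\lvert D\rvert$ (assigned in $[n]\setminus D$), and the ascents inside each part then follow from the inductive hypothesis applied to $w_{\mathbf{P}_{1}}$ and $w_{\mathbf{P}_{2}}$.

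Next I would establish that $w_{\mathbf{P}}$ is $(\alpha,231)$-avoiding. Suppose indices $i<j<k$ from distinct $\alpha$-regions form such a pattern with $w(i)=w(k)+1$ and $w(i)<w(j)$. Since values in $D$ are at most $\lvert D\rvert$ while values in $[n]\setminus D$ exceed $\lvert D\rvert$, a short case analysis on the inequality $w(k)<w(i)<w(j)$ shows that either $\{i,j,k\}\subseteq D$ or $\{i,j,k\}\subseteq[n]\setminus D$, unless at least one of the three indices lies in $P_{\circ}$. The two ``pure'' cases reduce to $(\alpha_{\bullet},231)$-patterns in $w_{\mathbf{P}_{1}}$ or $w_{\mathbf{P}_{2}}$, contradicting induction. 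The mixed case is the main obstacle; here one uses the consecutive decreasing sequence of $P_{\circ}$-values along the positions $i_{1}<\cdots<i_{s}$: if, say, $i=i_{p}$ then the constraint $w(i)=w(k)+1$ forces $k=i_{p+1}$, and the position of $j$ between them together with the placement of $D$ relative to the bumps $(i_{p},i_{p+1})$ contradicts either the noncrossing structure of $\mathbf{P}$ or the inductive 231-avoidance of $w_{\mathbf{P}_{1}}$.

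Finally, I would build the inverse $\Theta_{2}^{-1}$ greedily: set $i_{1}=1$, and define $i_{p+1}$ to be the least index greater than $i_{p}$ lying in a new $\alpha$-region with $w(i_{p+1})=w(i_{p})-1$, stopping when no such index exists. The resulting candidate first block determines $D=\{x:w(x)\le w(i_{1})\}$, after which $w$ splits into two parabolic $231$-avoiding sub-permutations to which the recursive inverse is applied; reversing the structural arguments above verifies that the reconstructed $\alpha$-partition is noncrossing, and uniqueness of the greedy choice of $P_{\circ}$ (forced by $(\alpha,231)$-avoidance) yields $\Theta_{2}\circ\Theta_{2}^{-1}=\mathrm{id}$ and vice versa. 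The bump--descent correspondence is then immediate: each adjacent pair $(i_{p},i_{p+1})$ of $P_{\circ}$ is by construction a descent of $w_{\mathbf{P}}$, and bumps internal to $\mathbf{P}_{1}$ or $\mathbf{P}_{2}$ correspond to descents of the respective sub-permutations by the inductive hypothesis.
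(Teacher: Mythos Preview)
The paper does not give its own proof of this theorem; it is quoted verbatim from \cite{muehle18tamari}*{Theorem~4.2}. What the paper \emph{does} provide is an independent route to the same conclusion: it introduces LAC trees, proves that $\lactoperm\colon\Trees_\alpha\to\Symmetric_\alpha(231)$ and $\lactonc\colon\Trees_\alpha\to\NC_\alpha$ are bijections sending internal nodes to descents and to bumps respectively (Theorems~\ref{thm:lac_to_perm} and~\ref{thm:lac_to_nc}), and then shows in Proposition~\ref{prop:theta_2_equiv} that $\Theta_2=\lactoperm\circ\nctolac$. Combining these yields Theorem~\ref{thm:bijection_nc_perms} with the bump--descent correspondence for free.

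Your plan is the natural direct induction on $n$ along the recursive structure of Construction~\ref{constr:nc_to_perm}, and is presumably close in spirit to the original argument in \cite{muehle18tamari}. It is sound in outline. The structural lemma you flag---that within each $\alpha$-region the elements of $D$ form a leftmost block---is genuinely the crux for showing $w_{\mathbf P}\in\Symmetric_\alpha$, and does follow from \eqref{it:nc1}--\eqref{it:nc2}, but it deserves a written proof rather than a one-line appeal. Likewise your ``mixed case'' in the $(\alpha,231)$-avoidance check (where $i\in P_\circ$) is the real content; your sketch is correct, noting that $w(i)=w(k)+1$ with $i=i_p$ does force $k=i_{p+1}$, and then $j\in D\setminus P_\circ$ would give $w(j)\le |D|-s<w(i)$, while $j\notin D$ contradicts that $j$ lies between two consecutive elements of $P_\circ$ and hence below the bump $(i_p,i_{p+1})$. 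One small slip in your inverse: since $w$ is a permutation there is at most one index with value $w(i_p)-1$, so ``least'' is vacuous; the real check is that this index lies to the right of $i_p$ and in a later region, which follows from the value partition $\{1,\dots,|D|\}$ versus $\{|D|+1,\dots,n\}$.

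Compared with the paper's LAC-tree factorisation, your approach is more self-contained but requires handling the noncrossing combinatorics directly at each step; the paper's approach offloads that work onto the tree bijections, at the cost of introducing a new intermediate object.
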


\subsection{$231$-Avoiding Permutations and Left-Aligned Colorable Trees}
	\label{sec:trees_to_permutations}
The following construction associates a permutation in $\Symmetric_{\alpha}$ with an $\alpha$-tree.

\begin{construction}\label{constr:lac_to_perm}
	Let $\alpha$ be an integer composition.  Let $T\in\Trees_{\alpha}$ whose left-aligned coloring produced by Construction~\ref{constr:lac_tree} is $C$.  
	
	We label the non-root nodes of $T$ in LR-postfix order.  We then group together the labels that correspond to nodes with the same color under $C$, and order these labels increasingly. Finally, we order the blocks according to their color, and obtain a permutation $w_{T}\in\Symmetric_{\alpha}$.
\end{construction}

An example of Construction~\ref{constr:lac_to_perm} is illustrated in Figure~\ref{fig:tree_perm_bij}.  

\begin{figure}
	\centering
	\includegraphics[page=1,width=\textwidth]{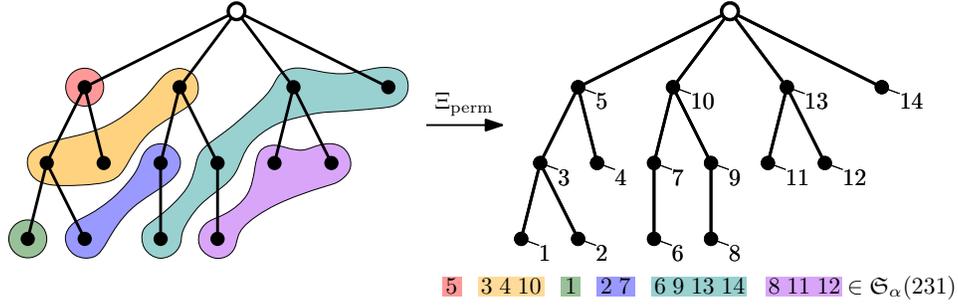}
	\caption{An illustration of Construction~\ref{constr:lac_to_perm} for $\alpha=(1,3,1,2,4,3)$.}
	\label{fig:tree_perm_bij}
\end{figure}

\begin{lemma}\label{lem:postfix_next}
	For a non-root node $u$ in a plane rooted tree $T$, the next node $v$ in the LR-postfix order is either its parent, of which $u$ is the last child, or the left-most leaf of the subtree induced by its sibling immediately to the right.
\end{lemma}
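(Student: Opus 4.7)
The plan is a short case analysis that follows directly from the recursive definition of the left-to-right depth-first traversal. Recall that in LR-postfix order a node is recorded precisely when the traversal finishes processing the whole subtree rooted at that node and is about to backtrack to its parent.

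First I would fix a non-root node $u$ with parent $p$, and split into two cases according to whether $u$ has a right sibling in $T$. If $u$ is the last child of $p$, then at the moment $u$ is recorded the traversal has completed every subtree rooted at a child of $p$; hence it backtracks to $p$ and, since $p$ has no remaining child to visit, records $p$ next. This yields the first alternative of the lemma.

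In the remaining case, let $u'$ be the sibling of $u$ immediately to the right. After recording $u$ the traversal backtracks to $p$ and then descends into the subtree rooted at $u'$. Inside that subtree it will keep descending into the leftmost child available before recording anything, and a node of that subtree gets recorded in LR-postfix order only once its own subtree is fully processed. A simple induction on the depth of $u'$'s subtree (or equivalently the observation that the first node recorded within any subtree in postfix order is its leftmost leaf) then shows that the next node recorded after $u$ is precisely the leftmost leaf of the subtree rooted at $u'$, giving the second alternative.

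Neither step is really an obstacle; the only thing worth writing out carefully is the auxiliary claim that within any plane rooted subtree, the first node visited in LR-postfix order is its leftmost leaf, which is immediate from the recursive structure of depth-first search and can be dispatched in one line.
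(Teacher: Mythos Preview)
Your proposal is correct and follows essentially the same approach as the paper, which simply states that the lemma follows from the definition of the LR-postfix order. Your case analysis makes this explicit, and the auxiliary observation that the first node recorded in LR-postfix order within any subtree is its leftmost leaf is exactly the point needed for the second case.
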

\begin{proof}
	This follows from the definition of the LR-postfix order.
\end{proof}

Construction~\ref{constr:lac_to_perm} suggests the definition of the following map, which we next show is well-defined:
\begin{equation}\label{eq:lac_to_perm}
	\lactoperm\colon\Trees_{\alpha}\to\Symmetric_{\alpha}(231),\quad T\mapsto w_{T}.
\end{equation}

\begin{proposition} \label{prop:lac_to_perm_valid}
	For an $\alpha$-tree $T$, the permutation $w_{T}$ obtained by Construction~\ref{constr:lac_to_perm} is $(\alpha,231)$-avoiding.
\end{proposition}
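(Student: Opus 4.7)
The permutation $w_T$ automatically lies in $\Symmetric_{\alpha}$: Construction~\ref{constr:lac_to_perm} sorts the LR-postfix labels within each color class, so $w_T$ is strictly increasing on each $\alpha$-region. To establish $(\alpha,231)$-avoidance I argue by contradiction. Assume positions $i<j<k$ lying in distinct $\alpha$-regions realize $w(k)<w(i)<w(j)$ and $w(i)=w(k)+1$, and let $u_i,u_j,u_k$ be the nodes with LR-postfix labels $w(i),w(j),w(k)$, of colors $c_i<c_j<c_k$ respectively (these being the indices of the three regions).

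Since $w(i)=w(k)+1$, the node $u_i$ is the LR-postfix successor of $u_k$, so Lemma~\ref{lem:postfix_next} leaves two configurations: (a) $u_i$ is the parent of $u_k$ (with $u_k$ the last child of $u_i$), or (b) $u_i$ is the leftmost leaf of the subtree rooted at $u_k$'s immediate right sibling $s$. In both cases I next locate $u_j$ relative to $u_i$. It cannot be a descendant of $u_i$, since $u_i$'s LR-postfix label is maximal on its own subtree while $w(j)>w(i)$; nor can it be an ancestor of $u_i$, since by Lemma~\ref{lem:lac_property}(i) ancestors carry strictly smaller colors, contradicting $c_j>c_i$. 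Hence $u_i$ and $u_j$ are incomparable, and because incomparable nodes appear in the same relative order under the LR-prefix and LR-postfix traversals, the inequality $w(j)>w(i)$ forces $u_j$ to follow $u_i$ in LR-prefix order.

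In case (a), $u_k$ lies in the subtree of $u_i$ while $u_j$ lies outside this subtree and after $u_i$, so $u_k$ precedes $u_j$ in LR-prefix order; in case (b), $u_k$ precedes the subtree of its right sibling $s$ (which contains $u_i$), and $u_j$ follows $u_i$, so again $u_k$ precedes $u_j$ in LR-prefix order. The parent of $u_k$ is either $u_i$ itself (case (a)) or a strict ancestor of $u_i$ (case (b)), so by Lemma~\ref{lem:lac_property}(i) it is colored at some step at most $c_i<c_j$; hence $u_k$ is already active at step $c_j$ of Construction~\ref{constr:lac_tree}, and so is $u_j$, which is colored precisely then. Lemma~\ref{lem:lac_property}(ii) then requires the eventual colors of active nodes at step $c_j$ to weakly increase along LR-prefix order, forcing $c_k=C(u_k)\le C(u_j)=c_j$ and contradicting $c_k>c_j$. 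The main obstacle is the careful simultaneous handling of the two branches of Lemma~\ref{lem:postfix_next}, since the LR-prefix order of $u_k,u_i,u_j$ must be reconstructed differently in each branch; once that is in place, Lemma~\ref{lem:lac_property} delivers the contradiction directly.
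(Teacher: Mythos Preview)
Your proof is correct and follows essentially the same strategy as the paper's: both invoke Lemma~\ref{lem:postfix_next} to analyse the LR-postfix successor relation $w(i)=w(k)+1$, then use Lemma~\ref{lem:lac_property} to force a contradiction on the colors. The only organisational difference is that the paper immediately eliminates configuration~(b) (since in that case $u_k$ precedes $u_i$ in LR-prefix order while both are active at step $c_i$, so Lemma~\ref{lem:lac_property}(ii) gives $c_k\le c_i$, a direct contradiction), and then treats only case~(a); you instead carry both cases in parallel and extract the contradiction uniformly from the pair $(u_k,u_j)$, which also works and is only marginally longer.
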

\begin{proof}
	Let $T$ be an $\alpha$-tree with corresponding left-aligned coloring $C$.  Suppose that $(i,j,k)$ is an $(\alpha,231)$-pattern in $w_{T}$.  Let $u_{i},u_{j},u_{k}$ denote the nodes of $T$ labeled by $i,j,k$, respectively, in Construction~\ref{constr:lac_to_perm}.  It follows that $C(u_{i})<C(u_{j})<C(u_{k})$.  
	
	By Lemma~\ref{lem:postfix_next}, since $w_{T}(k)+1=w_{T}(i)$, the node $u_{i}$ is either the parent of $u_{k}$, or the left-most leaf of the subtree induced by the sibling of $u_{k}$ on the right.  However, the second case cannot happen, since the node $u_{k}$ would then become active before $u_{i}$ (because $u_{k}$ precedes $u_{i}$ in LR-prefix order).  By Lemma~\ref{lem:lac_property}(ii), we would have $C(u_{i})\geq C(u_{k})$, which is a contradiction.
	
	Therefore, $u_{i}$ is the parent of $u_{k}$, which means that $u_{k}$ is active during steps $i+1,i+2,\ldots,k$ in Construction~\ref{constr:lac_tree}, in particular in step $j$.  Since $C(u_{j})=j<C(u_{k})$, the node $u_{j}$ must come before $u_{k}$ in LR-prefix order.  However, since $w(i)<w(j)$, we know that $u_{j}$ comes after $u_{i}$ in LR-postfix order, and $C(u_{i})<C(u_{j})$ means that $u_{j}$ is not an ancestor of $u_{i}$.  Therefore, $u_{j}$ must come after $u_{i}$ in LR-prefix order, and thus also after $u_{k}$.  This is a contradiction, and we conclude that $w_{T}$ is $(\alpha,231)$-avoiding.
\end{proof}

Let us now describe how to obtain a plane rooted tree from an $(\alpha,231)$-avoiding permutation.  

\begin{construction}\label{constr:perm_to_lac}
	Let $\alpha$ be an integer composition, and let $w\in\Symmetric_{\alpha}(231)$.  
	
	We initialize our algorithm with the tree $T_{0}$ that has a single node labeled by $\lvert\alpha\rvert+1$.  In the $i$-th step we create the tree $T_{i}$ by inserting a node labeled by $w(i)$ into $T_{i-1}$.  This insertion proceeds as follows.  We start at the root and walk around $T_{i-1}$.  Suppose that we have reached a node $v$ with label $a$.  If $w(i)<a$, then we move to the left-most child of $v$, otherwise we move to the first sibling of $v$ on the right.  If the destination does not exist, then we create it and label it by $w(i)$.
	
	After $\lvert\alpha\rvert$ steps we return the plane rooted tree $T_{w}=T_{\lvert\alpha\rvert}$.
\end{construction}

Readers familiar with binary search trees may notice that this construction is essentially the insertion algorithm for binary search trees composed with the classical bijection between binary and plane trees.

It turns out that the output of Construction~\ref{constr:perm_to_lac} is a plane rooted tree compatible with $\alpha$, which enables us to define the following map:
\begin{equation}\label{eq:perm_to_lac}
	\permtolac\colon\Symmetric_{\alpha}(231)\to\Trees_{\alpha},\quad w\mapsto T_{w}.
\end{equation}

\begin{proposition} \label{prop:perm_to_lac_valid}
	For an $(\alpha,231)$-avoiding permutation $w$, the plane rooted tree $T_{w}$ obtained by Construction~\ref{constr:perm_to_lac} is compatible with $\alpha$.	
\end{proposition}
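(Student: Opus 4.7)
My plan is to define a candidate coloring $\tilde{C}$ on $T_w$ by declaring $\tilde{C}(v)=k$ whenever the node $v$ is inserted during the $k$-th $\alpha$-region of Construction~\ref{constr:perm_to_lac}, and to show that Construction~\ref{constr:lac_tree} applied to $T_w$ produces precisely this coloring. Since compatibility with $\alpha$ is by definition the non-failure of Construction~\ref{constr:lac_tree}, this will establish the proposition.

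The structural input I use is that Construction~\ref{constr:perm_to_lac} is, under the standard identification ``leftmost child $=$ left child, first right sibling $=$ right child,'' the classical binary search tree insertion. Two consequences are used throughout: \textbf{(F1)} every node of $T_w$ has a label strictly greater than that of each of its descendants, and \textbf{(F2)} among the children of a common parent, those inserted earlier lie strictly to the left. Combining (F1) with the strict monotonicity of $w$ on each $\alpha$-region (from $w\in\Symmetric_\alpha$), a node and its strict descendant cannot share a region: were they both in region $k$, the ancestor would be inserted first and intra-region monotonicity would force it to have a smaller label than its descendant, contradicting (F1). Hence $\tilde{C}$ strictly increases along every root-to-leaf path.

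I then run Construction~\ref{constr:lac_tree} on $T_w$ and argue by induction on the step $k$. Under the inductive hypothesis that steps $1,\dots,k-1$ reproduced $\tilde{C}$ on regions $1,\dots,k-1$, the path-wise strict increase of $\tilde{C}$ ensures that each of the $\alpha_k$ region-$k$ nodes is active at step $k$, so the algorithm has enough active nodes and does not fail. The delicate remaining claim is that these are the \emph{first} $\alpha_k$ active nodes in LR-prefix order. Suppose, for contradiction, that some active node $u$ with $\tilde{C}(u)>k$ precedes some $v$ with $\tilde{C}(v)=k$ in LR-prefix order. Since $u$ is inserted strictly after $v$ and LR-prefix places every ancestor before its descendants, $u$ and $v$ must be incomparable; writing $a=\mathrm{lca}(u,v)$ and $c_u,c_v$ for the children of $a$ on the paths to $u,v$, (F2) forces $c_u$ to lie strictly left of $c_v$ and to be inserted before $c_v$. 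Moreover $u$ must be a strict descendant of $c_u$, because otherwise $u=c_u$ would be inserted before $v$, contradicting $\tilde{C}(u)>\tilde{C}(v)$. Thus (F1) yields $w(i_u)<w(i_{c_u})<w(i_{c_v})\geq w(i_v)$ together with $i_{c_u}<i_{c_v}\leq i_v<i_u$, writing $i_x$ for the insertion index of $x$.

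From this configuration I plan to extract an $(\alpha,231)$-pattern in $w$, contradicting $w\in\Symmetric_\alpha(231)$. The triple $(i_{c_u},i_{c_v},i_u)$ already satisfies the ordering and strict value inequality $w(i_u)<w(i_{c_u})<w(i_{c_v})$ required by Definition~\ref{def:parabolic_231_avoiding_permutation}; the remaining task is to produce the tight descent $w(i)=w(k^*)+1$ and to certify that the three indices fall into three distinct $\alpha$-regions. The natural approach is to keep $k^*=i_u$ and replace $i_{c_u}$ by $i^*:=w^{-1}(w(i_u)+1)$, then use the BST insertion walk of the label $w(i_u)+1$ to locate its carrying node strictly left of $c_v$ in LR-prefix order and in a region different from both $u$'s and $c_v$'s; in pathological subcases one further replaces $i_{c_v}$ by a suitable witness index harvested from the sibling-subtree of $c_u$ within $a$. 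The main obstacle is carrying out this case analysis while certifying that the three indices always lie in three distinct $\alpha$-regions: this is where both the path-wise strict increase of $\tilde{C}$ and the intra-region monotonicity of $w$ are combined. Once the pattern is produced in every subcase, the induction closes and $T_w\in\Trees_\alpha$.
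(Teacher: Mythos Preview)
Your overall architecture matches the paper's: define the ``region'' coloring $\tilde C$, show it strictly increases along root-to-leaf paths, and prove by induction on $k$ that Construction~\ref{constr:lac_tree} reproduces $\tilde C$. Where you diverge is in the extraction of the $(\alpha,231)$-pattern, and this is precisely where your argument is incomplete.

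You set up the contradiction via the lowest common ancestor $a$ of the offending pair $(u,v)$ and the children $c_u,c_v$ on the two branches. This produces a triple $(i_{c_u},i_{c_v},i_u)$ with the correct order and the inequality $w(i_u)<w(i_{c_u})<w(i_{c_v})$, but, as you note, it does \emph{not} give the tight descent $w(i)=w(k)+1$ nor guarantee three distinct regions. Your plan to repair this by replacing $i_{c_u}$ with $i^*=w^{-1}(w(i_u)+1)$ and then, ``in pathological subcases,'' replacing $i_{c_v}$ as well, is only sketched; the case analysis needed to certify that the resulting three indices land in three distinct $\alpha$-regions and satisfy $w(j)>w(i^*)$ is genuinely nontrivial and is not carried out. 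As written, this is a gap.

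The paper sidesteps all of this with a different choice of triple. Instead of taking the LCA, it takes $u_i$ to be the \emph{parent} of the offending node $u_*$ (your $u$), and $u_k$ to be the \emph{rightmost child} of $u_i$. Because Construction~\ref{constr:perm_to_lac} labels nodes in LR-postfix order, the rightmost child of any internal node has label exactly one less than its parent, so $w(k)+1=w(i)$ comes for free---no search for $i^*$ and no case analysis. The middle index $j$ is then the insertion index of the symmetric offending node $u_j$ (your $v$); one checks directly that $u_j$ is not a child of $u_i$ and hence follows $u_i$ in LR-postfix order, giving $w(j)>w(i)$, while $f(u_i)<s=f(u_j)<f(u_k)$ places $i,j,k$ in three distinct regions immediately. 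I recommend you adopt this ``parent plus rightmost child'' choice; it turns the hardest part of your argument into a two-line verification.
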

\begin{proof}
	Let us define a full coloring $f$ of $T_{w}$ as follows.  For every node $v$ of $T_{w}$ with label $a$ we set $f(v)=i$ if and only if $w^{-1}(a)$ belongs to the $i$-th $\alpha$-region.  It remains to prove that $f$ is the left-aligned coloring of $T_{w}$ produced in Construction~\ref{constr:lac_tree}.  (Because in that case, Construction~\ref{constr:lac_tree} does not fail, and $T_{w}$ is indeed an $\alpha$-tree.)

	Construction~\ref{constr:perm_to_lac} implies that the nodes of $T_{w}$ are labeled in LR-postfix order.  Now we show that for every non-root node $u$ of $T_{w}$ whose parent $v$ is not the root, we have $f(u)>f(v)$.  Since $v$ is the parent of $u$, it is inserted before $u$, which yields $f(u)\geq f(v)$.  If $f(u)=f(v)$, then the labels of $u$ and $v$ must come from indices in the same $\alpha$-region.  However, by construction, the label of $u$ is smaller than that of $v$, which would produce an inversion in their common $\alpha$-region, contradicting the assumption that $w\in\Symmetric_{\alpha}$.  We conclude that nodes of $T_{w}$ labeled by elements coming from the same $\alpha$-region are incomparable.
	
	Now let $\alpha=(\alpha_{1},\alpha_{2},\ldots,\alpha_{r})$. 
We now prove that $T_w$ is an $\alpha$-tree, and $f$ is its left-aligned coloring. We consider the partial colorings $C_{s}$ obtained after step $s$ of Construction~\ref{constr:lac_tree}, and proceed by induction to prove that, for all $0\leq s\leq r$, the algorithm does not fail in step $s$, and $C_s$ is the same as the restriction $f_{s}$ of $f$ to values from $1$ to $s$. The base case $s=0$ is clearly correct. Now suppose that the induction hypothesis holds for $s-1$.

	We first show that there are always enough active nodes for step $s$. Since $w \in \Symmetric_\alpha(231)$, labels $w(i)$ for $i$ in the $s$-th $\alpha$-region are inserted in increasing order, and by Construction~\ref{constr:perm_to_lac}, these newly inserted nodes cannot be comparable. Therefore, they are all children of nodes in the domain of $C_{s-1}$, thus active at step $s$.  We therefore have at least $\alpha_s$ active nodes, and $C_s$ is well defined.
	
	\medskip	
	
	We now show that $C_s$ coincides with $f_s$. Suppose otherwise, and let $u_{*}$ be the first node of $T_{w}$ in LR-prefix order that is in the domain of $C_s$ but not in that of $f_s$.  We have $f(u_{*}) > s$.  Let $u_{i}$ be the parent of $u_{*}$, which is in the domain of $C_{s-1}$, and therefore in that of $f_{s-1}$ by induction hypothesis. We thus have $f(u_{i})\leq s-1$.  
	
	By minimality of $u_{*}$, the node $u_{i}$ is in the domain of $C_s$.  Let $u_{k}$ be the rightmost child of $u_{i}$ (which exists as $u_{i}$ has a least one child $u_{*}$).  Since $u_{k}$ is inserted after $u_{*}$, we conclude that $f(u_{k})\geq f(u_{*})$. It is clear that the domains of $C_s$ and of $f_s$ are of the same size. Hence, there is a node $u_j$ in the domain of $f_s$ but not in that of $C_s$, and we take it the minimal in LR-prefix order.
	
	By induction hypothesis, we have $f(u_j) \geq s$, thus $f(u_j)=s$. Since nodes are colored in LR-prefix order in Construction~\ref{constr:perm_to_lac}, $u_*$ is also the first node that differs in the domains of $C_s$ and $f_s$, meaning that $u_*$ precedes $u_j$ in LR-prefix order. If $u_j$ is a child of $u_i$, since $f(u_*)>s=f(u_j)$, it would lead to a contradiction to Construction~\ref{constr:perm_to_lac}. Therefore, $u_j$ comes after $u_i$ in LR-postfix order. 
	
	Let $i,j,k$ be the indices such that $u_{i},u_{j},u_{k}$ are labeled by $w(i),w(j),w(k)$ respectively.  Since $f(u_{i})<f(u_{j})=s < f(u_{k})$, we have $i<j<k$.  Moreover, by construction, since $u_{k}$ is the rightmost child of $u_{i}$, we have $w(k)+1=w(i)$, and since $u_{j}$ comes after $u_{i}$ in LR-postfix order, we have $w(j)>w(i)$.  Therefore, $(i,j,k)$ is an $(\alpha,231)$-pattern in $w$, which is a contradiction.
	  
	We thus conclude the induction, meaning that $f$ is the left-aligned coloring of $T_{w}$, which is therefore an $\alpha$-tree.
\end{proof}

We conclude this section with the proof that $\lactoperm$ is a bijection with inverse $\permtolac$.  In a plane rooted tree, an \tdef{internal} node is a node that is neither the root nor a leaf.

\begin{theorem}\label{thm:lac_to_perm}
	For every integer composition $\alpha$, the map $\lactoperm$ is a bijection whose inverse is $\permtolac$.  Moreover, $\lactoperm$ sends internal nodes to descents.
\end{theorem}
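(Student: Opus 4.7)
The plan is to verify $\lactoperm\circ\permtolac=\mathrm{id}_{\Symmetric_\alpha(231)}$ and $\permtolac\circ\lactoperm=\mathrm{id}_{\Trees_\alpha}$ separately, and then match internal nodes with descents. For the first identity, the key observation (implicit in the proof of Proposition~\ref{prop:perm_to_lac_valid}) is that Construction~\ref{constr:perm_to_lac} is classical binary search tree insertion via the first-child/next-sibling encoding of plane rooted trees as binary trees. Since the in-order traversal of a BST visits labels in sorted order, and since this in-order traversal coincides with the LR-postfix order of the plane tree on non-root nodes, the LR-postfix order of $T_w:=\permtolac(w)$ visits its labels $1,2,\ldots,n$ in increasing order. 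Hence the LR-postfix relabeling inside Construction~\ref{constr:lac_to_perm} applied to $T_w$ is the identity on labels. From the proof of Proposition~\ref{prop:perm_to_lac_valid}, the node with label $w(i)$ has color equal to the $\alpha$-region of $i$; grouping labels by color, sorting each group increasingly, and concatenating in color order therefore reproduces $w$, since $w\in\Symmetric_\alpha$ is increasing on each $\alpha$-region.

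For the second identity, I exploit two easy facts about $T$: (i) nodes of the same color are pairwise incomparable by Lemma~\ref{lem:lac_property}(i), so their LR-postfix and LR-prefix orders agree, whence the sequence $w_T(1),\ldots,w_T(n)$ visits nodes of $T$ in the very same order in which Construction~\ref{constr:lac_tree} colors them; and (ii) all children of a fixed parent are colored in left-to-right order, since they become active simultaneously and are subsequently drawn in LR-prefix order. I then proceed by induction on the coloring step to show that after inserting the first $i$ labels, the partial tree $T'_i$ built by $\permtolac$ agrees, as a labeled plane tree, with the subtree of $T$ spanned by the root and the first $i$ colored nodes, each carrying its LR-postfix label. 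For the induction step, the parent $v_p$ of the newly colored node $v_i$ has strictly smaller color, hence is already present in $T'_{i-1}$; using that LR-postfix labels decrease from a node to each of its descendants and increase from left to right among siblings, the BST descent from the root follows the path to $v_p$, then traverses $v_p$'s already-inserted children (which by (ii) are precisely the siblings of $v_i$ lying to its left in $T$) rightwards, and finally creates the label of $v_i$ as the new rightmost child of $v_p$, matching $v_i$'s position in $T$. The main obstacle here lies in the case analysis of this BST descent, verifying that it really reaches $v_p$ instead of terminating elsewhere.

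Finally, to match internal nodes with descents: by Lemma~\ref{lem:postfix_next} the LR-postfix predecessor of an internal node $u$ with label $\ell$ is its rightmost child, whose color strictly exceeds that of $u$ by Lemma~\ref{lem:lac_property}(i); hence the position of $\ell$ in $w_T$ lies in an earlier $\alpha$-region than the position of $\ell-1$, yielding a descent $(i,k)$ with $i<k$ and $w_T(i)=w_T(k)+1$. Conversely, if $u$ is a leaf with label $\ell\geq 2$, Lemma~\ref{lem:postfix_next} applied in reverse identifies the LR-postfix predecessor of $u$ as the immediate left sibling of the lowest ancestor of $u$ having a left sibling, which has color at most that of $u$; therefore the position of $\ell-1$ in $w_T$ does not exceed that of $\ell$, and no such descent arises. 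This gives the claimed bijection and completes the proof.
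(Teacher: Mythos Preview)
Your proof is correct and follows essentially the same strategy as the paper: verify the two composites are identities by induction on the insertion/coloring process, leaning on the fact (established in Proposition~\ref{prop:perm_to_lac_valid}) that the LR-postfix labeling of $T_w$ recovers the inserted labels and that the induced coloring agrees with the left-aligned one. Your induction for $\permtolac\circ\lactoperm=\mathrm{id}$ is finer-grained (node by node, tracing the BST descent explicitly) than the paper's color-by-color induction, and you additionally supply the argument for the internal-nodes-to-descents correspondence, which the paper states but does not prove; both refinements are welcome but do not change the underlying approach.
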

\begin{proof}
	Given Propositions~\ref{prop:lac_to_perm_valid} and \ref{prop:perm_to_lac_valid}, it remains to prove that $\lactoperm \circ \permtolac = \operatorname{id}$ and $\permtolac \circ \lactoperm = \operatorname{id}$.  Let $\alpha=(\alpha_{1},\alpha_{2},\ldots,\alpha_{r})$.
	
	To show that $\lactoperm \circ \permtolac = \operatorname{id}$, for $w\in\Symmetric_{\alpha}(231)$, we consider $T=\permtolac(w)$ and $w' = \lactoperm(T)$. Let $C$ be the full coloring of $T$ constructed in the proof of Proposition~\ref{prop:lac_to_perm_valid}.  We have seen in that proof that $C$ is precisely the left-aligned coloring of $T$ from Construction~\ref{constr:lac_tree}.  Therefore, entries in each $\alpha$-region of $w$ and $w'$ agree, and since there is a unique way to order a set of distinct integers increasingly, we conclude that $w=w'$.
	
	Now for $\permtolac \circ \lactoperm = \operatorname{id}$, given $T\in\Trees_{\alpha}$, we consider $w=\lactoperm(T)$ and $T'=\permtolac(w)$.  Let $C,C'$ be the left-aligned colorings of $T$ and $T'$, respectively, and for $s\in[r]$ let $T_{s}$ and $T'_{s}$ denote the induced subtrees consisting of the nodes of color at most $s$ in $T$ and $T'$ respectively.  We prove by induction on $s$ that $T_{s}=T'_{s}$ holds for all $0\leq s\leq r$.  
	
	The base case $s=0$ is clear, as $T_{0}$ and $T'_{0}$ consist only of the root.  Now suppose that $T_{s-1}=T'_{s-1}$.  Let $u$ be a node of $T$ with $C(u)=s$ and label $w(i)$, and let $u'$ be the node of $T'$ with the same label.  Since elements in any $\alpha$-region are ordered increasingly, we know that every node of color $s$ inserted in $T'$ before $u'$ has a smaller label than $u'$, and by construction no such node can be the parent of $u'$.  It follows that the parent $v'$ of $u'$ belongs to $T'_{s-1}$.  By our induction hypothesis, it follows that the parent $v$ of $u$ in $T$ has the same label as $v'$.  Now, for the order of newly inserted children of a node, by Construction~\ref{constr:perm_to_lac}, in $T'$ the children of every node at each step are ordered by increasing labels, which is the same as in $T$ due to the LR-postfix order. We thus have $T_{s}=T'_{s}$, which completes the induction step. We conclude that $T=T_{r}=T'_{r}=T'$.
\end{proof}

\subsection{Noncrossing Partitions and Left-Aligned Colorable Trees}
	\label{sec:trees_to_noncrossings}
The next construction associates an $\alpha$-partition with every $\alpha$-tree.

\begin{construction}\label{constr:lac_to_nc}
	Let $\alpha$ be an integer composition.  Let $T\in\Trees_{\alpha}$ whose left-aligned coloring produced by Construction~\ref{constr:lac_tree} is $C$. We write the $\lvert\alpha\rvert$ non-root nodes of $T$ on a horizontal line, where we group them by their color under $C$.  Each group is then sorted by LR-prefix order, while groups are sorted by increasing color.  We then connect two nodes $u$ and $v$ by a bump if and only if $v$ is the rightmost child of $u$.  The diagram thus obtained is the \tdef{flattening} of $T$.  
	
	By Lemma~\ref{lem:lac_property}(i) there are no bumps between elements in the same color group, which implies that the flattening of $T$ is the diagram of some $\alpha$-partition $\mathbf{P}_{T}$.
\end{construction}

See Figure~\ref{fig:lac_to_nc} for an illustration of Construction~\ref{constr:lac_to_nc}.  We now prove that every $\alpha$-partition obtained by Construction~\ref{constr:lac_to_nc} is in fact noncrossing, which allows us to define the following map:
\begin{equation}\label{eq:lac_to_nc}
	\lactonc\colon\Trees_{\alpha}\to\NC_{\alpha},\quad T\mapsto\mathbf{P}_{T}.
\end{equation}

\begin{figure}
	\centering
	\begin{tikzpicture}
		\draw(0,0) node(lac){\lactreeBnc{0.7}};
		\draw(7,0) node(nc){\ncB{1.2}};
		\draw[->] (lac) -- (nc) node[above,midway]{$\lactonc$};
	\end{tikzpicture}
	\caption{An illustration of Construction~\ref{constr:lac_to_nc} for $\alpha=(1,3,1,2,4,3)$.}
	\label{fig:lac_to_nc}
\end{figure}
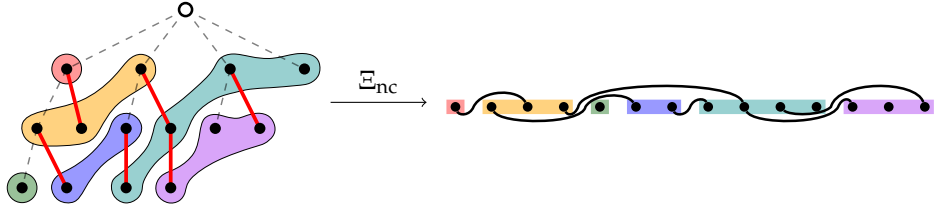

\begin{proposition}\label{prop:lac_to_nc_valid}
	For an $\alpha$-tree $T$, the $\alpha$-partition $\mathbf{P}_{T}$ obtained by Construction~\ref{constr:lac_to_nc} is noncrossing.
\end{proposition}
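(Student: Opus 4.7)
The plan is to verify the two conditions \ref{it:nc1} and \ref{it:nc2} of Definition~\ref{def:parabolic_noncrossing_partition} for the flattening $\mathbf{P}_T$. Identifying the horizontal positions of the diagram with non-root nodes of $T$, the layout order is lexicographic: first by color under $C$, then by LR-prefix order within a color class. Each bump is a pair $(u,v)$ where $v$ is the rightmost child of $u$ in $T$. My two workhorses are the two parts of Lemma~\ref{lem:lac_property}: part~(i) forces incomparability between equally colored nodes and strict color increase along ancestor chains, while part~(ii) converts the LR-prefix order of active nodes at a given step into a weak color inequality on the colors they eventually receive. Together these form the bridge between layout position, coloring step, and tree comparability.

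For the nested case \ref{it:nc2} with bumps $(u_1,v_1)$ and $(u_2,v_2)$ satisfying $u_1 < u_2 < v_2 < v_1$ in layout, I would argue by contradiction assuming $C(u_1)=C(u_2)$. Lemma~\ref{lem:lac_property}(i) then forces $u_1,u_2$ to be incomparable, so the entire subtree of $u_1$ (in particular $v_1$) precedes $u_2$, and hence $v_2$, in LR-prefix order. Combined with $v_2 < v_1$ in layout and $C(v_1),C(v_2)>C(u_1)$, this forces $C(v_2) < C(v_1)$. But then $v_1$ is still active at step $C(v_2)$, and Lemma~\ref{lem:lac_property}(ii) yields $v_2$ before $v_1$ in LR-prefix order, contradicting what was just derived.

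For the crossing case \ref{it:nc1} with $u_1 < u_2 < v_1 < v_2$ in layout, I would assume for contradiction that $u_1,u_2$ lie in different regions \emph{and} $u_2,v_1$ also do, which forces $C(u_1) < C(u_2) < C(v_1)$. Since $v_1$ is still active at step $C(u_2)$, Lemma~\ref{lem:lac_property}(ii) yields $u_2$ preceding $v_1$ in LR-prefix order. The color inequalities together with Lemma~\ref{lem:lac_property}(i) exclude either of $u_2,v_1$ from being an ancestor of the other, so they are incomparable in $T$; hence the whole subtree of $u_2$, and in particular $v_2$, also precedes $v_1$ in LR-prefix order. I would then finish by splitting on $C(v_1)$ versus $C(v_2)$: if $C(v_1)=C(v_2)$ the LR-prefix tie-breaker in the layout places $v_2 < v_1$, contradicting the hypothesis $v_1 < v_2$; if $C(v_1) < C(v_2)$ then $v_2$ is active at step $C(v_1)$ and Lemma~\ref{lem:lac_property}(ii) forces $v_1$ before $v_2$ in LR-prefix order, again contradicting the conclusion just drawn.

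The hardest part is the bookkeeping in \ref{it:nc1}, where one must track two interleaved chains of implications (layout $\Rightarrow$ color inequalities, and LR-prefix $\Rightarrow$ subtree containment) and carefully check, at each application of Lemma~\ref{lem:lac_property}(ii), that the relevant node really is still active at the chosen step. The case \ref{it:nc2} is somewhat cleaner because the two bumps start at the same color from the outset.
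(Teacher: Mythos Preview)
Your proof is correct and follows essentially the same route as the paper's: for each of \ref{it:nc1} and \ref{it:nc2} you argue by contradiction, use Lemma~\ref{lem:lac_property}(i) to turn the color hypothesis into incomparability of the bump-initiating nodes, deduce the LR-prefix relation between $v_1$ and $v_2$ via subtree containment, and then obtain the contradiction from the layout order. Your write-up is slightly more explicit than the paper's in invoking Lemma~\ref{lem:lac_property}(ii) for the final step (the paper compresses ``$d_1\le d_2$ implies $v_1$ precedes $v_2$'' into one clause), but the argument is the same.
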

\begin{proof}
	We recall that by Lemma~\ref{lem:lac_property}(i), nodes with the same color in $T$ are incomparable.  Thus, $\mathbf{P}_{T}$ is indeed an $\alpha$-partition. Now suppose that $\mathbf{P}_{T}$ is not noncrossing.  There are thus two bumps $(a_{1},b_{1}),(a_{2},b_{2})$ that violate either \eqref{it:nc1} or \eqref{it:nc2}.  Let $u_{1},v_{1},u_{2},v_{2}$ be the nodes of $T$ corresponding to the elements $a_{1},b_{1},a_{2},b_{2}$, whose colors in the left-aligned coloring of $T$ are $c_{1},d_{1},c_{2},d_{2}$.
	
	If the two bumps violate \eqref{it:nc1}, then $a_{1}<a_{2}<b_{1}<b_{2}$, but $a_{1},a_{2},b_{1}$ belong to three different $\alpha$-regions.  It follows that $c_{1}<c_{2}<d_{1}$.  We conclude that $u_{2}$ precedes $v_{1}$ in LR-prefix order, since $v_{1}$ becomes active immediately after step $c_{1}$ of Construction~\ref{constr:lac_tree} in which its parent $u_{1}$ receives its color.  Moreover, since $c_{1}<c_{2}$, the node $u_{2}$ is not an ancestor of $u_{1}$.  Consequently, the right-most child of $u_{2}$, which is $v_{2}$, precedes $v_{1}$ in LR-prefix order, too.   However, since $b_{1}<b_{2}$,  we have $d_{1}\leq d_{2}$, which implies that $v_{1}$ precedes $v_{2}$ in LR-prefix order.  This is a contradiction, and we conclude that \eqref{it:nc1} is not violated.
	
	If the two bumps violate \eqref{it:nc2}, then $a_{1}<a_{2}<b_{2}<b_{1}$, and $a_{1}, a_{2}$ are in the same $\alpha$-region.  It follows that $c_{1}=c_{2}$, and from Lemma~\ref{lem:lac_property}(i) we know that $u_{1}$ and $u_{2}$ are incomparable in $T$.  By Construction~\ref{constr:lac_to_nc}, the node $u_{1}$ thus precedes $u_{2}$ in LR-prefix order.  Since $v_{1}$ is the rightmost child of $u_{1}$, it also precedes $u_{2}$ in LR-prefix order.  Since $v_{2}$ is the rightmost child of $u_{2}$, it is also preceded by $v_{1}$ in the LR-prefix order.  However, since $b_{2}<b_{1}$, we have $d_{2}\leq d_{1}$, which implies that $v_{2}$ precedes $v_{1}$ in LR-prefix order.  This is a contradiction, and we conclude that \eqref{it:nc2} is not violated, either. 
	
	Hence, $\mathbf{P}_{T}$ must be noncrossing.
\end{proof}

Let us now describe how to obtain a plane rooted tree from a noncrossing $\alpha$-partition.  

\begin{construction}\label{constr:nc_to_lac}
	For $\alpha$ a composition of $n$,  let $\mathbf{P}$ be a noncrossing $\alpha$-partition, and we label the elements of $\mathbf{P}$ by $1,2,\ldots,n$ from left to right.
	
	We start with a collection of $n+1$ nodes $v_{0},v_{1},\ldots,v_{n}$.  If the element $i$ does not lie below any bump of $\mathbf{P}$, then we add an edge from $v_{0}$ to $v_{i}$, making $v_{i}$ a child of $v_{0}$.  If $i$ lies directly below a bump $(a,b)$, then we add an edge from $v_{a}$ to $v_{i}$ in the same way.
	
	Since we associate with each element of $\mathbf{P}$ a parent with strictly smaller index, the resulting graph does not have cycles.  Moreover, it has $n+1$ nodes and $n$ edges, and therefore is a tree.  Finally, we order the children of each node by their label in increasing order, and root the tree at $v_{0}$.  We thus obtain a plane rooted tree $T_{\mathbf{P}}$.
\end{construction}

We now prove that the plane rooted tree $T_\mathbf{P}$ obtained from Construction~\ref{constr:nc_to_lac} is compatible with $\alpha$ so that we obtain the following map:
\begin{equation}\label{eq:nc_to_lac}
	\nctolac\colon\NC_{\alpha}\to\Trees_{\alpha},\quad\mathbf{P}\mapsto T_{\mathbf{P}}.
\end{equation}

\begin{proposition} \label{prop:nc_to_lac_valid}
	For a noncrossing $\alpha$-partition $\mathbf{P}$, the plane rooted tree $T_{\mathbf{P}}$ is compatible with $\alpha$.  
\end{proposition}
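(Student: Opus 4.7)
My plan is to exhibit the coloring produced by Construction~\ref{constr:lac_tree} on $T_{\mathbf{P}}$ explicitly by showing that it coincides with the \emph{natural} coloring $C$ defined by $C(v_i)=k$ whenever $i$ lies in the $k$-th $\alpha$-region $R_k$. Establishing this forces Construction~\ref{constr:lac_tree} to succeed, which is exactly the compatibility of $T_{\mathbf{P}}$ with $\alpha$. The structural prerequisites are immediate from Construction~\ref{constr:nc_to_lac}: any non-root node $v_i$ with non-root parent $v_a$ has $i$ lying directly below a bump $(a,b)$ of $\mathbf{P}$, which forces $a<i$ with $a,i$ in different $\alpha$-regions; hence $C(v_a)<C(v_i)$, and nodes sharing a $C$-color are pairwise incomparable in $T_{\mathbf{P}}$.

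I would then prove by induction on $k$ that after the $k$-th step of Construction~\ref{constr:lac_tree} the colored set equals $\{v_i:i\in R_1\cup\cdots\cup R_k\}$. Writing $G_k:=\{v_i:i\in R_k\}$, the monotonicity above already gives $G_k\subseteq A_k$ (the active set at step $k$), so the inductive step reduces to the key combinatorial claim that for every $v_i\in G_k$ and every $v_j\in A_k$ with $C(v_j)>k$, the node $v_i$ precedes $v_j$ in the LR-prefix order on $T_{\mathbf{P}}$; granting this claim, $G_k$ is exactly the first $\alpha_k$ nodes of $A_k$ in LR-prefix and is therefore what step $k$ colors.

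For the key claim I would argue by contradiction, leveraging the noncrossing structure of $\mathbf{P}$. Assume $v_j$ precedes $v_i$; let $v_q$ be the LCA of $v_i,v_j$ in $T_{\mathbf{P}}$ and $v_{c_i},v_{c_j}$ the children of $v_q$ on the paths to $v_i,v_j$. Strict decrease of colors on ancestor chains forces $C(v_{c_j})\leq k-1$, shows that $v_q$ is a strict common ancestor, and (combined with the assumption $v_j$ precedes $v_i$) yields $c_j<c_i$. In the simplest case $v_{c_i}=v_i$, write $b$ for the next element of $c_j$'s block; since $v_j$ is a descendant of $v_{c_j}$ with $j>s_k$, one has $b>s_k$, so the bump $(c_j,b)$ either strictly separates $i$ from the bump $(q,q')$ under which $v_i$ sits---contradicting the definition of ``directly below'' in Construction~\ref{constr:nc_to_lac} when $i<q'$---or, when $i=q'$, combines with the bump $(q,i)$ to violate \eqref{it:nc1}. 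When $v_{c_i}$ is a proper ancestor of $v_i$, the same separator argument rules out $c_i,c_j$ being in different regions, so they must lie in the same region; a comparison of $(c_i,b')$ with $(c_j,b)$ via \eqref{it:nc1}--\eqref{it:nc2} (where $b'$ is the next element of $c_i$'s block) then forces a specific nested/crossing pattern, and the argument can be pushed down the chain $v_{c_i}\to\cdots\to v_{p_i}\to v_i$.

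The main obstacle is controlling that pushdown, i.e., the case where $v_i$ lies many levels below $v_{c_i}$. I expect to handle it by a downward induction on the color gap $C(v_i)-C(v_{c_i})$: at each step I replace $v_{c_i}$ by $v_i$'s nearest strict ancestor of smaller color, compare the bump attached to that new ancestor with $(c_j,b)$, and either obtain an immediate violation of \eqref{it:nc1} or \eqref{it:nc2} in $\mathbf{P}$ or strictly reduce the gap, eventually reaching the base case treated above. Once the key claim is established for all $k$, the natural coloring $C$ coincides with the coloring produced by Construction~\ref{constr:lac_tree}; in particular that construction does not fail on $T_{\mathbf{P}}$, proving that $T_{\mathbf{P}}$ is compatible with $\alpha$.
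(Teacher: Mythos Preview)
Your overall plan coincides with the paper's: define the natural coloring $C(v_i)=k$ for $i$ in the $k$-th $\alpha$-region, observe that colors strictly increase along parent--child edges, and prove by induction on $k$ that the set colored at step~$k$ of Construction~\ref{constr:lac_tree} is exactly $G_k=\{v_i:i\in R_k\}$. The reduction to the key claim---that no active node $v_j$ with $C(v_j)>k$ can precede an active $v_i\in G_k$ in LR-prefix order---is also the paper's reduction.

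Where you diverge is in the proof of that key claim, and your LCA-and-pushdown route is currently only a sketch with real gaps. The sibling case $v_{c_i}=v_i$, $v_{c_j}=v_j$ is not addressed (it is easy, but it is exactly the case where your assertion $C(v_{c_j})\le k-1$ fails); the possibility that the LCA $v_q$ is the root, so that there is no bump ``$(q,q')$ under which $v_i$ sits'', is not handled; and, most importantly, your claim that $b>s_k$ (with $b$ the next element of $c_j$'s block) does not follow from $j>s_k$ alone: when $v_j$ lies more than one level below $v_{c_j}$, the child $v_{m}$ of $v_{c_j}$ on the path to $v_j$ has $m\le s_{k-1}$, so $m\le b$ gives no lower bound on $b$. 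Your proposed ``downward induction on the color gap'' would have to absorb exactly this difficulty, and it is not clear how.

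The paper avoids all of this by staying at the parents of the two active nodes rather than climbing to their LCA. Writing $v_{a_1},v_{a_2}$ for the parents of $v_j,v_i$ (both of $C$-color at most $k-1$ by the inductive hypothesis), one first disposes of the degenerate cases $v_{a_1}=v_{a_2}$ and $v_{a_\ell}$ equal to the root, and otherwise considers the bumps $(a_1,b_1')$ and $(a_2,b_2')$ under which $j$ and $i$ lie directly. A four-case analysis on the relative order of $a_1,a_2,b_1',b_2'$ then yields an immediate contradiction with \eqref{it:nc1} or \eqref{it:nc2} in every case, with no nested induction required. I recommend replacing your LCA argument with this parent-level case analysis.
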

\begin{proof}
	Let $\alpha=(\alpha_{1},\alpha_{2},\ldots,\alpha_{r})$ with $\lvert\alpha\rvert=n$, and suppose that the nodes of $T_{\mathbf{P}}$ are $v_{0},v_{1},\ldots,v_{n}$ as in Construction~\ref{constr:nc_to_lac}.  We define a full coloring $f$ of $T_{\mathbf{P}}$, where $f(v_{a})=i$ if and only if $a$ belongs to the $i$-th $\alpha$-region in $\mathbf{P}$.  We now prove that Construction~\ref{constr:lac_tree} does not fail on $T_{\mathbf{P}}$ for $\alpha$, and $f$ agrees with the coloring of $T_{\mathbf{P}}$ thus obtained. We proceed by induction on the number $s$ of steps.  
	
	The elements in the first $\alpha$-region do not lie below any bump of $\mathbf{P}$, which means that Construction~\ref{constr:lac_tree} does not fail in the first step.  Moreover, by Construction~\ref{constr:nc_to_lac} the nodes $v_{1},v_{2},\ldots,v_{\alpha_{1}}$ are the leftmost children of the root of $T_{\mathbf{P}}$, and the partial labeling of Construction~\ref{constr:lac_tree} therefore assigns the color $1$ to all these nodes.  Hence, this partial labeling agrees with $f$ on the nodes colored by $1$.  
	This establishes the base case $s=1$ of our induction.

	Now suppose that the induction hypothesis holds of $s-1<r$, and we now consider step $s$. For $a\in\{\alpha_{s-1}+1,\alpha_{s-1}+2,\ldots,\alpha_{s}\}$, let $w$ be the parent of $v_{a}$.  By Construction~\ref{constr:nc_to_lac}, $w$ is either the root, or $w=v_{a'}$ for some $a'\leq\alpha_{s-1}$.  This implies $f(w)\leq s-1$.  Thus, $v_{a}$ is active in step $s$ of Construction~\ref{constr:lac_tree}.  Since $a$ was chosen arbitrarily, we conclude that all vertices of $T_{\mathbf{P}}$ that come from the $s$-th $\alpha$-region are active in step $s$ of Construction~\ref{constr:lac_tree}.  Consequently, Construction~\ref{constr:lac_tree} does not fail in step $s$.
	
	We prove now that the nodes corresponding to elements in the $s$-th $\alpha$-region are the first active nodes in LR-prefix order at step $s$.  Suppose that at step $s$ there are two active nodes $v_{b_{1}},v_{b_{2}}$ such that $v_{b_{1}}$ precedes $v_{b_{2}}$ in LR-prefix order, and $f(v_{b_{1}})>s=f(v_{b_{2}})$.  Consequently, $b_{2}<b_{1}$, and both nodes belong to different $\alpha$-regions.  Let $v_{a_{1}},v_{a_{2}}$ denote the parents of $v_{b_{1}},v_{b_{2}}$, respectively.  If $v_{a_{1}}=v_{a_{2}}$, then by construction we must have $b_{1}<b_{2}$, which is a contradiction.  If $v_{a_{1}}$ is the root, then $v_{b_{1}}$ also precedes $v_{a_{2}}$ in LR-prefix order, and it is either colored or active in the step in which $v_{a_{2}}$ receives its color.  By our induction hypothesis we have $s<f(v_{b_{1}})\leq f(v_{a_{2}}) < s$, which is a contradiction.  If $v_{a_{2}}$ is the root, then since $b_{2}$ is not below any bump, the region of $b_{2}$ must not come later than that of $a_{1}$ (they can be in the same region). In combination with the induction hypothesis, we have $s=f(v_{b_{2}})\leq f(v_{a_{1}})<s$, which is a contradiction. We conclude that $b_{1}$ and $b_{2}$ are directly below bumps $(a_{1},b'_{1})$ and $(a_{2},b'_{2})$ respectively.  
	
	Since $v_{b_{1}},v_{b_{2}}$ are active in the $s$-th step, it follows that $a_{1},a_{2}$ are in $\alpha$-regions before the $s$-th $\alpha$-region, with $b_{1}<b'_{1}$ and $b_{2}<b'_{2}$.  There are four possibilities.
	
	(i) If $a_{1}<a_{2}<b'_{1}<b'_{2}$, then either $a_{1}$ and $a_{2}$ are in the same $\alpha$-region, or $a_{2}$ and $b'_{1}$ are in the same $\alpha$-region by \eqref{it:nc1}.  In both cases, we would have $b_{1}<b_{2}$, which is a contradiction.
	
	(ii) If $a_{1}<a_{2}<b'_{2}<b'_{1}$, then  $a_{1}$ and $a_{2}$ belong to different $\alpha$-regions by \eqref{it:nc2}. Thus, $v_{a_{1}}$ is colored before $v_{a_{2}}$ is, which implies that $v_{b_{1}}$ is active before $v_{a_{2}}$ receives its color.  Since $v_{b_{1}}$ precedes $v_{b_{2}}$ in LR-prefix order and $v_{b_{1}}$ is not a descendant of $v_{a_{2}}$, we conclude that $v_{b_{1}}$ precedes $v_{a_{2}}$ in LR-prefix order, too.  It follows that $v_{b_{1}}$ receives its color not later than $v_{a_{2}}$, and by our induction hypothesis, we have $f(v_{b_{1}})\leq f(v_{a_{2}})<s$.  This contradicts the assumption $f(v_{b_{1}}) > s$.	
	
	(iii) If $a_{2}<a_{1}<b'_{1}<b'_{2}$, then $a_{1}$ and $a_{2}$ belong to different $\alpha$-regions by \eqref{it:nc2}. This means that $(a_{1},b'_{1})$ separates $b_{1}$ from $(a_{2},b'_{2})$. Since $f(v_{a_1}) < s = f(v_{b_2})$, we have $a_1 < b_2$, thus $b_1 < b_1' < b_2$ by the separation, which is a contradiction.

	(iv) If $a_{2}<a_{1}<b'_{2}<b'_{1}$, then either $a_{1}$ and $a_{2}$ belong to the same $\alpha$-region, or $a_{1}$ and $b'_{2}$ belong to the same $\alpha$-region by \eqref{it:nc1}.  If $a_{1}$ and $a_{2}$ belong to the same $\alpha$-region, then $v_{a_{2}}$ precedes $v_{a_{1}}$ in LR-prefix order (since $a_{2}<a_{1}$).  Moreover, since $f(v_{a_{1}})=f(v_{a_{2}})\leq s-1$, by our induction hypothesis these two nodes receive the same color in Construction~\ref{constr:lac_tree}, and by Lemma~\ref{lem:lac_property}(i) they are thus incomparable in $T_{\mathbf{P}}$. 
	Since $v_{b_{2}}$ is a child of $v_{a_{2}}$, it follows that $v_{b_{2}}$ precedes $v_{b_{1}}$ in LR-prefix order, which is a contradiction.  If $a_{1}$ and $b'_{2}$ belong to the same $\alpha$-region, then necessarily $b_{2}$ belongs to this $\alpha$-region, too.  By induction hypothesis we have $s>f(v_{a_{1}})=f(v_{b_{2}})=s$, which is a contradiction.
	
	\medskip
	
	We therefore conclude that all the nodes of $T_{\mathbf{P}}$ from the $s$-th $\alpha$-region receive color $s$ in step $s$ of Construction~\ref{constr:lac_tree}, and we finish our induction step.  We conclude that $T_{\mathbf{P}}$ is an $\alpha$-tree, and $f$ is its left-aligned coloring.   
\end{proof}

We conclude this section with the proof that $\lactonc$ is a bijection with inverse $\nctolac$.

\begin{theorem}\label{thm:lac_to_nc}
	For every integer composition $\alpha$, the map $\lactonc$ is a bijection whose inverse is $\nctolac$.  Moreover, $\lactonc$ sends internal nodes to bumps.
\end{theorem}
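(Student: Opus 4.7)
The plan is to proceed analogously to the proof of Theorem~\ref{thm:lac_to_perm}. Propositions~\ref{prop:lac_to_nc_valid} and~\ref{prop:nc_to_lac_valid} already ensure both maps are well-defined, so it remains to show $\nctolac\circ\lactonc=\mathrm{id}$, $\lactonc\circ\nctolac=\mathrm{id}$, and the internal-nodes/bumps correspondence. The last claim is essentially by construction: in Construction~\ref{constr:lac_to_nc} every bump is drawn from a non-root node $u$ to its rightmost child, and internal nodes (non-root, non-leaf) are exactly the non-root nodes that possess a rightmost child, so this pairing is a bijection between the internal nodes of $T$ and the bumps of $\mathbf{P}_T$.

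For $\nctolac\circ\lactonc=\mathrm{id}$, start with $T\in\Trees_{\alpha}$ and let $\mathbf{P}_{T}=\lactonc(T)$. Identify each non-root node of $T$ with its position in the flattening. The plan is to prove the key claim: if $v$ is a non-root node with parent $u$ in $T$, and $u$ is not the root, then $v$ lies directly below the bump $(u,u')$ in $\mathbf{P}_{T}$, where $u'$ is the rightmost child of $u$; and if $u$ is the root, then $v$ lies below no bump. To check that $v$ lies below $(u,u')$, note that $u$ and $v$ receive different colors by Lemma~\ref{lem:lac_property}(i), hence lie in different $\alpha$-regions, and that the sibling ordering among the children of $u$ (all of which become active simultaneously) together with Lemma~\ref{lem:lac_property}(ii) forces $u$ before $v$ before $u'$ in the flattening. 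To upgrade this to ``directly below'', observe that any bump $(a',b')$ separating $v$ from $(u,u')$ would exhibit a noncrossing violation of \eqref{it:nc1} or \eqref{it:nc2}, contradicting Proposition~\ref{prop:lac_to_nc_valid}. Once this claim is established, Construction~\ref{constr:nc_to_lac} applied to $\mathbf{P}_{T}$ attaches each $v$ to exactly the parent it had in $T$ and orders siblings by flattening position, which by construction coincides with the LR-order of the original tree; hence $T_{\mathbf{P}_T}=T$.

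For $\lactonc\circ\nctolac=\mathrm{id}$, start with $\mathbf{P}\in\NC_{\alpha}$ and let $T_{\mathbf{P}}=\nctolac(\mathbf{P})$. The full coloring $f$ defined in the proof of Proposition~\ref{prop:nc_to_lac_valid} is the left-aligned coloring of $T_{\mathbf{P}}$, and assigns each element $i$ the color equal to its $\alpha$-region in $\mathbf{P}$. Consequently, grouping non-root nodes of $T_{\mathbf{P}}$ by color and sorting each group by LR-prefix order reproduces exactly the original linear order $1,2,\ldots,n$ of the elements of $\mathbf{P}$. It then suffices to check that the rightmost-child bumps of $T_{\mathbf{P}}$ recover exactly the bumps of $\mathbf{P}$. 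By Construction~\ref{constr:nc_to_lac}, the children of $v_{a}$ are precisely the elements lying directly below $(a,b)$ together with $b$ itself, listed in increasing order; hence the rightmost child of $v_{a}$ is $b$, producing the bump $(a,b)$. Conversely, every bump drawn in the flattening comes from some such parent/rightmost-child pair, so no spurious bumps appear. This yields $\mathbf{P}_{T_{\mathbf{P}}}=\mathbf{P}$.

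The main obstacle, as in the proof of Proposition~\ref{prop:nc_to_lac_valid}, is the careful bookkeeping required to translate ``directly below'' in the flattened diagram into ancestry in $T$: the color grouping in Construction~\ref{constr:lac_to_nc} can interleave nodes from different subtrees, so one must rule out separating bumps via a case analysis using \eqref{it:nc1} and \eqref{it:nc2} essentially identical to that already carried out in Proposition~\ref{prop:nc_to_lac_valid}. Apart from this, both verifications reduce to straightforward inductions on the coloring step $s$.
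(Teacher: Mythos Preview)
Your proposal is correct and follows essentially the same route as the paper: both directions are verified directly, with the crux being that a child $v$ of a non-root node $u$ in $T$ corresponds to an element lying directly below the bump $(u,u')$ in $\mathbf{P}_T$, established via a case analysis using \eqref{it:nc1} and \eqref{it:nc2}. One small imprecision: a hypothetical separating bump does not itself ``exhibit a noncrossing violation'' (after all, $\mathbf{P}_T$ is noncrossing by Proposition~\ref{prop:lac_to_nc_valid}); rather, the noncrossing conditions are used as constraints that, combined with the LR-prefix order inherited from $T$, force contradictions on colors or positions---exactly as you indicate in your final paragraph.
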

\begin{proof}
	In view of Propositions~\ref{prop:lac_to_nc_valid}~and~\ref{prop:nc_to_lac_valid}, it remains to prove that $\lactonc \circ \nctolac = \operatorname{id}$ and $\nctolac \circ \lactonc = \operatorname{id}$.

	\medskip

	To show that $\lactonc \circ \nctolac = \operatorname{id}$, for $\mathbf{P} \in \NC_\alpha$, let $T = \nctolac(\mathbf{P})$ be the $\alpha$-tree obtained from $\mathbf{P}$ by Construction~\ref{constr:nc_to_lac}, and $\mathbf{P}'=\lactonc(T)$ the noncrossing $\alpha$-partition obtained from $T$ by Construction~\ref{constr:lac_to_nc}.  
	
	Since every bump of $\mathbf{P}$ corresponds to an internal node of $T$, and every internal node of $T$ corresponds to a bump of $\mathbf{P}'$, we conclude that $\mathbf{P}$ and $\mathbf{P}'$ have the same number of bumps.  Let $(a,b)$ be a bump of $\mathbf{P}$. The node $v_a$ is thus the parent of $v_b$. Let $i$ be an element of $\mathbf{P}$ such that $v_i$ is a child of $v_a$ in $T$. By Construction~\ref{constr:nc_to_lac}, we have $a < i \leq b$. Hence, $v_{i}$ weakly precedes $v_{b}$ in LR-prefix order. This is valid for any child of $v_a$ in $T$, implying that $v_b$ is the rightmost child of $v_a$ in $T$, leading to a bump $(a,b)$ in $\mathbf{P}'$ by Construction~\ref{constr:lac_to_nc}. We thus conclude that $\mathbf{P}=\mathbf{P}'$.

	\medskip

	To show that $\nctolac \circ \lactonc = \operatorname{id}$, for $T\in\Trees_{\alpha}$, let $C$ be the left-aligned coloring of $T$, $\mathbf{P}=\lactonc(T)$ and $T'=\nctolac(\mathbf{P})$.  We now prove $T=T'$. For $u$ a non-leaf node of $T$ and $v$ a child of $u$, 
	let $a,b$ be the elements of $\mathbf{P}$ corresponding to $u,v$, and let $u',v'$ be the nodes of $T'$ corresponding to $a,b$.  We first show that $v'$ is a child of $u'$ in $T'$.  
	
	If $u$ is the root, then $b$ is not below any bump in $\mathbf{P}$, meaning that $u'$ is the root of $T'$ and $v'$ one of its children by Construction~\ref{constr:nc_to_lac}. We now suppose that $u$ is not the root.  If $v$ is the rightmost child of $u$, then $(a,b)$ is a bump of $\mathbf{P}$, and it follows from Construction~\ref{constr:nc_to_lac} that $v'$ is the rightmost child of $u'$, too.  
	
	It remains to consider the case that $v$ is not the rightmost child of $u$.  Let $v_{r}$ be the rightmost child of $u$, and let $b_{r}$ be the element of $\mathbf{P}$ corresponding to $v_{r}$.  It follows that $(a,b_{r})$ is a bump of $\mathbf{P}$.  Moreover, by Construction~\ref{constr:lac_tree} we have $C(u)<C(v)\leq C(v_{r})$, and Construction~\ref{constr:lac_to_nc} then implies that $a<b<b_{r}$.  Hence, $b$ lies below some bump, which implies that $u'$ is not the root of $T'$.  Moreover, $b$ lies directly below a certain bump $(a_{*},b_{*})$.  
	
	Suppose that $a_{*}\neq a$.  Let $u_{*}$ and $v_{*}$ denote the nodes of $T$ that correspond to $a_{*}$ and $b_{*}$.  In particular, $a_{*}$ and $b$ belong to different $\alpha$-regions, which implies $C(u_{*})<C(v)$. Since $b<b_{*}$, we have $C(v)\leq C(v_{*})$.  By Construction~\ref{constr:lac_to_nc}, the node $v_{*}$ is the rightmost child of $u_{*}$, and we conclude that $b\leq b_{*}<b_{r}$.  There are two cases.
	
	(i) If $a<a_{*}<b_{*}<b_{r}$, then $a$ and $a_{*}$ are in different $\alpha$-regions by \eqref{it:nc2}.  Thus, $C(u)<C(u_{*})$, which means that $v$ is active in the step in which $u_{*}$ receives its color in Construction~\ref{constr:lac_tree}.  Since $C(u_{*})<C(v)$, it follows that $u_{*}$ precedes $v$ in LR-prefix order.  Consequently, $v_{*}$ precedes $v$ in LR-prefix order, too, as $u_*$ and $v_*$ are consecutive in LR-prefix order.  But this implies $b_{*}<b$, which is a contradiction.  
	
	(ii) If $a_{*}<a<b_{*}<b_{r}$, then either $a$ and $a_{*}$ belong to the same $\alpha$-region, or $a$ and $b_{*}$ belong to the same $\alpha$-region by \eqref{it:nc1}.  If $a$ and $a_{*}$ belong to the same $\alpha$-region, then it follows that $C(u)=C(u_{*})$ and that $u_{*}$ precedes $u$ in LR-prefix order.  Hence, $v$ and $v_{*}$ become active in the same iteration of Construction~\ref{constr:lac_tree} and $v_{*}$ precedes $v$ in LR-prefix order, too.  Thus, we have $C(v_{*})\leq C(v)$, which by assumption yields $C(v_{*})=C(v)$.  It follows that $b$ and $b_{*}$ belong to the same $\alpha$-region, and by Construction~\ref{constr:lac_to_nc} we obtain that $v$ precedes $v_{*}$ in LR-prefix order, which is a contradiction.  If $a$ and $b_{*}$ belong to the same $\alpha$-region, 
	then combining $b<b_*$ with the case condition, we have $a<b<b_{*}$, and it follows that $a$ and $b$ also belong to the same $\alpha$-region.  This implies $C(u)=C(v)$, which contradicts Lemma~\ref{lem:lac_property}(i), since $v$ is a child of $u$. 
	 
	 We conclude that $a_{*}=a$, which implies that $b$ lies directly below the bump $(a,b_{r})$.  We observe that the order of the children a node in $T$ and $T'$ depends only on the order of the elements of $\mathbf{P}$ by Constructions~\ref{constr:lac_to_nc} and \ref{constr:nc_to_lac}.  
	 We thus conclude that $T=T'$.
\end{proof}

\subsection{Dyck Paths and Left-Aligned Colorable Trees}
	\label{sec:trees_to_paths}
The following construction associates an $\alpha$-Dyck path with every $\alpha$-tree.
	
\begin{construction}\label{constr:lac_to_path}
	Given $\alpha=(\alpha_{1},\alpha_{2},\ldots,\alpha_{r})$ an integer composition with $\lvert\alpha\rvert=n$, let $T\in\Trees_{\alpha}$.  For $k\in[r]$, let $s_{k}=\alpha_{1}+\alpha_{2}+\cdots+\alpha_{k}$ as defined at the beginning of Section~\ref{sec:members_parabolic_cataland}, which is precisely the number of nodes of color $\leq k$.  Moreover, let $a_{k}$ denote the number of active nodes in step $(k+1)$ of Construction~\ref{constr:lac_tree} before coloring, which is precisely the number of nodes of color $>k$ whose parents have color $\leq k$.  It is immediate that $s_{k}+a_{k}\leq n$ for all possible $k$.  
	
	We now construct a Dyck path which will have exactly one valley for each internal node of $T$.  Suppose that the rightmost child of the $i$-th internal node of color $k$ is the $j$-th active node in the $(k+1)$-st step of Construction~\ref{constr:lac_tree} before coloring.  We mark the coordinate $(p,q)$ in the plane, where $p=s_{k}-i+1$ and $q=s_{k}+a_{k}-j$. We then return the unique Dyck path $\mu_{T}$ from $(0,0)$ to $(n,n)$ that has valleys precisely at the marked coordinates. 
\end{construction}

\begin{remark}
There is a simpler way to informally describe Construction~\ref{constr:lac_to_path}. We first draw the bounce path $\nu_\alpha$, which encloses triangular regions with the main diagonal. Each such region corresponds to one color. For each internal node $u$ of color $k$, with $v$ its rightmost child, suppose that $u$ is the $i$-th node \emph{from right to left} of color $k$, and $v$ the $j$-th active node \emph{from right to left} at step $(k+1)$ of Construction~\ref{constr:lac_tree}. We then add a dot on the $i$-th column from left to right intersecting the triangular region of color $k$, in the $j$-th cell above the region. These dots indicate the valleys of $\mu_T$. The description here is equivalent to Construction~\ref{constr:lac_to_path}.
\end{remark}

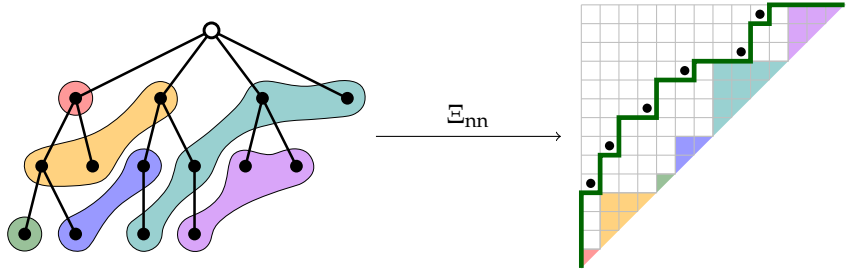
\begin{figure}
	\centering
	\begin{tikzpicture}
	\draw(0,0) node(lac){\lactreeB{0.8}};
	\draw(7,0) node(nn){\pathBReversed{1}};
	\draw[->] (lac) -- (nn) node[above,midway]{$\lactonn$};
	\end{tikzpicture}
	\caption{An illustration of Construction~\ref{constr:lac_to_path} for $\alpha=(1,3,1,2,4,3)$.}
	\label{fig:lac_to_nn}
\end{figure}

See Figure~\ref{fig:lac_to_nn} for an illustration of Construction~\ref{constr:lac_to_path}.  We now prove that every Dyck path obtained by Construction~\ref{constr:lac_to_path} is in fact an $\alpha$-Dyck path, so that we obtain the following map:
\begin{equation}\label{eq:lac_to_nn}
	\lactonn\colon\Trees_{\alpha}\to\Dyck_{\alpha},\quad T\mapsto\mu_{T}.
\end{equation}

\begin{proposition}\label{prop:lac_to_nn_valid}
	For an $\alpha$-tree $T$, the Dyck path $\mu_{T}$ obtained by Construction~\ref{constr:lac_to_path} is an $\alpha$-Dyck path.
\end{proposition}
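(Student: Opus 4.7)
The plan is to prove, in sequence, that the marked coordinates from Construction~\ref{constr:lac_to_path} have pairwise distinct $x$-coordinates and strictly increasing $y$-coordinates when sorted by $x$, that they form the valleys of a Dyck path lying weakly above the diagonal, and that this Dyck path lies weakly above $\nu_\alpha$.

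For distinctness and within-color monotonicity: the formula $p = s_k - i + 1$ places the valleys of color $k$ in disjoint $x$-ranges $(s_{k-1}, s_k]$ across colors, with distinct values within a single color. For strictly increasing $y$ within a color $k$, I would invoke Lemma~\ref{lem:lac_property}(i): same-color internal nodes are incomparable, so if $u$ precedes $u'$ in LR-prefix order then the subtree of $u$ is fully traversed before $u'$, forcing the rightmost child of $u$ to precede that of $u'$ in the LR-prefix ordering of step-$(k{+}1)$ active nodes. This gives $j_u < j_{u'}$ and hence $q_u > q_{u'}$, matching the relation $p_u > p_{u'}$.

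The main obstacle is strictly increasing $y$ across colors. For an internal $u$ of color $k$ and an internal $u'$ of color $k{+}1$, the key structural fact is that $u'$, being colored at step $k{+}1$, occupies some position $\ell \leq \alpha_{k+1}$ in the LR-prefix ordering of step-$(k{+}1)$ active nodes, so the $a_k - \alpha_{k+1}$ leftover active nodes lie at positions $> \ell$ and hence after $u'$ in the LR-prefix of the whole tree. Because $u'$'s subtree contains no nodes active at step $k{+}1$ (any descendant of $u'$ has an uncolored ancestor before step $k{+}1$), the leftovers are not descendants of $u'$ and therefore appear after the rightmost child $v'$ of $u'$ in the LR-prefix ordering of step-$(k{+}2)$ active nodes. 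Counting yields $j_{u'} \leq a_{k+1} - (a_k - \alpha_{k+1})$, so $q_{u'} \geq s_{k+1} + a_k - \alpha_{k+1} = s_k + a_k$, which strictly exceeds $q_u \leq s_k + a_k - 1$. An inductive extension of this argument, tracking leftovers accumulated through intermediate steps, handles non-adjacent color pairs.

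For the final two substeps, every color-$k$ valley satisfies $q \geq s_k \geq p$, so all valleys (and hence all peaks) lie weakly above the diagonal, making $\mu_T$ a valid Dyck path. To compare with $\nu_\alpha$, at each column $x \in (s_{k-1}, s_k]$ the height of the $x$-th east step of $\mu_T$ equals the $y$-coordinate of the first valley with $p \geq x$ (or equals $n$ if no such valley exists), which by the monotonicity arguments is at least $s_k$, matching the height of $\nu_\alpha$ in that column. Hence $\mu_T \in \Dyck_\alpha$.
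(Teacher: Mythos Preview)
Your argument is correct, and in fact more careful than the paper's own proof. The paper's proof is very short: it simply observes that for an internal node $u$ of color $k$, the associated marked coordinate $(p,q)$ satisfies $s_{k-1} < p \leq s_k$ (since $1 \leq i \leq \alpha_k$) and $s_k \leq q$ (since $j \leq a_k$), so $(p,q)$ lies weakly above $\nu_\alpha$. That is the entire argument --- the paper takes for granted that the marked coordinates are the valley set of a well-defined Dyck path, treating this as part of Construction~\ref{constr:lac_to_path} rather than something to be proved.

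What you add is precisely a verification of that well-definedness: distinct $x$-coordinates, strict monotonicity of the $y$-coordinates when sorted by $x$, and the resulting path being a genuine Dyck path. Your within-color monotonicity via incomparability of same-color nodes is clean, and your adjacent-color argument using the leftover active nodes is correct. The phrase ``an inductive extension \ldots\ handles non-adjacent color pairs'' is a bit loose; the cleanest way to make it precise is to show directly that for any internal $u'$ of color $k' > k$, every node $w$ that was active at step $k{+}1$ and still has color $> k'$ must come after the rightmost child $v'$ of $u'$ in LR-prefix order (by locating the ancestor of $u'$ that was active at step $k{+}1$ and invoking Lemma~\ref{lem:lac_property}(ii)). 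Counting such $w$'s then gives $q_{u'} \geq s_k + a_k > q_u$, with no intermediate colors needed.

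So: same core observation as the paper for the $\nu_\alpha$-comparison, but you supply the monotonicity argument that the paper leaves implicit. Your route is longer but closes a real gap.
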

\begin{proof}
	Let $\alpha=(\alpha_{1},\alpha_{2},\ldots,\alpha_{r})$ be a composition of $n$.  Recall from Section~\ref{sec:parabolic_dycks} that the $\alpha$-bounce path is $\nu_{\alpha}=N^{\alpha_{1}}E^{\alpha_{1}}N^{\alpha_{2}}E^{\alpha_{2}}\ldots N^{\alpha_{r}}E^{\alpha_{r}}$.  We need to show that $\mu_{T}$ is weakly above $\nu_{\alpha}$, or equivalently, that all valleys of $\mu_{T}$ lie weakly above $\nu_{\alpha}$.
	
	Let $u$ be the $i$-th node of color $k$ in $T$ with respect to LR-prefix order.  If $u$ is a leaf, then it does not contribute a valley of $\mu_{T}$, and thus need not be considered.   Let $v$ be the rightmost child of $u$. Suppose that $v$ is the $j$-th active node in the $k$-th step of Construction~\ref{constr:lac_tree}.  
	
	By construction, the corresponding valley of $\mu_{T}$ has coordinate $(p,q)$ with $p=s_{k}-i+1$ and $q=s_{k}+a_{k}-j$, where $a_{k}$ is the number of active nodes in the ${(k+1)}$-st step of Construction~\ref{constr:lac_tree} before coloring.  By construction we have $i\leq\alpha_{k}$, therefore $s_{k-1}<p\leq s_{k}$.  Moreover, since $j\leq a_{k}$, we have $s_{k}\leq q<s_{k}+a_{k}$. 
	Therefore, $(p,q)$ is weakly above $\nu_\alpha$, with which we conclude the proof.  
	
\end{proof}

Let us now describe how to obtain a colored tree from an $\alpha$-Dyck path.

\begin{construction}\label{constr:path_to_lac}
	Given $\alpha=(\alpha_{1},\alpha_{2},\ldots,\alpha_{r})$ an integer composition, let $\mu\in\Dyck_{\alpha}$. We initialize our algorithm with the tree $T_{0}$ which consists of a single root node $v_{0}$ (implicitly colored by color $0$), and the empty coloring $C_{0}$.  
	
	At the $k$-th step, we construct a (partially) colored tree $(T_{k},C_{k})$ from $(T_{k-1},C_{k-1})$ by adding as many children to the $i$-th node of color $k-1$ as there are north-steps in $\mu$ with $x$-coordinate equal to $s_{k-1}-i+1$.  After all childen have been added, we extend $C_{k-1}$ to $C_{k}$ by assigning the value $k$ to the first $\alpha_{k}$ uncolored nodes in LR-prefix order. The algorithm fails if there are not enough uncolored nodes. 
	After $r$ steps, we obtain a colored tree $(T_{r},C_{r})$, and we return the plane rooted tree $T_{\mu}=T_{r}$.
\end{construction}

We now prove that the plane rooted tree obtained in Construction~\ref{constr:path_to_lac} is indeed compatible with $\alpha$, which allows for the definition of a map
\begin{equation}\label{eq:nn_to_lac}
	\nntolac\colon\Dyck_{\alpha}\to\Trees_{\alpha},\quad\mu\mapsto T_{\mu}.
\end{equation}

\begin{proposition}\label{prop:nn_to_lac_valid}
	For an $\alpha$-Dyck path $\mu$, the plane rooted tree $T_{\mu}$ obtained from Construction~\ref{constr:path_to_lac} is compatible with $\alpha$.
\end{proposition}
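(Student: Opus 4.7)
The plan is to show that Construction~\ref{constr:lac_tree}, applied to $T_{\mu}$ with the composition $\alpha$, does not fail; by definition, this is equivalent to $T_{\mu}$ being compatible with $\alpha$. I will do this by running the two constructions in parallel: Construction~\ref{constr:path_to_lac} on $\mu$ builds $T_{\mu}$ while assigning a coloring $C_{r}$, and Construction~\ref{constr:lac_tree} on the finished $T_{\mu}$ produces a partial coloring step by step. I will argue by induction on $k$ that after step $k$ of each, they have colored the same set of nodes with the same colors, and in particular that each step has enough active/uncolored nodes available.

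The key observation driving the induction is that Construction~\ref{constr:path_to_lac} only attaches children to nodes that were just colored with color $k-1$. Hence, at step $k$, once the new children have been added but before the new coloring is assigned, the uncolored nodes of $T_{k}$ are exactly the non-root nodes of $T_{k}$ whose parent lies in the domain of $C_{k-1}$. This coincides with the set of active nodes at step $k$ of Construction~\ref{constr:lac_tree} applied to $T_{\mu}$ with partial coloring $C_{k-1}$, because every node of $T_{\mu}$ whose parent has color $\geq k$ has not yet been created in $T_{k}$, and is therefore irrelevant at this stage. Both algorithms then color the first $\alpha_{k}$ elements of this common set in LR-prefix order, so they produce the same $C_{k}$ as soon as the set has size at least $\alpha_{k}$.

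To verify the latter, I invoke the $\alpha$-Dyck condition $\mu\geq\nu_{\alpha}$. The total number of non-root nodes added through step $k$ of Construction~\ref{constr:path_to_lac} equals the total number of north-steps of $\mu$ with $x$-coordinate in $\{0,1,\ldots,s_{k-1}\}$, which is precisely the $y$-coordinate of $\mu$ upon leaving column $s_{k-1}$. Since $\nu_{\alpha}=N^{\alpha_{1}}E^{\alpha_{1}}\cdots N^{\alpha_{r}}E^{\alpha_{r}}$ reaches height $s_{k}$ at $x=s_{k-1}$, and $\mu$ lies weakly above $\nu_{\alpha}$, this count is at least $s_{k}$. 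Subtracting the $s_{k-1}$ nodes already colored through step $k-1$ leaves at least $s_{k}-s_{k-1}=\alpha_{k}$ uncolored nodes for step $k$. Neither construction fails, the induction closes, and $T_{\mu}$ is compatible with $\alpha$; as a bonus, $C_{r}$ is identified with the left-aligned coloring of $T_{\mu}$, which will be convenient when proving that $\nntolac$ inverts $\lactonn$.

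The main conceptual hurdle is the parallel-induction bookkeeping of identifying active nodes for Construction~\ref{constr:lac_tree} with uncolored nodes for Construction~\ref{constr:path_to_lac}, since $T_{\mu}$ is built incrementally in one construction but treated as a fixed input in the other. Once this identification is in place, the Dyck-path inequality supplies everything else essentially by counting.
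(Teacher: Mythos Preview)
Your proposal is correct and follows essentially the same approach as the paper: both argue by induction on $k$ that the partial coloring produced by Construction~\ref{constr:path_to_lac} coincides with that of Construction~\ref{constr:lac_tree}, using the $\alpha$-Dyck inequality to guarantee enough uncolored/active nodes at each step. The paper separates the argument into two passes (first an exact count $m_{k-1}-s_{k}$ of uncolored nodes, then the identification of colorings), whereas you fold both into one induction; the paper's exact count is later reused (Remark~\ref{rem:lac_tree_nb_actives}), so you may want to record it as well. One small wording issue: your sentence ``the uncolored nodes of $T_{k}$ are exactly the non-root nodes of $T_{k}$ whose parent lies in the domain of $C_{k-1}$'' is not literally true (colored nodes also have colored parents); what you mean, and what makes the argument work, is that every uncolored node of $T_{k}$ has its parent in the domain of $C_{k-1}$, hence is active.
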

\begin{proof}
	Let $\alpha=(\alpha_{1},\alpha_{2},\ldots,\alpha_{r})$.  We first prove that we never run out of uncolored nodes in Construction~\ref{constr:path_to_lac}.  To that end, let $s_{0}=0$.  For $i\in\{0,1,\ldots,r-1\}$ we denote by $m_i$ the maximal $y$-coordinate of a lattice point on $\mu$ that has $x$-coordinate $s_{i}$.  Since $\mu$ is weakly above $\nu_{\alpha}$, we have $m_{i}\geq s_{i+1}$.  
	
	We now prove by induction on $k\in[r]$ that the number of uncolored non-root nodes at the end of step $k$ in Construction~\ref{constr:path_to_lac} is precisely $m_{k-1}-s_{k}$.  The base case $k=1$ holds, because in the first step, we add exactly $m_{0}$ children to the root, and we color $\alpha_{1}=s_{1}$ of them. Now suppose that the claim holds up to the $k$-th step of Construction~\ref{constr:path_to_lac}.  In particular, at the beginning of the $(k+1)$-st step we already have $m_{k-1}-s_{k}$ uncolored nodes.  In the $(k+1)$-st step, we add $m_{k}-m_{k-1}$ new nodes, and color $\alpha_{k+1}$ of the uncolored nodes.  This leaves us with $m_{k-1}-s_{k}+m_{k}-m_{k-1}-\alpha_{k+1}=m_{k}-s_{k+1}$ uncolored nodes, and we conclude the induction.
	
	\medskip
	
	Now we prove that $T_\mu$ is an $\alpha$-tree. Let $f$ be the full coloring of $T_{\mu}$ obtained in Construction~\ref{constr:path_to_lac}. We show that $f$ is the left-aligned coloring of $T_{\mu}$. Let $f_{k}$ denote the restriction of $f$ to the nodes of $T_{\mu}$ that have $f$-color at most $k$.  We now prove that Construction~\ref{constr:lac_tree} does not failed up to step $k$ and that $f_{k}$ agrees with $C_{k}$ defined in Construction~\ref{constr:lac_tree}, once again by induction on $k\in[r]$.
	
	For the base case $k=1$, by construction, the root of $T_{\mu}$ has $m_{0}\geq\alpha_{1}$ children, and we have $f(v)=1$ if and only if $v$ belongs to the $\alpha_{1}$ leftmost children of the root.  We see that Construction~\ref{constr:lac_tree} does not fail in the first step, and the same nodes receive color $1$ in both constructions. 
	
	Suppose that the induction hypothesis holds for $k$. The active nodes with respect to $C_{k}$ are thus precisely those whose parents have $C_{k}$-color at most $k$.  In step $k+1$ of Construction~\ref{constr:path_to_lac}, we add precisely $m_{k}-m_{k-1}$ new nodes, and after this addition, the parents of the uncolored nodes have $f_{k}$-color at most $k$.  By induction hypothesis, the uncolored nodes with respect to $f_{k}$ are precisely the active nodes with respect to $C_{k}$.  In view of the first part of this proof, Construction~\ref{constr:lac_tree} does not fail in step $k+1$, and since we color the $\alpha_{k+1}$ leftmost nodes in LR-prefix order by $k+1$ in both constructions, the induction step is established.  
	
	We conclude that $T_{\mu}$ is compatible with $\alpha$ and the coloring from Construction~\ref{constr:path_to_lac} is precisely the left-aligned coloring of $T_{\mu}$.
\end{proof}

\begin{remark}\label{rem:lac_tree_nb_actives}
	It follows from the proof of Proposition~\ref{prop:nn_to_lac_valid} that after we have created all the nodes in the $(k+1)$-st step of Construction~\ref{constr:lac_tree}, there are $m_{k}-s_{k}$ 
	active nodes before coloring, and $m_{k}-s_{k+1}$ active nodes after coloring.
\end{remark}

\begin{lemma}\label{lem:nb_children}
	Given $T\in\Trees_{\alpha}$, let $u$ be an internal node in $T$ and $\vec{p}=(p,q)$ the valley of $\mu = \lactonn(T)$ corresponding to $u$.  The number of children of $u$ is equal to that of consecutive north-steps in $\lactonn(T)$ immediately after $\vec{p}$.
\end{lemma}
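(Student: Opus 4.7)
My plan is to recall, from Construction~\ref{constr:lac_to_path}, that $\vec{p}=(p,q)=(s_k-i+1,\,s_k+a_k-j)$ where $u$ is the $i$-th node of color $k$ in LR-prefix order, $v$ is its rightmost child, and $v$ is the $j$-th active node at step $(k+1)$ before coloring. Since $\mu_T$ is the unique Dyck path with valleys at the marked coordinates, between consecutive valleys (sorted by $x$-coordinate) the path runs $N^{q'-q}E^{p'-p}$, so the number of consecutive north-steps immediately after $\vec{p}$ is exactly $q'-q$, where $(p',q')$ is the next valley in $x$-order, understood as $(n,n)$ if $\vec{p}$ is the last. Writing $c$ for the number of children of $u$, it then suffices to show $q'-q=c$. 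I would split the proof into two cases depending on whether $u$ is the first internal of color $k$ in LR-prefix order.

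In Case 1, $u$ is not the first such internal, so the next valley corresponds to $u^*$, the previous internal of color $k$ (the intermediate color-$k$ nodes are leaves, contributing no valleys). Since $u^*$ shares color $k$, both valleys reference the same step-$(k+1)$ active list, giving $q'-q=j-j^*$. Because sibling subtrees between consecutive children of $u$ contain no step-$(k+1)$ actives, $u$'s children form the contiguous block at positions $j-c+1,\dots,j$, and I must show no active node at step $k+1$ lies strictly between $v^*$ and $u$'s leftmost child in LR-prefix order. If such an active $w$ existed (so $w$ has color $>k$ with parent $p$ of color $\le k$), then either $p$ has color $<k$---so $w$ is already active at step $k$ strictly between $u^*$ and $u$ in LR-prefix of step-$k$ actives, forcing $w$ into the first $\alpha_k$ (since $u^*,u$ are), contradicting $w$'s color $>k$---or $p$ has color $k$, making $p$ a color-$k$ internal strictly between $u^*$ and $u$ in LR-prefix, contradicting the choice of $u^*$. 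This yields $j-j^*=c$.

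In Case 2, $u$ is the first internal of color $k$ in LR-prefix order, and the same style of argument at step $k$ shows that no color-$>k$ node with parent of color $\le k$ precedes $u$'s leftmost child in LR-prefix, so $j=c$. If $\vec{p}$ is the last valley, then no color-$>k$ node is internal (else its valley would lie further right), so all such nodes are leaves and $a_k=n-s_k$, giving $n-q=j=c$. If instead the next valley lies in a higher color $k^*$, the intermediate colors $k+1,\dots,k^*-1$ contain no internals, so $a_{k^*}=a_k-(s_{k^*}-s_k)+\beta_{k^*}$, where $\beta_{k^*}$ is the total number of children of color-$k^*$ internals; hence $q^*-q=\beta_{k^*}+(j-j^*)$. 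The main obstacle is proving $j^*=\beta_{k^*}$: I would do this by showing that the step-$(k^*+1)$ actives preceding $v^*$ in LR-prefix order are exactly the children of color-$k^*$ internals occurring up to $v^*$, using that uncolored originals from step $(k+1)$ (those with positions $>s_{k^*}-s_k$) lie after $w_{s_{k^*}-s_k}$'s subtree and hence after $v^*$ in LR-prefix, while every child of a color-$k^*$ internal earlier than $u^*$ lies before $v^*$ and the siblings of $v^*$ inside $u^*$'s subtree account for the rest; combined with $u^*$ being the last color-$k^*$ internal in LR-prefix, this gives $j^*=\beta_{k^*}$ and therefore $q^*-q=\beta_{k^*}+(c-\beta_{k^*})=c$.
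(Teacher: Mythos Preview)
Your argument is correct and uses the same two-case split as the paper (first internal of color $k$ versus not), though your Case~1 corresponds to the paper's second case and vice versa. In the ``not first internal'' case the two proofs agree; you additionally supply the explicit verification that the children of $u$ occupy the contiguous block of positions $j^*+1,\dots,j$ among the step-$(k+1)$ actives, which the paper leaves as the bare assertion ``It follows that $c_1=j-j_*$''.

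The real divergence is the ``first internal'' case. You compute the number of north steps after $\vec{p}$ by locating the next valley, which forces a further sub-split (last valley vs.\ next valley in some color $k^*>k$), a telescoping identity $a_{k^*}=a_k-(s_{k^*}-s_k)+\beta_{k^*}$, and the auxiliary claim $j^*=\beta_{k^*}$. This is all valid, but the paper bypasses it entirely: since $\vec{p}$ is the last valley with $x$-coordinate at most $s_k$, the run of north steps after $\vec{p}$ terminates exactly at height $m_k$ (the maximal $y$-coordinate of $\mu$ at $x=s_k$), so $c_2=m_k-q$. Combined with $a_k=m_k-s_k$ from Remark~\ref{rem:lac_tree_nb_actives}, this gives $m_k-q=j$ in one line. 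Your route works but the $m_k$ observation collapses this half of the proof considerably; you may want to adopt it.
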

\begin{proof}
	Let $c_1$ be the number of children of $u$ in $T$, and $c_2$ the number of consecutive north-steps in $\lactonn(T)$ immediately after $\vec{p}$.  Suppose that $u$ has color $k$, and that the rightmost child of $u$ is the $j$-th active node in LR-prefix order in the $(k+1)$-st step of Construction~\ref{constr:lac_tree} before coloring among a total of $a_k$ active nodes.  Moreover, let $m_{k}$ be the maximal $y$-coordinate of a lattice point on $\mu$ with $x$-coordinate $s_{k}$.  It follows from Remark~\ref{rem:lac_tree_nb_actives} that $a_{k}=m_{k}-s_{k}$.  By Construction~\ref{constr:lac_to_path} we have $q=s_{k}+a_{k}-j=m_{k}-j$.
	
	If $u$ is the first internal node in its color group, then $c_{1}=j$.  By construction, $\vec{p}$ is the last valley whose $x$-coordinate is at most $s_{k}$, which implies that $c_{2}=m_{k}-q=j$.
	
	Otherwise let $u_{*}$ be the internal node of color $k$ that immediately precedes $u$ in LR-prefix order. 
	Suppose that the rightmost child of $u_{*}$ is the $j_{*}$-th active node in LR-prefix order at step $k+1$ of Construction~\ref{constr:lac_tree} before coloring.  It follows that $c_{1}=j-j_{*}$.  Moreover, let $\vec{p}_{*}=(p_{*},q_{*})$ be the valley of $\mu$ corresponding to $u_{*}$.  We know that $\vec{p}_{*}$ is the valley immediately after $\vec{p}$, and we have $c_{2}=q_{*}-q$.  Since $q_{*}=s_{k}+a_{k}-j_{*}=m_{k}-j_{*}$ by Construction~\ref{constr:lac_to_path}, we conclude that $c_{2}=j-j_{*}$.  We thus have $c_1=c_2$ in all cases.
\end{proof}

We now prove that $\lactonn$ is a bijection with inverse $\nntolac$.

\begin{theorem}\label{thm:lac_to_nn}
	For every integer composition $\alpha=(\alpha_{1},\alpha_{2},\ldots,\alpha_{r})$, the map $\lactonn$ is a bijection with inverse $\nntolac$.  Moreover, $\lactonn$ sends internal nodes to valleys.
\end{theorem}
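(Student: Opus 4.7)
The plan is to follow the template established by Theorems~\ref{thm:lac_to_perm} and \ref{thm:lac_to_nc}. Propositions~\ref{prop:lac_to_nn_valid} and \ref{prop:nn_to_lac_valid} already certify that $\lactonn$ lands in $\Dyck_{\alpha}$ and that $\nntolac$ lands in $\Trees_{\alpha}$, so the heart of the proof is to verify that $\lactonn \circ \nntolac = \operatorname{id}$ and $\nntolac \circ \lactonn = \operatorname{id}$. The statement that $\lactonn$ sends internal nodes to valleys is essentially built into Construction~\ref{constr:lac_to_path}: every internal node of $T$ contributes exactly one marked coordinate, the marked coordinates are pairwise distinct, and they exhaust the valleys of $\mu_T$.

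For $\lactonn \circ \nntolac = \operatorname{id}$, I would take $\mu \in \Dyck_{\alpha}$, set $T = \nntolac(\mu)$ and $\mu' = \lactonn(T)$, and show $\mu = \mu'$. Since any Dyck path is determined by the positions of its valleys, it suffices to show that the valleys match. Remark~\ref{rem:lac_tree_nb_actives} translates the number of active nodes at each step of the left-aligned coloring of $T$ into the quantity $m_k - s_{k+1}$, where $m_k$ is the maximal $y$-coordinate attained by $\mu$ at abscissa $s_k$. Plugging these quantities into the index recipe $(p, q) = (s_k - i + 1, s_k + a_k - j)$ of Construction~\ref{constr:lac_to_path} identifies each valley of $\mu'$ with the corresponding valley of $\mu$, giving $\mu = \mu'$.

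For $\nntolac \circ \lactonn = \operatorname{id}$, I would start with $T \in \Trees_{\alpha}$, set $\mu = \lactonn(T)$ and $T' = \nntolac(\mu)$, and induct on the color $k$ to show that after step $k$ of Construction~\ref{constr:path_to_lac} applied to $\mu$, the partially built tree $T'$ agrees with the corresponding restriction of $T$. The base case $k = 0$ is trivial since both trees consist solely of the root. For the inductive step, the key observation is that the children added at step $k$ to the $i$-th color-$(k-1)$ node of $T'$ are determined by the number of north-steps of $\mu$ at $x$-coordinate $s_{k-1} - i + 1$, and by Lemma~\ref{lem:nb_children} this count equals the number of children of the corresponding internal node in $T$. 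Since the first $\alpha_k$ uncolored nodes receive color $k$ in LR-prefix order in both constructions, the tree structures coincide through step $k$, which closes the induction and yields $T = T'$.

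The principal technical hurdle is index bookkeeping: in Construction~\ref{constr:lac_to_path} the internal nodes of a fixed color are enumerated left-to-right while the associated $x$-coordinate $s_k - i + 1$ decreases in $i$, and similarly the active children are counted in reverse within each column. Reconciling these reversed enumerations against the column-based indexing of Construction~\ref{constr:path_to_lac}, and keeping track of which uncolored nodes persist between steps, requires careful reading; but once Lemma~\ref{lem:nb_children} and Remark~\ref{rem:lac_tree_nb_actives} are invoked the two directions reduce to routine inductions parallel to those in the proofs of Theorems~\ref{thm:lac_to_perm} and \ref{thm:lac_to_nc}.
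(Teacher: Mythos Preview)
Your proposal is correct and follows essentially the same approach as the paper: both directions are handled exactly as you describe, with the valley-matching argument for $\lactonn \circ \nntolac = \operatorname{id}$ using Remark~\ref{rem:lac_tree_nb_actives} to compute $a_k = m_k - s_k$ and recover $(p,q)$ from the index recipe, and the color-by-color induction for $\nntolac \circ \lactonn = \operatorname{id}$ using Lemma~\ref{lem:nb_children} to match the number of children at each node. The paper's write-up is only marginally more explicit about the index arithmetic in the first direction and about matching the positions of corresponding nodes among the active nodes in the inductive step of the second direction.
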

\begin{proof}
	In view of Propositions~\ref{prop:lac_to_nn_valid}~and~\ref{prop:nn_to_lac_valid}, it remains to prove that $\lactonn \circ \nntolac = \operatorname{id}$ and $\nntolac \circ \lactonn = \operatorname{id}$.

	To show that $\lactonn \circ \nntolac = \operatorname{id}$, for $\mu\in\Dyck_{\alpha}$, let $T=\nntolac(\mu)$ be the $\alpha$-tree obtained from $\mu$ by Construction~\ref{constr:path_to_lac}, and $\mu'=\lactonn(T)$ the $\alpha$-Dyck path obtained from $T$ by Construction~\ref{constr:lac_to_path}.  Since every valley of $\mu$ corresponds to an internal node of $T$, which in turn corresponds to a valley of $\mu'$, we conclude that $\mu$ and $\mu'$ have the same number of valleys.  
	
	Let $(p,q)$ be a valley of $\mu$, and $u$ the corresponding internal node of $T$, which is the $(s_{k}-p+1)$-st node of color $k$ in the LR-prefix order of $T$. 
	Let $m_k$ be the maximal $y$-coordinate of a lattice point on $\mu$ with $x$-coordinate $s_{k}$.  By Remark~\ref{rem:lac_tree_nb_actives}, the number of active nodes in the $(k+1)$-st step of Construction~\ref{constr:lac_tree} (before coloring) is precisely $a_{k}=m_{k}-s_{k}$.  The rightmost child of $u$ is thus the $(m_{k}-q)$-th active node at step $k$ in the LR-prefix order of $T$.  By Construction~\ref{constr:path_to_lac}, the internal node $u$ and its rightmost child contribute a valley $(p',q')$ to $\mu'$, where $p'=s_{k}-(s_{k}-p+1)+1=p$ and $q'=s_{k}+a_{k}-(m_{k}-q)=q$.  Thus, the valleys of $\mu$ and $\mu'$ agree, and it follows that $\mu=\mu'$.
	
	\medskip
	
	To show that $\nntolac \circ \lactonn = \operatorname{id}$, for $T\in\Trees_{\alpha}$, let $\mu=\lactonn(T)$ be the $\alpha$-Dyck path obtained from $T$ by Construction~\ref{constr:lac_to_path}, and $T'=\nntolac(\mu)$ the $\alpha$-tree obtained from $\mu$ by Construction~\ref{constr:path_to_lac}.  For $s\in[r]$, let $T_{s},T'_{s}$ denote the induced subtrees of $T,T'$, respectively, that consist of the nodes of color at most $s$.  We prove by induction on $s$ that $T_{s}=T'_{s}$ holds for all $0\leq s\leq r$.
	
	The base case $s=0$ is trivially true, since both trees $T_{0}$ and $T'_{0}$ consist only of a root node.  Suppose that $T_{s-1}=T'_{s-1}$. 
	Suppose that $u$ is a node of $T_{s-1}$ that has $k>0$ children in $T$. 
	By Lemma~\ref{lem:nb_children}, either $\mu$ starts with $k$ north-steps (if $u$ is the root), or the valley corresponding to $u$ is followed by $k$ north-steps.  By Construction~\ref{constr:path_to_lac}, it follows that $u'$ also has $k$ children. 
	We also notice that, by construction, the positions of $u$ and $u'$ among all active nodes in $T_{s-1}$ and $T'_{s-1}$ are the same. Therefore, we have $T_s=T'_s$.
\end{proof}

We may now prove our first main result.
\bijections*
\begin{proof}
	The bijections are established in Theorems~\ref{thm:lac_to_perm}, \ref{thm:lac_to_nc}, and \ref{thm:lac_to_nn}.
\end{proof}

We conclude this section by showing that the bijections $\Theta_{1}$ and $\Theta_{2}$ from Sections~\ref{sec:noncrossings_to_paths} and \ref{sec:permutations_to_noncrossings}, respectively, can be recovered using the new bijections from Sections~\ref{sec:trees_to_permutations}--\ref{sec:trees_to_paths}.

\begin{proposition}\label{prop:theta_1_equiv}
	For every $\mu\in\Dyck_{\alpha}$, we have $\Theta_{1}(\overline{\mu}) = \lactonc\circ\nntolac(\mu)$.
\end{proposition}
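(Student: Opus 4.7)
The plan is to show that the two bijections agree by matching their outputs bump-by-bump, since a noncrossing $\alpha$-partition is determined by its set of bumps. First, I would note that both $\Theta_{1}\circ\overline{(\cdot)}$ and $\lactonc\circ\nntolac$ are bijections $\Dyck_{\alpha}\to\NC_{\alpha}$: the former by Theorem~2.4 together with the reversal bijection $\Dyck_{\alpha}\to\Dyck_{\overline{\alpha}}$, the latter by composing Theorems~\ref{thm:lac_to_nn} and~\ref{thm:lac_to_nc}. Each map produces exactly one bump per valley of $\mu$: reversal identifies valleys of $\mu$ with valleys of $\overline{\mu}$ via $(p,q)\mapsto(n-p,n-q)$, Construction~\ref{constr:path_to_lac} (via Theorem~\ref{thm:lac_to_nn}) identifies valleys of $\mu$ with internal nodes of $T=\nntolac(\mu)$, and Construction~\ref{constr:lac_to_nc} sends internal nodes to bumps.

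Next, I would fix a valley $(p,q)$ of $\mu$ and locate the associated internal node $u$ of $T$ using Construction~\ref{constr:lac_to_path}: if $u$ has color $k$, is the $i$-th internal node of color $k$ in LR-prefix order, and its rightmost child $v$ is the $j$-th active node at step $k+1$ before coloring (among $a_{k}$ actives, cf.\ Remark~\ref{rem:lac_tree_nb_actives}), then $p=s_{k}-i+1$ and $q=s_{k}+a_{k}-j$. The bump assigned by $\lactonc$ then connects the flattening label of $u$, namely $s_{k-1}$ plus the LR-prefix position of $u$ within its color group, to the flattening label of $v$, namely $s_{k}$ plus the LR-prefix position of $v$ within its color group.

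I would then translate into the language of Construction~\ref{constr:path_to_nc} applied to $(n-p,n-q)$ in $\overline{\mu}$. Using $t_{i}=n-s_{r-i}$, the unique region index $k'$ with $t_{k'}\leq n-q\leq t_{k'+1}$ satisfies $r-k'=k+1$, so the endpoint $a$ lies in the $(k+1)$-st $\alpha$-region, matching the color of $v$. A direct computation unfolds the offset $\ell=(n-q)-t_{k'}$ into the LR-prefix position of $v$ among its color group. For the second endpoint $b$ the two constructions differ in flavor: $\Theta_{1}$ counts elements past the designated region that are not yet below an existing bump, while $\lactonc$ reads the LR-prefix position of $u$. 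I would bridge these by processing the valleys of $\overline{\mu}$ in the order prescribed by Construction~\ref{constr:path_to_nc} (increasing first coordinate in $\overline{\mu}$, equivalently decreasing in $\mu$) and inducting on this processing order, using the natural correspondence between descendants of already-processed internal nodes of $T$ and elements already placed below bumps in the partial partition.

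The main obstacle will be exactly this last matching step: reconciling the global ``next available element'' description underlying Construction~\ref{constr:path_to_nc} with the local LR-prefix combinatorics that governs Construction~\ref{constr:lac_to_nc}. The key lever will be Lemma~\ref{lem:nb_children}, which pins down how many north-steps follow a given valley in terms of children of the associated node, combined with the observation from the proof of Proposition~\ref{prop:nn_to_lac_valid} that the coloring of Construction~\ref{constr:path_to_lac} coincides with the left-aligned coloring; together these ensure that the running counts of ``free'' and ``consumed'' elements on the $\Theta_{1}$ side agree step-by-step with the subtree counts on the $\lactonc$ side.
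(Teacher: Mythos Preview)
Your overall strategy matches the paper's: both arguments identify each valley of $\mu$ with an internal node of $T=\nntolac(\mu)$ and with a valley of $\overline{\mu}$, and then check that the two resulting bumps coincide. The paper's own proof is in fact considerably terser than yours and essentially asserts the bump-by-bump agreement without spelling out the ``hard'' endpoint, so your plan to argue more carefully is reasonable.

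However, there is a concrete error that propagates through your computations. Reversing a Dyck path and swapping $N\leftrightarrow E$ sends a valley at $(p,q)$ to a valley at $(n-q,\,n-p)$, not at $(n-p,\,n-q)$. (Check $\mu=NNENEENE$: its valleys $(1,2)$ and $(3,3)$ go to $(2,3)$ and $(1,1)$ in $\overline{\mu}$.) With the correct formula, the $y$-coordinate of the valley in $\overline{\mu}$ is $n-p$, so the region index $k'$ in Construction~\ref{constr:path_to_nc} satisfies $t_{k'}\le n-p\le t_{k'+1}$, which unwinds to $r-k'=k$, \emph{not} $k+1$. Hence the first endpoint $a$ produced by $\Theta_{1}$ lies in the $k$-th $\alpha$-region and matches the flattening position of $u$ (the internal node of color $k$), not of its rightmost child $v$; indeed $\ell=(n-p)-t_{k'}=s_{k}-p=i-1$, so $a=s_{k-1}+i$ is exactly $u$'s position in the flattening. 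Conversely, the sequential ``first element not yet below a bump'' description of $b$ in Construction~\ref{constr:path_to_nc} is what must be matched with the flattening position of $v$. So your identification of which endpoint is direct and which is sequential is inverted.

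Once you correct the reversal map and swap the roles of $u$ and $v$ accordingly, your proposed induction on the processing order is the right shape of argument for the $b$-endpoint: processing valleys of $\overline{\mu}$ in increasing first coordinate is processing valleys of $\mu$ in decreasing $q$, and the elements ``not yet below a bump'' past the $k$-th region are precisely the nodes of color $>k$ whose parents have not yet been handled, which lines up with the active nodes of Construction~\ref{constr:lac_tree} and hence with the LR-prefix position of $v$. Lemma~\ref{lem:nb_children} and Remark~\ref{rem:lac_tree_nb_actives} are indeed the right tools here.
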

\begin{proof}
	Let $T=\nntolac(\mu)$, $\mathbf{P}=\lactonc(T)$, and $\mathbf{P}'=\Theta_{1}(\overline{\mu})$.  By Theorem~\ref{thm:lac_to_nn}, we know that every valley of $\mu$ corresponds to an internal node $u$ of $T$, with its coordinates dictating the exact position of $u$ and its rightmost child $v$ in the $\alpha$-regions.  By Theorem~\ref{thm:lac_to_nc}, each such pair $(u,v)$ corresponds to a bump $(a,b)$ of $\mathbf{P}$, where $a$ and $b$ are determined by the positions of $u$ and $v$.  If we now take the corresponding valley of $\overline{\mu}$, and apply Construction~\ref{constr:path_to_nc}, then we see that $(a,b)$ is also a bump of $\mathbf{P}'$.  We conclude that $\mathbf{P}=\mathbf{P}'$.
\end{proof}

\begin{proposition}\label{prop:theta_2_equiv}
	For every $\mathbf{P}\in\NC_{\alpha}$, we have $\Theta_{2}(\mathbf{P}) = \lactoperm\circ\nctolac(\mathbf{P})$.
\end{proposition}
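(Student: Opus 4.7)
The strategy is induction on $n = |\alpha|$, mirroring the recursive structure of $\Theta_{2}$ from Construction~\ref{constr:nc_to_perm}. The base case $n = 0$ is trivial. For the inductive step, let $\mathbf{P} \in \NC_{\alpha}$ with first block $P_{\circ} = \{1 = i_1 < i_2 < \cdots < i_s\}$, let $c_j$ denote the region containing $i_j$, and set $T = \nctolac(\mathbf{P})$ and $w = \lactoperm(T)$. First I would analyze the structure of $T$ near its root: by Construction~\ref{constr:nc_to_lac} combined with Theorem~\ref{thm:lac_to_nc}, the elements of $P_{\circ}$ lie along a rightmost path $v_1 \to v_{i_2} \to \cdots \to v_{i_s}$ in $T$, and $v_1$ is the leftmost child of $v_0$ (since the children of $v_0$ are ordered by label and $1$ is minimal). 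A short block-poset argument identifies the set $D$ from Construction~\ref{constr:nc_to_perm} with $\{q : v_q \in T_1\}$, where $T_1$ is the subtree rooted at $v_1$; in particular $|D| = |T_1|$.

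Since $v_1$ is the leftmost child of $v_0$, the LR-postfix traversal assigns the labels $1, \ldots, |D|$ to the nodes of $T_1$ and the labels $|D|+1, \ldots, n$ to the remaining nodes. Inside $T_1$, the rightmost-path structure forces $v_{i_j}$ to be visited $j$-th from last, so its LR-postfix label equals $|D| - j + 1$; this is the largest color-$c_j$ label among nodes of $T_1$, because the larger labels $|D|-j+2, \ldots, |D|$ belong to $v_{i_{j-1}}, \ldots, v_1$ of pairwise distinct colors $c_{j-1}, \ldots, c_1$, while every color-$c_j$ label outside $T_1$ exceeds $|D|$. Consequently, in $w$ the value $|D|-j+1$ lands at position $s_{c_j-1} + m_j$, where $m_j$ is the number of color-$c_j$ nodes of $T_1$. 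The technical heart of the proof is the identity $m_j = i_j - s_{c_j-1}$, equivalent to: the elements of region $c_j$ lying in $D$ are precisely $\{s_{c_j-1}+1, \ldots, i_j\}$. The forward inclusion follows because each such element lies below the $P_{\circ}$-bump $(i_{j-1}, i_j)$, and a short case analysis using \eqref{it:nc1} and \eqref{it:nc2} places its corresponding tree-node as a descendant of $v_{i_{j-1}} \in T_1$; the reverse inclusion uses the noncrossing conditions to rule out any separating bump that would drag an element $i > i_j$ of region $c_j$ under $v_1$. Granting this, $w(i_j) = |D|-j+1 = \Theta_{2}(\mathbf{P})(i_j)$ on all of $P_{\circ}$.

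To close the induction, I would decompose $T$ into two subforests: $T_1 \setminus P_{\circ}$, consisting of the non-rightmost-child subtrees along the $P_{\circ}$-path, and $T \setminus T_1$, the subtrees hanging off the other children of $v_0$. Adjoining a virtual root to each yields $\nctolac(\mathbf{P}_1)$ and $\nctolac(\mathbf{P}_2)$ respectively, after the natural relabeling of elements and regions; the restricted LR-postfix orders and left-aligned colorings match those coming from $\mathbf{P}_1$ and $\mathbf{P}_2$. Applying the induction hypothesis to both pieces, together with the values already fixed on $P_{\circ}$ and the shifts by $|D|$ built into Construction~\ref{constr:nc_to_perm}, reproduces the embedding prescribed by $\Theta_{2}$ and forces $w = \Theta_{2}(\mathbf{P})$. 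The hardest single step is the rank identity $m_j = i_j - s_{c_j-1}$: this is where the noncrossing hypotheses genuinely intervene, and everything else is bookkeeping matched to the inductive decomposition.
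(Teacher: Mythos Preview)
Your proposal is correct and follows essentially the same inductive strategy as the paper: identify $P_\circ$ with the rightmost branch of the first subtree $T_1$ of the root, match $D$ with the node set of $T_1$, verify the values on $P_\circ$, and then decompose into two smaller LAC trees corresponding to $\mathbf{P}_1$ and $\mathbf{P}_2$ to invoke the induction hypothesis. The only notable difference is one of explicitness: you isolate the rank identity $m_j = i_j - s_{c_j-1}$ and prove it directly from \eqref{it:nc1} and \eqref{it:nc2} to pin down the position of the label $|D|-j+1$ inside its color block, whereas the paper absorbs this positioning into the final reconstruction step (observing that within each color the values coming from $T_1\setminus P_\circ$, from $P_\circ$, and from $T_2$ are numerically separated and hence land in the right relative positions once sorted). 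Both arguments are valid; yours front-loads the work that the paper leaves to the end.
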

\begin{proof}
	Let $T=\nctolac(\mathbf{P})$, $w=\lactoperm(T)$, and $w'=\Theta_{2}(\mathbf{P})$.  We prove by induction on $n=\lvert\alpha\rvert$ that $w=w'$.  The base case $n=0$ clearly holds. 
	Now suppose that our claim holds for all $\alpha'$ with $\lvert\alpha'\rvert<n$.  

	We take the block of $\mathbf{P}$ containing $1$, and denote it by $P_{\circ}=\{i_{1},i_{2},\ldots,i_{s}\}$ with $i_{1}<i_{2}<\cdots<i_{s}$ and $i_{1}=1$.  By Construction~\ref{constr:nc_to_perm}, we have $w'(i_{1})=w'_{i_{2}}+1=\cdots=w'_{i_{s}}+s-1$.  The remaining values of $w'$ are determined inductively from two parabolic noncrossing partitions $\mathbf{P}_{1}$ and $\mathbf{P}_{2}$ with strictly less than $n$ elements each.  By induction hypothesis we have $\Theta_2(\mathbf{P}_{i}) = \lactoperm\circ\nctolac(\mathbf{P}_{i})$ for $i\in\{1,2\}$.

	On the other hand, by Construction~\ref{constr:nc_to_lac}, the elements in $P_{\circ}$ correspond to nodes in $T$ on the rightmost branch of the first child $u$ of the root, that is, the set of nodes visited from $u$ by always moving to the rightmost child when possible. 
	The label of $u$ in Construction~\ref{constr:lac_to_perm} is always $w(1)$, and the label of the rightmost child of any node $v$ is always one less than the label of $v$ itself.  Finally, by comparing Constructions~\ref{constr:nc_to_perm} and \ref{constr:nc_to_lac} we see that the elements contributing to $w'(1)$ are precisely the nodes in the subtree of $u$, and we conclude that $w$ and $w'$ agree on $P_{\circ}$.  

	Now let $T_{2}$ be the induced subtree of $T$ containing all the elements that are descendants of all but the first root children, and let $w_{2}=\lactoperm(T_{2})$.  
	Moreover, we construct a new tree $T_1$ from $T$ by first deleting the root and all nodes of $T_{2}$, and then contracting the rightmost branch of $u$ into a new root node while keeping the LR-prefix order of the remaining nodes.
	Let $w_{1}=\lactoperm(T_{1})$.  
	The elements of $T_{1}$ and $T_{2}$, respectively, are in the same relative order as in $T$ with respect to LR-postfix order. 
	Hence, $w$ can be reconstructed from the values $w(i_{1}),w(i_{2}),\ldots,w(i_{s})$, the values in $w_{1}$ and the values in $w_{2}$ increased by $w(1)$.  This is precisely the way that $w'$ was created, so we conclude that $w=w'$.
\end{proof}

\part{Tamari Lattices in Parabolic Cataland}\label{part:tamari}

In the middle part of this article, we equip each of the sets $\Symmetric_{\alpha}(231)$ and $\Dyck_{\alpha}$ with a partial order, and show that the resulting posets are isomorphic (Theorem~\ref{thm:parabolic_nu_tamari_isomorphism}). Both posets are in fact lattices and generalize the well-known \tdef{Tamari lattice} $\Tamari_{n}$ introduced in \cites{tamari51monoides,tamari62algebra}.  See \cite{hoissen12associahedra} for a recent survey on topics related to the Tamari lattices.  

For the proof of this isomorphism, 
we need some lattice-theoretic background, which is only relevant for the current section.  We have thus decided to put it in Appendix~\ref{sec:lattice_theory}, which can be consulted whenever necessary.

\section{Parabolic Tamari Lattices}
	\label{sec:parabolic_tamari_lattices}
For every permutation $w\in\Symmetric_{n}$ we define its \tdef{inversion set} by
\begin{displaymath}
	\inv(w) \defs \bigl\{(i,j)\mid i<j\;\text{and}\;w(i)>w(j)\bigr\}.
\end{displaymath}
The elements of $\inv(w)$ are the \tdef{inversions} of $w$.  In particular, every descent of $w$ is also an inversion.  The \tdef{(left) weak order} on $\Symmetric_{n}$ is defined by setting $w_{1}\leq_{L}w_{2}$ if and only if $\inv(w_{1})\subseteq\inv(w_{2})$ for all $w_{1},w_{2}\in\Symmetric_{n}$.

\begin{definition}
	The \tdef{parabolic Tamari lattice} $\Tamari_{\alpha}\defs\bigl(\Symmetric_{\alpha}(231),\leq_{L}\bigr)$ is defined to be the restriction of the weak order to $(\alpha,231)$-avoiding permutations.
\end{definition}

\begin{figure}
	\centering
	\begin{subfigure}[b]{.45\textwidth}
		\centering
		\paraTam{1.4\textwidth}
		\caption{The parabolic Tamari lattice $\Tamari_{(2,2,1)}$.}
		\label{fig:parabolic_tamari_5_221}
	\end{subfigure}
	\hspace*{1cm}
	\begin{subfigure}[b]{.45\textwidth}
		\centering
		\begin{tikzpicture}\small
			\def\x{2};
			\def\y{2};
			\def\s{.8};
			\draw(1*\x,1*\y) node(n24){$(2,4)$};
			\draw(2*\x,1*\y) node(n23){$(2,3)$};
			\draw(3*\x,1*\y) node(n14){$(1,4)$};
			\draw(1*\x,1.5*\y) node(n25){$(2,5)$};
			\draw(2*\x,1.5*\y) node(n15){$(1,5)$};
			\draw(3*\x,1.5*\y) node(n13){$(1,3)$};
			\draw(1*\x,2*\y) node(n35){$(3,5)$};
			\draw(2*\x,2*\y) node(n45){$(4,5)$};
			\draw[->](n45) -- (n15);
			\draw[->](n45) -- (n25);
			\draw[->](n35) -- (n45);
			\draw[->](n35) -- (n25);
			\draw[->](n35) -- (n15);
			\draw[->](n25) -- (n23);
			\draw[->](n25) -- (n24);
			\draw[->](n15) -- (n23);
			\draw[->](n15) -- (n24);
			\draw[->](n15) -- (n25);
			\draw[->](n15) -- (n13);
			\draw[->](n15) -- (n14);
			\draw[->](n24) -- (n23);
			\draw[->](n14) -- (n13);
			\draw[->](n14) -- (n23);
			\draw[->](n14) .. controls (2.75*\x,.75*\y) and (1.25*\x,.75*\y) .. (n24);
			\draw[->](n13) -- (n23);
		\end{tikzpicture}
		\caption{The Galois graph of $\Tamari_{(2,2,1)}$.}
		\label{fig:parabolic_galois_5_221}
	\end{subfigure}
	\caption{The lattice $\Tamari_{(2,2,1)}$ and its Galois graph.}
	\label{fig:parabolic_5_221}
\end{figure}

Figure~\ref{fig:parabolic_tamari_5_221} shows the parabolic Tamari lattice $\Tamari_{(2,2,1)}$.  The elements of $\Symmetric_{(2,2,1)}(231)$ are shown in one-line notation, where the $(2,2,1)$-regions are separated by vertical bars.  Theorem~1.1 in \cite{muehle18tamari} states that $\Tamari_{\alpha}$ is indeed a lattice. Moreover, the following structural result holds.

\begin{theorem}[{\cite[Theorem~1.3]{muehle18noncrossing}}]\label{thm:parabolic_tamari_cul_extremal}
	For every integer composition $\alpha$, the parabolic Tamari lattice $\Tamari_{\alpha}$ is extremal and congruence uniform.
\end{theorem}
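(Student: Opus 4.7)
The plan is to prove both properties by exploiting the fact, established in \cite{muehle18tamari}, that $\Tamari_{\alpha}$ arises as a lattice quotient of the (left) weak order on the parabolic quotient $\Symmetric_{\alpha}$. The ambient weak order on $\Symmetric_{n}$ is a classical example of a congruence uniform (and in fact semidistributive) lattice, so the first move is to reduce the two claims to statements about a specific congruence $\Theta_{\alpha}$ on $\Symmetric_{\alpha}$ whose quotient is $\Tamari_{\alpha}$. The congruence classes of $\Theta_{\alpha}$ are exactly the fibers of the ``projection down to the unique $(\alpha,231)$-avoiding minimum,'' and this description makes each class an interval of the weak order.

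For congruence uniformity, I would invoke the fact that congruence uniformity is preserved under quotients by congruences that are themselves expressible as iterated contractions inside a congruence-uniform framework. Concretely, one constructs $\Tamari_{\alpha}$ from the one-element lattice by a sequence of doublings of intervals; the order in which these doublings are performed can be read off from a linear extension of the poset of join-irreducibles of $\Tamari_{\alpha}$. Each join-irreducible of $\Tamari_{\alpha}$ corresponds to a join-irreducible of the ambient weak order that is \emph{not} killed by the congruence $\Theta_{\alpha}$, and one must verify that at each stage of the doubling sequence the added interval lifts from the weak order. The tools collected in the appendix (Galois graphs, labelings of cover relations) are precisely designed to facilitate this verification.

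For extremality, the plan is to exhibit explicit descriptions of $\JI(\Tamari_{\alpha})$ and $\MI(\Tamari_{\alpha})$ and show they are equinumerous and of size equal to the length of a maximal chain in $\Tamari_{\alpha}$. A join-irreducible should correspond to an $(\alpha,231)$-avoiding permutation with exactly one descent (and this descent must be compatible with the $\alpha$-regions); a meet-irreducible admits a dual characterization. Using Theorems~\ref{thm:lac_to_perm} and~\ref{thm:lac_to_nn}, both families can be transferred to $\alpha$-trees with a specified structural feature (essentially, trees with a unique internal non-root node of a prescribed shape), which permits a bijective count. The length of a longest chain equals the number of descents of the maximum element of $\Tamari_{\alpha}$, and one checks that this matches the common cardinality.

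The main obstacle will be the congruence-uniformity half, because it requires not just an \emph{enumeration} but an actual recursive construction of $\Tamari_{\alpha}$ by interval doublings that is compatible with both the parabolic structure of $\Symmetric_{\alpha}$ and with the $(\alpha,231)$-avoidance. In the non-parabolic case $\alpha=(1,\dots,1)$ this is standard, but in general the difficulty is that the congruence classes of $\Theta_{\alpha}$ interact nontrivially with the parabolic cover relations, so one must track how covers collapse under the quotient and verify that the resulting doubling sequence indeed realizes $\Tamari_{\alpha}$. Once this framework is in place, extremality follows quickly by counting arguments, since congruence-uniform lattices come equipped with canonical labelings that make the $\JI$/$\MI$/maximal-chain comparison automatic.
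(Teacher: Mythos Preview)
The paper does not prove this theorem: it is quoted verbatim from \cite[Theorem~1.3]{muehle18noncrossing} and used as a black box, so there is no ``paper's own proof'' to compare against. Your proposal is therefore being judged on its own merits as a sketch of how one might establish the result.

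Your overall strategy is sound, but two points deserve correction. First, your route to congruence uniformity is needlessly elaborate. You do not have to exhibit an explicit doubling sequence for $\Tamari_{\alpha}$. It is a theorem of Day that any lattice quotient of a congruence-uniform lattice is again congruence uniform, and it is classical (Caspard, and others) that the weak order on $\Symmetric_{n}$ is congruence uniform. Since $(\Symmetric_{\alpha},\leq_{L})$ is an interval of the weak order (namely $[\mathrm{id},w_{\circ}^{\alpha}]$ for the longest element $w_{\circ}^{\alpha}$ of the parabolic quotient), and intervals of congruence-uniform lattices are congruence uniform \cite[Theorem~4.3]{day79characterizations}, the parabolic weak order is congruence uniform. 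Then $\Tamari_{\alpha}$, being a lattice quotient of it \cite[Proposition~3.18]{muehle18tamari}, is congruence uniform. No explicit tracking of doublings or of how ``covers collapse'' is required.

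Second, your closing sentence is wrong: congruence uniformity does \emph{not} make extremality automatic. Congruence-uniform (equivalently, semidistributive) lattices always satisfy $\lvert\JI\rvert=\lvert\MI\rvert$, but there is no reason in general for this common value to equal the length of the lattice; that is an additional and genuinely separate fact. Your earlier paragraph has the right idea: one must identify the join-irreducibles as the $(\alpha,231)$-avoiding permutations with a unique descent, count them (they are indexed by pairs $(a,b)$ with $a<b$ in distinct $\alpha$-regions, as in Theorem~\ref{thm:parabolic_tamari_galois}), and then separately verify that a maximal chain of $\Tamari_{\alpha}$ has exactly this length, for instance by computing the number of inversions of the top element $w_{\circ}^{\alpha}$. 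That computation is straightforward but must actually be carried out; it is not a formal consequence of congruence uniformity.
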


It follows from \cite[Corollary~4.5]{muehle18noncrossing} that the join-irreducible elements of $\Tamari_{\alpha}$ are precisely the $(\alpha,231)$-avoiding permutations with a unique descent.  As a consequence of Proposition~\ref{prop:extremal_cu_galois}, the Galois graph of $\Tamari_{\alpha}$ (defined in Section~\ref{sec:extremal_lattices}) can be realized as a directed graph whose vertices are pairs of integers.  Let us make this more precise.

\begin{theorem}[{\cite[Theorem~1.8]{muehle18noncrossing}}]\label{thm:parabolic_tamari_galois}
	Let $\alpha$ be an integer composition.  The Galois graph of $\Tamari_{\alpha}$ is isomorphic to the directed graph with vertex set 
	\begin{displaymath}
		\bigl\{(a,b)\mid 1\leq a<b\leq\lvert\alpha\rvert\;\text{and}\;a,b\;\text{belong to different}\;\alpha\text{-regions}\bigr\},
	\end{displaymath}
	where there is a directed edge $(a_{1},b_{1})\to (a_{2},b_{2})$ if and only if $(a_{1},b_{1})\neq(a_{2},b_{2})$ and 
	\begin{itemize}
		\item either $a_{1}$ and $a_{2}$ belong to the same $\alpha$-region and $a_{1}\leq a_{2}<b_{2}\leq b_{1}$,
		\item or $a_{1}$ and $a_{2}$ belong to different $\alpha$-regions and $a_{2}<a_{1}<b_{2}\leq b_{1}$, where $a_{1}$ and $b_{2}$ belong to different $\alpha$-regions, too.
	\end{itemize}
\end{theorem}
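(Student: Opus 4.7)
The plan is to use the structural characterization of extremal, congruence uniform lattices via their Galois graphs (Proposition~\ref{prop:extremal_cu_galois} from the appendix). By Theorem~\ref{thm:parabolic_tamari_cul_extremal}, $\Tamari_{\alpha}$ is both extremal and congruence uniform, so its Galois graph is determined by the set of join-irreducibles $\JI(\Tamari_{\alpha})$ together with the canonical pairing $\kappa\colon\JI(\Tamari_{\alpha})\to\MI(\Tamari_{\alpha})$.

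First I would establish the vertex identification. By Corollary~4.5 of \cite{muehle18noncrossing} (quoted in the excerpt), join-irreducibles of $\Tamari_{\alpha}$ are exactly those $(\alpha,231)$-avoiding permutations with a unique descent pair. I claim that the map sending such a permutation $j$ to its unique descent pair $(a,b)$ (with $a<b$) is a bijection onto the vertex set in the statement. For injectivity, I would argue that the parabolic constraint together with unique-descent and $(\alpha,231)$-avoidance pin down the full permutation from the pair $(a,b)$. For surjectivity, given a pair $(a,b)$ with $a<b$ in different $\alpha$-regions, I would exhibit explicitly the $(\alpha,231)$-avoiding permutation $j_{(a,b)}$ that is the identity outside positions $a$ through $b$, whose values at positions $a$ and $b$ satisfy $j_{(a,b)}(a)=j_{(a,b)}(b)+1$, and which shifts the intermediate values minimally to preserve increases within each $\alpha$-region; one checks directly that $j_{(a,b)}$ avoids $(\alpha,231)$-patterns and has $(a,b)$ as its unique descent pair. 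The requirement that $a$ and $b$ lie in different $\alpha$-regions is forced, for otherwise $(a,b)$ would be an inversion inside a single $\alpha$-region and no permutation in $\Symmetric_{\alpha}$ could realize it.

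Next I would give a similar explicit description of the meet-irreducibles and of the canonical pairing. Concretely, I would describe $\kappa(j_{(a,b)})$ as the unique $(\alpha,231)$-avoiding permutation in $\MI(\Tamari_{\alpha})$ obtained by pushing the ``inverted block'' at positions $\{a,\ldots,b\}$ outward as far as possible without creating a new descent pair or a $(\alpha,231)$-pattern, recording the resulting boundary positions. The Galois-graph criterion of Proposition~\ref{prop:extremal_cu_galois} then takes the form: $(a_{1},b_{1})\to(a_{2},b_{2})$ is an edge if and only if the two pairs are distinct and $j_{(a_{2},b_{2})}\not\leq_{L}\kappa(j_{(a_{1},b_{1})})$ (or the dual form, according to the precise statement in the appendix). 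Since the weak order is controlled by inversion sets, this translates into a containment condition $\inv(j_{(a_{2},b_{2})})\subseteq\inv(\kappa(j_{(a_{1},b_{1})}))$, which I would then expand using the explicit descriptions of $j_{(a,b)}$ and $\kappa(j_{(a,b)})$.

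The main obstacle will be the subsequent case analysis, split according to whether $a_{1}$ and $a_{2}$ lie in the same $\alpha$-region and according to the relative order of $a_{1},b_{1},a_{2},b_{2}$. The parabolic setting introduces an asymmetry: in the case where $a_{1}$ and $a_{2}$ belong to different $\alpha$-regions, the inversion-set containment survives only under the additional hypothesis that $a_{1}$ and $b_{2}$ also belong to different regions, and I would have to verify both directions by exhibiting, in the failing subcase, a concrete $(\alpha,231)$-pattern or a forced coincidence of values in the same $\alpha$-region that breaks the needed inequality. The classical Tamari case $\alpha=(1,\ldots,1)$ collapses the two bullets in the statement into the familiar nested condition $a_{1}\leq a_{2}<b_{2}\leq b_{1}$, which serves as a useful sanity check and tells me that the parabolic subtleties are confined to boundaries between $\alpha$-regions.
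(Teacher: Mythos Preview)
The paper does not prove this theorem at all: it is quoted verbatim as \cite[Theorem~1.8]{muehle18noncrossing} and used as a black box, so there is no ``paper's own proof'' to compare your proposal against. Your outline is broadly the right strategy (and is essentially the route taken in \cite{muehle18noncrossing}), but a few points in your sketch are imprecise. First, you invoke Proposition~\ref{prop:extremal_cu_galois} and then immediately switch to a criterion involving a canonical pairing $\kappa\colon\JI\to\MI$ and the condition $j_{(a_2,b_2)}\not\leq_L\kappa(j_{(a_1,b_1)})$; but Proposition~\ref{prop:extremal_cu_galois} says something different and more directly usable, namely that there is an edge $i\to k$ if and only if $k\leq k_*\vee i$, a criterion phrased entirely inside $\JI(\Tamari_\alpha)$. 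Working with that formulation avoids having to pin down $\kappa$ or the meet-irreducibles explicitly, and the case analysis reduces to deciding when $\inv(j_{(a_2,b_2)})\subseteq\inv\bigl((j_{(a_2,b_2)})_*\vee j_{(a_1,b_1)}\bigr)$, which is what actually drives the two bullets in the statement. Second, your description of $j_{(a,b)}$ as ``the identity outside positions $a$ through $b$'' is not quite right in the parabolic setting: the unique $(\alpha,231)$-avoiding permutation with descent $(a,b)$ generally moves values at positions between $a$ and $b$ that lie in the same $\alpha$-region as $a$, and getting this explicit form correct is exactly what makes the inversion-set comparison tractable.
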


Figure~\ref{fig:parabolic_galois_5_221} illustrates this result, and shows the Galois graph of $\Tamari_{(2,2,1)}$.  If $\alpha=(1,1,\ldots,1)$ and $n=\lvert\alpha\rvert$, then the classical Tamari lattice introduced in \cite{tamari51monoides} is $\Tamari_{n}\defs\Tamari_{(1,1,\ldots,1)}$~\cite[Theorem~9.6(ii)]{bjorner97shellable}.

\subsection{On the Duality of the Parabolic Tamari Lattices}
	\label{sec:duality_tamari}
We now show that reversing the composition $\alpha$, produces a parabolic Tamari lattice which is dual to $\Tamari_{\alpha}$.

Recall from Section~\ref{sec:members_parabolic_cataland} that, for an integer composition $\alpha=(\alpha_{1},\alpha_{2},\ldots,\alpha_{r})$, the reverse composition is $\overline{\alpha}=(\alpha_{r},\alpha_{r-1},\ldots,\alpha_{1})$.

Moreover, for every $n>0$, and every permutation $w\in\Symmetric_{n}$ we define the \tdef{reverse permutation} $\overline{w}$ by setting $\overline{w}(i)=w(n+1-i)$ for $i\in[n]$.  If $w\in\Symmetric_{\alpha}$, and we additionally reverse the order of the entries of each $\overline{\alpha}$-region of $\overline{w}$, then we obtain a permutation $\widetilde{w}\in\Symmetric_{\overline{\alpha}}$.

For example, let $w=24|157|3|6\in\Symmetric_{(2,3,1,1)}$.  Then we have $\overline{w}=6375142$, and $\widetilde{w}=6|3|157|24\in\Symmetric_{(1,1,3,2)}$.

\begin{lemma}\label{lem:reversal_weak_order}
	Let $w_{1},w_{2}\in\Symmetric_{\alpha}$.  We have $\inv(w_{1})\subseteq\inv(w_{2})$ if and only if $\inv(\widetilde{w}_{1})\supseteq\inv(\widetilde{w}_{2})$.
\end{lemma}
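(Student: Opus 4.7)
The plan is to exhibit an explicit bijection $\Psi$ on ``potential inversion'' sets that swaps inversions with non-inversions, so that the containment of inversion sets on one side becomes reverse containment on the other via a set-theoretic duality. First I observe that since every $w\in\Symmetric_{\alpha}$ is increasing within each $\alpha$-region, any inversion of $w$ must straddle two distinct $\alpha$-regions. Let
\[
I_{\alpha}\defs\bigl\{(i,j)\mid 1\leq i<j\leq n,\; i\text{ and }j\text{ belong to different }\alpha\text{-regions}\bigr\},
\]
so that $\inv(w)\subseteq I_{\alpha}$ for every $w\in\Symmetric_{\alpha}$, and analogously $\inv(\widetilde{w})\subseteq I_{\overline{\alpha}}$.

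Next I will unpack $\widetilde{w}$ into a usable formula. Let $\tau$ be the involution on $[n]$ that reverses each $\overline{\alpha}$-region. Since $w$ is increasing on each $\alpha$-region, the reverse $\overline{w}(i)=w(n+1-i)$ is decreasing on each $\overline{\alpha}$-region, and sorting each such region to be increasing amounts to composing with $\tau$. Hence $\widetilde{w}(i)=\overline{w}(\tau(i))=w(n+1-\tau(i))$ for every $i\in[n]$.

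Now I define $\Psi\colon I_{\overline{\alpha}}\to I_{\alpha}$ by $\Psi(i,j)\defs\bigl(n+1-\tau(j),\,n+1-\tau(i)\bigr)$. Since $\tau$ preserves each $\overline{\alpha}$-region, for $(i,j)\in I_{\overline{\alpha}}$ we have $\tau(i)<\tau(j)$ in distinct $\overline{\alpha}$-regions, so the two coordinates of $\Psi(i,j)$ lie in two distinct $\alpha$-regions; the analogous map in the other direction shows $\Psi$ is a bijection onto $I_{\alpha}$. Substituting the formula for $\widetilde{w}$, for $(i,j)\in I_{\overline{\alpha}}$ and $(a,b)=\Psi(i,j)$ (so $a<b$), the condition $(i,j)\in\inv(\widetilde{w})$ becomes $w(b)>w(a)$, i.e., $(a,b)\notin\inv(w)$. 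Therefore $\Psi$ restricts to a bijection $\inv(\widetilde{w})\to I_{\alpha}\setminus\inv(w)$.

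From this, the lemma follows by pure set theory:
\[
\inv(w_{1})\subseteq\inv(w_{2})\iff I_{\alpha}\setminus\inv(w_{2})\subseteq I_{\alpha}\setminus\inv(w_{1})\iff\inv(\widetilde{w}_{2})\subseteq\inv(\widetilde{w}_{1}),
\]
where the last step applies $\Psi^{-1}$ to both sides. The main piece of care is the index bookkeeping that confirms $\Psi$ really maps $I_{\overline{\alpha}}$ bijectively to $I_{\alpha}$ and that the identity $\widetilde{w}(i)=w(n+1-\tau(i))$ is correct; both become routine once the correspondence between $\alpha$-regions and $\overline{\alpha}$-regions under $i\mapsto n+1-i$ is made explicit.
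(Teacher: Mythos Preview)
Your proof is correct and follows essentially the same idea as the paper: both exploit that passing from $w$ to $\widetilde{w}$ complements the inversion set inside the set $I_{\alpha}$ of cross-region pairs, via the index correspondence $i\mapsto n+1-\tau(i)$. The paper carries this out as an element-by-element contraposition, while you package the same mechanism as an explicit bijection $\Psi\colon I_{\overline{\alpha}}\to I_{\alpha}$ together with set complementation; the mathematical content is identical.
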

\begin{proof}
	Let $\alpha=(\alpha_{1},\alpha_{2},\ldots,\alpha_{r})$ be an integer composition.  We only prove the implication ``$\inv(w_{1})\subseteq\inv(w_{2})$ implies $\inv(\widetilde{w}_{1})\supseteq\inv(\widetilde{w}_{2})$'', the other implication follows analogously.
	
	Let $w_{1},w_{2}\in\Symmetric_{\alpha}$ such that $\inv(w_{1})\subseteq\inv(w_{2})$, and choose integers $a,b$ in different $\overline{\alpha}$-regions such that $(a,b)\notin\inv(\widetilde{w}_{1})$.  
	
	Let $a',b'$ be the unique indices with $w_{1}(a')=\widetilde{w}_{1}(a)$ and $w_{1}(b')=\widetilde{w}_{1}(b)$.  Since the entries in each $\overline{\alpha}$-region are ordered linearly, and by going from $\widetilde{w}_{1}$ to $w_{1}$ we revert this order, we conclude that $w_{2}(a')=\widetilde{w}_{2}(a)$ and $w_{2}(b')=\widetilde{w}_{2}(b)$.
	
	Suppose that $a$ belongs to the $i$-th $\overline{\alpha}$-region, and suppose that $b$ belongs to the $j$-th $\overline{\alpha}$-region.  Since $a<b$ we have $i<j$.  (By construction, it follows from $\widetilde{w}_{1}\in\Symmetric_{\overline{\alpha}}$ that there are no inversions within the same $\overline{\alpha}$-region.)  By construction we conclude that $a$ and $a'$ belong to the $(r+1-i)$-th $\alpha$-region and $b$ and $b'$ belong to the $(r+1-j)$-th $\alpha$-region.  It follows that $b'<a'$.

	Since $(a,b)\notin\inv(\widetilde{w}_{1})$ we must have $w_{1}(a') = \widetilde{w}_{1}(a) < \widetilde{w}_{1}(b) = w_{1}(b')$, which implies $(b',a')\in\inv(w_{1})\subseteq\inv(w_{2})$.  It follows that $\widetilde{w}_{2}(b) = w_{2}(b') > w_{2}(a') = \widetilde{w}_{2}(a)$, which yields $(a,b)\notin\inv(\widetilde{w}_{2})$.
	
	By contraposition we conclude that $\inv(\widetilde{w}_{1})\supseteq\inv(\widetilde{w}_{2})$.  
\end{proof}

\begin{definition}\label{def:parabolic_132_avoiding_permutation}
	In a permutation $w\in\Symmetric_{\alpha}$, an \tdef{$(\alpha,132)$-pattern} is a triple of indices $i<j<k$ each in different $\alpha$-regions such that $w(i)<w(k)<w(j)$ and $w(k)=w(i)+1$.  A permutation in $\Symmetric_{\alpha}$ without $(\alpha,132)$-patterns is \tdef{$(\alpha,132)$-avoiding}.  
\end{definition}

We denote the set of all $(\alpha,132)$-avoiding permutations of $\Symmetric_{\alpha}$ by $\Symmetric_{\alpha}(132)$, and consider how reversal acts on $(\alpha,231)$-patterns. 

\begin{lemma}\label{lem:reversal_patterns}
	A permutation $w\in\Symmetric_{\alpha}$ has an $(\alpha,231)$-pattern if and only if $\widetilde{w}\in\Symmetric_{\overline{\alpha}}$ has an $(\overline{\alpha},132)$-pattern.
\end{lemma}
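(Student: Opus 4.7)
The plan is to make explicit the correspondence of positions between $w \in \Symmetric_{\alpha}$ and $\widetilde{w} \in \Symmetric_{\overline{\alpha}}$, and then check that the defining inequalities of an $(\alpha,231)$-pattern translate term-by-term into those of an $(\overline{\alpha},132)$-pattern.

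First I would describe the position map. Write $\alpha=(\alpha_{1},\ldots,\alpha_{r})$ and $n=\lvert\alpha\rvert$. Since $w\in\Symmetric_{\alpha}$, the entries in each $\alpha$-region are increasing, so in $\overline{w}$ the entries of each $\overline{\alpha}$-region are decreasing; re-sorting them increasingly to form $\widetilde{w}$ therefore \emph{reverses} the reversal within each region. Concretely, if $\ell = s_{m-1}+q$ is the $q$-th position of the $m$-th $\alpha$-region, one checks that $\widetilde{w}(p) = w(\ell)$ where $p = n - s_{m} + q$ is the $q$-th position of the $(r+1-m)$-th $\overline{\alpha}$-region. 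Thus the map $\ell\mapsto p$ is a value-preserving bijection from positions of $w$ to positions of $\widetilde{w}$, and it is strictly order-reversing on any set of positions lying in pairwise distinct $\alpha$-regions, since $\ell_{1}<\ell_{2}$ in different regions means $m_{\ell_{1}}<m_{\ell_{2}}$, hence $r+1-m_{\ell_{1}}>r+1-m_{\ell_{2}}$, hence $p_{1}>p_{2}$.

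Next I would translate patterns via this map. Suppose $(i,j,k)$ is an $(\alpha,231)$-pattern in $w$, so $i<j<k$ lie in three distinct $\alpha$-regions and $w(k)<w(i)<w(j)$ with $w(i)=w(k)+1$. Let $i',j',k'$ be the images of $i,j,k$ under the position map. By the order-reversing property applied to three positions in pairwise distinct regions, $k'<j'<i'$, and of course $k',j',i'$ lie in three distinct $\overline{\alpha}$-regions. The values are preserved: $\widetilde{w}(i')=w(i)$, $\widetilde{w}(j')=w(j)$, $\widetilde{w}(k')=w(k)$. Re-reading the inequalities with positions in increasing order $k'<j'<i'$ gives $\widetilde{w}(k')<\widetilde{w}(i')<\widetilde{w}(j')$ together with $\widetilde{w}(i')=\widetilde{w}(k')+1$, which is exactly the definition of an $(\overline{\alpha},132)$-pattern at the triple $(k',j',i')$.

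For the converse I would run the same argument in reverse, using that the position map from $\widetilde{w}$ back to $w$ is again order-reversing on positions in distinct regions (the composition being the identity), so an $(\overline{\alpha},132)$-pattern in $\widetilde{w}$ pulls back to an $(\alpha,231)$-pattern in $w$. There is no real obstacle here; the only thing to be careful about is the bookkeeping of the position map and the observation that ``three distinct $\alpha$-regions'' is preserved under the map, which is immediate from the formula $m\mapsto r+1-m$ on region indices. Everything else is a direct rewriting of the inequalities.
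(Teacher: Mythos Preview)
Your proof is correct and follows essentially the same approach as the paper: both arguments use the value-preserving correspondence between positions of $w$ and positions of $\widetilde{w}$, observe that it reverses order on positions lying in pairwise distinct regions, and then read off the translated pattern. The only difference is that you write out the position map $\ell=s_{m-1}+q\mapsto n-s_{m}+q$ explicitly, whereas the paper defers to the proof of Lemma~\ref{lem:reversal_weak_order} for this bookkeeping; the content is the same.
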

\begin{proof}
	We only prove the implication ``$w$ has an $(\alpha,231)$-pattern implies that $\widetilde{w}$ has an $(\overline{\alpha},132)$-pattern'', the other implication follows analogously.

	Let $w\in\Symmetric_{\alpha}$ have an $(\alpha,231)$-pattern $(i,j,k)$.  Let $a=w(i)$, $b=w(j)$ and $c=w(k)$.  By Definition~\ref{def:parabolic_231_avoiding_permutation} we have $i<j<k$ in different $\alpha$-regions, $a<b$ and $a=c+1$.  Let $i',j',k'$ be such that $\widetilde{w}(i')=a$, $\widetilde{w}(j')=b$ and $\widetilde{w}(k')=c$.  As in the proof of Lemma~\ref{lem:reversal_weak_order} we find that $k'<j'<i'$ are in different $\alpha$-regions.  By Definition~\ref{def:parabolic_132_avoiding_permutation}, $(k',j',i')$ is an $(\alpha,132)$-pattern of $\widetilde{w}$.
\end{proof}

\begin{theorem}\label{thm:parabolic_tamari_duality}
	For every integer composition $\alpha$, the lattice $\Tamari_{\alpha}$ is isomorphic to the dual of $\Tamari_{\overline{\alpha}}$.
\end{theorem}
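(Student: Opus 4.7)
The plan is to parlay the two preparatory lemmas into the desired anti-isomorphism. Lemma~\ref{lem:reversal_weak_order} provides an order-reversing bijection $\widetilde{\cdot}\colon\Symmetric_{\alpha}\to\Symmetric_{\overline{\alpha}}$ of the weak orders, while Lemma~\ref{lem:reversal_patterns} shows that its restriction yields a bijection $\Symmetric_{\alpha}(231)\to\Symmetric_{\overline{\alpha}}(132)$.  Putting the two facts together, $\widetilde{\cdot}$ restricts to an anti-isomorphism of posets $\Tamari_{\alpha}\to\bigl(\Symmetric_{\overline{\alpha}}(132),\leq_{L}\bigr)$.

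What remains is the identification of $\bigl(\Symmetric_{\overline{\alpha}}(132),\leq_{L}\bigr)$ with $\Tamari_{\overline{\alpha}}=\bigl(\Symmetric_{\overline{\alpha}}(231),\leq_{L}\bigr)$.  My approach would be to show that both $\Symmetric_{\overline{\alpha}}(132)$ and $\Symmetric_{\overline{\alpha}}(231)$ are complete systems of representatives (the maxima and the minima, respectively) of the congruence classes of the parabolic Tamari congruence on the weak order of $\Symmetric_{\overline{\alpha}}$.  Consequently the restricted weak orders on these two sets are both isomorphic to the quotient lattice, and hence to each other.  Composing the anti-isomorphism above with this poset isomorphism yields the desired anti-isomorphism $\Tamari_{\alpha}\cong\Tamari_{\overline{\alpha}}^{\mathrm{op}}$.

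The main obstacle is the middle identification.  In the classical setting this is standard, since both the $231$- and $132$-avoiding permutations furnish canonical representatives of sylvester congruence classes that project isomorphically onto the Tamari quotient.  The parabolic analogue can be extracted from the results of \cite{muehle18tamari}; alternatively, using the Galois-graph characterization of extremal and congruence-uniform lattices (Theorem~\ref{thm:parabolic_tamari_cul_extremal} combined with Theorem~\ref{thm:parabolic_tamari_galois}), one can verify directly that $\bigl(\Symmetric_{\overline{\alpha}}(132),\leq_{L}\bigr)$ and $\Tamari_{\overline{\alpha}}$ have the same Galois graph, and are therefore isomorphic as lattices.
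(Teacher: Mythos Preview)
Your proposal is correct and follows essentially the same line as the paper: both arguments combine Lemmas~\ref{lem:reversal_weak_order} and~\ref{lem:reversal_patterns} with the fact (from \cite[Section~3.4]{muehle18tamari}) that $\Symmetric_{\beta}(231)$ and $\Symmetric_{\beta}(132)$ are the minimal and maximal representatives of the parabolic Tamari congruence classes, hence carry isomorphic restrictions of the weak order. The only cosmetic difference is the order of composition---the paper first passes from $\Tamari_{\alpha}$ to $\bigl(\Symmetric_{\alpha}(132),\leq_{R}\bigr)$ via the congruence, then applies $\widetilde{\cdot}$ to land in $\Tamari_{\overline{\alpha}}$, whereas you apply $\widetilde{\cdot}$ first and invoke the congruence on the $\overline{\alpha}$ side---but the ingredients and logic are the same.
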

\begin{proof}
	We use the fact that $\Tamari_{\alpha}$ arises as a quotient lattice of $\bigl(\Symmetric_{\alpha},\leq_{L}\bigr)$~\cite[Proposition~3.18]{muehle18tamari}.  It follows from \cite[Section~3.4]{muehle18tamari} that the set of greatest elements of the corresponding congruence-classes is precisely $\Symmetric_{\alpha}(132)$, and the set of least elements of the corresponding congruence-classes is $\Symmetric_{\alpha}(231)$.  
	
	Let us define a partial order on the symmetric group by setting $w_{1}\leq_{R}w_{2}$ if and only if $\inv(w_{1})\supseteq\inv(w_{2})$.  We conclude that the dual lattice of $\Tamari_{\alpha}=\bigl(\Symmetric_{\alpha}(231),\leq_{L}\bigr)$ is isomorphic to $\bigl(\Symmetric_{\alpha}(132),\leq_{R}\bigr)$.  Lemmas~\ref{lem:reversal_weak_order} and \ref{lem:reversal_patterns} now imply that $\bigl(\Symmetric_{\alpha}(132),\leq_{R}\bigr)\cong\bigl(\Symmetric_{\overline{\alpha}}(231),\leq_{L}\bigr)=\Tamari_{\overline{\alpha}}$.
\end{proof}

Compare Figures~\ref{fig:parabolic_tamari_5_221} and \ref{fig:parabolic_tamari_5_221_dual} for an illustration of Theorem~\ref{thm:parabolic_tamari_duality}.

\begin{figure}
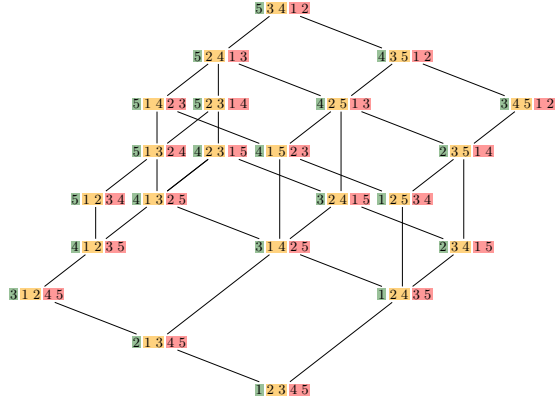

	\centering
	\paraTamDual{0.6\textwidth}
	\caption{The parabolic Tamari lattice $\Tamari_{(1,2,2)}$.}
	\label{fig:parabolic_tamari_5_221_dual}
\end{figure}

\section{$\nu_{\alpha}$-Tamari Lattices}
	\label{sec:nu_tamari_lattices}
In this section we define a partial order on the set $\Dyck_{\alpha}$ of all $\alpha$-Dyck paths.  This construction can in fact be carried out for any set of Dyck paths that stay weakly above a fixed path $\nu$ formed by north and east steps.  See~\cites{ceballos18the,preville17enumeration} for the general setup.  All of the following structural properties of $\nu_{\alpha}$-Tamari lattices hold in this general case, too.

Let $\mu\in\Dyck_{\alpha}$, and consider a lattice point $\vec{p}$ in $\mu$.  We define $\horiz_{\alpha}(\vec{p})$ to be the maximal number of east-steps that can be added after $\vec{p}$ without crossing $\nu_{\alpha}$.  For any valley $\vec{p}$ of $\mu$, let $\vec{q}$ be the first lattice point on $\mu$ such that $\horiz_{\alpha}(\vec{p})=\horiz_{\alpha}(\vec{q})$. We denote by $\mu[\vec{p},\vec{q}]$ the subpath of $\mu$ from $\vec{p}$ to $\vec{q}$.  Let $\mu'$ be the unique $\alpha$-Dyck path which arises from $\mu$ by swapping the subpath $\mu[\vec{p},\vec{q}]$ with the east step preceding $\vec{p}$.  Let us abbreviate this operation by $\mu\lessdot_{\alpha}\mu'$.  

This operation induces an acyclic relation on $\Dyck_{\alpha}$, and we denote by $\leq_{\alpha}$ its reflexive and transitive closure.  

\begin{definition}
	The poset $\Tamari_{\nu_{\alpha}}\defs\bigl(\Dyck_{\alpha},\leq_{\alpha}\bigr)$ is the \tdef{$\nu_{\alpha}$-Tamari lattice}.
\end{definition}

The $\nu_{(1,2,2)}$-Tamari lattice is shown in Figure~\ref{fig:nu_tamari_5_221}.

\begin{theorem}\label{thm:nu_tamari_cul_extremal}
	For every integer composition $\alpha$, the $\nu_{\alpha}$-lattice $\Tamari_{\nu_{\alpha}}$ is extremal and congruence uniform.
\end{theorem}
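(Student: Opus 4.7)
The plan is to deduce this theorem from the isomorphism announced in Theorem~\ref{thm:parabolic_nu_tamari_isomorphism}, together with Theorem~\ref{thm:parabolic_tamari_cul_extremal}. Both extremality and congruence uniformity are lattice-theoretic properties invariant under lattice isomorphism, so once $\Tamari_{\nu_{\alpha}}\cong\Tamari_{\alpha}$ is known, the conclusion is immediate.

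Of course, the substance is then in proving Theorem~\ref{thm:parabolic_nu_tamari_isomorphism}, and a bit of care is needed to avoid circularity, because the natural proof of that isomorphism theorem (sketched in the introduction) proceeds via Galois graphs, which in turn requires extremality of \emph{both} lattices. I would therefore first establish extremality of $\Tamari_{\nu_{\alpha}}$ directly. The cover relation $\mu'\lessdot_{\alpha}\mu$ produces one predecessor of $\mu$ for each valley of $\mu$ lying strictly above $\nu_{\alpha}$, so the join-irreducibles of $\Tamari_{\nu_{\alpha}}$ are precisely the $\alpha$-Dyck paths with a single such valley; each is indexed by the position of that valley above the staircase. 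A dual analysis using a ``peak-flip'' description of the reverse cover relation identifies the meet-irreducibles in the same way. Counting both and comparing with the length of a longest chain in $\Tamari_{\nu_\alpha}$ gives extremality.

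Once extremality is in place, one can form the Galois graph of $\Tamari_{\nu_{\alpha}}$ on the set of join-irreducibles. The natural indexing of a valley $\vec{p}$ above $\nu_{\alpha}$ records the pair $(a,b)$ of integers such that $\vec{p}$ sits in the triangular region associated to two distinct $\alpha$-regions, which matches exactly the vertex set of the Galois graph of $\Tamari_{\alpha}$ from Theorem~\ref{thm:parabolic_tamari_galois}. The heart of the argument is then to verify, by a case analysis on the relative positions of two valleys, that the directed edges in the two Galois graphs agree; the two subcases in Theorem~\ref{thm:parabolic_tamari_galois} (according to whether $a_{1}$ and $a_{2}$ lie in the same $\alpha$-region) should correspond to the natural geometric ``containment'' of one valley under another. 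Applying the Galois-graph criterion from the appendix then simultaneously yields the isomorphism of Theorem~\ref{thm:parabolic_nu_tamari_isomorphism} and congruence uniformity of $\Tamari_{\nu_{\alpha}}$.

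The main technical obstacle will be the case analysis for the Galois graph identification, and in particular handling boundary behavior where valleys almost touch $\nu_{\alpha}$ or where the indexing pair $(a,b)$ straddles the end of an $\alpha$-region. A subsidiary challenge is confirming that the dual notion of a ``peak above $\nu_\alpha$'' behaves symmetrically to a valley under the cover relation, which is needed to count meet-irreducibles cleanly.
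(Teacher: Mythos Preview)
Your approach is quite different from the paper's, and it has a genuine structural problem.

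The paper does \emph{not} deduce this theorem from the isomorphism with $\Tamari_{\alpha}$; in fact the logical flow is the opposite. The paper's proof is short and self-contained: by \cite{preville17enumeration}*{Theorem~3}, the lattice $\Tamari_{\nu_{\alpha}}$ is an interval in a large classical Tamari lattice $\Tamari_{N}$. The classical Tamari lattice is trim (a strengthening of extremality) by \cite{thomas06analogue}, and trimness passes to intervals, so $\Tamari_{\nu_{\alpha}}$ is extremal. Likewise $\Tamari_{N}$ is congruence uniform by \cite{geyer94on}, and congruence uniformity passes to intervals by \cite{day79characterizations}. That is the entire argument; Theorem~\ref{thm:parabolic_nu_tamari_isomorphism} is proved \emph{afterwards}, using this result as input.

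Your proposal also contains a concrete error: in the cover relation $\mu\lessdot_{\alpha}\mu'$, the valley is chosen on $\mu$ and $\mu'$ is the \emph{upper} cover. Hence valleys of $\mu$ parametrize its upper covers, and paths with a single valley are the \emph{meet}-irreducibles, not the join-irreducibles. The paper handles this by passing to the dual lattice $\Tamari_{\overline{\nu}_{\alpha}}$ via Theorem~\ref{thm:nu_tamari_duality}. More seriously, the computation of the Galois graph in Section~\ref{sec:galois_graph_nu_tamari} uses Proposition~\ref{prop:extremal_cu_galois}, which already requires congruence uniformity; so even after you establish extremality directly, you cannot simply reuse the paper's Galois-graph computation to obtain the isomorphism and only then deduce congruence uniformity. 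You would have to compute the Galois graph from the raw definition $j_{i}\not\leq m_{k}$, which is possible but considerably more work than the paper's two-line inheritance argument.
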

\begin{proof}
	First of all, \cite[Theorem~1]{preville17enumeration} states that $\Tamari_{\nu_{\alpha}}$ is indeed a lattice.  Moreover, \cite[Theorem~3]{preville17enumeration} states that $\Tamari_{\nu_{\alpha}}$ is an interval in $\Tamari_{2\lvert\alpha\rvert+1}$.  
	
	It was shown in \cite[Theorem~22]{markowsky92primes} that $\Tamari_{N}$ is extremal for every integer $N>0$.  However, it follows from \cite[Theorem~14(ii)]{markowsky92primes} that intervals of extremal lattices need not be extremal.  Fortunately, \cite[Theorem~9]{thomas06analogue} implies that $\Tamari_{N}$ is \tdef{trim}.  (This is a property that is somewhat stronger than extremality.)  Theorem~1 in \cite{thomas06analogue} states that every interval of a trim lattice is trim, too.  We conclude that $\Tamari_{\nu_{\alpha}}$ is trim, and therefore extremal.
	
	Theorem~3.5 in \cite{geyer94on} states that $\Tamari_{N}$ is congruence uniform for every integer $N>0$, and \cite[Theorem~4.3]{day79characterizations} implies that intervals of congruence-uniform lattices are congruence uniform again.  We conclude that $\Tamari_{\nu_{\alpha}}$ is congruence uniform.
\end{proof}

We conclude this section with another useful result, which nicely parallels Theorem~\ref{thm:parabolic_tamari_duality}.

\begin{theorem}[{\cite[Theorem~2]{preville17enumeration}}]\label{thm:nu_tamari_duality}
	For every integer composition $\alpha$, the lattice $\Tamari_{\nu_{\alpha}}$ is isomorphic to the dual of $\Tamari_{\overline{\nu}_{\alpha}}$.
\end{theorem}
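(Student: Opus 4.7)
The plan is to construct an explicit anti-isomorphism $\Phi\colon\Tamari_{\nu_{\alpha}}\to\Tamari_{\nu_{\overline{\alpha}}}$ given by the reflection-swap operation $\mu\mapsto\overline{\mu}$ on lattice paths (reverse the word and exchange $N$ with $E$). First I would check that $\Phi$ is a well-defined bijection on the underlying sets. The identity $\overline{\nu_{\alpha}}=\nu_{\overline{\alpha}}$ was already recorded in Section~\ref{sec:parabolic_dycks}, and reflection-swap is an involutive symmetry of the $n\times n$ grid that sends ``weakly above the path $\nu$'' to ``weakly above the path $\overline{\nu}$''. Hence $\Phi$ restricts to a bijection $\Dyck_{\alpha}\to\Dyck_{\overline{\alpha}}$.

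Next I would show that $\Phi$ inverts cover relations. Recall that a cover $\mu\lessdot_{\alpha}\mu'$ is produced from a valley $\vec{p}$ of $\mu$ by locating the first lattice point $\vec{q}$ after $\vec{p}$ with $\horiz_{\alpha}(\vec{p})=\horiz_{\alpha}(\vec{q})$ and sliding the east-step immediately before $\vec{p}$ to the position just after $\vec{q}$. Under $\Phi$ a valley of $\mu$ becomes a peak of $\overline{\mu}$, the preceding east-step becomes a following north-step, and the subpath $\mu[\vec{p},\vec{q}]$ becomes a subpath of $\overline{\mu}$ with the roles of $N$ and $E$ exchanged. So the image move is a dual cover: pick a peak of $\overline{\mu}$, follow the matching rule with a transposed (``vertical'') statistic, and slide the following north-step backwards past the corresponding subpath. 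The key computational lemma I would isolate is that if $\vec{p}^{*}$ denotes the lattice point of $\overline{\mu}$ corresponding to $\vec{p}$, then the quantity $\horiz_{\alpha}(\vec{p})$ (measured on $\mu$ relative to $\nu_{\alpha}$) equals the analogous vertical distance from $\vec{p}^{*}$ to $\nu_{\overline{\alpha}}$. Granted this, the point that closes off the swapping interval on $\overline{\mu}$ is exactly $\vec{q}^{*}$, and the two cover operations are mutually inverse.

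With the lemma in hand, the cover relations of $\Tamari_{\nu_{\alpha}}$ correspond bijectively under $\Phi$ to the reversed cover relations of $\Tamari_{\nu_{\overline{\alpha}}}$; since both partial orders are the transitive-reflexive closures of their covers, $\Phi$ is an anti-isomorphism of posets, hence of lattices. As a sanity check, one recovers the classical Tamari case $\alpha=(1,1,\ldots,1)$, where $\overline{\alpha}=\alpha$ and $\Phi$ is the familiar anti-automorphism $\mu\mapsto\overline{\mu}$ of $\Tamari_{n}$.

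The main obstacle will be the bookkeeping in the horizontal/vertical statistic lemma: one must verify that matching $\horiz_{\alpha}$-values on $\mu$ relative to $\nu_{\alpha}$ translate under reflection-swap into matching ``vert'' values on $\overline{\mu}$ relative to $\nu_{\overline{\alpha}}$, and then track carefully that the first such matching point, the swapped step, and the subpath in between all transform coherently. Once this is dispatched by a direct computation using coordinates, the anti-isomorphism follows with essentially no further work.
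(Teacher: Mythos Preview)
The paper does not actually prove this theorem; it is imported as a black box from \cite[Theorem~2]{preville17enumeration}.  There is therefore no ``paper's own proof'' to compare against, only the original argument of Pr\'eville-Ratelle and Viennot, which is indeed based on the reflection--swap map $\mu\mapsto\overline{\mu}$ that you propose.

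Your overall strategy is the right one, but one concrete step is wrong and would derail the computation if carried out as written.  You claim that under $\Phi$ a valley of $\mu$ becomes a peak of $\overline{\mu}$.  In fact valleys go to valleys: if $s_{i}s_{i+1}=EN$ in $\mu$, then in $\overline{\mu}$ the corresponding pair of letters (at positions $2n{-}i$ and $2n{+}1{-}i$) is $\overline{s_{i+1}}\,\overline{s_{i}}=EN$ again.  Geometrically, the anti-diagonal reflection $(x,y)\mapsto(n-y,n-x)$ sends the local shape ``$E$ then $N$'' to itself.  So the image move is not ``pick a peak and use a vertical statistic''; the valley $\vec{p}$ is sent to a valley $\vec{p}^{*}$ of $\overline{\mu}$, the east-step \emph{preceding} $\vec{p}$ becomes the north-step \emph{following} $\vec{p}^{*}$, and the forward subpath $\mu[\vec{p},\vec{q}]$ becomes the \emph{backward} subpath $\overline{\mu}[\vec{q}^{*},\vec{p}^{*}]$.

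The remaining work, then, is not quite the lemma you isolate.  What you really must show is that the reflected move --- at the valley $\vec{p}^{*}$ of $\overline{\mu}$, slide the north-step just after $\vec{p}^{*}$ backwards past the subpath ending at $\vec{p}^{*}$ determined by matching $\horiz_{\alpha}$-values on $\mu$ --- is exactly the \emph{inverse} of a cover in $\Tamari_{\nu_{\overline{\alpha}}}$, i.e.\ that $\overline{\mu'}\lessdot_{\overline{\alpha}}\overline{\mu}$.  For this one checks that under the reflection the horizontal distance $\horiz_{\alpha}$ at a lattice point equals the horizontal distance $\horiz_{\overline{\alpha}}$ at the image point (both count cells between the path point and the bounce path along the same anti-diagonal), so that the interval $[\vec{p},\vec{q}]$ determined by the matching rule on the $\mu$-side coincides, after reflection, with the interval determined by the matching rule on the $\overline{\mu'}$-side.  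Once that identification is made, your conclusion goes through: covers correspond bijectively to reversed covers, and $\Phi$ is an anti-isomorphism.
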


\subsection{$\nu_{\alpha}$-Bracket Vectors}
	\label{sec:bracket_vectors}
Following~\cite{ceballos18the}, we may as well represent $\Tamari_{\nu_{\alpha}}$ as a lattice of certain integer vectors under componentwise order.

Let $n=\lvert\alpha\rvert$.  The \tdef{minimal $\nu_{\alpha}$-bracket vector} $\minbracket$ is the vector with $2n+1$ entries that contains the $y$-coordinates of the lattice points of $\nu_{\alpha}$ read in order from $(0,0)$ to $(n,n)$.  The \tdef{fixed positions} are the elements of the set $F=\{f_{0},f_{1},\ldots,f_{n}\}$, where $f_{i}$ denotes the last occurrence of $i$ in $\minbracket$.  

\begin{definition}\label{def:bracket-vector}
	A \tdef{$\nu_{\alpha}$-bracket vector} is a vector $\bracket=(b_{1},b_{2},\ldots,b_{2n+1})$ that satisfies
	\begin{itemize}
		\item $b_{f_{i}} = i$ for $0\leq i\leq n$;
		\item $\minbracket_{i} \leq b_{i} \leq n$ for all $i$;
		\item if $b_{i}=k$, then $b_{j}\leq k$ for all $i\leq j\leq f_{k}$.
	\end{itemize}
\end{definition}

\begin{theorem}[{\cite[Theorem~4.2]{ceballos18the}}]
	For every integer composition $\alpha$, the lattice of $\nu_{\alpha}$-bracket vectors under componentwise order is isomorphic to $\Tamari_{\nu_{\alpha}}$.
\end{theorem}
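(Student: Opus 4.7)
The plan is to establish the isomorphism by constructing an explicit order-preserving bijection $\Psi \colon \Dyck_\alpha \to \mathcal{B}_\alpha$, where $\mathcal{B}_\alpha$ denotes the set of $\nu_\alpha$-bracket vectors, and then verifying that $\Psi$ transports the cover relation $\lessdot_\alpha$ into the componentwise cover on $\mathcal{B}_\alpha$.

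First I would define $\Psi(\mu)$ as follows. The path $\nu_\alpha$ has $2n+1$ lattice points, and each such lattice point $\vec{p}_i$ lies on a unique horizontal line $y = \minbracket_i$. Any $\mu \in \Dyck_\alpha$ lies weakly above $\nu_\alpha$, so for each $i$ we may shoot a vertical ray upward from $\vec{p}_i$ and record as $b_i$ the $y$-coordinate of the first lattice point of $\mu$ encountered by the ray, whenever that point is a ``corner'' of $\mu$; at the fixed positions $f_k$ (which correspond to the tops of the maximal north-runs of $\nu_\alpha$) we simply set $b_{f_k} = k$. One checks that $\Psi(\nu_\alpha) = \minbracket$ and that for every $\mu$ the vector $\Psi(\mu)$ satisfies the three conditions of Definition~\ref{def:bracket-vector}: the fixed-position condition is immediate from the definition, $\minbracket_i \leq b_i \leq n$ follows from $\nu_\alpha \leq \mu$ and $\mu$ being a Dyck path, and the monotonicity condition $b_j \leq k$ for $i \leq j \leq f_k$ encodes the fact that $\mu$ cannot descend below a previously attained height before reaching the next fixed position.

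Next I would exhibit the inverse $\Psi^{-1}$ by reconstruction: given $\bracket$ satisfying the three conditions, build $\mu$ column by column, placing exactly the north-steps dictated by the jumps of $\bracket$ between consecutive fixed positions. The conditions precisely ensure that this yields an $\alpha$-Dyck path, proving bijectivity.

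The heart of the proof, and the main obstacle, is to match the cover relations. I would show that if $\mu \lessdot_\alpha \mu'$ via the swap at a valley $\vec{p}$ with associated endpoint $\vec{q}$, then $\Psi(\mu')$ is obtained from $\Psi(\mu)$ by increasing exactly one coordinate (the one indexed by the east step just before $\vec{p}$) by $1$. Conversely, every componentwise cover in $\mathcal{B}_\alpha$ should correspond to such a swap: I would use the third condition in Definition~\ref{def:bracket-vector} to identify, for an admissible single-entry increment $b_i \mapsto b_i+1$, the valley of $\mu$ at which the rotation occurs, and use $\horiz_\alpha$ to identify $\vec{q}$. Verifying that the rotated path remains an $\alpha$-Dyck path from the bracket-vector conditions, and conversely that every single-entry increment produces a valid bracket vector, is the most delicate step; here one must carefully use the characterization of $\vec{q}$ as the first lattice point after $\vec{p}$ with the same horizontal distance to $\nu_\alpha$, which translates exactly into the block $b_i, b_{i+1}, \ldots, b_{f_{b_i}}$ in the bracket vector. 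Once this correspondence between covers is established, both posets are graded by the same rank function (total sum of entries, respectively total area between $\mu$ and $\nu_\alpha$), and $\Psi$ is then automatically an isomorphism of lattices.
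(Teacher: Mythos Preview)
The paper does not prove this theorem; it is quoted from \cite{ceballos18the}, and only the explicit bijection is reproduced here as Construction~\ref{constr:bracket_vectors}. Your ``vertical ray'' description of $\Psi$ is not obviously the same map and, as written, is ambiguous: a vertical ray from a lattice point of $\nu_\alpha$ meets $\mu$ in an interval of lattice points, and the qualifier ``whenever that point is a corner'' does not single one out.

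The substantive gap is in your treatment of covers. It is \emph{not} true that $\mu \lessdot_\alpha \mu'$ increases one coordinate of the bracket vector by~$1$. The rotation shifts the whole subpath $\mu[\vec p,\vec q]$ one unit left, so the area---equivalently, the sum of the bracket entries---jumps by the height $q_y - p_y$, which can be arbitrary. Lemma~\ref{lem:mi_brackets} already exhibits this: when $q = t_{k+1}-1$, the single changed entry of $\bracketred(\mu_{p,q})$ jumps from $q$ all the way to~$n$. For instance, with $\overline\alpha = (2,1,1)$ the meet-irreducible $\mu_{1,2}$ has $\bracketred = (4,2,4,4)$ while its unique upper cover has $(4,4,4,4)$. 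It follows that $\Tamari_{\nu_\alpha}$ is in general \emph{not} graded by area, so your concluding rank argument cannot work. The proof in \cite{ceballos18the} avoids this by showing directly that $\Psi$ and $\Psi^{-1}$ are order-preserving, rather than attempting to match covers with unit increments.
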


For $\mu\in\Dyck_{\alpha}$ we may directly construct the corresponding $\nu_{\alpha}$-bracket vector as follows; see \cite[Section~4.3]{ceballos18the}.  

\begin{construction}\label{constr:bracket_vectors}
	Let $\mu\in\Dyck_{\alpha}$ for some integer composition $\alpha$.  Suppose that $\mu$ runs through $k_{i}$ lattice points of $y$-coordinate $i$.  We start with an empty vector $\bracket_{0}$ of length $2n+1$.  In the $i$-th step we construct $\bracket_{i}$ from $\bracket_{i-1}$ by filling the $k_{i}$ rightmost available spaces before and including the fixed position $f_{i}$ with the value $i$.  After $n$ steps we return the bracket vector $\bracket_{\mu}=\bracket_{n}$.
\end{construction}

Given a $\nu_{\alpha}$-bracket vector $\bracket$, we denote by $\bracketred$ the \tdef{reduced $\nu_{\alpha}$-bracket vector}, \ie the vector that contains all entries of $\bracket$ in the same order, except for those at the fixed positions.  It is clear that $\bracket\leq\bracket'$ if and only if $\bracketred\leq\bracketred'$, where $\leq$ is componentwise order.  Figure~\ref{fig:nu_tamari_5_221_bracket} shows the lattice of reduced $\nu_{(1,2,2)}$-bracket vectors.

\begin{figure}
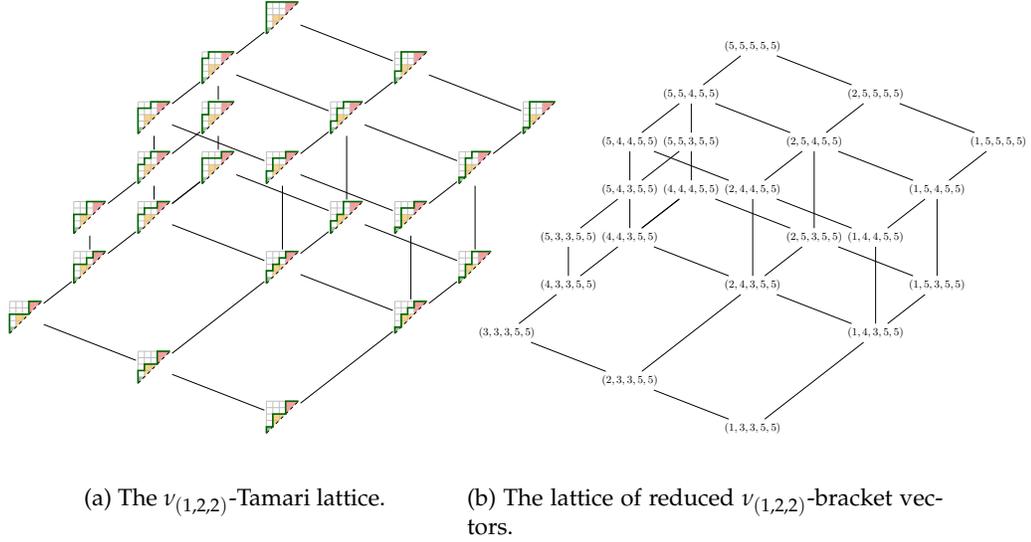

	\centering
	\begin{subfigure}[t]{.5\textwidth}
		\centering
 		\nuTam{1.2\textwidth}
		\caption{The $\nu_{(1,2,2)}$-Tamari lattice.}
		\label{fig:nu_tamari_5_221}
	\end{subfigure}
	\hspace*{-.25cm}
	\begin{subfigure}[t]{.5\textwidth}
		\centering
		\nuTamvec{1.2\textwidth}
		\caption{The lattice of reduced $\nu_{(1,2,2)}$-bracket vectors.}
		\label{fig:nu_tamari_5_221_bracket}
	\end{subfigure}
	\caption{Two representations of a $\nu_{\alpha}$-Tamari lattice.}
	\label{fig:nu_tamari_example}
\end{figure}

\subsection{The Galois Graph of $\Tamari_{\nu_{\alpha}}$}
	\label{sec:galois_graph_nu_tamari}
By Theorem~\ref{thm:nu_tamari_cul_extremal}, the lattice $\Tamari_{\nu_{\alpha}}$ is extremal for every integer composition $\alpha$, and by Theorem~\ref{thm:markowskys_representation} we may represent $\Tamari_{\nu_{\alpha}}$ by its Galois graph.  In this section, we characterize the Galois graph of $\Tamari_{\nu_{\alpha}}$.

Since $\Tamari_{\nu_{\alpha}}$ is also congruence uniform, we may apply Proposition~\ref{prop:extremal_cu_galois} and define $\Galois\bigl(\Tamari_{\nu_{\alpha}}\bigr)$ in terms of the join-irreducible elements of $\Tamari_{\nu_{\alpha}}$.  In general, however, it is much easier to describe meet-irreducible Dyck paths (because they have a unique valley).  In view of Theorem~\ref{thm:nu_tamari_duality}, we may regard $\Galois\bigl(\Tamari_{\nu_{\alpha}}\bigr)$ as a directed graph with vertex set $\MI\bigl(\Tamari_{\overline{\nu}_{\alpha}}\bigr)$, where there is a directed edge $\mu\to\tilde{\mu}$ if and only if $\mu\neq\tilde{\mu}$ and $\tilde{\mu}\geq \tilde{\mu}^{*}\wedge \mu$.

Since meet-irreducible Dyck paths are precisely those with a unique valley, we may write $\mu_{p,q}$ for the unique meet-irreducible element of $\Tamari_{\overline{\nu}_{\alpha}}$ that has its only valley at $(p,q)$.  Moreover, we abbreviate $k^{(m)}\defs\underbrace{k,k,\ldots,k}_{m}$ for any integer $k$, and any nonnegative integer $m$.  

Let $\alpha=(\alpha_{1},\alpha_{2},\ldots,\alpha_{r})$ be a composition of $n$, and recall that we have defined $s_{i}=\alpha_{1}+\alpha_{2}+\cdots+\alpha_{i}$ and $t_{i}=n-s_{r-i}$ for all $i\in\{0,1,\ldots,r\}$ in the beginning of Section~\ref{sec:members_parabolic_cataland}.

\begin{lemma}\label{lem:mi_brackets}
	Let $\mu_{p,q}\in\MI(\Tamari_{\overline{\nu}_{\alpha}})$, where $t_{k}\leq q<t_{k+1}$ for some $k\in[r]$.  The reduced $\overline{\nu}_{\alpha}$-bracket vector of $\mu_{p,q}$ is
	\begin{displaymath}
		\bracketred(\mu_{p,q}) = \Bigl(n^{(t_{k}-p)},q^{(p)},n^{(n-t_{k})}\Bigr).
	\end{displaymath}
	Moreover, the reduced $\overline{\nu}_{\alpha}$-bracket vector of the unique upper cover $\mu_{p,q}^{*}$ of $\mu_{p,q}$ is
	\begin{displaymath}
		\bracketred(\mu_{p,q}^{*}) = 
		\begin{cases}
			\Bigl(n^{(t_{k}-p)},q+1,q^{(p-1)},n^{(n-t_{k})}\Bigr), & \text{if}\;q<t_{k+1}-1,\\
			\Bigl(n^{(t_{k}-p+1)},q^{(p-1)},n^{(n-t_{k})}\Bigr), & \text{if}\;q=t_{k+1}-1.
		\end{cases}
	\end{displaymath}
\end{lemma}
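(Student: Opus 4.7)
My plan is to verify both formulas by direct application of Construction~\ref{constr:bracket_vectors}, after identifying the level counts of the relevant paths and locating the upper cover.

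For the first formula I begin by writing down the lattice points of $\mu_{p,q}=N^{q}E^{p}N^{n-q}E^{n-p}$, from which the level counts are immediate: $k_q=p+1$, $k_n=n-p+1$, and $k_i=1$ for all other $i$. A useful preliminary observation is that the non-fixed positions of the bracket vector are in order-preserving bijection with the reduced indices $1,\dots,n$, and that the $\alpha_{r-k'+1}$ non-fixed positions of $\minbracket$-value $t_{k'}$ occupy reduced indices $t_{k'-1}+1,\dots,t_{k'}$. Running Construction~\ref{constr:bracket_vectors}, the steps $i<q$ and $q<i<n$ each place value $i$ only at $f_i$ since $k_i=1$. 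Step $i=q$ fills $p+1$ positions, namely $f_q$ together with the $p$ rightmost unfilled non-fixed positions below $f_q$. A short verification shows that, whether $q=t_k$ or $t_k<q<t_{k+1}$, the non-fixed positions lying weakly below $f_q$ are exactly those with reduced indices $1,\dots,t_k$, so the $p$ rightmost are at reduced indices $t_k-p+1,\dots,t_k$ and each receives value $q$. Finally, step $i=n$ fills $f_n$ together with all remaining non-fixed positions (reduced indices $1,\dots,t_k-p$ and $t_k+1,\dots,n$) with value $n$, yielding the asserted formula for $\bracketred(\mu_{p,q})$.

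For the upper cover I next determine $\mu_{p,q}^{*}$ explicitly via the cover operation of Section~\ref{sec:nu_tamari_lattices}. A direct computation shows $\horiz_{\overline{\alpha}}(\vec{p})=t_k-1-p$ for $\vec{p}=(p,q)$, since a horizontal stretch at height $q\in[t_k,t_{k+1})$ remains weakly above $\overline{\nu}_{\alpha}$ exactly until $x=t_k$. In Case~1 ($q<t_{k+1}-1$) the next lattice point $(p,q+1)$ still has height in $[t_k,t_{k+1})$ and hence the same $\horiz$-value, so $\vec{q}=(p,q+1)$ and the cover swap yields $\mu_{p,q}^{*}=N^{q}E^{p-1}NEN^{n-q-1}E^{n-p}$. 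In Case~2 ($q=t_{k+1}-1$) the $\horiz$-value jumps strictly upward along the vertical segment above $\vec{p}$ (as $y$ crosses each $t_j$) and then decreases by one per east step along the top horizontal; the first match occurs at $\vec{q}=(n-t_k+p,n)$, and the swap gives $\mu_{p,q}^{*}=N^{q}E^{p-1}N^{n-q}E^{n-p+1}=\mu_{p-1,q}$.

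In Case~2 the formula then follows immediately by applying the first part of the lemma to $\mu_{p-1,q}$, since $q\in[t_k,t_{k+1})$ still holds with the same $k$. In Case~1 the level counts of $\mu_{p,q}^{*}$ agree with those of $\mu_{p,q}$ except that $k_q=p$ and $k_{q+1}=2$. Replaying Construction~\ref{constr:bracket_vectors}, step $q$ now fills only $p-1$ non-fixed positions (reduced indices $t_k-p+2,\dots,t_k$) with value $q$; at step $q+1$ the fixed position $f_{q+1}$ lies inside the north run of block $k+1$ but strictly before the first non-fixed position of that block, so the available non-fixed positions before $f_{q+1}$ are those below $f_q$ minus the ones just filled, and the rightmost unfilled one is reduced index $t_k-p+1$, which receives value $q+1$. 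The remaining steps $i<n$ fill only $f_i$, and step $n$ fills all unfilled non-fixed positions with $n$, giving the Case~1 formula. The main technical obstacle throughout is the careful position accounting: verifying that the non-fixed positions with reduced indices $1,\dots,t_k$ really sit weakly to the left of $f_q$ and $f_{q+1}$ in every subcase, and that the ``rightmost available'' rule consistently picks out contiguous suffixes in reduced-index order; once this order-preserving correspondence is established the two formulas follow by direct bookkeeping.
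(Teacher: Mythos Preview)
Your proof is correct and follows essentially the same route as the paper: both compute the level counts of $\mu_{p,q}$, feed them through Construction~\ref{constr:bracket_vectors}, and then locate the upper cover by the $\horiz$-matching rule, with you supplying considerably more of the position-accounting detail that the paper leaves implicit. One small slip: your formula $\horiz_{\overline{\nu}_\alpha}(\vec{p})=t_k-1-p$ should read $t_k-p$ (consistent with your own verbal description and with your Case~2 endpoint $(n-t_k+p,n)$), but nothing downstream depends on this value, so the argument is unaffected.
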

\begin{proof} 
	We will use Construction~\ref{constr:bracket_vectors}. Since $\mu_{p,q}$ has a unique valley, it contains precisely $p$ lattice points of $y$-coordinate $q$, and $n-p$ lattice points of $y$-coordinate $n$.  Every other $y$-coordinate is met exactly once.  Therefore, the $y$-coordinates in $[n]\setminus\{q,n\}$ fill only fixed positions, and all the other positions in the bracket vector of $\mu_{p,q}$ are filled with either $q$ or $n$.  Now, since we reach $y$-coordinate $q$ strictly before we reach $y$-coordinate $n$, we may first insert $p$-times the value $q$, and then fill the remaining available positions with values $n$ (possibly before the first $q$ and certainly after the last).
	
	Let $(p',q')$ be the next lattice point on $\mu_{p,q}$ with $\horiz_{\overline{\nu}_{\alpha}}\bigl((p',q')\bigr)=\horiz_{\overline{\nu}_{\alpha}}\bigl((p,q)\bigr)$.  If $q=t_{k+1}-1$, then $(p',q')$ is $\Bigl(n-\horiz_{\overline{\nu}_{\alpha}}\bigl((p,q)\bigr),n\Bigr)$, and if $q<t_{k+1}-1$, then $(p',q')=(p,q+1)$.  The description of the reduced bracket vector of $\mu_{p,q}^{*}$ follows immediately.
\end{proof}

\begin{lemma}\label{lem:mi_arrows}
	Let $\mu_{p_{1},q_{1}},\mu_{p_{2},q_{2}}\in\MI\bigl(\Tamari_{\overline{\nu}_{\alpha}}\bigr)$, where $t_{k_{1}}\leq q_{1}<t_{k_{1}+1}$ and $t_{k_{2}}\leq q_{2}<t_{k_{2}+1}$ for some $k_{1},k_{2}\in[r]$.   We have $\mu_{p_{2},q_{2}}\geq \mu_{p_{2},q_{2}}^{*}\wedge\mu_{p_{1},q_{1}}$ if and only if $t_{k_{1}}-p_{1}\leq t_{k_{2}}-p_{2}<t_{k_{1}}$ and $q_{1}\leq q_{2}$.
\end{lemma}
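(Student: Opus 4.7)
The plan is to translate the inequality $\mu_{p_{2},q_{2}} \geq \mu_{p_{2},q_{2}}^{*} \wedge \mu_{p_{1},q_{1}}$ into a coordinate-wise comparison of the reduced $\overline{\nu}_{\alpha}$-bracket vectors produced in Lemma~\ref{lem:mi_brackets}, and then to read off the two claimed inequalities from the explicit block structure of those vectors. Since $\Tamari_{\overline{\nu}_{\alpha}}$ is isomorphic to the lattice of $\overline{\nu}_{\alpha}$-bracket vectors under componentwise order, and the three axioms of Definition~\ref{def:bracket-vector} are easily seen to be preserved under componentwise minimum, the meet in $\Tamari_{\overline{\nu}_{\alpha}}$ is the componentwise minimum of the corresponding bracket vectors. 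Moreover, all three vectors agree at the fixed positions, so I can work entirely with the reduced bracket vectors.

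Next, I would observe that $\bracketred(\mu_{p_{2},q_{2}}^{*})$ dominates $\bracketred(\mu_{p_{2},q_{2}})$ componentwise, so the required inequality $\bracketred(\mu_{p_{2},q_{2}})_{i} \geq \min\bigl(\bracketred(\mu_{p_{2},q_{2}}^{*})_{i}, \bracketred(\mu_{p_{1},q_{1}})_{i}\bigr)$ is automatic at every index where $\bracketred(\mu_{p_{2},q_{2}}^{*})$ and $\bracketred(\mu_{p_{2},q_{2}})$ already coincide. Inspecting the two subcases of Lemma~\ref{lem:mi_brackets}, in both situations these vectors differ at exactly one index, namely $i_{0} \defs t_{k_{2}} - p_{2} + 1$, where $\bracketred(\mu_{p_{2},q_{2}})_{i_{0}} = q_{2}$ while $\bracketred(\mu_{p_{2},q_{2}}^{*})_{i_{0}} \in \{q_{2}+1, n\}$. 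Hence the condition of the lemma reduces to the single inequality $\bracketred(\mu_{p_{1},q_{1}})_{i_{0}} \leq q_{2}$.

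To finish, I would plug the explicit form $\bracketred(\mu_{p_{1},q_{1}}) = \bigl(n^{(t_{k_{1}}-p_{1})}, q_{1}^{(p_{1})}, n^{(n-t_{k_{1}})}\bigr)$ into this inequality. Since $q_{2} < n$, the entry at $i_{0}$ must lie in the middle $q_{1}$-block, i.e. $t_{k_{1}} - p_{1} + 1 \leq i_{0} \leq t_{k_{1}}$, which rearranges to $t_{k_{1}}-p_{1} \leq t_{k_{2}}-p_{2} < t_{k_{1}}$; once $i_{0}$ is in the $q_{1}$-block the value there is $q_{1}$, and the remaining condition becomes $q_{1} \leq q_{2}$. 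The converse direction is a direct substitution back into the block form of $\bracketred(\mu_{p_{1},q_{1}})$.

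The main obstacle I anticipate is essentially bookkeeping: one has to verify that the two subcases of Lemma~\ref{lem:mi_brackets} behave uniformly at the index $i_{0}$ (they do, because in both, the cover modifies only the entry at position $t_{k_{2}}-p_{2}+1$ of the reduced vector), and that passing to the componentwise minimum is legitimate on the nose, which is the short axiom check alluded to above. Everything else is elementary manipulation of the block decompositions of the bracket vectors.
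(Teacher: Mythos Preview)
Your proposal is correct and follows essentially the same approach as the paper: translate the meet into a componentwise minimum of reduced bracket vectors via Lemma~\ref{lem:mi_brackets}, observe that $\bracketred(\mu_{p_{2},q_{2}})$ and $\bracketred(\mu_{p_{2},q_{2}}^{*})$ differ only at the single index $i_{0}=t_{k_{2}}-p_{2}+1$, and then read off the two inequalities from the block form of $\bracketred(\mu_{p_{1},q_{1}})$. Your added remark that the componentwise minimum of two $\overline{\nu}_{\alpha}$-bracket vectors is again a bracket vector (so that the meet really is the componentwise minimum) is a welcome detail that the paper uses implicitly.
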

\begin{proof}
	From Lemma~\ref{lem:mi_brackets} it follows that
	\begin{align*}
		\bracketred(\mu_{p_{1},q_{1}}) & = \Bigl(n^{(t_{k_{1}}-p_{1})},q_{1}^{(p_{1})},n^{(n-t_{k_{1}})}\Bigr),\\
		\bracketred(\mu_{p_{2},q_{2}}) & = \Bigl(n^{(t_{k_{2}}-p_{2})},q_{2}^{(p_{2})},n^{(n-t_{k_{2}})}\Bigr),\\
		\bracketred(\mu_{p_{2},q_{2}}^{*}) & = \Bigl(n^{(t_{k_{2}}-p_{2})},q'_{2},q_{2}^{(p_{2}-1)},n^{(n-t_{k_{2}})}\Bigr),
	\end{align*}
	where $q'_{2}>q_{2}$.  For $i\in[n]$ let $a_{i}$ denote the $i$-th entry in $\bracketred(\mu_{p_{1},q_{1}})$, let $b_{i}$ denote the $i$-th entry in $\bracketred(\mu_{p_{2},q_{2}})$, let $c_{i}$ denote the $i$-th entry in $\bracketred(\mu_{p_{2},q_{2}}^{*})$.  
	
	In order to determine when $\mu_{p_{2},q_{2}}\geq\mu_{p_{2},q_{2}}^{*}\wedge\mu_{p_{1},q_{1}}$ is satisfied, we have to figure out under which conditions 
	\begin{equation}\label{eq:target}
		b_{i} \geq \min(c_{i},a_{i})
	\end{equation}
	holds for all $i\in[n]$.  By Lemma~\ref{lem:mi_brackets} it follows that $b_{i}=c_{i}$ for $i\neq t_{k_{2}}-p_{2}+1$ and $b_{t_{k_{2}}-p_{2}+1}<c_{t_{k_{2}}-p_{2}+1}$.
	
	Hence, \eqref{eq:target} holds trivially if $i\neq t_{k_{2}}-p_{2}+1$.  It thus remains to consider the case $i=t_{k_{2}}-p_{2}+1$.  We have $c_{i}=q'_{2}>q_{2}$, which implies that $\min(a_{i},c_{i})\leq b_{i}$ holds if and only if $a_{i}\leq b_{i}$.  In particular, we need to have $a_{i}<n$, which is the case precisely when $t_{k_{1}}-p_{1}<i\leq t_{k_{1}}$.
\end{proof}

\begin{figure}
	\centering
	\begin{tikzpicture}\small
		\def\x{2};
		\def\y{2};
		\def\s{.8};
		\draw(1*\x,1*\y) node(n24){$(2,4)$};
		\draw(2*\x,1*\y) node(n23){$(1,4)$};
		\draw(3*\x,1*\y) node(n14){$(2,3)$};
		\draw(1*\x,1.5*\y) node(n25){$(3,4)$};
		\draw(2*\x,1.5*\y) node(n15){$(3,3)$};
		\draw(1*\x,2*\y) node(n35){$(1,1)$};
		\draw(2*\x,2*\y) node(n45){$(1,2)$};
		\draw(3*\x,1.5*\y) node(n13){$(1,3)$};
		\draw[->](n45) -- (n15);
		\draw[->](n45) -- (n25);
		\draw[->](n35) -- (n45);
		\draw[->](n35) -- (n25);
		\draw[->](n35) -- (n15);
		\draw[->](n25) -- (n23);
		\draw[->](n25) -- (n24);
		\draw[->](n15) -- (n23);
		\draw[->](n15) -- (n24);
		\draw[->](n15) -- (n25);
		\draw[->](n15) -- (n13);
		\draw[->](n15) -- (n14);
		\draw[->](n24) -- (n23);
		\draw[->](n14) -- (n13);
		\draw[->](n14) .. controls (2.75*\x,.75*\y) and (1.25*\x,.75*\y) .. (n24);
		\draw[->](n14) -- (n23);
		\draw[->](n13) -- (n23);
	\end{tikzpicture}
	\caption{The Galois graph of $\Tamari_{\nu_{(2,2,1)}}$.}
	\label{fig:nu_galois_5_221}
\end{figure}

We may thus conclude the following description of the Galois graph of $\Tamari_{\nu_{\alpha}}$, which is illustrated in Figure~\ref{fig:nu_galois_5_221} for $\alpha=(2,2,1)$.

\begin{theorem}\label{thm:nu_tamari_galois}
	Let $\alpha$ be an integer composition into $r$ parts, and let $\nu_{\alpha}$ be the corresponding $\alpha$-bounce path.  The Galois graph of $\Tamari_{\nu_{\alpha}}$ is isomorphic to the directed graph with vertex set
	\begin{displaymath}
		\bigl\{(p,q)\mid 1\leq p\leq t_{k}\leq q<t_{k+1}\;\text{for some}\;k\in[r-1]\bigr\},
	\end{displaymath}
	where there is a directed edge from $(p_{1},q_{1})\to(p_{2},q_{2})$ if and only if $(p_{1},q_{1})\neq (p_{2},q_{2})$ and $t_{k_{1}}-p_{1}\leq t_{k_{2}}-p_{2}<t_{k_{1}}$ and $q_{1}\leq q_{2}$, where $k_{1}$ and $k_{2}$ are the unique indices in $[r]$ with $t_{k_{1}}\leq q_{1}<t_{k_{1}+1}$ and $t_{k_{2}}\leq q_{2}<t_{k_{2}+1}$.
\end{theorem}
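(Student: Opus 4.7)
The plan is to combine three ingredients already in place: the general identification of the Galois graph of an extremal lattice with its meet-irreducibles (Theorem~\ref{thm:markowskys_representation} in the appendix), the duality of Theorem~\ref{thm:nu_tamari_duality} that lets us describe $\Galois(\Tamari_{\nu_{\alpha}})$ in terms of the meet-irreducibles of $\Tamari_{\overline{\nu}_{\alpha}}$ (as already spelled out in the paragraph preceding Lemma~\ref{lem:mi_brackets}), and the explicit arrow criterion from Lemma~\ref{lem:mi_arrows}. So the proof is really an unpacking of the lemmas plus a short geometric identification of the vertex set.

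First I would pin down $\MI(\Tamari_{\overline{\nu}_{\alpha}})$. Meet-irreducible elements of a $\nu$-Tamari lattice are exactly the Dyck paths with a unique valley, so they are parametrised by the valley position $(p,q)$ and have the shape $\mu_{p,q}=N^{q}E^{p}N^{n-q}E^{n-p}$. I need to characterize which $(p,q)$ produce paths that lie weakly above $\overline{\nu}_{\alpha}=\nu_{\overline{\alpha}}$. Since $\mu_{p,q}$ is determined by its valley, this reduces to checking that the lattice point $(p,q)$ itself is weakly above $\nu_{\overline{\alpha}}$. A short inspection of the bounce path shows that, writing $t_{k}\leq q<t_{k+1}$ for the unique $k\in[r-1]$ (with $q\geq t_{1}$ automatic from $p\geq 1$), the rightmost $x$-coordinate of $\nu_{\overline{\alpha}}$ at height $q$ is $t_{k}$. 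Thus $(p,q)$ lies above $\nu_{\overline{\alpha}}$ precisely when $1\leq p\leq t_{k}\leq q<t_{k+1}$, which is exactly the vertex set in the statement.

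Second, I would invoke Proposition~\ref{prop:extremal_cu_galois} to realize $\Galois(\Tamari_{\overline{\nu}_{\alpha}})$ as the digraph on $\MI(\Tamari_{\overline{\nu}_{\alpha}})$ in which $\mu\to\tilde\mu$ iff $\mu\neq\tilde\mu$ and $\tilde\mu\geq\tilde\mu^{*}\wedge\mu$. Identifying vertices with their valley positions via the bijection $(p,q)\mapsto\mu_{p,q}$ and plugging into Lemma~\ref{lem:mi_arrows}, the arrow condition $\mu_{p_{2},q_{2}}\geq\mu_{p_{2},q_{2}}^{*}\wedge\mu_{p_{1},q_{1}}$ becomes $t_{k_{1}}-p_{1}\leq t_{k_{2}}-p_{2}<t_{k_{1}}$ together with $q_{1}\leq q_{2}$, which matches the statement of the theorem. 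Finally, applying the duality of Theorem~\ref{thm:nu_tamari_duality} identifies this digraph with $\Galois(\Tamari_{\nu_{\alpha}})$, concluding the proof.

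There is no serious obstacle: Lemmas~\ref{lem:mi_brackets} and \ref{lem:mi_arrows} have already done all the lattice-theoretic work in the reduced-bracket-vector model, so the present theorem is the concrete geometric translation. The only item that needs care is the vertex-set computation, and in particular confirming that the shape constraint ``$(p,q)$ above $\nu_{\overline{\alpha}}$'' is equivalent to the two-sided condition $p\leq t_{k}\leq q<t_{k+1}$; this follows from the elementary observation that the bounce path $\nu_{\overline{\alpha}}$ touches height $y$ furthest to the right at the corner $(t_{k},t_{k})$ precisely when $y=t_{k}$, and at the corner $(t_{k-1},t_{k})$ otherwise.
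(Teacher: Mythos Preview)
Your proposal is correct and follows essentially the same route as the paper: both arguments use the extremality and congruence uniformity of $\Tamari_{\nu_{\alpha}}$ together with Theorem~\ref{thm:nu_tamari_duality} to pass to meet-irreducibles of $\Tamari_{\overline{\nu}_{\alpha}}$, identify these with their unique valleys, and then read off the edge relation from Lemma~\ref{lem:mi_arrows}. Your vertex-set computation is a slightly more explicit version of what the paper calls ``straightforward to verify''; the only quibble is that your parenthetical remark about the corner $(t_{k-1},t_{k})$ is irrelevant, since for $t_{k}\leq q<t_{k+1}$ the rightmost point of $\nu_{\overline{\alpha}}$ at height $q$ always has $x$-coordinate $t_{k}$, which is exactly what you need.
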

\begin{proof}
	We have seen in Theorem~\ref{thm:nu_tamari_cul_extremal} that $\Tamari_{\nu_{\alpha}}$ is extremal and congruence uniform, and therefore, by Proposition~\ref{prop:extremal_cu_galois}, its Galois graph is (isomorphic to) the directed graph on the vertex set $\JI(\Tamari_{\nu_{\alpha}})$, which has a directed edge $x\to y$ if and only if $x\neq y$ and $y\leq y_{*}\vee x$.  Now, Theorem~\ref{thm:nu_tamari_duality} implies that $\Tamari_{\nu_{\alpha}}\cong\Tamari_{\overline{\nu}_{\alpha}}$, and let us denote this isomorphism by $\Psi$.  It follows that $\mu\in\JI\big(\Tamari_{\nu_{\alpha}}\bigr)$ if and only if $\Psi(\mu)\in\MI\bigl(\Tamari_{\overline{\nu}_{\alpha}}\bigr)$, and for $\mu_{1},\mu_{2}\in\JI\bigl(\Tamari_{\nu_{\alpha}}\bigr)$ we have $\mu_{2}\leq(\mu_{2})_{*}\vee\mu_{1}$ if and only if $\Psi(\mu_{2})\geq\Psi(\mu_{2})^{*}\wedge\Psi(\mu_{1})$.  We may therefore use Lemma~\ref{lem:mi_arrows} to describe $\Galois(\Tamari_{\nu_{\alpha}})$.

	\medskip
	
	Let $V$ denote the set of pairs $(p,q)$ with $1\leq p\leq t_{k}\leq q<t_{k+1}$ for some $k\in[r-1]$.  Let $G$ be the directed graph with vertex set $V$, which has a directed edge precisely under the conditions given in the statement.  
	
	It is straightforward to verify that the elements of $V$ are precisely the lattice points in the rectangle from $(0,0)$ to $(n,n)$ that do not have a coordinate equal to $0$ or $n$, and that lie weakly above $\overline{\nu}_{\alpha}$.  Therefore, for each such pair $(p,q)$ we can find a unique $\overline{\nu}_{\alpha}$-path that has a unique valley at $(p,q)$.  Hence, the vertices of $G$ are in bijection with the meet-irreducible elements of $\Tamari_{\overline{\nu}_{\alpha}}$ via the map $(p,q)\mapsto\mu_{p,q}$.  
	
	By Lemma~\ref{lem:mi_arrows} we see that there exists a directed edge $(p_{1},q_{1})\to(p_{2},q_{2})$ in $G$ if and only if $\mu_{p_{1},q_{1}}\neq\mu_{p_{2},q_{2}}$ and $\mu_{p_{2},q_{2}}\geq\mu_{p_{2},q_{2}}^{*}\wedge\mu_{p_{1},q_{1}}$.  In view of the first paragraph of this proof, this is exactly the case when there exists a directed edge $\Psi^{-1}(\mu_{p_{1},q_{1}})\to\Psi^{-1}(\mu_{p_{2},q_{2}})$ in $\Galois\bigl(\Tamari_{\nu_{\alpha}}\bigr)$.  We conclude that $G$ and $\Galois\bigl(\Tamari_{\nu_{\alpha}}\bigr)$ are isomorphic.
\end{proof}

\begin{figure}
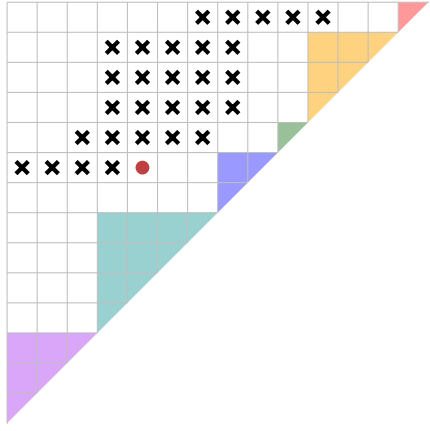

	\centering
	\galoisoncells{1}
	\caption{In the Galois graph of the $\nu$-Tamari lattice given by the above bounce path, there exist a directed arrow from the box with a red dot to any of the boxes with a cross.  (We identify boxes with the lattice points on their bottom right corner.)}
	\label{fig:illustation_galois_graph_nu_tamari}
\end{figure}

We may identify the lattice points $(p,q)$ above $\nu_{\alpha}$ with the box that has $(p,q)$ as its lower right corner.  From this perspective, Figure~\ref{fig:illustation_galois_graph_nu_tamari} illustrates
Theorem~\ref{thm:nu_tamari_galois}, by indicating to which boxes an arrow from the highlighted box may go in the Galois graph.

\begin{corollary}\label{cor:nu_tamari_galois_outedges}
	Let $\alpha$ be an integer composition.  In the Galois graph of $\Tamari_{\nu_{\alpha}}$ the vertex $(p,q)$ has precisely $p\bigl(\lvert\alpha\rvert-q\bigr)-1$ outgoing arrows.
\end{corollary}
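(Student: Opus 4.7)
The plan is to apply Theorem~\ref{thm:nu_tamari_galois} directly and count the pairs $(p',q')$ to which $(p,q)$ points, organizing the count by the region index $k'$ of the target. Let $n=\lvert\alpha\rvert$, and fix a vertex $(p,q)$ with $t_{k}\leq q<t_{k+1}$. By Theorem~\ref{thm:nu_tamari_galois}, there is an outgoing arrow from $(p,q)$ to $(p',q')\neq(p,q)$ (where $t_{k'}\leq q'<t_{k'+1}$) precisely when $q\leq q'$ and $t_{k}-p\leq t_{k'}-p'<t_{k}$. Since $q\leq q'$ forces $k'\geq k$, I would split the count into the cases $k'=k$ and $k'>k$.

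In the case $k'=k$, the inequality $t_{k}-p\leq t_{k}-p'<t_{k}$ collapses to $1\leq p'\leq p$, and $q'$ ranges freely over $\{q,q+1,\ldots,t_{k+1}-1\}$. This contributes $p(t_{k+1}-q)$ pairs, one of which is $(p,q)$ itself. In the case $k'>k$, the same inequality yields $p'\in\{t_{k'}-t_{k}+1,\ldots,t_{k'}-t_{k}+p\}$, which is a block of exactly $p$ admissible integers (they lie in $\{1,\ldots,t_{k'}\}$ because $p\leq t_{k}$ ensures $t_{k'}-t_{k}+p\leq t_{k'}$). Since $k'>k$ gives $q'\geq t_{k'}\geq t_{k+1}>q$ for free, every $q'\in\{t_{k'},\ldots,t_{k'+1}-1\}$ is valid, so this case produces $p(t_{k'+1}-t_{k'})$ pairs for each such $k'$.

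Summing the second case over $k'=k+1,\ldots,r-1$ uses the telescoping identity $\sum_{k'=k+1}^{r-1}(t_{k'+1}-t_{k'})=t_{r}-t_{k+1}=n-t_{k+1}$ (recall $t_{r}=n-s_{0}=n$). Combining both contributions and subtracting one for the excluded pair $(p,q)$ itself yields
\[
p(t_{k+1}-q)+p(n-t_{k+1})-1=p(n-q)-1,
\]
which is the claimed count $p(\lvert\alpha\rvert-q)-1$.

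There is no real obstacle here beyond careful bookkeeping; once Theorem~\ref{thm:nu_tamari_galois} has been secured, the corollary is a direct combinatorial consequence. The only point that deserves attention is checking that the block of $p$ values of $p'$ in the second case actually lies inside the admissible range $\{1,\ldots,t_{k'}\}$, and this is guaranteed by the bound $p\leq t_{k}$ built into the definition of the vertex set of the Galois graph.
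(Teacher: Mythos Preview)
Your argument is correct and is precisely the detailed verification that the paper leaves implicit: the paper's own proof consists of the single sentence ``This follows immediately from Theorem~\ref{thm:nu_tamari_galois}.'' Your case split on $k'=k$ versus $k'>k$ and the telescoping sum are exactly what one does to cash out that immediacy, and your check that the block $\{t_{k'}-t_k+1,\ldots,t_{k'}-t_k+p\}$ sits inside $\{1,\ldots,t_{k'}\}$ is the only point requiring care.
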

\begin{proof}
	This follows immediately from Theorem~\ref{thm:nu_tamari_galois}.
\end{proof}

\section{$\Tamari_{\alpha}$ and $\Tamari_{\nu_{\alpha}}$ are Isomorphic}
	\label{sec:isomorphism_tamaris}
In this section we prove that for every integer composition $\alpha$ the lattices $\Tamari_{\nu_{\alpha}}$ and $\Tamari_{\alpha}$ are isomorphic.  In order to achieve this we make use of the bijective correspondence between the sets $\Symmetric_{\alpha}(231)$ and $\Dyck_{\alpha}$ that was first described in \cite[Theorem~1.2]{muehle18tamari}, and recalled here in Sections~\ref{sec:noncrossings_to_paths} and \ref{sec:permutations_to_noncrossings}.  

Let $\Theta_{1}$ and $\Theta_{2}$ be the bijections from Theorems~\ref{thm:bijection_paths_nc} and \ref{thm:bijection_nc_perms}.  Consequently, the map 
\begin{displaymath}
	\Theta\colon\Dyck_{\overline{\alpha}}\to\Symmetric_{\alpha}(231),\quad \mu\mapsto\Theta_{2}\circ\Theta_{1}(\mu)
\end{displaymath}
is a bijection.

\begin{remark}
	Note that there is a slight subtlety to our approach.  The bijection $\Theta$ maps elements of $\Dyck_{\overline{\alpha}}$ to elements of $\Symmetric_{\alpha}(231)$, and therefore \emph{reverses} the composition.  In fact, this comes in handy, because Theorems~\ref{thm:parabolic_tamari_duality}~and~\ref{thm:nu_tamari_duality} state that reversing the composition corresponds to taking the lattice dual.  Theorem~\ref{thm:parabolic_tamari_galois} describes the Galois graph of $\Tamari_{\alpha}$, while Theorem~\ref{thm:nu_tamari_galois} uses the dual perspective to describe the Galois graph of $\Tamari_{\nu_{\alpha}}$.  Therefore, everything is in place for the map $\Theta$ to take effect.  This is also the reason why we did not use the map $\lactoperm\circ\nntolac$ in this section.
\end{remark}

We now show that the map $\Theta$ extends to an isomorphism from $\Galois(\Tamari_{\nu_{\alpha}})$ to $\Galois(\Tamari_{\alpha})$.  Let us write $w_{a,b}$ for the unique join-irreducible element of $\Tamari_{\alpha}$ whose unique descent is $(a,b)$, 
and recall that $\mu_{p,q}$ is the unique meet-irreducible element of $\Tamari_{\overline{\nu}_{\alpha}}$ that has its unique valley at $(p,q)$.

\begin{lemma}\label{lem:bijection_irreducibles}
	Let $\mu_{p,q}\in\MI\bigl(\Tamari_{\overline{\nu}_{\alpha}}\bigr)$ such that $t_{k}\leq q<t_{k+1}$.  The image of $\Theta(\mu_{p,q})$ is $w_{a,b}\in\JI\bigl(\Tamari_{\alpha}\bigr)$, where 
	\begin{align*}
		a & = s_{r-k-1}+q-n+s_{r-k}+1,\\
		b & = s_{r-k}+p.
	\end{align*}
	In particular, $a$ belongs to the $(r-k)$-th $\alpha$-region, and $b$ belongs to the $i$-th $\alpha$-region for some $i>r-k$.
\end{lemma}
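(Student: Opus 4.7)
The plan is to trace $\mu_{p,q}$ through the two stages of $\Theta=\Theta_{2}\circ\Theta_{1}$, exploiting the fact that $\mu_{p,q}$ carries only one valley.

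First, I would apply Construction~\ref{constr:path_to_nc} to $\mu_{p,q}$. Since $\mu_{p,q}\in\MI(\Tamari_{\overline{\nu}_{\alpha}})$ has a unique valley at $(p,q)$, the construction executes only a single step and yields a noncrossing $\alpha$-partition $\mathbf{P}:=\Theta_{1}(\mu_{p,q})$ whose sole non-singleton block is a bump $(a,b)$. The hypothesis $t_{k}\leq q<t_{k+1}$ pins down the index $k$ used in the construction, and with $\ell=q-t_{k}$ the algorithm declares $a$ to be the $(\ell+1)$-st element of the $(r-k)$-th $\alpha$-region, i.e.\ $s_{r-k-1}+\ell+1=s_{r-k-1}+q-n+s_{r-k}+1$, and $b$ to be the $p$-th position after that region (no earlier bumps have been placed), i.e.\ $s_{r-k}+p$. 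These are exactly the values in the statement. The region-placement claim then drops out: $0\leq\ell<\alpha_{r-k}$ gives $s_{r-k-1}<a\leq s_{r-k}$, and the admissible range $1\leq p\leq t_{k}$ from Theorem~\ref{thm:nu_tamari_galois} gives $s_{r-k}<b\leq n$.

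Next, I would apply Construction~\ref{constr:nc_to_perm} to $\mathbf{P}$. Because $\{a,b\}$ is the only non-singleton block, every recursion step whose first block $P_{\circ}$ is encountered before reaching this block has $|P_{\circ}|=1$, which assigns a single value in the natural (position-preserving) way and creates no pair satisfying $w(i)=w(j)+1$. At the unique step where $P_{\circ}=\{a,b\}$, the construction sets $w(a)=|D|$ and $w(b)=|D|-1$, producing the pair $w(a)=w(b)+1$ with $a<b$, hence the descent $(a,b)$. The ensuing recursive calls on the pieces $\mathbf{P}_{1}$ and $\mathbf{P}_{2}$ again see only singleton first blocks, so no further descents can appear. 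Consequently the permutation $w:=\Theta(\mu_{p,q})$ has $(a,b)$ as its unique descent.

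Finally, Theorem~\ref{thm:bijection_nc_perms} ensures that $w$ is $(\alpha,231)$-avoiding, and Theorem~\ref{thm:parabolic_tamari_cul_extremal} combined with \cite{muehle18noncrossing}*{Corollary~4.5} identifies the join-irreducible elements of $\Tamari_{\alpha}$ with precisely the $(\alpha,231)$-avoiding permutations possessing a unique descent. Hence $w\in\JI(\Tamari_{\alpha})$ with descent $(a,b)$, and by the uniqueness of such an element we conclude $w=w_{a,b}$.

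The main obstacle is bookkeeping: the reversal $\alpha\leftrightarrow\overline{\alpha}$ implicit in $\Theta\colon\Dyck_{\overline{\alpha}}\to\Symmetric_{\alpha}(231)$ and the attendant swap $s_{i}\leftrightarrow t_{r-i}$ have to be tracked carefully when converting a valley location into the exact position of the resulting bump; this is where the algebraic identity $s_{r-k-1}+\ell+1=s_{r-k-1}+q-n+s_{r-k}+1$ enters. A secondary subtlety is ruling out extraneous descents arising from the way Construction~\ref{constr:nc_to_perm} distributes values across different recursion levels, but the singleton-only structure outside the bump $\{a,b\}$ makes this verification routine rather than genuinely combinatorial.
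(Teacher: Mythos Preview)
Your proposal is correct and follows essentially the same route as the paper: trace the single valley through $\Theta_{1}$ via Construction~\ref{constr:path_to_nc} to obtain the unique bump $(a,b)$ with the stated coordinates, then through $\Theta_{2}$ to obtain a permutation whose unique descent is $(a,b)$, and finally read off the region claims from the inequalities on $\ell$ and $p$. The paper's own argument is terser---it simply invokes Theorem~\ref{thm:bijection_nc_perms} (bumps $\leftrightarrow$ descents) rather than walking through the recursion of Construction~\ref{constr:nc_to_perm} as you do---but the logical content is the same.
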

\begin{proof}
	Since $\mu_{p,q}$ has a unique valley at $(p,q)$, it follows from Construction~\ref{constr:path_to_nc} that $\Theta_{1}(\mu_{p,q})$ has a unique bump at positions $a,b$ with $a=s_{r-k-1}+\ell+1$, where $q=t_{k}+\ell$ for a unique index $k\in[r]$, and $b=s_{r-k}+p$.  (The value of $b$ follows immediately, since there is no other bump in $\Theta_{1}(\mu_{p,q})$ that could potentially block elements.)  By Construction~\ref{constr:nc_to_perm} we get that $w=\Theta(\mu_{p,q})$ has $(a,b)$ as its unique inversion, which proves the first claim.
	
	By construction we have $s_{r-k}=n-t_{k}$, and since $t_{k}\leq q<t_{k+1}$ we conclude that $s_{r-k-1}<a\leq s_{r-k}$, meaning that $a$ belongs to the $(r-k)$-th $\alpha$-region.  Moreover, since $p\geq 1$, we conclude that $b>s_{r-k}$, which implies the last statement.
\end{proof}

\begin{theorem}\label{thm:isomorphism_galois_graphs}
	Let $\alpha$ be an integer composition.  The map $\Theta$ induces an isomorphism from $\Galois\bigl(\Tamari_{\nu_{\alpha}}\bigr)$ to $\Galois\bigl(\Tamari_{\alpha}\bigr)$.
\end{theorem}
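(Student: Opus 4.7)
The plan is to verify that the vertex bijection of Lemma~\ref{lem:bijection_irreducibles} carries the explicit arrow description of $\Galois\bigl(\Tamari_{\nu_\alpha}\bigr)$ from Theorem~\ref{thm:nu_tamari_galois} onto the explicit arrow description of $\Galois\bigl(\Tamari_\alpha\bigr)$ from Theorem~\ref{thm:parabolic_tamari_galois}. Since $\Tamari_{\nu_\alpha}$ and $\Tamari_\alpha$ are both extremal (Theorems~\ref{thm:nu_tamari_cul_extremal} and \ref{thm:parabolic_tamari_cul_extremal}), each is determined by its Galois graph, so this will suffice.

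I would first establish the vertex bijection. Using the identification of the vertex set of $\Galois\bigl(\Tamari_{\nu_\alpha}\bigr)$ with $\MI\bigl(\Tamari_{\overline{\nu}_\alpha}\bigr)$ already made in the proof of Theorem~\ref{thm:nu_tamari_galois}, Lemma~\ref{lem:bijection_irreducibles} tells us that $\Theta$ sends $\mu_{p,q} \mapsto w_{a,b}$, where, setting $j = r - k$,
\[
	a = s_{j-1} + (q - t_k) + 1, \qquad b = s_j + p, \qquad t_k \le q < t_{k+1}.
\]
The constraints $1 \le p \le t_k$ and $t_k \le q < t_{k+1}$ correspond exactly to $s_{j-1} < a \le s_j$ (so $a$ lies in the $j$-th $\alpha$-region) and $s_j < b \le n$ (so $b$ lies in a strictly later region), matching the vertex description of Theorem~\ref{thm:parabolic_tamari_galois}; the two vertex sets also agree in cardinality $\alpha_j \cdot t_k$ for each $j$.

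Next I would verify that the arrow conditions match. A preliminary remark is that both the $\nu$-arrow condition ($t_{k_1} - p_1 \le t_{k_2} - p_2 < t_{k_1}$ together with $q_1 \le q_2$) and the $\alpha$-arrow condition force $k_1 \le k_2$: on the $\nu$-side $k_1 > k_2$ would give $t_{k_1} \le q_1 \le q_2 < t_{k_2+1} \le t_{k_1}$, and on the $\alpha$-side case (i) has $k_1 = k_2$ while case (ii) has $a_2 < a_1$ with $a_1, a_2$ in different regions, forcing $j_2 < j_1$, i.e.\ $k_1 < k_2$. If $k_1 = k_2$, the $\nu$-condition reduces to $q_1 \le q_2$ and $p_2 \le p_1$, which translate to $a_1 \le a_2$ and $b_2 \le b_1$; together with the automatic inequality $a_2 \le s_j < b_2$, this is precisely case (i) of Theorem~\ref{thm:parabolic_tamari_galois}. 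If $k_1 < k_2$, using the identity $t_{k_2} - t_{k_1} = s_{j_1} - s_{j_2}$, the inequality $t_{k_1} - p_1 \le t_{k_2} - p_2$ rearranges to $b_2 \le b_1$, and the inequality $t_{k_2} - p_2 < t_{k_1}$ rearranges to $b_2 > s_{j_1}$. Since $a_1 \le s_{j_1}$, the condition $b_2 > s_{j_1}$ is equivalent, under the standing assumption $j_2 < j_1$, to the conjunction ``$a_1 < b_2$ and $a_1, b_2$ lie in different $\alpha$-regions''. Combined with the automatic inequalities $a_2 < a_1$ (from $j_2 < j_1$) and $q_1 \le q_2$ (from $q_1 < t_{k_1+1} \le t_{k_2} \le q_2$), this recovers exactly case (ii) of Theorem~\ref{thm:parabolic_tamari_galois}.

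The main obstacle is simply the careful bookkeeping required to translate between the coordinates $(p,q)$ on the $\nu$-side and $(a,b)$ on the $\alpha$-side via the change of variables $j = r - k$ and $t_k = n - s_j$. The one genuinely subtle point is in case (ii): a single $\nu$-inequality $t_{k_2} - p_2 < t_{k_1}$ must simultaneously encode both the strict inequality $a_1 < b_2$ and the ``different $\alpha$-region'' constraint on $a_1, b_2$. This packaging works precisely because, once $j_2 < j_1$ is fixed, the only way $b_2$ can both exceed $a_1 > s_{j_1 - 1}$ and avoid region $j_1$ is to satisfy $b_2 > s_{j_1}$, which is exactly the inequality produced by $t_{k_2} - p_2 < t_{k_1}$.
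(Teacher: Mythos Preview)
Your proposal is correct and follows essentially the same approach as the paper: both use Lemma~\ref{lem:bijection_irreducibles} for the vertex bijection and then verify the arrow conditions of Theorems~\ref{thm:nu_tamari_galois} and~\ref{thm:parabolic_tamari_galois} via the case split $k_1=k_2$ versus $k_1<k_2$. The only cosmetic difference is that you argue the equivalence of the two arrow conditions directly (using the substitution $j=r-k$, $t_k=n-s_j$), whereas the paper checks the two implications separately.
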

\begin{proof}
	Let $\alpha=(\alpha_{1},\alpha_{2},\ldots,\alpha_{r})$ with $n=\lvert\alpha\rvert$.  We denote by $G_{\nu_{\alpha}}$ the Galois graph of $\Tamari_{\nu_{\alpha}}$, and by $G_{\alpha}$ that of $\Tamari_{\alpha}$.  It follows from Lemma~\ref{lem:bijection_irreducibles} and the fact that $\Theta$ is a bijection which sends descents to valleys, that the vertex sets of $G_{\nu_{\alpha}}$ and $G_{\alpha}$ are in bijection via $\Theta$.  

	\medskip
	
	Now let $(p_{1},q_{1}),(p_{2},q_{2})$ be two vertices of $G_{\nu_{\alpha}}$ such that $t_{k_{1}}\leq q_{1}<t_{k_{1}+1}$ and $t_{k_{2}}\leq q_{2}<t_{k_{2}+1}$ for some $k_{1},k_{2}\in[r]$.  
	
	By Lemma~\ref{lem:bijection_irreducibles} we have $\Theta(\mu_{p_{1},q_{1}})=w_{a_{1},b_{1}}$ and $\Theta(\mu_{p_{2},q_{2}})=w_{a_{2},b_{2}}$, where 
	\begin{displaymath}\begin{aligned}
		& a_{1} = s_{r-k_{1}-1}+s_{r-k_{1}}+q_{1}+1-n, && b_{1} = s_{r-k_{1}}+p_{1},\\
		& a_{2} = s_{r-k_{2}-1}+s_{r-k_{2}}+q_{2}+1-n, && b_{2} = s_{r-k_{2}}+p_{2}.
	\end{aligned}\end{displaymath}
	Moreover, if we assume that $(p_{1},q_{1})\neq(p_{2},q_{2})$, then we see that $(a_{1},b_{1})\neq(a_{2},b_{2})$.  
	
	Suppose that there is a directed edge $(p_{1},q_{1})\to(p_{2},q_{2})$ in $G_{\nu_{\alpha}}$.  Theorem~\ref{thm:nu_tamari_galois} then implies that $t_{k_{1}}-p_{1}\leq t_{k_{2}}-p_{2}<t_{k_{2}}$ and $q_{1}\leq q_{2}$.  Since $q_{1}\leq q_{2}$, we conclude that $t_{k_{1}}\leq t_{k_{2}}$. Therefore, from the definition of $t_k$, we have two cases, $k_1=k_2$ or $k_1 < k_2$.
	
	(i) If $k_1=k_2$, then it follows that $a_{1}$ and $a_{2}$ belong to the same $\alpha$-region, and we must have $a_{1}\leq a_{2}$.  From the fact that $t_{k_{1}}-p_{1}\leq t_{k_{2}}-p_{2}$ follows $p_{2}\leq p_{1}$, and therefore $b_{2}\leq b_{1}$.  Lemma~\ref{lem:bijection_irreducibles} implies further that $a_{2}<b_{2}$; it follows now from Theorem~\ref{thm:parabolic_tamari_galois} that there exists a directed edge $(a_{1},b_{1})\to(a_{2},b_{2})$ in $G_{\alpha}$.

	(ii) If $k_1 < k_2$, then Lemma~\ref{lem:bijection_irreducibles} implies that $a_{1}$ belongs to the $(r-k_{1})$-th $\alpha$-region, and $a_{2}$ belongs to the $(r-k_{2})$-th $\alpha$-region.  It is clear that $r-k_{2}<r-k_{1}$, which implies $a_{2}<a_{1}$.  Moreover, we have 
	\begin{align*}
		t_{k_{1}}-p_{1} & = n-s_{r-k_{1}}-p_{1} = n-(s_{r-k_{1}}+p_{1}) = n-b_{1},\\
		t_{k_{2}}-p_{2} & = n-s_{r-k_{2}}-p_{2} = n-(s_{r-k_{2}}+p_{2}) = n-b_{2}.
	\end{align*}
	By assumption we have $t_{k_{1}}-p_{1}\leq t_{k_{2}}-p_{2}$, which implies $b_{2}\leq b_{1}$.  Finally, $t_{k_{2}}-p_{2}<t_{k_{1}}$ implies $b_{2}>s_{r-k_{1}}$.  Therefore, Theorem~\ref{thm:parabolic_tamari_galois} tells us that there is a directed edge $(a_{1},b_{1})\to(a_{2},b_{2})$ in $G_{\alpha}$.
	
	\medskip

	Conversely, let $(a_{1},b_{1}),(a_{2},b_{2})$ be two vertices of $G_{\alpha}$ such that $a_{1}$ belongs to the $(r-k_{1})$-th $\alpha$-region and $a_{2}$ belongs to the $(r-k_{2})$-th $\alpha$-region.

	By Lemma~\ref{lem:bijection_irreducibles} we have $\Theta^{-1}(w_{a_{1},b_{1}})=\mu_{p_{1},q_{1}}$ and $\Theta(w_{a_{2},b_{2}})=\mu_{p_{2},q_{2}}$, where 
	\begin{displaymath}\begin{aligned}
		& p_{1} = b_{1}-s_{r-k_{1}}, && q_{1} = n+a_{1}-s_{r-k_{1}-1}-s_{r-k_{1}}-1,\\
		& p_{2} = b_{2}-s_{r-k_{2}}, && q_{2} = n+a_{2}-s_{r-k_{2}-1}-s_{r-k_{2}}-1,\\
	\end{aligned}\end{displaymath}
	and $t_{k_{1}}\leq q_{1}<t_{k_{1}+1}$ and $t_{k_{2}}\leq q_{2}<t_{k_{2}+1}$.  Moreover, if we assume that $(a_{1},b_{1})\neq(a_{2},b_{2})$, then we see that $(p_{1},q_{1})\neq(p_{2},q_{2})$.  
	
	Suppose that there is a directed edge $(a_{1},b_{1})\to(a_{2},b_{2})$ in $G_{\alpha}$.  Theorem~\ref{thm:parabolic_tamari_galois} implies first of all that $r-k_{2}\leq r-k_{1}$.
	
	(i) If $k_{1}=k_{2}$, then Theorem~\ref{thm:parabolic_tamari_galois} implies also that $a_{1}\leq a_{2}<b_{2}\leq b_{1}$.  It follows immediately that $q_{1}\leq q_{2}$, and $t_{k_{1}}-p_{1}\leq t_{k_{1}}-p_{2}$.  Moreover, since $p_{2}>0$, we have $t_{k_{1}}-p_{2}<t_{k_{1}}$.  Theorem~\ref{thm:nu_tamari_galois} states that there exists a directed edge $(p_{1},q_{1})\to(p_{2},q_{2})$ in $G_{\nu_{\alpha}}$.
	
	(ii) If $r-k_{2}<r-k_{1}$, then Theorem~\ref{thm:parabolic_tamari_galois} implies further that $s_{r-k_{1}}<b_{2}\leq b_{1}$.  This means exactly that $s_{r-k_{1}}<p_{2}+s_{r-k_{2}}\leq p_{1}+s_{r-k_{1}}$.  It follows that $n-s_{r-k_{1}}>n-s_{r-k_{2}}-p_{2}\geq n-s_{r-k_{1}}-p_{1}$, which means precisely that $t_{k_{1}}>t_{k_{2}}-p_{2}\geq t_{k_{1}}-p_{1}$.  Moreover, $s_{r-k_{1}}>s_{r-k_{2}}$ implies $t_{k_{1}}<t_{k_{2}}$, and we conclude that $q_{1}<t_{k_{1}+1}\leq t_{k_{2}}\leq q_{2}$.  Theorem~\ref{thm:nu_tamari_galois} now states that there exists a directed edge $(p_{1},q_{1})\to(p_{2},q_{2})$ in $G_{\nu_{\alpha}}$.
	
	\medskip
	
	We have thus shown that $\Theta$ induces an isomorphism of directed graphs from $G_{\nu_{\alpha}}$ to $G_{\alpha}$, and the proof is complete.
\end{proof}

We may now prove Theorem~\ref{thm:parabolic_nu_tamari_isomorphism}, which states that $\Tamari_{\nu_{\alpha}}$ and $\Tamari_{\alpha}$ are isomorphic.

\tamari*
\begin{proof}
	Theorems~\ref{thm:parabolic_tamari_cul_extremal} and \ref{thm:nu_tamari_cul_extremal} state that the lattices $\Tamari_{\alpha}$ and $\Tamari_{\nu_{\alpha}}$ are both extremal lattices, and Theorem~\ref{thm:isomorphism_galois_graphs} states that their Galois graphs are isomorphic.  We may thus conclude the result as described in Remark~\ref{rem:isomorphic_graphs_isomorphic_lattices}.
\end{proof}

\begin{corollary}\label{cor:dual_nu_tamari_parabolic}
	For every integer composition $\alpha$, the lattice $\Tamari_{\overline{\nu}_{\alpha}}$ is isomorphic to the dual of $\Tamari_{\alpha}$.
\end{corollary}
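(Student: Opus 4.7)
The plan is to chain together three results that are already at hand. By Theorem~\ref{thm:parabolic_nu_tamari_isomorphism}, we have $\Tamari_{\alpha}\cong\Tamari_{\nu_{\alpha}}$. Taking duals on both sides gives that the dual of $\Tamari_{\alpha}$ is isomorphic to the dual of $\Tamari_{\nu_{\alpha}}$. Then Theorem~\ref{thm:nu_tamari_duality} identifies the dual of $\Tamari_{\nu_{\alpha}}$ with $\Tamari_{\overline{\nu}_{\alpha}}$, which is exactly the desired conclusion.

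Alternatively, one can route through the parabolic side: Theorem~\ref{thm:parabolic_tamari_duality} says that the dual of $\Tamari_{\alpha}$ is isomorphic to $\Tamari_{\overline{\alpha}}$, and applying Theorem~\ref{thm:parabolic_nu_tamari_isomorphism} to the composition $\overline{\alpha}$ identifies $\Tamari_{\overline{\alpha}}$ with $\Tamari_{\nu_{\overline{\alpha}}}$. Since $\nu_{\overline{\alpha}}=\overline{\nu}_{\alpha}$ (as noted in Section~\ref{sec:parabolic_dycks}), this again yields the result. Either route is essentially a one-line diagram chase, so there is no genuine obstacle: all the work has been done in establishing the two duality statements and the main isomorphism.

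Given how short the argument is, I would simply record it inline, with the two dualities and the main isomorphism cited explicitly, and a brief remark that $\nu_{\overline{\alpha}}=\overline{\nu}_{\alpha}$ so that the two natural routes produce the same conclusion.
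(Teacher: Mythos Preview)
Your first route is exactly the paper's proof: it combines Theorem~\ref{thm:parabolic_nu_tamari_isomorphism} with Theorem~\ref{thm:nu_tamari_duality} in a one-line deduction. The alternative via Theorem~\ref{thm:parabolic_tamari_duality} is also correct and equally immediate.
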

\begin{proof}
	This follows from Theorems~\ref{thm:nu_tamari_duality} and \ref{thm:parabolic_nu_tamari_isomorphism}.
\end{proof}

\begin{remark}
	We may as well prove Theorem~\ref{thm:parabolic_tamari_duality} by combining Theorems~\ref{thm:parabolic_nu_tamari_isomorphism} and \ref{thm:nu_tamari_duality}.
\end{remark}

	In fact, we suspect that the map $\Theta$ is an isomorphism from $\Tamari_{\overline{\nu}_{\alpha}}$ to $\Tamari_{\alpha}$, but we have not succeeded proving this directly. See also Remark~\ref{rem:galois_difficulty}. Figure~\ref{fig:cover_map} shows a cover relation in $\Tamari_{\nu_{(3,4,2,1,3,1)}}$ and the corresponding cover relation in $\Tamari_{(1,3,1,2,4,3)}$ under the map $\Theta$.

\begin{openproblem}\label{rem:tamari_direct_isomorphism}
	Show that $\Theta$ is a lattice isomorphism from $\Tamari_{\overline{\nu}_{\alpha}}$ to $\Tamari_{\alpha}$.
\end{openproblem}


\begin{figure}
	\centering
	\begin{tikzpicture}\small
		\def\dx{7};
		\def\dy{1.75};
		\draw(1*\dx,1.3*\dy) node{
			\permA{1}
		};
		\draw(1*\dx,2*\dy) node{
			\ncA{1}
		};
		\draw(1*\dx,3.85*\dy) node{
			\pathA{1}{0}
		};

		\draw(1.5*\dx,1.3*\dy) node{$\gtrdot_{L}$};
		\draw(1.5*\dx,3.75*\dy) node{$\lessdot_{\overline{\nu}_{\alpha}}$};

		\draw(2*\dx,1.3*\dy) node{
			\permB{1}
		};
		\draw(2*\dx,2*\dy) node{
			\ncB{1}
		};
		\draw(2*\dx,3.85*\dy) node{
			\pathB{1}{0}
		};
		
		\draw(1*\dx,2.5*\dy) node{$\downarrow\Theta_{1}$};
		\draw(2*\dx,2.5*\dy) node{$\downarrow\Theta_{1}$};
		\draw(1*\dx,1.6*\dy) node{$\downarrow\Theta_{2}$};
		\draw(2*\dx,1.6*\dy) node{$\downarrow\Theta_{2}$};
	\end{tikzpicture}
	\caption{A cover relation in $\Tamari_{\alpha}$ and the corresponding cover relation in $\Tamari_{\overline{\nu}_{\alpha}}$ for $\alpha=(1,3,1,2,4,3)$.}
	\label{fig:cover_map}
\end{figure}

\part{The Zeta Map in Parabolic Cataland} \label{part:zeta}

In the last part of this article, we prove the Steep-Bounce Conjecture of Bergeron, Ceballos and Pilaud; see \cite{ceballos_hopf_2018}*{Conjecture~2.2.8}.  This conjecture arises as a combinatorial approach to understand an intriguing connection between the study of certain lattice walks in the positive quarter plane, and certain Hopf algebras with applications in the theory of multivariate diagonal harmonics.

We prove this conjecture by relating two families of nested Dyck paths via the left-aligned colorable trees ($\alpha$-trees) from Section~\ref{sec:lac_trees}. Since a tree $T$ can be an $\alpha$-tree for potentially several choices of $\alpha$, it is convenient to consider the following definition.

\begin{definition} \label{def:LAC-tree}
	A \tdef{LAC tree} of size $n$ is a pair $(T, \alpha)$, where $T$ is an $\alpha$-tree for a composition $\alpha$ of $n$ ($\alpha\models n$). For $n > 0$, we denote by 
\begin{displaymath}
	\LAC_{n} \defs \bigl\{(T,\alpha)\mid\alpha\models n,T\in\Trees_{\alpha}\bigr\}
\end{displaymath}
the set of all LAC trees of size $n$.
\end{definition}

In contrast to the bijections defined in Section~\ref{sec:parabolic_cataland_bijections}, where a composition $\alpha$ is fixed, the bijections we define in this section will consider all compositions of $n$ at the same time.

\section{Level-Marked Dyck Paths}

In view of Definition~\ref{def:parabolic_dyck_path}, a \tdef{Dyck path} is simply a $(1,1,\ldots,1)$-Dyck path.  Let $\Dyck_{n}$ denote the set of all Dyck paths with $2n$ steps.  

\begin{definition}\label{def:level_marked_path}
	A \tdef{marked Dyck path} $\mu^\bullet$ is a Dyck path with two types of north-steps: \tdef{marked} ones denoted by $N_{\bullet}$, and \tdef{unmarked} ones denoted by $N_{\circ}$. It is \tdef{level-marked} if, for each lattice point $\vec{p}$ on $\mu$, the number of unmarked north-steps before $\vec{p}$ does not exceed the number of east-steps before $\vec{p}$. 
\end{definition}

\begin{remark}
As an equivalent definition, $\mu^\bullet$ is level-marked if and only if for each lattice point $(p, q)$ on $\mu^\bullet$, there are at least $q-p$ marked north-steps before it.
\end{remark}

The set of all level-marked Dyck paths is denoted by $\Dyck_{n}^{\bullet}$.  Level-marked Dyck paths have been introduced in \cite[Section~2.2.3]{ceballos_hopf_2018} under the name ``colored Dyck paths'', and it was shown in that paper that $\Dyck_{n}^{\bullet}$ is in bijection with the set of lattice walks in $\mathbb{N}\times\mathbb{N}$ with $2n$ steps taken from the set $\bigl\{(0,1),(-1,1),(1,-1)\bigr\}$ that start at the origin and end on the $x$-axis.  The bijection goes by sending $N_{\bullet}$ to $(0,1)$, $N_{\circ}$ to $(-1,1)$ and $E$ to $(1,-1)$.  An example of a level-marked Dyck path is shown in the middle of Figure~\ref{fig:bij_tree_dyck}.

\begin{figure}
  \centering
  \includegraphics[page=2,width=\textwidth]{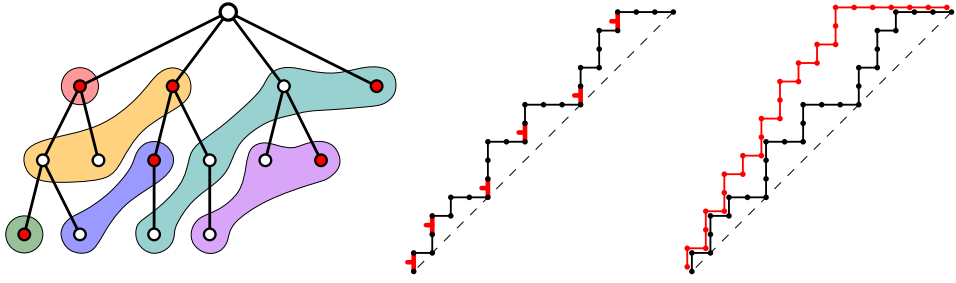}
  \caption{Example of left-aligned colorable trees, level-marked Dyck paths and steep pairs which are in bijection.}
  \label{fig:bij_tree_dyck}
\end{figure}

We define the \tdef{right-to-left traversal} of a plane rooted tree $T$ to be the depth-first search in $T$ starting from the root, where children of the same node are visisted \emph{from right to left}. 

The following construction describes how to obtain a marked Dyck path from an $\alpha$-tree with $n=\lvert\alpha\rvert$.

\begin{construction}\label{constr:lac_to_level}
	Given an integer composition $\alpha$, let $T\in\Trees_{\alpha}$ and $C$ its corresponding left-aligned coloring. We construct a marked Dyck path $\mu_{T}^{\bullet}$, starting from the origin, by performing a right-to-left traversal of $T$.  Whenever we visit a new node in $T$ with a $C$-color we have not seen before, we add a marked north-step $N_\bullet$, otherwise we add an unmarked north-step $N_\circ$.  When we visit a node that we have visited before, we add an east-step.  
\end{construction}

The left part of Figure~\ref{fig:bij_tree_dyck} illustrates Construction~\ref{constr:lac_to_level}.  In the $\alpha$-tree on the left of that figure, the first vertices per color in the right-to-left traversal are marked.  

We now prove that the following map is well defined with the correct domain and image:
\begin{equation}\label{eq:lac_to_dyck}
	\lactodyck\colon\LAC_{n}\to\Dyck_{n}^{\bullet},\quad (T,\alpha)\mapsto\mu_{T}^{\bullet}.
\end{equation}

\begin{proposition}\label{prop:lac_to_level_valid}
	For a LAC tree $(T,\alpha)$, the marked Dyck path $\mu_{T}^{\bullet}$ obtained by Construction~\ref{constr:lac_to_level} is indeed a level-marked Dyck path with $2\lvert\alpha\rvert$ steps.
\end{proposition}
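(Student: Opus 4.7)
The plan is to establish three things in sequence: that $\mu_T^\bullet$ has the right number of steps, that it is a Dyck path, and finally that it satisfies the level-marked condition. I would set $n = |\alpha|$ throughout and keep in mind that Construction~\ref{constr:lac_to_level} is a standard depth-first tree traversal, just done right-to-left, with the annotation that among the north-steps, those corresponding to the first appearance of each color in the traversal are marked.

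For the step count and Dyck property, I would note that the right-to-left traversal visits each of the $n$ non-root nodes exactly twice: the first visit produces a north-step (marked or not) and the second visit produces an east-step. Hence $\mu_T^\bullet$ has $n$ north-steps and $n$ east-steps, totaling $2n$. To see the Dyck property, observe that at any prefix of the traversal, the quantity $\#N - \#E$ equals the current depth, i.e.\ the number of open non-root nodes on the DFS stack, which is non-negative throughout and returns to $0$ at the end.

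For the level-marked condition, consider an arbitrary prefix and write $A$ for the number of N-steps produced so far, $D$ for the number of E-steps, $O = A - D$ for the current depth, and $C$ for the set of colors already seen. Since exactly one N-step per color in $C$ is marked, we have $\#N_\bullet = |C|$ and $\#N_\circ = A - |C|$. Thus the level-marked inequality $\#E \geq \#N_\circ$ is equivalent to $|C| \geq O$. This is the only non-routine step: the currently open non-root nodes are precisely the non-root ancestors of the node we are about to visit, so they form a chain in $T$, and by Lemma~\ref{lem:lac_property}(i) colors strictly increase from parent to child, so these $O$ nodes bear $O$ distinct colors, all belonging to $C$. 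This yields $|C| \geq O$ and completes the argument.

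The only real obstacle is conceptual rather than technical: recognizing that the level-marked inequality is precisely a statement about how the set of colors on the current DFS stack compares to its size, after which Lemma~\ref{lem:lac_property}(i) closes the argument immediately. The rest is standard bookkeeping for the DFS-to-Dyck-path correspondence.
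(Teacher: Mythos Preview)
Your proof is correct and follows essentially the same approach as the paper: both invoke the standard DFS-to-Dyck-path correspondence for the path shape, and both reduce the level-marked condition to the observation that the $q-p$ (your $O$) non-root nodes on the current root-to-node path carry pairwise distinct colors by Lemma~\ref{lem:lac_property}(i), all of which have already been seen. The paper checks the inequality only at endpoints of north-steps while you check it at every prefix, but this is a cosmetic difference.
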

\begin{proof}

Since Construction~\ref{constr:lac_to_level} is simply a variant of the classical bijection between rooted plane trees and Dyck paths, it is clear that $\mu_T^\bullet$ is a Dyck path. To see that it is level-marked, we only need to consider the endpoint $(p,q)$ of any north step in $\mu_T^\bullet$ which corresponds to the first visit of a node $u$ in $T$. By Lemma~\ref{lem:lac_property}(i), all nodes on the path from $u$ to the root, including $u$ and excluding the root, are of different colors. There are exactly $q-p$ such nodes, meaning that up to $(p,q)$ we have seen at least $q-p$ colors in the traversal of Construction~\ref{constr:lac_to_level}. Hence, at least $q-p$ marked north steps come before the point $(p,q)$, making $\mu_T^\bullet$ a level-marked Dyck path.
\end{proof}

We now explain how to obtain a colored tree from a level-marked Dyck path.

\begin{construction}\label{constr:level_to_lac}
	For $n>0$ and $\mu^{\bullet}\in\Dyck_{n}^{\bullet}$, suppose that $\mu^{\bullet}$ has precisely $r$ marked north-steps.  We initialize our construction with the tree $T_{0}$ that consists of a single root node, the empty coloring $f_{0}$, and a list of colors $L_{0} = [0]$, where $0$ stands for the color of the root.  Throughout this construction we maintain a pointer on the colors.  Initially, this pointer points to the $0$ in $L_{0}$.  Moreover, since we traverse the tree during its construction, we say that the \tdef{current node} is the node that we are visiting in the current step, which is the root at the beginning.
	
	We parse $\mu^{\bullet}$ as a word.  Let $x$ be the $i$-th letter of this word.  We then update $T_{i-1}$, $f_{i-1}$, $L_{i-1}$ as follows.
	
	\begin{enumerate}[(i)]
		\item 
			If $x=N_{\bullet}$, then we extend $T_{i-1}$ to $T_{i}$ by adding a node $v_{i}$ to the current node of $T_{i-1}$ as its leftmost child. 
			We set the current node of $T_{i}$ to be $v_{i}$.  We insert a new color $c$ into $L_{i-1}$ right after the pointer, and move the pointer to $c$.  We extend $f_{i-1}$ to $f_{i}$ by setting $f_{i}(v_{i})=c$.
		\item
			If $x=N_{\circ}$, then we extend $T_{i-1}$ to $T_{i}$ by adding a child $v_{i}$ to the current node of $T_{i-1}$ as its leftmost child.  We set the current node of $T_{i}$ to be $v_{i}$.  We take $L_{i}$ to simply be $L_{i-1}$ with the pointer moved to the next color $c'$.  We extend $f_{i-1}$ to $f_{i}$ by setting $f_{i}(v_{i})=c'$.
		\item
			If $x=E$, then we take $T_{i}$ to simply be $T_{i-1}$ with the current node set to the parent of the current node of $T_{i-1}$.  We take $L_{i}$ to be $L_{i-1}$ with the pointer moved to the previous color. Since $\mu^\bullet$ is a Dyck path, such a color (may be $0$ for the root) must exist.  We set $f_{i}=f_{i-1}$.
	\end{enumerate}
	
	Let $f'_{i}$ be the coloring of $T_{i}$ that is obtained from $f_{i}$ by renaming the colors according to first appearance in the left-to-right traversal of $T_{i}$. After $2n$ steps we have parsed $\mu^{\bullet}$ completely, and we output the colored tree $\mathcal{T}_{\mu^{\bullet}}=(T_{2n},f'_{2n})$.
\end{construction}

If $\mathcal{T}_{\mu^{\bullet}}=(T_{\mu^{\bullet}},f')$ is the output of Construction~\ref{constr:level_to_lac}, then we set $\alpha_{i}$ to be the number of nodes of $T_{\mu^{\bullet}}$ whose $f'$-color is $i$, and we set $\alpha_{\mu_{\bullet}}=(\alpha_{1},\alpha_{2},\ldots,\alpha_{r})$ for $r$ the number of colors used by $f'$. 
We now prove that the following map is well defined:
\begin{equation}\label{eq:dyck_to_lac}
	\dycktolac\colon\Dyck_{n}^{\bullet}\to\LAC_{n},\quad\mu^{\bullet}\mapsto (T_{\mu^{\bullet}},\alpha_{\mu^{\bullet}}).
\end{equation}

\begin{lemma}\label{lem:increasing_color_property}
	At the end of the $i$-th step of Construction~\ref{constr:level_to_lac}, the list $L_{i}$ of colors (renamed by first appearances in left-to-right traversal) is increasing.  In particular, if $u$ is a non-root node of $T_{i}$ and $v$ is a descendant of $u$, then $f'_{i}(u)<f'_{i}(v)$.
\end{lemma}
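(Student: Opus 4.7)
My plan is an induction on $i$, strengthening the statement so that the induction carries through. The key strengthened invariant is that $L_i$ lists the distinct colors used in $T_i$ in the order of their first appearance in the left-to-right traversal of $T_i$. Along with this I would carry two supporting invariants, namely (A) every root-to-node path in $T_i$ has pairwise distinct colors, and (B) the prefix of $L_i$ up to and including the pointer is exactly the sequence of colors on the path from the root to the current node, read from root downwards. The base case $i = 0$ is immediate since $L_0 = [0]$ and $T_0$ consists of a single root.

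For the inductive step I would check the three operations in turn. In the $E$ case the tree is unchanged, so only (B) needs checking, and moving the pointer back one position corresponds exactly to ascending one level. In the $N_\bullet$ case, the fresh color $c$ is inserted immediately after the pointer, matching the fact that the new leaf $v_i$ is visited right after its parent in the left-to-right traversal of $T_i$. The delicate case is $N_\circ$: here $L_i$ equals $L_{i-1}$ as a list, but $v_i$ re-uses the existing color $c'$ sitting immediately after the pointer, so one must argue that in $T_i$ the first appearance of $c'$ migrates from its previous location to $v_i$. This works because $v_i$ is inserted as the leftmost child of the old current node and therefore precedes any previously existing node colored $c'$ in traversal (such a node must, by the invariant at step $i-1$, have lived in a subtree visited strictly later than the old current node). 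The main difficulty will be checking that no color strictly between the old pointer and $c'$ in $L_{i-1}$ could disrupt this migration, but this is automatic because $c'$ is the \emph{immediate} successor of the pointer.

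For the ``in particular'' clause, rather than arguing directly from the structure of $L_i$, I would look at the step $j$ at which the descendant $v$ is created. At that moment, the current node is the parent of $v$, which lies on the path from $u$ to $v$, so it is either $u$ itself or a descendant of $u$. Invariant (B) applied at step $j-1$ then places the color of $u$ at or before the pointer, while the color of $v$ is either freshly inserted just after the pointer (case $N_\bullet$) or already present just after the pointer (case $N_\circ$). In either situation, the color of $u$ precedes that of $v$ in $L_j$, and since subsequent operations only insert into $L$ and never reorder existing entries, the same order persists in $L_i$. Invoking the main invariant, which identifies $f'_i$ with the position function on $L_i$, immediately yields $f'_i(u) < f'_i(v)$.
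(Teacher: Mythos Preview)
Your inductive framework with the strengthened ``order of first appearances'' invariant together with (A) and (B) is the right approach, and your treatment of the ``in particular'' clause is correct. There is, however, one genuine gap in the $N_\circ$ case. You assert that any previously existing node coloured $c'$ ``must, by the invariant at step $i-1$, have lived in a subtree visited strictly later than the old current node''. From (B) you know such a node is not on the root-to-$p$ path (since $c'$ lies strictly after the pointer), and from the main invariant you know the \emph{first} appearance of $c'$ comes after the first appearance of $c_p$. But neither of these gives you that the first appearance of $c'$ comes after $p$ itself; that would require $p$ to be the first node of its own colour, which your stated invariants do not establish.

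The clean fix is to carry one more invariant: \emph{every node of $T_{i-1}$ not on the root-to-current path comes strictly after the current node in LR-prefix order}. This holds because Construction~\ref{constr:level_to_lac} always inserts the new node as the \emph{leftmost} child, so once the traversal backs out of a subtree it never re-enters it and everything already built sits to the right. It is preserved by all three step types, and combined with (B) it yields immediately that every node of colour $c'$ in $T_{i-1}$ lies after $p$, hence after $v_i$ in $T_i$. With this addition your $N_\circ$ analysis, and hence the whole induction, goes through.

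By comparison, the paper's proof is terser and structured differently: it records only your invariant (B) and then argues directly that for colours $c_2$ before $c_1$ in $L_i$, the first node of colour $c_1$ has an ancestor of colour $c_2$. Your explicit step-by-step induction on the three letter types, with the first-appearance invariant carried throughout, is more transparent, and once the missing invariant above is added it is fully rigorous.
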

\begin{proof}
	It follows from Construction~\ref{constr:level_to_lac} that the colors in the unique shortest path from the root of $T_{i}$ to $v$ exactly coincide with the colors in $L_{i}$ from the beginning to the pointer, and they appear in the same order on that path and in $L_{i}$.
	
	Now, suppose that there are two colors $c_{1},c_{2}$ such that $c_{1}$ is after $c_{2}$ in $L_{i}$.  Let $u_{1}$ be the first node in the LR-prefix order of $T_{i}$ with $f'_{i}(u_{1})=c_{1}$.  There must be an ancestor $u_{2}$ of $u_{1}$ with $f'_{i}(u_{2})=c_{2}$ by the argument in the first paragraph.  
	
	Since $u_{2}$ precedes $u_{1}$ in the LR-prefix order of $T_{i}$, it follows that $c_{2} < c_{1}$ due to the renaming.  This yields $f'_{i}(u_{2})<f'_{i}(u_{1})$ as desired.
\end{proof}

\begin{proposition}\label{prop:level_to_lac_valid}
	For a level-marked Dyck path $\mu^{\bullet}\in\Dyck_{n}^{\bullet}$ with $r$ marked north-steps, let $(T,f)$ be the colored tree obtained from Construction~\ref{constr:level_to_lac}.  The coloring $f$ yields a composition $\alpha$ such that $T$ is compatible with $\alpha$.
\end{proposition}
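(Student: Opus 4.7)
My plan splits the proposition into two claims: first, that $\alpha=(\alpha_1,\ldots,\alpha_r)$ is a composition of $n$; second, that Construction~\ref{constr:lac_tree} applied to $T$ with this $\alpha$ does not fail and in fact reproduces the coloring $f$. The first claim is a routine count: the number $r$ of parts equals the number of marked north-steps of $\mu^\bullet$, each freshly inserted internal color colors at least the node created when the color is introduced (so $\alpha_k\geq 1$ after renaming), and $\sum_k\alpha_k$ counts the $n$ non-root nodes of $T$.

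For the second claim I would induct on $k$ with inductive hypothesis that after $k-1$ steps of Construction~\ref{constr:lac_tree} the colored nodes form $V_{k-1}\defs\{u:f(u)\leq k-1\}$. Writing $A_k$ for the active nodes at step $k$ and $U_k\defs\{u:f(u)=k\}$, Lemma~\ref{lem:increasing_color_property} forces every $u\in U_k$ to have parent with $f$-color less than $k$ and hence in $V_{k-1}\cup\{v_\circ\}$; thus $U_k\subseteq A_k$, in particular $|A_k|\geq\alpha_k$, and step $k$ does not fail. It remains to show that the first $\alpha_k$ elements of $A_k$ in LR-prefix order are exactly $U_k$, equivalently that along $A_k$ in LR-prefix order the $f$-colors are weakly increasing.

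This monotonicity is the main obstacle. I would prove it by a direct analysis of Construction~\ref{constr:level_to_lac}, using three key facts: (a) by Lemma~\ref{lem:increasing_color_property} the renamed list $L_i$ is strictly increasing throughout the construction, recording an increasing chain of colors along the root-to-current-node path; (b) the pointer is always at the color of the current node, so when creating a child of a node $p$ the insertion (for $N_\bullet$) or lookup (for $N_\circ$) occurs adjacent to $f(p)$ in $L$; and (c) each new node is added as a leftmost child, so the LR-prefix order of siblings of any fixed parent is the reverse of their creation order. Ingredients (a)--(c) directly yield the sibling case: for two children $u,v$ of a common parent $p$ with $u$ created later (hence preceding $v$ in LR-prefix), one has $f(u)\leq f(v)$, because the ``next-after-$f(p)$'' slot in the renamed $L$ is only refined downward as further $N_\bullet$ actions splice fresh entries in immediately above $f(p)$, while $N_\circ$ simply copies this slot. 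For arbitrary $u,v\in A_k$, take the lowest common ancestor $a$ of $u$ and $v$ in $T$; since $A_k$-nodes are pairwise incomparable (a consequence of Lemma~\ref{lem:increasing_color_property} applied to ancestor chains), $a$ is a proper ancestor of both, and $u,v$ lie in subtrees of distinct children $c_u,c_v$ of $a$ with $c_u$ preceding $c_v$. The sibling case applied to $c_u,c_v$ gives $f(c_u)\leq f(c_v)$, and combining this with Lemma~\ref{lem:increasing_color_property} along the chains from $c_u$ down to $u$ and from $c_v$ down to $v$, together with the renaming constraint that in LR-prefix order a color $c$ cannot first appear before all colors $1,\ldots,c-1$ have first appeared, promotes the inequality to $f(u)\leq f(v)$.

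With the monotonicity in hand, step $k$ of Construction~\ref{constr:lac_tree} colors precisely $U_k$, closing the induction. After $r$ steps the full coloring $f$ is recovered, and $T$ is compatible with $\alpha$.
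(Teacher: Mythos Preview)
Your overall strategy matches the paper's: both argue by induction on $k$ that Construction~\ref{constr:lac_tree} colors exactly the set $U_k=\{u:f(u)=k\}$ at step $k$, and both reduce this to showing that the nodes of $U_k$ are the first $|U_k|$ members of $A_k$ in LR-prefix order. The inclusion $U_k\subseteq A_k$ via Lemma~\ref{lem:increasing_color_property} is fine, and your sibling case is correct (indeed it follows cleanly from the fact that the renamed list $L$ is increasing and only grows).

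The gap is in the ``promotion'' step. From $f(c_u)\le f(c_v)$ and the chain monotonicity $f(c_u)\le f(u)$, $f(c_v)\le f(v)$ you cannot conclude $f(u)\le f(v)$: nothing prevents, say, $f(c_u)=1$, $f(c_v)=2$, $f(u)=10$, $f(v)=3$. Your appeal to the renaming constraint (``color $c$ cannot first appear before colors $1,\dots,c-1$'') does not help, because $u$ need not be the first LR-prefix occurrence of $f(u)$. So the reduction to the sibling case, as written, does not go through.

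What actually closes the argument is the observation the paper uses (and which bypasses the LCA reduction entirely). Since $u,v\in A_k$ are incomparable and $u$ precedes $v$ in LR-prefix, $u$ is created \emph{after} $v$, say at step $j$. In the increasing list $L_j$ the entries $f(\mathrm{parent}(u))$ and $f(u)$ are adjacent, with $f(\mathrm{parent}(u))<k\le f(u)$; hence no element of $L_j$ lies in $[k,f(u))$. But $f(v)\in L_i\subseteq L_j$ and $f(v)\ge k$, so $f(v)\ge f(u)$. Equivalently (and this is how the paper phrases it), take $u$ to be the last node of $f$-color $s+1$ in LR-prefix and show that for any active $v$ preceding it, the pointer in $L_i$ can be neither before nor after $s+1$, forcing $f(v)=s+1$. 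Either way, the crux is the adjacency in $L$, not a comparison passed up from siblings.
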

\begin{proof}
	We suppose that $\mu^{\bullet}$ has $r$ marked north-steps.	Let $\mathcal{T}_{\mu^{\bullet}}=(T,f)$ be the colored tree obtained from $\mu^\bullet$ through Construction~\ref{constr:level_to_lac}.  It follows that $f$ uses exactly $r$ colors, and for $i\in[r]$ we let $\alpha_{i}$ be the number of nodes of $T$ whose $f$-color is $i$.  Since every non-root node of $T$ receives a unique color, it is clear that $(\alpha_{1},\alpha_{2},\ldots,\alpha_{r})$ is a composition of $n$.
	
	For $s\in[r]$, let $T_{s}$ be the restriction of $T$ to the nodes of $f$-color at most $s$.  We prove by induction on $s$ that the induced subtree $T'_{s}$ of $T$ whose nodes are colored at the end of step $s$ in Construction~\ref{constr:lac_tree} is precisely $T_{s}$.
	
	The base case $s=0$ is clear, since 
	both $T_{0}$ and $T'_{0}$ consist of a single root node.  Now suppose that 
	$T_{s}=T'_{s}$.  By Lemma~\ref{lem:lac_property}(i), we see that $T'_{s+1}$ is connected.  By Lemma~\ref{lem:increasing_color_property}, the $f$-color of any node is bigger than those of all its ancestors, so $T_{s+1}$ is also connected.
	
	Now, let $u$ be the last node with $f(u)=s+1$ in the LR-prefix order of $T$. 
	Consider a node $v \notin T'_{s}$ that is a child of some node in $T_{s}=T'_{s}$ while strictly preceding $u$ in LR-prefix order. Suppose that $v$ is created in step $i$ of Construction~\ref{constr:level_to_lac}, with $L_{i}$ the list of colors (after renaming) available just after.  Note that every ancestor of $v$ has $f$-color at most $s$, so that $u$ is in particular not an ancestor of $v$. Therefore, $v$ comes after $u$ in the right-to-left traversal of $T$, which means that $s+1$ is already present in $L_{i}$.  The pointer of $L_{i}$ cannot be after $s+1$, because that would imply the existence of an ancestor of $v$ of $f$-color $s+1$, which is a contradiction.  The pointer of $L_{i}$ cannot be before $s+1$, because otherwise by Lemma~\ref{lem:increasing_color_property} we would have the contradiction $s+1 > f(v) > s$ because $v \notin T_s$.  It follows that $f(v)=s+1$, which holds for all nodes of $T'_{s+1}\setminus T'_{s}$ that come before $u$ in the LR-prefix order.  
	
	Since $u$ is the last node in LR-prefix order that has $f$-color $s+1$, no descendant of $u$ belongs to $T_{s+1}$.  Let $X$ be the set of nodes in $T_{s+1}\setminus T_{s}$.  We have just shown that $X$ contains all the nodes of $T'_{s+1}\setminus T'_{s}$ that precede $u$ in LR-prefix order. Moreover, all nodes in $X$ are active in the $(s+1)$-st step of Construction~\ref{constr:lac_tree}.  Since $\lvert X\rvert=\alpha_{s+1}$, all the vertices in $X$ receive color $s+1$ in Construction~\ref{constr:lac_tree}.  We conclude that $T_{s+1}=T'_{s+1}$.
	
	This completes the induction step, and we see that $T$ is indeed compatible with $\alpha$, and $f$ is indeed the left-aligned coloring of $T$.
\end{proof}

We conclude this subsection with the proof that $\lactodyck$ is a bijection whose inverse is $\dycktolac$.

\begin{theorem}\label{thm:lac_to_level}
	For every positive integer $n$, the map $\lactodyck$ is a bijection whose inverse is $\dycktolac$.
\end{theorem}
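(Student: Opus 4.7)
The proof will verify that $\dycktolac \circ \lactodyck = \operatorname{id}$ and $\lactodyck \circ \dycktolac = \operatorname{id}$, with well-definedness of both maps already handled by Propositions~\ref{prop:lac_to_level_valid} and \ref{prop:level_to_lac_valid}. The underlying observation for both directions is that Construction~\ref{constr:level_to_lac} builds its tree by successive leftmost-child insertions off a moving cursor, which is combinatorially inverse to the right-to-left traversal used in Construction~\ref{constr:lac_to_level}: a tree assembled by repeatedly adding leftmost children is traversed right-to-left in the very order in which its nodes were created. This alone forces the equality of the underlying plane rooted trees in both compositions, leaving only the color/marking correspondence to be verified.

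For $\dycktolac \circ \lactodyck = \operatorname{id}$, I will fix an LAC tree $(T,\alpha)$ with left-aligned coloring $C$, set $\mu^{\bullet} = \lactodyck(T,\alpha)$, and run Construction~\ref{constr:level_to_lac} on $\mu^{\bullet}$ in lockstep with the right-to-left traversal of $T$. By induction on the number of letters of $\mu^{\bullet}$ processed I plan to maintain the invariant that, after step $i$, the cursor of Construction~\ref{constr:level_to_lac} corresponds to the current node of the traversal, and the list $L_i$ together with its pointer records the $C$-colors of the ancestors of the current node, in root-to-current order, with the pointer at the $C$-color of the current node itself. Under this invariant, the three letter-cases match transparently: an $N_\bullet$ occurs precisely when the visited node carries a $C$-color appearing for the first time in the traversal, so a fresh label is correctly inserted after the pointer; an $N_\circ$ occurs when that color has already been seen, and the forward move of the pointer must land on the position carrying the correct label; an $E$ simply mirrors backtracking. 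Once the invariant is established, the labels produced by $\dycktolac(\mu^{\bullet})$ coincide with $C$ up to a relabeling, and the final renaming by first appearance in LR-prefix order recovers $C$ exactly, using the structural fact that in any left-aligned colored tree the distinct colors $1,2,\ldots,r$ appear in this very order as first encounters of the LR-prefix traversal; this in turn follows from Lemma~\ref{lem:lac_property}(ii) by noting that the first node of color $k+1$ has a parent of color at most $k$, whereas the first-in-LR-prefix active node at step $k$ is already colored $k$.

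The second direction $\lactodyck \circ \dycktolac = \operatorname{id}$ will follow by running essentially the same invariant in reverse: the RTL traversal of the tree produced by $\dycktolac$ visits nodes in creation order, so a node arises from a marked step if and only if it was assigned a brand-new label, if and only if its final color appears for the first time in the RTL traversal. The main obstacle in both directions is the $N_\circ$ case of the invariant, where I have to check that the accumulated pointer moves triggered by previously processed letters leave the list $L_i$ in exactly the right state for the next forward move of the pointer to land on the desired label. This calls for careful bookkeeping of the pointer relative to the colors of the ancestors, combined with the observation that within any subtree the colors encountered by an RTL traversal follow a prescribed pattern determined by the left-aligned coloring.
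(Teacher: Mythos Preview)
Your approach is sound and genuinely different from the paper's. The paper does not track the list $L_i$ at all: it first invokes the classical plane-tree/Dyck-path bijection to conclude that the underlying trees (resp.\ underlying Dyck paths) agree, and then handles the color/marking data by a short induction on the color index~$s$, showing that the subtrees $T_s$ and $T'_s$ of nodes of color $\le s$ coincide. That induction leans on Proposition~\ref{prop:level_to_lac_valid}, which has already done the heavy lifting of identifying the output coloring of Construction~\ref{constr:level_to_lac} with a left-aligned coloring. Your route is more self-contained: you replay Construction~\ref{constr:level_to_lac} against the right-to-left traversal and verify the match letter by letter.

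However, the invariant you wrote down is too weak to close the $N_\circ$ case. Saying that the prefix of $L_i$ up to the pointer records the $C$-colors of the ancestors of the current node tells you nothing about the entry immediately \emph{after} the pointer, which is exactly what you must land on. The fix is to strengthen the invariant to: identifying each abstract label with the $C$-color of the node at which it was introduced, the list $L_i$ equals $\{0\}\cup\{C(w):w\text{ already visited in RTL}\}$, listed in increasing order, with the pointer at $C(\text{current node})$. The $N_\circ$ case then reduces to the claim that, when visiting a child $v$ of $u$ whose color has already been seen, $C(v)$ is the \emph{smallest} seen color strictly greater than $C(u)$; the $N_\bullet$ case similarly needs that no seen color lies strictly between $C(u)$ and $C(v)$, so that inserting $C(v)$ right after $C(u)$ preserves sortedness. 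Both follow from one argument: if some visited $w$ had $C(u)<C(w)<C(v)$, then at step $C(w)$ of Construction~\ref{constr:lac_tree} both $w$ and $v$ are active and, by the left-aligned rule, $w$ precedes $v$ in LR-prefix order; but $w$ is in the subtree of a right sibling of some ancestor of $v$, hence comes after $v$ in LR-prefix order, a contradiction.

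Your justification for the final renaming step (``colors $1,\ldots,r$ first appear in this order in LR-prefix'') is also a bit elliptical; the clean argument is that the first-in-LR-prefix node of color $k+1$ is either a child of a color-$k$ node (hence after it in LR-prefix) or was already active at step $k$ but not among the first $\alpha_k$ (hence after the first color-$k$ node in LR-prefix). With these two points filled in, your proof goes through; the paper's version is shorter only because Proposition~\ref{prop:level_to_lac_valid} absorbs the combinatorics you are redoing by hand.
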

\begin{proof}
	In view of Propositions~\ref{prop:lac_to_level_valid} and \ref{prop:level_to_lac_valid} it remains to show that $\lactodyck \circ \dycktolac = \operatorname{id}$ and $\dycktolac \circ \lactodyck = \operatorname{id}$. We observe that the map $\lactodyck$ without markings and colors is a small twist of the well-known bijection from the set of plane rooted trees with $n$ non-root nodes to the set of Dyck paths with $2n$ steps, described for instance in \cite{deutsch99dyck}*{Appendix~E.1}.

	\medskip

	To show that $\lactodyck \circ \dycktolac = \operatorname{id}$, for $\mu^{\bullet}\in\Dyck_{n}^{\bullet}$, let $(T,\alpha)=\dycktolac(\mu^{\bullet})$ and $\hat{\mu}^{\bullet}=\lactodyck\bigl((T,\alpha)\bigr)$. As mentioned before, the underlying Dyck paths $\mu$ and $\hat{\mu}$ are equal, so it remains to show that the marked north-steps are in the same positions.  By construction, the marked north-steps of $\mu^{\bullet}$ determine the right-most nodes per color in $(T,\alpha)$, which in turn determine the marked north-steps in $\hat{\mu}^{\bullet}$ in the same way.  We conclude that $\mu^{\bullet}=\hat{\mu}^{\bullet}$.

	\medskip

	To show that $\dycktolac \circ \lactodyck = \operatorname{id}$, for $(T,\alpha)\in\LAC_{n}$, let $\mu^{\bullet}=\lactodyck\bigl((T,\alpha)\bigr)$ and $(T',\alpha')=\dycktolac(\mu^{\bullet})$.  As before, we conclude that $T=T'$, and it remains to show that $\alpha=\alpha'$.  We observe already that $\alpha$ has the same number of parts as $\alpha'$, as both are equal to the number of marked north steps in $\mu^\bullet$ by construction.

	Let $T_{s},T'_{s}$ denote the subtrees of $T$ and $T'$, respectively, that consist of all nodes of color at most $s$, including the root.  We show that $T_{s}=T'_{s}$ by induction on $s$.  The base case $s=0$ is clear, since both $T_{0}$ and $T'_{0}$ consist of the root only.  Now suppose that $T_{s}=T'_{s}$.  At least one of the active nodes of $T_{s}$ must belong to the set $V$ of last nodes per color with respect to $(T,\alpha)$.  We denote by $u$ the first active node in $V$ in the LR-prefix order of $T$.  It follows that all active nodes of $T_{s}$ up to $u$ in LR-prefix order receive color $s+1$.  The same reasoning works for $T'_{s}$ as $\alpha$ and $\alpha'$ have the same number of parts, and we conclude that $T_{s+1}=T'_{s+1}$.  This completes the induction step, and we have $\alpha=\alpha'$.
\end{proof}

\section{The Steep-Bounce Theorem}
	\label{sec:steep_bounce_theorem}
Recall the definitions of steep and bounce Dyck paths from Section~\ref{sec:parabolic_dycks}.  
A pair $(\mu_{1},\mu_{2})$ of Dyck paths of the same length is \tdef{nested} if $\mu_{1}$ always stays weakly below~$\mu_{2}$.  
We denote by $\SP_{n}$ the set of nested pairs $(\mu_1,\mu_2)$ such that \mbox{$\mu_1,\mu_2\in\Dyck_n$} and the top path $\mu_2$ is steep. We refer to the elements of $\SP_{n}$ as \tdef{steep pairs} for simplicity. 
Similarly, we denote by $\BP_{n}$ the set of nested pairs $(\mu_1,\mu_2)$ such that $\mu_1,\mu_2\in\Dyck_n$ and the bottom path $\mu_1$ is bounce, and refer to its elements as \tdef{bounce pairs}. 

The following construction encodes a level-marked Dyck path as a steep pair.

\begin{construction}[\cite{ceballos_hopf_2018}*{Section~2.2.3}]\label{constr:level_to_steeppair}
	Given $n>0$ and $\mu^{\bullet}\in\Dyck_{n}^{\bullet}$, we first take $\mu_{1}\in\Dyck_{n}$ the path obtained from $\mu^{\bullet}$ by forgetting the marking.  We then construct a Dyck path $\mu_{2}\in\Dyck_{n}$ by forgetting the east-steps, replacing each marked north-step $N_\bullet$ by a north-step $N$, and each unmarked north-step $N_\circ$ by a valley $EN$.  We then append sufficiently many east-steps at the end of the so-created path to reach the coordinate $(n,n)$. We return the pair $(\mu_{1},\mu_{2})$.
\end{construction}

\begin{lemma}\label{lem:level_to_steeppair_valid}
	For a level-marked Dyck path $\mu^{\bullet}$, the pair $(\mu_{1},\mu_{2})$ obtained by Construction~\ref{constr:level_to_steeppair} is a nested pair of Dyck paths, where $\mu_{2}$ is steep.
\end{lemma}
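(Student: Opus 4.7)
The plan is to verify three things about the output $(\mu_1,\mu_2)$ of Construction~\ref{constr:level_to_steeppair}: that $\mu_2$ is a Dyck path in $\Dyck_n$, that the pair is nested (i.e.\ $\mu_1$ lies weakly below $\mu_2$), and that $\mu_2$ is steep. Note that $\mu_1$ is just the underlying Dyck path of $\mu^{\bullet}$ with the markings forgotten, so nothing needs to be said about $\mu_1$ in isolation. I would set up the following bookkeeping: for $k \in \{0,1,\ldots,n\}$, let $u_k$ denote the number of unmarked north-steps among the first $k$ north-steps of $\mu^{\bullet}$, and let $e_k$ denote the number of east-steps of $\mu^{\bullet}$ strictly preceding the $(k+1)$-st north-step. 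Unwinding Construction~\ref{constr:level_to_steeppair}, after processing the $k$-th north-step of $\mu^{\bullet}$ the path $\mu_2$ sits at $(u_k, k)$; before the final padding, $\mu_2$ ends at $(u_n, n)$, after which $n-u_n$ appended east-steps carry it to $(n,n)$.

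For the Dyck property of $\mu_2$, the only non-trivial check is that, whenever the $k$-th north-step of $\mu^{\bullet}$ is unmarked, the intermediate point $(u_{k-1}+1,\,k-1)$ produced by the $EN$ replacement lies weakly above the diagonal. The level-marked condition applied at the lattice point of $\mu^{\bullet}$ just after this unmarked step gives $u_{k-1}+1 = u_k \leq e_{k-1}$, while the fact that $\mu^{\bullet}$ itself is a Dyck path gives $e_{k-1} \leq k-1$; combining these yields $u_{k-1}+1 \leq k-1$, as needed. For nestedness, it is enough to compare, at every height $k \in [n]$, the leftmost $x$-coordinate of $\mu_2$ with that of $\mu_1$; these are respectively $u_k$ and $e_{k-1}$, so the required inequality $u_k \leq e_{k-1}$ is again the level-marked condition applied just after the $k$-th north-step of $\mu^{\bullet}$. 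For steepness, every east-step of $\mu_2$ preceding its last north-step is the $E$ of some $EN$ replacement and is therefore immediately followed by a north-step, so no two east-steps of $\mu_2$ are consecutive before its last north-step.

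The main (and essentially only) observation is that the level-marked condition does double duty: the single family of inequalities $u_k \leq e_{k-1}$ simultaneously certifies that $\mu_2$ stays above the diagonal and that $\mu_2$ stays above $\mu_1$, while steepness is built into the replacement rule $N_\circ \mapsto EN$. Once this dual role of the level-marked condition is identified, the proof amounts to short bookkeeping at the lattice points $(u_k,k)$ and $(u_{k-1}+1,k-1)$, with no further combinatorial obstacle.
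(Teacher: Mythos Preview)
Your proof is correct and follows the same direct-verification approach as the paper's, only with far more care: the paper's own proof is a single sentence asserting that steepness of $\mu_2$ and the inequality $\mu_1 \leq \mu_2$ are ``immediate from the construction.'' Your bookkeeping with $u_k$ and $e_{k-1}$ makes the role of the level-marked inequality $u_k \leq e_{k-1}$ explicit, which is exactly the content hiding behind the word ``immediate.'' One small efficiency you could exploit: once you have established nestedness ($\mu_2$ weakly above $\mu_1$) and you already know $\mu_1$ is a Dyck path, the Dyck property of $\mu_2$ follows for free, so your separate check at the intermediate point $(u_{k-1}+1,k-1)$ is not strictly necessary.
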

\begin{proof}
	The fact that $\mu_{2}$ is steep and is weakly above $\mu_{1}$ is immediate from the construction.
\end{proof}

It follows from \cite{ceballos_hopf_2018}*{Section~2.2.3} that the map 
\begin{equation}\label{eq:dyck_to_steeppair}
	\dycktosteeppair\colon\Dyck_{n}^{\bullet}\to\SP_{n},\quad\mu^{\bullet}\mapsto(\mu_{1},\mu_{2})
\end{equation}
is a bijection.  
Let 
\begin{equation}\label{eq:lac_to_steeppair}
	\lactosteep\colon\LAC_{n}\to\SP_{n},\quad (T,\alpha)\mapsto\dycktosteeppair\circ\lactodyck\bigl((T,\alpha)\bigr)
\end{equation}
be the composition of the maps $\lactodyck$ and $\dycktosteeppair$.  Theorem~\ref{thm:lac_to_level} states that $\lactodyck$ is a bijection, which implies that $\lactosteep$ is also a bijection. 
This map is illustrated in the right part of Figure~\ref{fig:bij_tree_dyck}, and also in the right part of Figure~\ref{fig:steep_bounce_zeta_map}. We denote by $\steeptolac$ the inverse of $\lactosteep$.

Recall that for a fixed composition $\alpha$, the map $\lactonn\colon\Trees_{\alpha}\to\Dyck_{\alpha}$ from \eqref{eq:lac_to_nn} is a bijection between the set of $\alpha$-trees and the set of $\alpha$-Dyck paths. This map can be naturally extended to a map on the set of LAC trees considering all compositions simultaneously as follows: 
\begin{equation}\label{eq:lac_to_bouncepair}
	\lactobounce\colon\LAC_{n}\to\BP_{n},\quad (T,\alpha)\mapsto\bigl(\nu_{\alpha},\lactonn(T)\bigr).
\end{equation}
\begin{remark}
Although the maps $\lactonn$ and $\lactobounce$ are essentially the same, we assign them different names because they have different domains and codomains. 
On one hand, $\lactonn$ is defined on the set of plane rooted trees that are compatible with a fixed $\alpha$ ($\alpha$-trees), and its image is the set of Dyck paths that lie weakly above a fixed bounce path $\nu_\alpha$ ($\alpha$-Dyck paths). 
On the other hand, $\lactobounce$ is defined on the set of pairs $(T,\alpha)$ where $T$ is an $\alpha$-tree and $\alpha$ is a composition of $n$ (LAC trees), and its image is the set of bounce pairs of the form $(\nu_\alpha,\mu)$.  
\end{remark}
Theorem~\ref{thm:lac_to_nn} states that $\lactonn$ is a bijection, which implies that $\lactobounce$ is also a bijection.  Let $\bouncetolac$ be the inverse of $\lactobounce$.  The left part of Figure~\ref{fig:steep_bounce_zeta_map} illustrates the map $\lactobounce$. We now define a bijection $\steeppairtobouncepair$ by
\begin{equation} \label{eq:gamma_def}
\steeppairtobouncepair\defs\lactobounce\circ\steeptolac.
\end{equation}
Note that $\steeppairtobouncepair$ sends steep pairs to bounce pairs.

\begin{figure}
	\centering
	\includegraphics[page=11, scale=0.8]{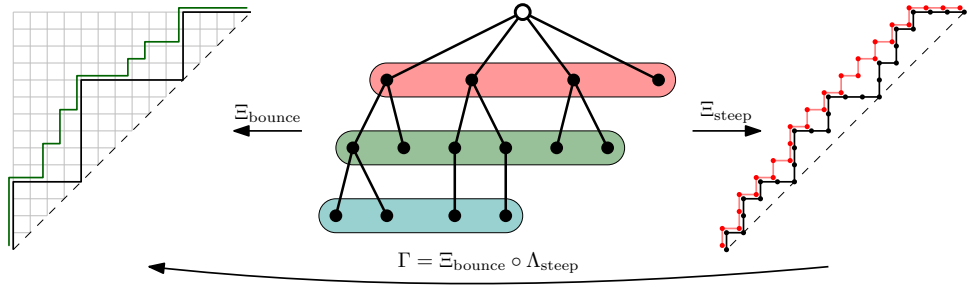}
	\caption{An illustration of the Steep-Bounce zeta map $\Gamma$.}
	\label{fig:steep_bounce_zeta_map}
\end{figure}

We are now in the position to prove the first main result of this section. 
\steepbounce*
\begin{proof}
	Let $(\mu_{1},\mu_{2})\in\SP_{n}$ such that $\mu_{2}$ ends with $r$ consecutive east-steps, and $(\mu'_{1},\mu'_{2})=\steeppairtobouncepair(\mu_{1},\mu_{2})$. 	
	It is clear that $\mu'_{1}$ touches the main diagonal precisely $r+1$ times, since the parameter $r$ describes precisely the number of colors in the LAC tree $(T,\alpha)$ that relates both pairs of Dyck paths.
\end{proof}

We obtain a proof of \cite{ceballos_hopf_2018}*{Conjecture~2.2.1} as a corollary.  We refer the reader to \cite{ceballos_hopf_2018} for any undefined terminology.

\begin{corollary}[{\cite[Conjecture~2.2.1]{ceballos_hopf_2018}}]
	The graded dimension of degree $n$ of the Hopf algebra of the special family of pipe dreams considered in~\cite[Section~2.2]{ceballos_hopf_2018} is equal to the number of walks in the quarter plane starting from the origin, ending on the $x$-axis, and consisting of $2n$ steps taken from the set $\bigl\{(-1, 1), (1, -1), (0, 1)\bigr\}$.
\end{corollary}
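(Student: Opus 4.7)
The plan is to string together the Steep-Bounce Theorem with the explicit bijections already assembled earlier in the paper, using the setup from~\cite{ceballos_hopf_2018} to identify what the graded dimension actually counts.

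First, I would invoke the combinatorial description of the graded dimension given in~\cite[Section~2.2]{ceballos_hopf_2018}: by construction of the Hopf algebra on this special family of pipe dreams, the degree-$n$ graded component has dimension equal to the number of such pipe dreams, and these pipe dreams are shown there to be enumerated (refined by the number of diagonal contacts) by bounce pairs in $\BP_n$. So the graded dimension in degree $n$ equals $|\BP_n|$, and more precisely it equals $\sum_{r=1}^{n} b_{n,r}$, where $b_{n,r}$ is the number of bounce pairs $(\mu'_1,\mu'_2)\in\BP_n$ whose bounce path $\mu'_1$ touches the main diagonal $r+1$ times.

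Next, I would apply Theorem~\ref{thm:steep_bounce_theorem} to convert this refined enumeration into a refined enumeration of steep pairs. That theorem says $\steeppairtobouncepair$ is a bijection between steep pairs whose top path ends with $r$ east steps at height $n$ and bounce pairs whose bottom path touches the diagonal $r+1$ times, for every $r\in[n]$. Summing over $r$ gives $|\BP_n|=|\SP_n|$, so the graded dimension equals $|\SP_n|$.

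Finally, I would chain the two remaining bijections. The map $\dycktosteeppair$ of Construction~\ref{constr:level_to_steeppair} is, by~\cite[Section~2.2.3]{ceballos_hopf_2018}, a bijection between $\Dyck_n^\bullet$ and $\SP_n$, so $|\SP_n|=|\Dyck_n^\bullet|$. Then the bijection noted immediately after Definition~\ref{def:level_marked_path} (send $N_\bullet\mapsto(0,1)$, $N_\circ\mapsto(-1,1)$, $E\mapsto(1,-1)$) identifies $\Dyck_n^\bullet$ with lattice walks in $\mathbb{N}\times\mathbb{N}$ of $2n$ steps from $\{(0,1),(-1,1),(1,-1)\}$ starting at the origin and ending on the $x$-axis; the condition that the walk stays in the quarter plane is exactly the level-marked condition (at every prefix, the number of unmarked north steps is at most the number of east steps), and ending on the $x$-axis corresponds to $\mu$ returning to the diagonal. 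Composing all four identifications yields the claimed equality.

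There is no real obstacle here: the corollary is essentially a bookkeeping statement that aggregates Theorem~\ref{thm:steep_bounce_theorem} with bijections that were already available in~\cite{ceballos_hopf_2018}. The only mildly delicate point is to make sure that the identification of the graded dimension with $|\BP_n|$ indeed uses the same refined parameter $r$ as the Steep-Bounce Theorem, so that summing over $r$ is legitimate; this compatibility is exactly what~\cite[Conjecture~2.2.8]{ceballos_hopf_2018} was formulated to capture, and it is precisely what Theorem~\ref{thm:steep_bounce_theorem} now supplies.
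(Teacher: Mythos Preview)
Your proposal is correct and follows essentially the same approach as the paper: the corollary is stated immediately after the Steep-Bounce Theorem with no separate proof, relying on exactly the chain you describe (graded dimension $=\lvert\BP_n\rvert$ from~\cite{ceballos_hopf_2018}, then $\lvert\BP_n\rvert=\lvert\SP_n\rvert$ via $\steeppairtobouncepair$, then $\lvert\SP_n\rvert=\lvert\Dyck_n^\bullet\rvert$ via $\dycktosteeppair$, then the walk bijection). If anything, your write-up is more explicit than the paper's, which simply presents the statement as an immediate corollary.
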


An illustration of the bijection $\steeppairtobouncepair$ can be found in Figure~\ref{fig:steep_bounce_zeta_map}; the correspondence between all 
LAC trees, 
bounce pairs and 
steep pairs
for $n=3$ is shown in Figure~\ref{fig:example_zeta}.  

\begin{figure}
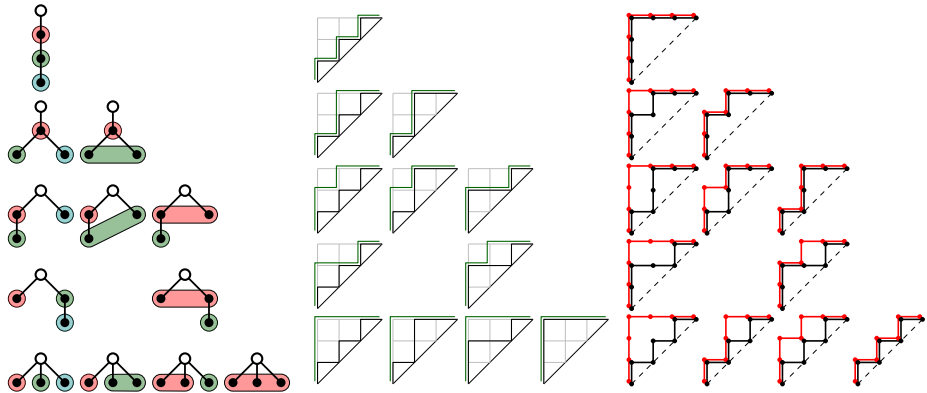

	\includegraphics[width=0.33\textwidth,page=12]{fig/ipe-fig.pdf}%
	\includegraphics[width=0.33\textwidth,page=19]{fig/ipe-fig.pdf}%
	\includegraphics[width=0.33\textwidth,page=13]{fig/ipe-fig.pdf}%
	\caption{The LAC trees, bounce pairs, and steep pairs of size $n=3$.}
	\label{fig:example_zeta}
\end{figure}

\section{A New Interpretation of the Zeta Map}
	\label{sec:new_zeta_map}
Somewhat surprisingly, the map~$\steeppairtobouncepair$ turns out to generalize the classical zeta map in $q,t$-Catalan combinatorics.  Before specifying this connection, we briefly recall the history of the zeta map.  Readers familiar with this topic may skip the following section and proceed directly to Section~\ref{sec:steep_bounce_zeta_map}.

\subsection{The Zeta Map}
	\label{sec:classical_zeta_map}
The classical zeta map is a certain bijection from $\Dyck_{n}$ onto itself.  Its inverse was discovered in \cite{AndrewsKrattenthalerOrsinaPapi} in connection with nilpotent ideals in certain Borel subalgebras of $sl(n)$.  It later became an important map in the theory of diagonal harmonics and $q,t$-Catalan combinatorics~\cite{Haglund-qt-catalan}.  

One remarkable ingredient in this theory is a bivariate polynomial $C_{n}(q,t)$ which can be defined as the bigraded Hilbert series of the alternating component of a certain module of diagonal harmonics.  Surprisingly, the dimension of that space is equal to the \tdef{Catalan number} $c_{n}\defs\frac{1}{n+1}\binom{2n}{n}$, and therefore $C_{n}(q,t)$ is a polynomial with nonnegative integer coefficients whose evaluation at $q=t=1$ recovers $c_{n}$.
	
Intensive attempts to understand these polynomials led to important develomments~\cites{GarsiaHaiman-remarkableCatalanSequence,Haiman-vanishingTheorems,haglund_conjectured_2003,garsia_proof_2002}.  In particular, Haglund gave a combinatorial interpretation of the polynomial $C_{n}(q,t)$ in terms of a pair of statistics on Dyck paths known as $\emph{area}$ and $\emph{bounce}$~\cite{haglund_conjectured_2003}.  Shortly thereafter, Haiman announced another combinatorial interpretation in terms of two statistics $\emph{area}$ and $\emph{dinv}$.  Unexpectedly, these two pairs of statistics were different but gave rise to the same expression:
\begin{equation} \label{eq:zeta}
	C_n(q,t) = \sum_{\mu\in\Dyck_{n}} q^{\area(\mu)} t^{\bounce(\mu)} = \sum_{\mu\in\Dyck_{n}} q^{\dinv(\mu)} t^{\area(\mu)}.
\end{equation}
The \tdef{zeta map} $\zeta$ is a bijection on $\Dyck_{n}$ that explains this phenomenon by sending the pair $(\area,\dinv)$ to the pair $(\bounce,\area)$. 

\subsection{Some Statistics on Dyck Paths}
	\label{sec:dyck_statistics}
Let us have a closer look at these statistics.  First we need to describe how to obtain a steep and a bounce Dyck path from a given $\mu\in\Dyck_{n}$.  

\begin{construction}\label{constr:dyck_to_bounce}
	Given $n>0$ and $\mu\in\Dyck_{n}$, we construct a Dyck path $\mu_{\bounce}$ from $\mu$ as follows.  We start at the origin and move north until we reach the last point that is still weakly below $\mu$, and then move east until we hit the main diagonal.  This process is repeated until we reach coordinate $(n,n)$.  It is clear that $\mu_{\bounce}\in\Dyck_{n}$ is bounce, and we call it the \tdef{bounce path} of $\mu$.  
\end{construction}

The \tdef{bounce parameters} of $\mu$ are the $x$-coordinates $0=b_{0}<b_{1}<\cdots<b_{r}=n$ of the contact points of $\mu_{\bounce}$ and the main diagonal.

\begin{construction}\label{constr:dyck_to_steep}
	Given $n>0$ and $\mu\in\Dyck_{n}$, we construct a Dyck path $\mu_{\mathrm{steep}}$ from $\mu$ as follows.  We start at the origin and move north until we reach the first point at which we can add an east-step while still remaining weakly above $\mu$.  We append this east-step, and repeat this process until we reach a lattice point with $y$-coordinate $n$.  We add sufficiently many east-steps such that we end at $(n,n)$.  
	
	It is clear that $\mu_{\mathrm{steep}}\in\Dyck_{n}$ is steep, and we call it the \tdef{steep path} of $\mu$.
\end{construction}

Now for $\mu\in\Dyck_{n}$, its \tdef{area vector} $\mathbf{a}(\mu)\defs(a_{1},a_{2},\ldots,a_{n})$ is the integer vector whose $i$-th entry $a_{i}$ equals the number of unit boxes in row $i$ that are below $\mu$ and above the main diagonal.  

\begin{definition}\label{def:dyck_statistics}
	Given $\mu\in\Dyck_{n}$ with area vector $\mathbf{a}(\mu)=(a_{1},a_{2},\ldots,a_{n})$ and bounce parameters $b_{0},b_{1},\ldots,b_{r}$, we define \tdef{area}, \tdef{dinv}, and \tdef{bounce} of $\mu$, respectively, by
	\begin{align*}
		\area(\mu) & \defs a_{1}+a_{2}+\cdots+a_{n},\\
		\dinv(\mu) & \defs \Bigl\lvert\bigl\{(i,j)\mid 1\leq i<j\leq n,\;a_{i}\in\{a_{j},a_{j}+1\}\bigr\}\Bigr\rvert,\\
		\bounce(\mu) & \defs \sum_{i=1}^{r-1}{n-b_{i}}.
	\end{align*}
\end{definition}

\begin{figure}
	\begin{center}
		\includegraphics[width=\textwidth,page=16]{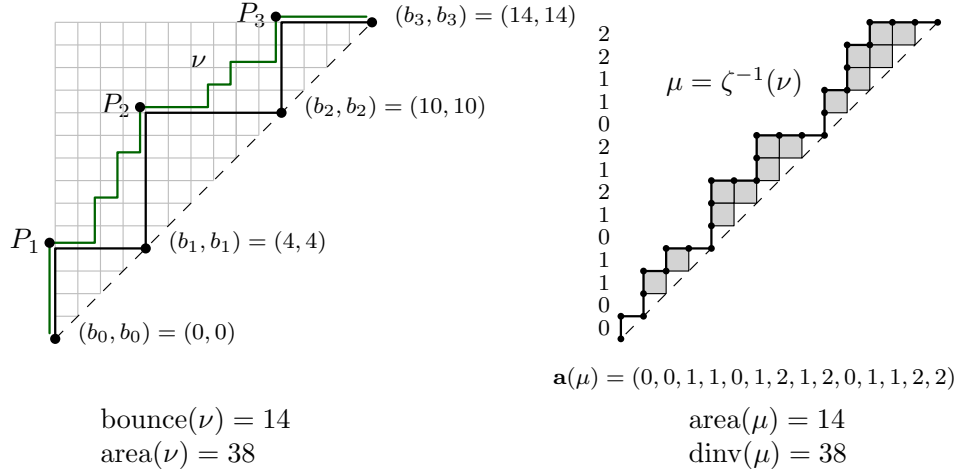}
	\end{center}
	\caption{An example of the inverse of the zeta map.}
	\label{fig:zetamap_statistics}
\end{figure}

Figure~\ref{fig:zetamap_statistics} illustrates these statistics.  We end this subsection by explicitly describing the inverse of the zeta map.  We refer the reader to \cite{AndrewsKrattenthalerOrsinaPapi} and \cite[Proof of Theorem~3.15]{Haglund-qt-catalan} for the original sources, and to \cites{ceballos010,ALW-sweep,williams_sweeping_2018} for further information on generalizations. 

\begin{construction}\label{constr:zeta_inverse}
	Given $\nu\in\Dyck_{n}$, let $\nu_{\bounce}$ be its bounce path with bounce parameters $b_{0},b_{1},\ldots,b_{r}$. For $k\in[r]$, we denote by $\vec{p}_{k}=(b_{k-1},b_{k})$ the peak of $\nu_{\bounce}$ at height $b_{k}$.  By construction, $\vec{p}_{k}$ is also on $\nu$. 
	Clearly, the number of steps between $\vec{p}_{k}$ and $\vec{p}_{k+1}$ is $(b_{k}-b_{k-1})+(b_{k+1}-b_{k})=b_{k+1}-b_{k-1}$. For $k\in[r-1]$, the sequence $\mathbf{a}^{(k)}$ arises from the subpath of $\nu$ between $\vec{p}_{k}$ and $\vec{p}_{k+1}$ by replacing each east-step by $k-1$ and each north-step by $k$.  
	
	We fashion these $r-1$ sequences together by interlacing the sequences $\mathbf{a}^{(k)}$ and $\mathbf{a}^{(k+1)}$ for all $k\in[r-1]$, where we insert the values $k-1$ and $k+1$ relative to the values $k$, and where we never put $k-1$ directly before $k+1$. The resulting sequence $\mathbf{a}$ has precisely $n$ entries, $b_{k+1}-b_{k}$ of which are equal to $k$ for $k\in\{0,1,\ldots,r-1\}$.  
\end{construction}

This construction is illustrated in Figure~\ref{fig:zeta_inverse_description}.

\begin{figure}
	\begin{center}
		\includegraphics[width=\textwidth,page=17]{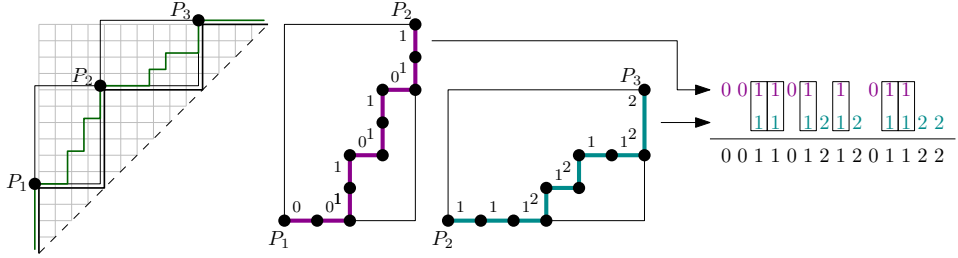}
	\end{center}
	\caption{An illustration of Construction~\ref{constr:zeta_inverse}.}
	\label{fig:zeta_inverse_description}
\end{figure}

\begin{proposition}[\cite{Haglund-qt-catalan}*{Proof of Theorem~3.15}]\label{prop:zeta_inverse_valid}
	For $\nu\in\Dyck_{n}$, the sequence $\mathbf{a}$ obtained by Construction~\ref{constr:zeta_inverse} is the area-vector of some Dyck path $\mu\in\Dyck_{n}$, and we have $\mu=\zeta^{-1}(\nu)$.
\end{proposition}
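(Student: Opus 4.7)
The plan is to verify the two claims of Proposition~\ref{prop:zeta_inverse_valid} in sequence: first, that $\mathbf{a}$ obtained from Construction~\ref{constr:zeta_inverse} is the area vector of some Dyck path $\mu\in\Dyck_n$, and second, that this $\mu$ satisfies $\zeta(\mu)=\nu$.

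For the first claim, I would verify the two defining constraints of a Dyck area vector: $a_1=0$ and $a_{i+1}\le a_i+1$ for every $i$. The entry $a_1$ comes from the first symbol of $\mathbf{a}^{(1)}$, which encodes the initial north-step of the subpath of $\nu$ from $\vec{p}_1=(0,b_1)$ to $\vec{p}_2$ and is therefore set to $0$ by the rule of Construction~\ref{constr:zeta_inverse}. For the slope condition, within any single block $\mathbf{a}^{(k)}$ all entries lie in $\{k-1,k\}$, so consecutive entries differ by at most one. At the interface between $\mathbf{a}^{(k)}$ and $\mathbf{a}^{(k+1)}$, the interlacing rule---that $k-1$ never appears directly before $k+1$---is precisely the condition excluding an illegal upward jump of size $2$. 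The count $b_{k+1}-b_k$ of entries equal to $k$ then follows from the length of the subpath encoded by $\mathbf{a}^{(k)}$ and the fact that the east-steps of this subpath correspond exactly to the $b_{k}-b_{k-1}$ entries equal to $k-1$ from the block below.

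For the second claim, I would invoke the standard diagonal reading description of $\zeta$: given $\mu$ with area vector $(a_1,\ldots,a_n)$, the path $\zeta(\mu)$ is constructed by processing the values $k=0,1,\ldots,r-1$ in order, and for each such $k$, scanning the positions with $a_i=k$ from left to right, emitting a north-step for each, interleaved with east-steps recording how many entries with $a_i=k-1$ lie between consecutive occurrences of $k$. Applied to the interleaved vector $\mathbf{a}$ of Construction~\ref{constr:zeta_inverse}, this diagonal reading, restricted to the two consecutive levels $\{k-1,k\}$, recovers exactly the subpath of $\nu$ between $\vec{p}_k$ and $\vec{p}_{k+1}$, because that is precisely the subpath from which $\mathbf{a}^{(k)}$ was read off with east-steps encoded as $k-1$ and north-steps as $k$. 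Stitching these subpaths together for $k=1,\ldots,r-1$ reconstructs $\nu$. A useful sanity check is that the block lengths force $\bounce(\mu)$ and the bounce parameters of $\mu$ to coincide with $\bounce(\nu_{\bounce})$ and $(b_0,\ldots,b_r)$, respectively, which is automatic.

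The main obstacle will be the bookkeeping needed to show that the diagonal reading of $\mathbf{a}$ produces \emph{exactly} $\nu$ rather than a perturbation. The crux is that the interlacing constraint ``never $k-1$ directly before $k+1$'' is the combinatorial counterpart, under diagonal reading, of the requirement that an east-step be inserted after each north-step at level $k$ of $\zeta(\mu)$ whenever the next level-$k$ occurrence is separated from the current one by a level-$k+1$ entry; tracking this correspondence level by level, and verifying that the relative order of values $k$ and $k+1$ in $\mathbf{a}$ faithfully encodes the north/east-step pattern of $\nu$ between $\vec{p}_k$ and $\vec{p}_{k+1}$, is where most of the technical work lies. Once this level-by-level reconstruction is carried out, the uniqueness of the area vector of a Dyck path yields $\mu=\zeta^{-1}(\nu)$.
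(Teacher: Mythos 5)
The paper gives no proof of this proposition; it is cited directly from Haglund's book \cite{Haglund-qt-catalan}. Your plan---verify the area-vector constraints, then invoke the diagonal reading of $\zeta$ to reconstruct $\nu$ level by level---is the natural approach and is essentially Haglund's, so as a proposal it is sound in outline, and you correctly flag the interlacing bookkeeping as the real work.

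One concrete error: you argue that $a_1=0$ because the first symbol of $\mathbf{a}^{(1)}$ ``encodes the initial north-step of the subpath of $\nu$ from $\vec{p}_1=(0,b_1)$ to $\vec{p}_2$ and is therefore set to $0$.'' This is doubly wrong. The first step of that subpath is necessarily an \emph{east}-step: $(0,b_1)$ is, by definition of the bounce path, the highest lattice point of $\nu$ on the $y$-axis, so $\nu$ cannot continue north from there. And even if it were a north-step, Construction~\ref{constr:zeta_inverse} would assign it the value $k=1$ (north-steps in $\mathbf{a}^{(k)}$ receive $k$), not $0$. The two slips happen to cancel, since an east-step in $\mathbf{a}^{(1)}$ does receive $k-1=0$, so the conclusion $a_1=0$ stands, but the reasoning must be repaired. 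Beyond this, your argument for the slope condition $a_{i+1}\leq a_i+1$ only addresses an upward jump of size exactly two (a $k-1$ placed immediately before a $k+1$); a complete proof must also explain why a $k-1$ can never be immediately followed by a $k+j$ with $j\geq 2$. This does follow from the pairwise-adjacent structure of the interlacing, but it is not an immediate consequence of the constraint as you quote it and deserves an explicit sentence.
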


\subsection{The Steep-Bounce Zeta Map}
	\label{sec:steep_bounce_zeta_map}
In this section we prove Theorem~\ref{thm:steep_bounce_zeta_map}, relating the zeta map $\zeta$ to our map $\steeppairtobouncepair$.

Let $T$ be a plane rooted tree with $n$ non-root nodes.  A node of $T$ is of \tdef{level} $k$ if its graph distance from the root is $k+1$. 
In other words, the level of a node is its depth \emph{minus one}. 
Let $\mathbf{a}(T)=(a_{1},a_{2},\ldots,a_{n})$ be the vector obtained by reading levels of each node according to the right-to-left traversal of $T$.  We define an integer partition $\alpha_{T}=(\alpha_{1},\alpha_{2},\ldots)$ of $n$ by setting $\alpha_{i}$ to the number of nodes of level $i+1$.  It then follows immediately that $T$ is $\alpha_{T}$-compatible.  The pair $(T,\alpha_{T})$ is a \tdef{horizontal LAC tree}.  We have already seen a horizontal LAC tree in the middle of Figure~\ref{fig:steep_bounce_zeta_map}.

\begin{lemma}\label{lem:zetaproof1}
	For $n>0$, the map $\lactosteep$ restricts to a bijection from
	\begin{itemize}
		\item the set of horizontal LAC trees $(T,\alpha_{T})$ where $T$ has $n$ non-root nodes, to
		\item the set of pairs $(\mu,\mu_{\mathrm{steep}})$ where $\mu\in\Dyck_{n}$.
	\end{itemize}
	Moreover, if $(\mu,\mu_{\mathrm{steep}})=\lactosteep\bigl((T,\alpha_{T})\bigr)$, then $\mathbf{a}(\mu)=\mathbf{a}(T)$. 
\end{lemma}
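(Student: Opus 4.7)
The moreover part $\mathbf{a}(\mu) = \mathbf{a}(T)$ is a direct computation: just after the $j$-th north step of $\mu$ (which corresponds to visiting the $j$-th node $v_j$ of $T$ in right-to-left prefix order), the current position $(x_j, j)$ satisfies $j - x_j = D_j$, the depth of $v_j$, so $\mathbf{a}(\mu)_j = (j-1) - x_j = D_j - 1$ equals the level of $v_j$ recorded by $\mathbf{a}(T)$.

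For the bijection, both sides are enumerated by the Catalan number $C_n$: horizontal LAC trees $(T, \alpha_T)$ correspond to plane rooted trees with $n$ non-root nodes (since $\alpha_T$ is determined by $T$), and pairs $(\mu, \mu_{\mathrm{steep}})$ correspond to $\mu \in \Dyck_n$ (since $\mu_{\mathrm{steep}}$ is determined by $\mu$). Since $\lactosteep$ is already a bijection on $\LAC_n$, its restriction to horizontal LAC trees is injective, so a cardinality argument reduces the bijection claim to showing $\mu_2 = \mu_{\mathrm{steep}}$ whenever $(\mu, \mu_2) = \lactosteep((T, \alpha_T))$ for a horizontal LAC tree.

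My plan to prove $\mu_2 = \mu_{\mathrm{steep}}$ is to match their area vectors, which uniquely determine any Dyck path. On one side, each unmarked $N_\circ$ in $\mu^\bullet$ is replaced by $EN$ in $\mu_2$ while each marked $N_\bullet$ becomes a single $N$, so the $x$-coordinate just before the $i$-th north of $\mu_2$ equals the number of unmarked norths among the first $i$ norths of $\mu^\bullet$. Writing $K_i$ for the number of marked norths in this prefix, this gives $\mathbf{a}(\mu_2)_i = K_i - 1$. A node $v_j$ is marked precisely when its depth $D_j$ strictly exceeds $\max_{k < j} D_k$, and the descent structure of right-to-left DFS forces $D_j \leq 1 + \max_{k < j} D_k$; hence each marking increments the running maximum by exactly one, so $K_i = \max_{j \leq i} D_j$, yielding $\mathbf{a}(\mu_2)_i = \max_{j \leq i}(D_j - 1) = \max_{j \leq i} \mathbf{a}(\mu)_j$.

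On the other side, the partial order $\mu \leq \mu'$ on Dyck paths is equivalent to $\mathbf{a}(\mu) \leq \mathbf{a}(\mu')$ componentwise (smaller $x$-coordinates of norths yield larger area components), and a Dyck path is steep iff its area vector is non-decreasing with consecutive differences in $\{0,1\}$. Thus $\mathbf{a}(\mu_{\mathrm{steep}})$ is the pointwise-smallest non-decreasing vector dominating $\mathbf{a}(\mu)$, which is exactly the running maximum; this is a valid area vector because $\mathbf{a}(\mu)$ has increments bounded by $1$, so its running maximum has increments in $\{0,1\}$, and both start at $0$. Combining with the previous paragraph gives $\mathbf{a}(\mu_2) = \mathbf{a}(\mu_{\mathrm{steep}})$, forcing $\mu_2 = \mu_{\mathrm{steep}}$. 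The delicate step I expect is the rigorous translation of Construction~\ref{constr:dyck_to_steep} into the area-vector framework, in particular verifying carefully that the greedy procedure indeed outputs the pointwise-minimum steep path above $\mu$.
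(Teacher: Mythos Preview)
Your proof is correct and takes essentially the same approach as the paper: both hinge on the observation that, for a horizontal LAC tree, the $j$-th north step of $\mu^\bullet$ is marked precisely when the level (equivalently, $\mathbf{a}(\mu)_j$) attains a new running maximum. The paper phrases this as an explicit if-and-only-if between ``$(T,\alpha)$ is horizontal'' and ``$\mu_2=\mu_{\mathrm{steep}}$'' (so no cardinality shortcut is needed), whereas you package the same content via area vectors by identifying both $\mathbf{a}(\mu_2)$ and $\mathbf{a}(\mu_{\mathrm{steep}})$ with the running maximum of $\mathbf{a}(\mu)$; the two arguments are equivalent, and your area-vector characterization of $\mu_{\mathrm{steep}}$ as the componentwise-minimal steep path above $\mu$ cleanly disposes of the ``delicate step'' you flagged.
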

\begin{proof}
	Recall that $\lactosteep = \dycktosteeppair \circ \lactodyck$.  Given a horizontal LAC tree $(T,\alpha)$, let $\mu_\bullet = \lactodyck\bigl((T,\alpha)\bigr)$ be the corresponding level-marked Dyck path, and $(\mu_1,\mu_2)=\dycktosteeppair (\mu_\bullet)$ be the steep pair associated with $\mu_\bullet$. We denote by $\mathbf{a}(\mu_1)=(a_1,a_2,\ldots,a_n)$ the area vector of $\mu_{1}$.  We recall that $\mu_1$ is the Dyck path obtained from $\mu_\bullet$ by forgetting the markings. The proof proceeds in three steps.

	\medskip
	
	We first prove that $\mathbf{a}(\mu_1)=\mathbf{a}(T)$.  Recall that $\mu_1$ is obtained from the right-to-left traversal of $T$: upon each visit to a new node, we add a north-step; every time we go back up one level, we add an east-step.  The contribution of a north-step to the area vector is equal to the number of north-steps preceding it minus the number of east-steps preceding it; this is clearly equal to the level of the corresponding node of the tree.

	\medskip
	
	We now prove that $(T,\alpha)$ is a horizontal LAC tree if and only if $\mu_\bullet$ is such that the $j$-th north-step is marked if and only if $a_i<a_j$ for all $i<j$.
	
	Observe that for every $\mu\in\Dyck_{n}$ there is a unique way to mark north-steps to satisfy the desired property.  Moreover, every plane rooted tree $T$ corresponds to a unique horizontal LAC-tree.  Since the set of Dyck paths with $2n$ steps is in bijection with the the set of plane rooted trees with $n$ non-root nodes~\cite{deutsch99dyck}*{Appendix~E.1}, and the map $\lactodyck$ is a bijection, it remains to show that $\lactodyck$ assigns to every horizontal LAC tree a marked Dyck path with the desired property.
	
	Let $(T,\alpha)$ be the horizontal LAC tree derived from $T$.  The color of a node in $T$ is equal to its level. In the right-to-left traversal of the tree, each time we reach the first node of a certain level, we get a marked north-step $N_\bullet$ in $\mu_\bullet$. All other nodes correspond to regular north-steps $N_\circ$. Since a north-step contributes an entry to the area vector which is equal to the level of its corresponding node, the marked north-steps in $\mu_\bullet$ are exactly those that contribute an entry $a_j$ such that $a_i<a_j$ for all $i<j$. 

	\medskip

	We finally prove that $\mu_{2}$ is the steep path of $\mu_{1}$ if and only if $\mu_\bullet$ is such that the $j$-th north-step is marked if and only if $a_i<a_j$ for all $i<j$.
	 
	This is straightforward in view of the following alternative description of the steep path of $\mu_{1}$. We parse $\mu_{1}$ as follows. In the beginning, for each north-step we write down $NE$, until we reach one that contributes a $1$ to the area vector of $\mu_{1}$,  for which we write down $NN$.  We then continue to append copies of $NE$ until we reach a north-step that contributes a $2$ to the area vector of $\mu_{1}$, for which we append $NN$ again.  We continue this process until a total of $n$ letters $N$ are written down, and we finish by adding the necessary number of letters $E$ at the end.
	 
	The word so written corresponds to the steep path of $\mu_{1}$, and the valleys of $(\mu_{1})_{\mathrm{steep}}$ correspond precisely to the entries $a_{j}$ of the area vector of $\mu_{1}$ with $a_{i}<a_{j}$ for all $i<j$.  By Construction~\ref{constr:level_to_steeppair}, the valleys of $\mu_{2}$ correspond precisely to the marked north-steps of $\mu_{\bullet}$. 
	
	\medskip
	
	These three claims together show that $(T,\alpha)$ is a horizontal LAC tree if and only if $\lactosteep\bigl((T,\alpha)\bigr)$ is $\bigl(\mu_{1},(\mu_{1})_{\mathrm{steep}}\bigr)$, and that $\mathbf{a}(T)=\mathbf{a}(\mu_{1})$.
\end{proof}

\begin{lemma}\label{lem:zetaproof2}
	For $n>0$, the map $\bouncetolac$ restricts to a bijection from
	\begin{itemize}
		\item the set of pairs $(\nu_{\bounce},\nu)$ where $\nu\in\Dyck_{n}$, to
		\item the set of horizontal LAC trees $(T,\alpha_{T})$ where $T$ has $n$ non-root nodes.
	\end{itemize}	
	Moreover, if $(T,\alpha_{T})=\bouncetolac(\nu_{\bounce},\nu)$, then $\mathbf{a}\bigl(\zeta^{-1}(\nu)\bigr)=\mathbf{a}(T)$. 
\end{lemma}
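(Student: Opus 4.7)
The proof has two ingredients: the bijection claim and the area-vector identity.

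For the bijection, I would argue that a LAC tree $(T, \alpha)$ with $\alpha = (\alpha_1, \ldots, \alpha_r)$ is horizontal (meaning $\alpha = \alpha_T$) if and only if, at every step of Construction~\ref{constr:lac_tree}, all active nodes receive a color, i.e., $a_k = \alpha_{k+1}$ for all $k$ in the notation of Construction~\ref{constr:lac_to_path}. Indeed, when all active nodes are colored at each step, the color of every non-root node equals its depth, which forces $\alpha = \alpha_T$, and conversely. By Remark~\ref{rem:lac_tree_nb_actives}, this is equivalent to $m_k = s_{k+1}$ for every $k$, that is, to $\mu = \lactonn(T)$ passing through each of the lattice points $(s_k, s_{k+1})$. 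Since $\mu$ already lies weakly above $\nu_\alpha$, this is precisely the condition that the bounce path of $\mu$ (in the sense of Construction~\ref{constr:dyck_to_bounce}) equals $\nu_\alpha$. Hence $\lactobounce$ restricts to a bijection between horizontal LAC trees $(T, \alpha_T)$ and bounce pairs of the form $(\nu_\bounce, \nu)$ with $\nu \in \Dyck_n$, where in both directions $\nu_\bounce = \nu_{\alpha_T}$ and $\nu = \lactonn(T)$.

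For the area-vector identity, let $(T, \alpha_T)$ correspond to $(\nu_\bounce, \nu)$ with bounce parameters $0 = b_0 < b_1 < \cdots < b_r = n$. I plan to compute $\mathbf{a}(\zeta^{-1}(\nu))$ via Construction~\ref{constr:zeta_inverse} and match it entry-by-entry with the right-to-left depth reading $\mathbf{a}(T)$. The key structural observation is that the subpath of $\nu$ between the bounce peaks $\vec{p}_k$ and $\vec{p}_{k+1}$ serves two roles simultaneously: on one hand, via Construction~\ref{constr:path_to_lac}, its east-steps encode the depth-$k$ nodes of $T$ (in LR-prefix order) and its north-steps encode their children at depth $k+1$; on the other hand, this subpath is the source of the auxiliary sequence $\mathbf{a}^{(k)}$ in Construction~\ref{constr:zeta_inverse}. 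The claim then reduces to showing that performing the right-to-left DFS on $T$ is equivalent to scanning these subpaths in an interleaved manner that matches the interleaving rule of Construction~\ref{constr:zeta_inverse} exactly.

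The cleanest route is likely induction on $r$, peeling off the deepest level (treating the level-$(r-1)$ nodes as blocks inserted into the traversal of the truncated tree) or the rightmost subtree of the root. The main obstacle will be verifying that the right-to-left DFS inserts these blocks precisely at the positions prescribed by the ``never $k-1$ directly before $k+1$'' rule, which governs when Construction~\ref{constr:zeta_inverse} is allowed to descend from a shallower to a deeper level. An appealing alternative is to invoke Lemma~\ref{lem:zetaproof1}, which already identifies the first coordinate of $\lactosteep((T, \alpha_T))$ as a Dyck path whose area vector equals $\mathbf{a}(T)$; the identity $\mathbf{a}(\zeta^{-1}(\nu)) = \mathbf{a}(T)$ would then follow from the assertion that $\zeta^{-1}(\nu)$ coincides with this first coordinate, an equivalent combinatorial task but phrased more symmetrically.
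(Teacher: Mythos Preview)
Your argument for the bijection half is correct and in fact more direct than the paper's. The paper observes that both sides are counted by the Catalan number and then only checks one implication (horizontal implies $\nu_\alpha=\nu_{\bounce}$) by locating, for each $k$, a valley of $\nu$ at height $s_k$. Your characterisation ``horizontal $\Leftrightarrow a_k=\alpha_{k+1}$ for all $k$ $\Leftrightarrow m_k=s_{k+1}$ for all $k$'' gives both implications at once and avoids the cardinality shortcut; it is a genuine improvement.

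For the area-vector identity, your structural observation is exactly the right one---the subpath of $\nu$ between consecutive bounce peaks simultaneously encodes the children at a given depth (via Construction~\ref{constr:path_to_lac}) and the sequence $\mathbf{a}^{(k)}$ of Construction~\ref{constr:zeta_inverse}---but you stop short of the device that makes the comparison effortless. Rather than proving that the full interleaving of Construction~\ref{constr:zeta_inverse} matches the full right-to-left traversal (your proposed induction on $r$, which does work but is fiddly), the paper restricts both sequences to the set of entries in $\{k-1,k\}$ for each $k$. Because a sequence with the property $a_{i+1}\le a_i+1$ is uniquely determined by the family of all such two-level restrictions, it suffices to match those. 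Each restriction is then immediate: on the $\zeta^{-1}$ side it is literally $\mathbf{a}^{(k)}$, read off the subpath from $(s_{k-1},s_k)$ to $(s_k,s_{k+1})$; on the tree side it is the right-to-left traversal restricted to depths $k-1$ and $k$, and Lemma~\ref{lem:nb_children} identifies the number of children of each depth-$(k-1)$ node with the length of the corresponding run of north steps. This bypasses the ``never $k-1$ directly before $k+1$'' bookkeeping entirely.

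Your ``alternative'' via Lemma~\ref{lem:zetaproof1} is not an independent route: showing that the first coordinate of $\lactosteep\bigl((T,\alpha_T)\bigr)$ equals $\zeta^{-1}(\nu)$ is equivalent to the identity you are trying to prove, since Lemma~\ref{lem:zetaproof1} already gives $\mathbf{a}(\mu_1)=\mathbf{a}(T)$ and a Dyck path is determined by its area vector.
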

\begin{proof}
	Given a nested pair of Dyck paths $(\nu_\alpha,\nu)$ with $\alpha=(\alpha_1,\dots,\alpha_r)$ a composition of $n$, let $(T,\alpha)= \bouncetolac(\nu_\alpha,\nu)$. For $k\in [r]$, recall that $s_k=\alpha_1+\alpha_2 + \cdots +\alpha_k$ and that $a_k$ is the number of active nodes at the beginning of the $(k+1)$-st step of Construction~\ref{constr:lac_tree} (not to be confused with components $\alpha_k$ in $\alpha$).  By definition, $s_0=0$, and $a_0$ is the number of children of the root. Since $(T,\alpha)$ is a horizontal LAC tree, we have $a_k=\alpha_{k+1}$.  The proof proceeds in two steps.

	\medskip
	
	We first show that $(T,\alpha)$ is a horizontal LAC tree if and only if $\nu_\alpha=\nu_{\bounce}$.  
	
	We have already argued that the set of horizontal LAC trees with $n$ non-root nodes is in bijection with the set of Dyck paths with $2n$ steps, and therefore also with the set of nested pairs of the form $(\nu_{\bounce},\nu)$ with $\nu\in\Dyck_n$.  Thus, both sets have $c_n$ elements.  In order to prove the claim, it thus suffices to show that $(T,\alpha)$ being a horizontal LAC tree implies $\nu_\alpha=\nu_{\bounce}$. 
	
	We already know that $\nu_\alpha$ lies weakly below $\nu$, and we need to show that each peak of $\nu_\alpha$ touches the path $\nu$ at the initial point of an east-step of $\nu$. Equivalently, we need to show that, for each $k\in [r-1]$, the path $\nu$ has a valley with coordinates $(p,q)$ such that $s_{k-1}<p\leq s_k$ and $q=s_k$. This valley corresponds to the rightmost internal node of color $k$, which is the $i$-th node of color $k$ for some $1\leq i \leq \alpha_k$. Since $T$ is a horizontal LAC tree, the rightmost child of this node is the rightmost node of color $k+1$, which is the $\alpha_{k+1}$-st active node in the $k$-th step of Construction~\ref{constr:lac_tree}. By definition of $\lactobounce$, we see that 
$s_{k-1}<p=s_k-i+1\leq s_k$ and $q=s_k+a_k-\alpha_{k+1}=s_k$, as desired.

	\medskip
	
	We now show that if $(T,\alpha)$ is a horizontal LAC tree, then $\mathbf{a}\bigl(\zeta^{-1}(\nu)\bigr)=\mathbf{a}(T)$.
	
	Let $\mu=\zeta^{-1}(\nu)$ as described in Construction~\ref{constr:zeta_inverse}.  The path $\nu_{\bounce}$ touches the diagonal at $(0,0)=(s_0,s_0),(s_1,s_1),\ldots, (s_r,s_r)=(n,n)$. Therefore,
$\mathbf{a}(\mu)$ has exactly $s_{k+1}-s_k=\alpha_{k+1}$ entries equal to $k$ for $0\leq k \leq r-1$, which is exactly the number of entries equal to $k$ in $\mathbf{a}(T)$.

	For $1\leq k\leq r-1$, we denote by ${\mathbf{a}}_k(\mu)$ and ${\mathbf{a}}_k(T)$, respectively, the restrictions of $\mathbf{a}(\mu)$ and $\mathbf{a}(T)$ to the values in $\{k-1,k\}$.  It remains to show that ${\mathbf{a}}_k(\mu)={\mathbf{a}}_k(T)$.

	The vector ${\mathbf{a}}_k(T)$ can be obtained from the right-to-left traversal of~$T$: each time we reach a new node at level $k-1$ we add a value $k-1$ and each time we reach a new node at level $k$ we add a value $k$. 
	
	The vector ${\mathbf{a}}_k(\mu)={\mathbf{a}}_k\bigl(\zeta^{-1}(\nu)\bigr)$ can be obtained from the subpath of $\nu$ between $(s_{k-1},s_k)$ and $(s_k,s_{k+1})$ by replacing each east-step with $k-1$ and each north-step by $k$.  

	Recall that $T=\nntolac(\nu)$ as described in Construction~\ref{constr:path_to_lac}.   By Lemma~\ref{lem:nb_children}, for $1\leq i \leq s_k-s_{k-1}=\alpha_k$, the number of north-steps in $\nu$ with $x$-coordinate $s_{k-1}+i$ is equal to the number of children of the $i$-th node of color $k$ in the right-to-left traversal of $T$.  From this we deduce that ${\mathbf{a}}_k(T)={\mathbf{a}}_k(\mu)$. 
\end{proof}

\steepbouncezeta*
\begin{proof}
	By Lemmas~\ref{lem:zetaproof1} and \ref{lem:zetaproof2}, the map 
	\begin{displaymath}
		\steeppairtobouncepair^{-1} = (\lactobounce \circ \steeptolac)^{-1} = \lactosteep \circ \bouncetolac
	\end{displaymath}
	restricts to a bijection from 
	\begin{itemize}
		\item the set of pairs $(\nu_{\bounce},\nu)$ where $\nu\in\Dyck_n$, to
		\item the set of pairs $(\mu,\mu_{\mathrm{steep}})$ where $\mu\in\Dyck_n$.
	\end{itemize}	
	Moreover, if $(\mu,\mu_{\mathrm{steep}})=\steeppairtobouncepair^{-1}(\nu_{\bounce},\nu)$ then $\mathbf{a}(\mu)=\mathbf{a}\bigl(\zeta^{-1}(\nu)\bigr)$, which implies that $\mu=\zeta^{-1}(\nu)$. 
\end{proof}

\begin{remark} \label{rem:comb_zeta}
	We can also see Equation~\eqref{eq:zeta} combinatorially through our bijections. Let $(T,\alpha)$ be a horizontal LAC tree, with $(\mu,\mu_{\mathrm{steep}})=\lactosteep\bigl((T,\alpha)\bigr)$, and $(\nu_\alpha,\nu) = \lactobounce\bigl((T,\alpha)\bigr)$. Since $\mu$ is simply $\lactodyck\bigl((T,\alpha)\bigr)$ without markings, we know that each north-step in $\mu$ corresponds to a non-root node in $T$, with the corresponding entry in the area vector $\mathbf{a}(\mu)$ to be the level of that node.  
	
	Therefore, we have (we sum over all non-root nodes)
	\begin{align*}
		\area(\mu) = \sum_{u \in T} \mathrm{level}(u) &= \sum_{u \in T} \sum_{v\;\mathrm{above}\;u} 1 \\
			&= \sum_{v \in T} \sum_{u\;\mathrm{below}\;v} 1 = \sum_{i=1}^{r-1} (n-b_i) = \bounce(\nu).
	\end{align*}

	The pairs contributing to $\dinv(\mu)$ correspond to pairs of nodes $(u,v)$ such that $v$ comes before $u$ in the right-to-left traversal of $T$ and $\mathrm{level}(v)-\mathrm{level}(u)\in\{0,1\}$.  Now, if we order the nodes in $T$
first by increasing level, and then by occurrence in the right-to-left traversal, then each node can be identified with an integer between $1$ and $n$.  It follows that the cells below $\nu$ correspond precisely to the pairs contributing to $\dinv(\mu)$, and we see that $\area(\nu) = \dinv(\mu)$.
\end{remark}

\subsection{The Zeta Map on Parking Functions}
There is another generalization of the zeta map, due to Haglund and Loehr~\cite{haglund_conjectured_2005}, which we can obtain as a special case of a labeled version of our bijection~$\steeppairtobouncepair$. Before going into the details, let us introduce some definitions. 

A \tdef{vertical labeling} of a Dyck path $\mu\in \Dyck_n$ is an assignment of the numbers from $1$ through~$n$ to the boxes directly to the right of the north steps of $\mu$ such that the numbers are increasing from below in each column. The set $\vertical(\Dyck_n)$ of pairs $(\mu,P)$ where $P$ is a vertical labeling of $\mu\in \Dyck_n$ is commonly referred to as the set of \tdef{parking functions} of length $n$.

A \tdef{diagonal labeling} of a Dyck path $\nu\in \Dyck_n$ is an assignment of the numbers from $1$ through~$n$ to the boxes on the main diagonal such that, for every consecutive pair of steps $EN$ of $\nu$ (valleys of $\nu$), the number in the same column as the $E$ step is smaller than the number in the same row as the $N$ step. We denote by $\diagonal(\Dyck_n)$ the set of pairs $(\nu,Q)$ where $Q$ is a diagonal labeling of $\nu\in\Dyck_n$.

In~\cite{haglund_conjectured_2005}~\cite[Conjecture~5.2]{Haglund-qt-catalan}, Haglund and Loehr conjectured two combinatorial interpretations of the bigraded Hilbert series of the module of diagonal harmonics:
\begin{displaymath}
	\mathcal{DH}_n(q,t)=
	\sum_{(\mu,P)\in\vertical(\Dyck_n)} q^{\dinv'(\mu,P)} t^{\area(\mu)} =
	\sum_{(\nu,Q)\in\diagonal(\Dyck_n)} q^{\area'(\nu,Q)} t^{\bounce(\nu)},
\end{displaymath}
where $\dinv'$ and $\area'$ are generalizations of the $\dinv$ and $\area$ statistics that depend on the labelings $P$ and $Q$. See~\cite[Chapter~5]{Haglund-qt-catalan} for more details.  
Haglund and Loehr also provided a bijection 
\begin{displaymath}
	\widetilde \zeta \colon \vertical(\Dyck_n) \to \diagonal(\Dyck_n)
\end{displaymath}
that explains the equivalence of their two interpretations, and their conjecture follows from the more general statement of the shuffle conjecture~\cite{haglund_shuffle_2005}, which was proven by Carlsson and Mellit in~\cite{carlsson_proof_2018}. 

The map $\widetilde \zeta$ is a straightforward generalization of the zeta map $\zeta$. It is defined by
\begin{displaymath}
	\widetilde \zeta (\mu,P) \defs \bigl(\zeta(\mu),Q\bigr),
\end{displaymath}
where the order in which the labels in $Q$ appear, in the north-east direction, is obtained by reading the labels in $P$ along diagonals $y=x+i$, in the north-east direction, for $i\in\{0,1,\dots, n-1\}$~\cite[Chapter~5]{Haglund-qt-catalan}.\footnote{We are using the version of the $\widetilde \zeta$ map described by Haglund in~\cite[Chapter~5]{Haglund-qt-catalan}, which differs from Haglund and Loehr's original description by just reversing the path $\zeta(\mu)$ and the labels on the diagonal.}  
An example is illustrated in Figure~\ref{fig_zetamap_on_parkingfunctions}.

\begin{figure}
\begin{center}
\includegraphics[width=\textwidth,page=18]{fig/ipe-fig.pdf}
\caption{An example of the $\widetilde \zeta$ map from parking functions to diagonally labeled Dyck paths, computed in terms of the map $\steeppairtobouncepairlabel$.}
\label{fig_zetamap_on_parkingfunctions}
\end{center}
\end{figure}

We will show that $\widetilde \zeta$ can be obtained in terms of a labeled version of our map $\steeppairtobouncepair$. 
Let $T$ be a plane rooted tree with $n$ non-root nodes.  A \tdef{right increasing labeling} $R$ of $T$ is an assignment of the numbers from $1$ though $n$ to the non-root nodes of $T$ that increases along right-most children, \ie if $v$ is the right-most child of $u$, then $R(v) > R(u)$.
We define a \tdef{labeled horizontal LAC tree} as a triple $(T,\alpha_{T},R)$ where $(T,\alpha_{T})$ is a horizontal LAC tree and $R$ is a right increasing labeling of $T$. 
We denote by $\LAClabel_n$ the set of labeled horizontal LAC trees with $n$ non-root nodes.

The restriction of the map $\lactosteep$ from horizontal LAC trees to pairs of the form $(\mu,\mu_\steep)$ (as in Lemma~\ref{lem:zetaproof1}) can be extended naturally in the context of labeled horizontal LAC trees. 
We denote by $\SPlabel$ the set of triples $(\mu,\mu_\steep,P)$ where $\mu\in \Dyck_n$ and $P$ is a vertical labeling of~$\mu$. Let
\begin{equation}\label{eq:lac_to_steeppair_label}
	\lactosteeplabel\colon\LAClabel_{n}\to\SPlabel 
\end{equation}
be the map that sends a labeled horizontal LAC tree $(T,\alpha_{T},R)$ to $(\mu,\mu_\steep,P)$, where $(\mu,\mu_\steep)=\lactosteep\bigl((T,\alpha_{T})\bigr)$ and $P$ is obtained from $R$ as follows: the non-root nodes in $T$ are in correspondence with the north steps of $\mu$ through the bijection $\lactosteep$, and we assign the labels of the non-root nodes in $R$ to their corresponding north steps. Since the labels in $T$ increase along right-most children, the resulting labeling of the north steps in $\mu$ is a vertical labeling. And vice-versa, every vertical labeling of $\mu$ arises uniquely from a labeling of $T$ that increases along right-most children. As a consequence, the map $\lactosteeplabel$ is a bijection. We denote by~$\steeptolaclabel$ its inverse.    

Similarly, we also have a labeled version of the restriction of the map $\lactobounce$ from horizontal LAC trees to pairs of the form $(\nu_\bounce,\nu)$ (as in Lemma~\ref{lem:zetaproof2}).
We denote by $\BPlabel$ the set of triples $(\nu_\bounce,\nu,Q)$ where $\nu\in \Dyck_n$ and $Q$ is a diagonal labeling of~$\nu$. Let
\begin{equation}\label{eq:lac_to_bouncepair_label}
	\lactobouncelabel\colon\LAClabel_{n}\to\BPlabel
\end{equation}
be the map that sends a labeled horizontal LAC tree $(T,\alpha_{T},R)$ to $(\nu_\bounce,\nu,Q)$, where $(\nu_\bounce,\nu)=\lactobounce\bigl((T,\alpha_{T})\bigr)$ and $Q$ is obtained from $R$ as follows: the labels in $Q$, from bottom to top, are obtained by reading from right-to-left the labels of non-root nodes of color $i$ in $T$, for $i\in\{1,2,\ldots\}$.
{Since the valleys of $\nu$ are in correspondence with pairs of nodes $(u,v)$ in $T$ such that $v$ is the right-most child of~$u$, it is not hard to see that $Q$ is a diagonal labeling of $\nu$ and that the map~$\lactobouncelabel$ is a bijection}. 
We denote by $\bouncetolaclabel$ its inverse.

Finally, we denote by 
\begin{equation}\label{eq:steeppair_to_bouncepair_label}
	\steeppairtobouncepairlabel \colon \SPlabel\to\BPlabel
\end{equation}
the composition $\steeppairtobouncepairlabel=\lactobouncelabel \circ \steeptolaclabel$. 
As a consequence of Theorem~\ref{thm:steep_bounce_zeta_map} and the discussion above, we get the following corollary.

\begin{corollary}\label{cor_zeta_parkingfunctions}
	For $n>0$, the map $\steeppairtobouncepairlabel$ is a bijection from
	\begin{itemize}
		\item the set of triples $(\mu,\mu_{\steep},P)$ such that $\mu\in\Dyck_n$ and $P$ is a vertical labeling of $\mu$, to
		\item the set of triples $(\nu_{\bounce},\nu,Q)$ such that $\nu\in\Dyck_n$ and $Q$ is a diagonal labeling of~$\nu$. 
	\end{itemize}
	Moreover, if $(\nu_{\bounce},\nu,Q)=\steeppairtobouncepairlabel(\mu,\mu_{\steep},P)$, then $(\nu,R)=\widetilde\zeta(\mu,P)$, where $\widetilde \zeta$ is the Haglund-Loehr's zeta map from parking functions to diagonally labeled Dyck paths.
\end{corollary}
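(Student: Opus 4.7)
The bijection property is automatic: $\steeppairtobouncepairlabel$ is defined as $\lactobouncelabel \circ \steeptolaclabel$, and both factors are bijections, as noted prior to the statement. Applying Theorem~\ref{thm:steep_bounce_zeta_map} to the horizontal LAC tree $(T,\alpha_T)$ underlying $(T,\alpha_T,R) \defs \steeptolaclabel(\mu,\mu_{\steep},P)$ immediately gives $\nu = \zeta(\mu)$, so the content of the ``moreover'' clause reduces to showing that the diagonal labeling $Q$ coincides with the one Haglund and Loehr attach to $\zeta(\mu)$ via diagonal-reading of $P$.

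To compare the two labelings it is convenient to express both $P$ and $Q$ directly in terms of $R$. By construction of $\lactosteeplabel$, the label $P$ assigns to the box right of the north step of $\mu$ corresponding to a node $v$ is exactly $R(v)$. By construction of $\lactobouncelabel$, the diagonal labeling $Q$ is obtained by concatenating, color by color starting from color $1$, the $R$-labels of the non-root nodes of $T$ of that color read right-to-left, and then placing the resulting word on the diagonal of $\nu$ from bottom to top. Haglund and Loehr's recipe, on the other hand, produces its diagonal labeling from $P$ by concatenating, for $i = 0, 1, \ldots, n-1$, the labels on the diagonal $y = x + i$ of $\mu$ read in the north-east direction, and again stacking bottom-to-top.

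To identify the two concatenations it then suffices to verify two facts. First, the boxes of $\mu$ on the diagonal $y = x + i$ are in label-preserving bijection with the color-$(i+1)$ non-root nodes of $T$: by the proof of Lemma~\ref{lem:zetaproof1}, the area-vector entry of a north step of $\mu$ equals the level of the corresponding node of $T$, which in a horizontal LAC tree coincides with its color (up to the standard offset). Second, within a single color class, the north-east order along the corresponding diagonal of $\mu$ is exactly the right-to-left traversal order of the nodes: this is immediate from $\mu = \lactodyck(T)$, since the north steps of $\mu$ appear, from left to right along the path, in right-to-left traversal order, and north-east along a diagonal is the same as the left-to-right order along $\mu$ of the north steps producing the diagonal boxes. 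The main obstacle is thus this simultaneous matching of traversal order, path order, and diagonal-reading order; since all three descend from the single right-to-left traversal underlying Construction~\ref{constr:lac_to_level}, the verification is a mechanical unwinding of definitions that can be confirmed on the example in Figure~\ref{fig_zetamap_on_parkingfunctions}.
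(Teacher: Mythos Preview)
Your argument is correct and matches the paper's approach: the paper does not give a separate proof but simply declares the corollary a consequence of Theorem~\ref{thm:steep_bounce_zeta_map} and the preceding discussion, and you have spelled out exactly the label-matching verification (color $i{+}1$ $\leftrightarrow$ diagonal $y=x+i$ via $\mathbf{a}(\mu)=\mathbf{a}(T)$, and right-to-left traversal $\leftrightarrow$ north-east reading along a diagonal) that the paper leaves to the reader.
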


\begin{remark}
Corollary~\ref{cor_zeta_parkingfunctions} is essentially a labeled version of Theorem~\ref{thm:steep_bounce_zeta_map}. One might expect to have a generalization of Theorem~\ref{thm:steep_bounce_theorem} involving labeled versions of steep pairs and bounce pairs. 
Indeed, one can naturally define a vertically labeled steep pair as a triple $(\mu_1,\mu_2,P)$  where $(\mu_1,\mu_2)$ is a nested pair such that the top path $\mu_2$ is steep and $P$ is a vertical labeling of~$\mu_1$. 
However, for the definition of diagonally labeled bounce pairs $(\nu_1,\nu_2,Q)$, we need $(\nu_1,\nu_2)$ to be a nested pair such that the bottom path $\nu_1$ is bounce as expected, but the requirement on the labeling $Q$ depends on both $\nu_1$ and $\nu_2$ and is more involved.\end{remark}

\begin{remark}
Suppose that we remove the increasing condition in vertical labelings and the valley condition in diagonal labelings, but instead record as statistics the number of violations of these conditions, \ie the number of descents from below to above along columns in vertical labelings, and the number of valleys with the entry below larger than the one above in diagonal labelings. Then our bijection $\widetilde{\Gamma}$ generalizes naturally to these labelings while transferring the two statistics from one to the other. It is because they are relayed by labeled LAC trees without the right increasing condition, and both statistics correspond to the number of non-root nodes whose label is larger than that of its rightmost child. The zeta map for parking functions is simply the case where all these statistics are zero. 
\end{remark}

\section*{Discussion}

In this section we highlight some possible directions of future research.

\medskip
\noindent
{\bf Enumerative questions.} 
From an enumerative point of view, Mishna and Rechnitzer found in~\cite{lattice-walk} that the generating function of the lattice walks mentioned after Definition~\ref{def:level_marked_path}, which are in bijection with the level-marked Dyck paths, is not differentially algebraic with respect to the variable marking the end of the walk. This is an indication that the enumeration of these objects is complicated. It would be interesting to study the enumeration of certain sub-families with simple enumeration formulas.

\medskip
\noindent
{\bf Generalizations for other Coxeter groups.} 
An intriguing follow-up question is to exhibit combinatorial models for the algebraic generalizations of parabolic $231$-avoiding permutations, parabolic nonnesting partitions, and parabolic noncrossing partitions to other Coxeter groups in \cite{muehle18tamari}*{Section~6}.  Observe that these generalizations may depend on the choice of Coxeter element.  These combinatorial models may then be used to generalize the other families described here accordingly. 
If such an extension is possible, it will hopefully lead to a Steep-Bounce zeta map in other types, which would possibly be related to the zeta map for irreducible crystallographic root systems described by Thiel in~\cite{thiel-other-types}.

\medskip
\noindent
{\bf Fu{\ss}-Catalan and rational Catalan generalizations.} Another natural extension would be to consider Fu{\ss}-Catalan and rational Catalan objects. Is it possible to define an analogue of LAC trees in this world? If such an adaptation exists, we may define a Steep-Bounce map on Fu{\ss}-Catalan or even rational Catalan objects, therefore potentially generalize the zeta map in rational Catalan combinatorics.

\medskip
\noindent
{\bf Generalizations of the zeta map and statistics for multivariate diagonal harmonics.} 
One of the most remarkable properties of the zeta map is that it describes combinatorial statistics for the Hilbert series of certain modules in diagonal harmonics. As mentioned by Armstrong, Loehr and Warrington in~\cite{ALW-sweep}, one point of view is that, rather than having two statistics $\area$ and $\dinv$, we have only one statistic area and a nice map $\zeta$. There are generalizations for multisets of variables that give rise to certain modules of multivariate diagonal harmonics~\cite{bergeron_multivariate_2013}. One important and difficult problem in this area is to find combinatorial models and a multiset of combinatorial statistics to describe them.  
In the three variate case, these modules are closely related to intervals in the Tamari lattice. Two of the statistics have been conjectured in~\cite{BergeronPrevilleRatelle} but the third statistic is still missing. 
Furthermore, at the moment no generalization of the zeta map for Tamari intervals is known. 
Can the results in this paper be used to find the third missing statistic or a zeta map for intervals in the Tamari lattice? Can we push this forward for more sets of variables in connection to the results in~\cite{ceballos_hopf_2018}? A positive answer to these questions may lead to new understanding of multivariate diagonal harmonics.

\section*{Acknowledgements}

The authors thank Nantel Bergeron, Myrto Kallipoliti, Marni Mishna, Vincent Pilaud, and Robin Sulzgruber for inspiring discussions.

\appendix

\section{Lattice-Theoretic Background}
	\label{sec:lattice_theory}
In this appendix we provide some lattice-theoretic background that is important for the proof of Theorem~\ref{thm:parabolic_nu_tamari_isomorphism}.  

\subsection{Basics}
	\label{sec:poset_basics} 
Let $\Poset=(P,\leq)$ be a finite partially ordered set (or \tdef{poset} for short).  An element $x\in P$ is \tdef{minimal} if for every $y\in P$ with $y\leq x$ follows $y=x$.  Dually, $x$ is \tdef{maximal} if for every $y\in P$ with $x\leq y$ follows $x=y$.  Then, $\Poset$ is \tdef{bounded} if there exists a unique minimal and a unique maximal element, and we denote these bounds by $\least$ and $\grtst$, respectively.

Two elements $x,y\in P$ form a \tdef{cover relation} if $x<y$ and there exists no $z\in P$ with $x<z<y$.  We write $x\lessdot y$ in this case, and say that $x$ is a \tdef{lower cover} of $y$ and that $y$ is a \tdef{upper cover} of $x$.  

A \tdef{chain} of $\Poset$ is a totally ordered subset of $P$.  A chain is \tdef{saturated} if it can be written as a sequence of cover relations, and it is \tdef{maximal} if it is saturated and contains a minimal and a maximal element of $\Poset$.  The \tdef{length} of $\Poset$ is one less than the maximum cardinality of the maximal chains of $\Poset$, and it is denoted by $\ell(\Poset)$.

The poset $\Poset$ is a \tdef{lattice} if for every $x,y\in P$ there exists a least upper bound $x\vee y$ (the \tdef{join} of $x$ and $y$), and a greatest lower bound $x\wedge y$ (the \tdef{meet} of $x$ and $y$).  It follows that every finite lattice is bounded.

If $\Poset$ is a lattice, then an element $j\in P\setminus\{\least\}$ is \tdef{join irreducible} if for every $x,y\in P$ with $j=x\vee y$ we have $j\in\{x,y\}$.  Dually, $m\in P\setminus\{\grtst\}$ is \tdef{meet irreducible} if for every $x,y\in P$ with $m=x\wedge y$ we have $m\in\{x,y\}$.  We denote the set of all join-irreducible elements of $\Poset$ by $\JI(\Poset)$, and the set of all meet-irreducible elements of $\Poset$ by $\MI(\Poset)$.  Moreover, we observe that if $P$ is finite, then every $j\in\JI(\Poset)$ has a unique lower cover $j_{*}$, and every $m\in\MI(\Poset)$ has a unique upper cover $m^{*}$.

\subsection{Extremal Lattices}
	\label{sec:extremal_lattices}
Let $\Lattice=(L,\leq)$ be a finite lattice.  We follow \cite{markowsky92primes} and say that $\Lattice$ is \tdef{extremal} if $\bigl\lvert\JI(\Lattice)\bigr\rvert=\ell(\Lattice)=\bigl\lvert\MI(\Lattice)\bigr\rvert$.  Extremal lattices admit a nice presentation as lattices that arise from certain directed graphs~\cite{markowsky92primes}, see also \cite{thomas19rowmotion}.  

Suppose that $\Lattice$ is extremal with $\ell(\Lattice)=n$, and we choose a maximal chain $C:x_{0}\lessdot x_{1}\lessdot\cdots\lessdot x_{n}$.  We may totally order the join- and meet-irreducible elements of $\Lattice$ as $j_{1},j_{2},\ldots,j_{n}$ and $m_{1},m_{2},\ldots,m_{n}$ such that
\begin{displaymath}
	j_{1}\vee j_{2}\vee\cdots\vee j_{i} = x_{i} = m_{i+1}\wedge m_{i+2}\wedge\cdots\wedge m_{n}
\end{displaymath}
holds for all $i\in[n]$.

From this order we may define a directed graph with vertex set $[n]$, where we have a directed edge $i\to k$ if and only if $i\neq k$ and $j_{i}\not\leq m_{k}$.  This graph is the \tdef{Galois graph} of $\Lattice$, denoted by $\Galois(\Lattice)$.  Figure~\ref{fig:extremal_lattice} shows an extremal lattice, and Figure~\ref{fig:galois_graph} shows its Galois graph.  The join- and meet-irreducible elements of the lattice in Figure~\ref{fig:extremal_lattice} are labeled as described in the previous paragraph.  We observe that this lattice has a unique maximal chain of maximum length.

\begin{figure}
	\centering
	\begin{subfigure}[t]{.25\textwidth}
		\centering
		\begin{tikzpicture}\small
			\def\x{1};
			\def\y{1.5};
			\tikzstyle{poset}=[draw,circle,scale=.6]
			\draw(2*\x,1*\y) node[poset](n1){};
			\draw(3*\x,1.5*\y) node[poset,label=below:{\color{white!50!black}\scriptsize $1$}](n2){};
			\draw(1*\x,2*\y) node[poset,label=below:{\color{white!50!black}\scriptsize $3$},label=above:{\color{white!50!black}\scriptsize $1$}](n3){};
			\draw(2.5*\x,2*\y) node[poset,label=below:{\color{white!50!black}\scriptsize $2$},label=above:{\color{white!50!black}\scriptsize $3$}](n4){};
			\draw(4*\x,2*\y) node[poset,label=below:{\color{white!50!black}\scriptsize $4$},label=above:{\color{white!50!black}\scriptsize $2$}](n5){};
			\draw(2*\x,2.5*\y) node[poset,label=above:{\color{white!50!black}\scriptsize $4$}](n6){};
			\draw(3*\x,3*\y) node[poset](n7){};
			\draw(n1) -- (n2);
			\draw(n1) -- (n3);
			\draw(n2) -- (n4);
			\draw(n2) -- (n5);
			\draw(n3) -- (n6);
			\draw(n4) -- (n6);
			\draw(n5) -- (n7);
			\draw(n6) -- (n7);
		\end{tikzpicture}
		\caption{An extremal lattice.}
		\label{fig:extremal_lattice}
	\end{subfigure}
	\hspace*{.2cm}
	\begin{subfigure}[t]{.3\textwidth}
		\centering
		\begin{tikzpicture}\small
			\def\x{1};
			\def\y{1};
			\draw(2*\x,1*\y) node(n1){$4$};
			\draw(1*\x,2*\y) node(n2){$3$};
			\draw(3*\x,2*\y) node(n3){$1$};
			\draw(2*\x,3*\y) node(n4){$2$};
			\draw[->](n1) -- (n2);
			\draw[->](n1) -- (n3);
			\draw[->](n2) -- (n4);
			\draw[->](n4) -- (n3);
		\end{tikzpicture}
		\caption{The Galois graph of the lattice in Figure~\ref{fig:extremal_lattice}.}
		\label{fig:galois_graph}
	\end{subfigure}
	\hspace*{.2cm}
		\begin{subfigure}[t]{.35\textwidth}
		\centering
		\begin{tikzpicture}\small
			\def\x{1};
			\def\y{1.5};
			\draw(2*\x,1*\y) node(n1){\scriptsize $(-,1234)$};
			\draw(3*\x,1.5*\y) node(n2){\scriptsize $(1,234)$};
			\draw(1*\x,2*\y) node(n3){\scriptsize $(3,14)$};
			\draw(2.5*\x,2*\y) node(n4){\scriptsize $(12,34)$};
			\draw(4*\x,2*\y) node(n5){\scriptsize $(14,2)$};
			\draw(2*\x,2.5*\y) node(n6){\scriptsize $(123,4)$};
			\draw(3*\x,3*\y) node(n7){\scriptsize $(1234,-)$};
			\draw(n1) -- (n2);
			\draw(n1) -- (n3);
			\draw(n2) -- (n4);
			\draw(n2) -- (n5);
			\draw(n3) -- (n6);
			\draw(n4) -- (n6);
			\draw(n5) -- (n7);
			\draw(n6) -- (n7);
		\end{tikzpicture}
		\caption{The lattice of maximal orthogonal pairs of the graph in Figure~\ref{fig:galois_graph}.}
		\label{fig:lattice_maximal_orthogonal_pairs}
	\end{subfigure}
	\caption{An extremal lattice with its associated Galois graph.}
	\label{fig:extremal_galois}
\end{figure}
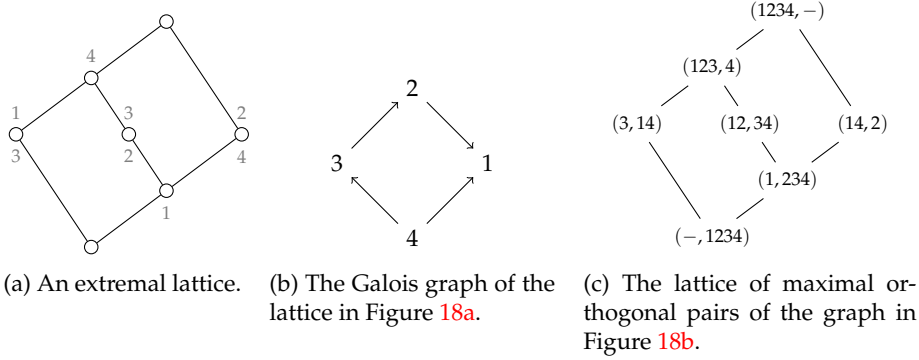

Now let $G$ be a simple directed graph with vertex set $[n]$ and edges $i\to k$ only if $i>k$.  Two sets $X,Y\subseteq[n]$ form an \tdef{orthogonal pair} if $X\cap Y=\emptyset$ and there exists no edge $i\to k$ in $G$ with $i\in X$ and $k\in Y$.  An orthogonal pair $(X,Y)$ is \tdef{maximal} if whenever there exist $Y'\supseteq Y$ and $X'\supseteq X$ such that $(X',Y')$ is an orthogonal pair, then $X'=X$ and $Y'=Y$.  Let $\MO(G)$ denote the set of maximal orthogonal pairs of $G$.  

For $(X,Y),(X',Y')\in\MO(G)$ we declare $(X,Y)\sqsubseteq(X',Y')$ if $X\subseteq X'$ (or equivalently $Y\supseteq Y'$).  It can be verified that $\Lattice(G)\defs\bigl(\MO(G),\sqsubseteq\bigr)$ is a lattice, where the meet $(X,Y)\sqcap(X',Y')$ is the unique maximal orthogonal pair $(X\cap X',Z)$ and the join $(X,Y)\sqcup(X',Y')$ is the unique maximal orthogonal pair $(Z,Y\cap Y')$.  We have the following result.

\begin{theorem}[{\cite[Section~4]{markowsky92primes}}]\label{thm:markowskys_representation}
	If $G$ is a simple directed graph on the vertex set $[n]$ which has edges $i\to k$ only if $i>k$, then $\Lattice(G)$ is an extremal lattice of length $n$.  Conversely, every finite extremal lattice $\Lattice$ is isomorphic to $\Lattice\bigl(\Galois(\Lattice)\bigr)$.  
\end{theorem}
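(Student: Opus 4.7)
\medskip

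The plan is to follow the strategy of Markowsky's original argument, treating the two halves of the statement separately but using the same dictionary between elements of the lattice and pairs of index sets. Throughout, for any simple directed graph $G$ on $[n]$ with edges only going from larger to smaller indices, I would first record the basic fact that the lattice structure on $\MO(G)$ follows from the observation that the first coordinate of a maximal orthogonal pair determines the second (and vice versa): given any $X\subseteq[n]$, there is a unique largest $Y$ with $(X,Y)$ orthogonal, namely the set of $k\notin X$ such that no edge of $G$ from $X$ points to $k$, and the unique maximal orthogonal pair containing $X$ in its first coordinate is obtained by enlarging $X$ greedily to its closure. With this ``closure operator'' in hand, the meet $(X\cap X',\_)$ and join $(\_,Y\cap Y')$ descriptions become routine, and the axioms for a lattice fall out.

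For the first half, I would identify the join-irreducibles and meet-irreducibles explicitly. For each $i\in[n]$, let $J_i$ be the unique maximal orthogonal pair whose first coordinate is the closure of $\{i\}$, and let $M_i$ be the unique maximal orthogonal pair whose second coordinate is the closure of $\{i\}$. A quick check using the formulas for meet and join shows that $J_i$ is join-irreducible and $M_i$ is meet-irreducible, with $i\mapsto J_i$ and $i\mapsto M_i$ injections. Conversely, every join-irreducible must have this form because its first coordinate contains a smallest element forced by the unique lower cover. To obtain the length $n$, I would build a maximal chain in $\Lattice(G)$ by processing the vertices of $[n]$ in an order compatible with the edge orientation (a topological sort of $G$, possible since edges respect the linear order $1,\dots,n$). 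Adding one vertex at a time to the first coordinate yields a saturated chain from $(\emptyset,[n])$ to $([n],\emptyset)$ of length exactly $n$, pinning down $\ell(\Lattice(G))=\bigl\lvert\JI(\Lattice(G))\bigr\rvert=\bigl\lvert\MI(\Lattice(G))\bigr\rvert=n$, which is extremality.

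For the converse, suppose $\Lattice$ is a finite extremal lattice with fixed maximal chain $C\colon x_0\lessdot\cdots\lessdot x_n$ and the associated orderings $j_1,\dots,j_n$ of $\JI(\Lattice)$ and $m_1,\dots,m_n$ of $\MI(\Lattice)$ as in the excerpt. Write $G=\Galois(\Lattice)$. For $x\in\Lattice$ define
\begin{displaymath}
	X_x=\{i\in[n]\mid j_i\leq x\},\qquad Y_x=\{i\in[n]\mid x\leq m_i\},
\end{displaymath}
and put $\Phi(x)=(X_x,Y_x)$. Orthogonality $X_x\cap Y_x=\emptyset$ holds because if $i$ lay in both, then $j_i\leq x\leq m_i$, whereas $j_i\not\leq m_i$ by general poset theory for irreducibles on a covering pair (since $j_i$ has a unique lower cover and $m_i$ a unique upper cover, and every element is a join of join-irreducibles and a meet of meet-irreducibles). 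The ``no edge from $X_x$ to $Y_x$'' condition translates, by definition of $G$, into: for all $i\neq k$ with $i\in X_x$ and $k\in Y_x$ one has $j_i\leq m_k$, which is immediate from $j_i\leq x\leq m_k$. Maximality of $(X_x,Y_x)$ follows because extremality forces $\lvert X_x\rvert+\lvert Y_x\rvert\geq n$ (any $i\in[n]$ satisfies either $j_i\leq x$ or $x\leq m_i$, using the standard characterizations $j\leq x\iff x\not\leq m$ with $m$ the pseudo-complement), and together with orthogonality this pins down the pair.

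Finally, I would check that $\Phi$ is an order isomorphism. Monotonicity is clear from the definitions. Injectivity uses that $x=\bigvee\{j_i\mid i\in X_x\}$ because $x$ is the join of the join-irreducibles below it. Surjectivity onto $\MO(G)$ is the heart of the matter: given $(X,Y)\in\MO(G)$, I set $x=\bigvee_{i\in X}j_i$ and would need to verify that $X_x=X$ and $Y_x=Y$. The inclusion $X\subseteq X_x$ is trivial; the reverse would use maximality of $(X,Y)$ combined with the edge relation defining $G$ to show that adding any further join-irreducible below $x$ to $X$ would break orthogonality. The hard part, and the step I expect to be the main obstacle, is exactly this verification of surjectivity, because it requires showing that the combinatorial data of $G$ faithfully reconstructs the lattice order: one has to trace through how the edge condition $j_i\not\leq m_k$ precisely encodes the sublattice generated by the chosen family of irreducibles, which is where the extremality hypothesis is used in a nontrivial way. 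Once this is done, $\Phi$ is the desired isomorphism $\Lattice\cong\Lattice(\Galois(\Lattice))$.
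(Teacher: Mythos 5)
The paper does not prove this theorem; it is imported verbatim from Markowsky~\cite{markowsky92primes}, so there is no internal proof to compare against. Evaluating your attempt on its own terms, the maximality step in the converse direction contains a genuine error. You assert that for every $i\in[n]$ either $j_i\leq x$ or $x\leq m_i$, hence $\lvert X_x\rvert+\lvert Y_x\rvert\geq n$, and attribute this to a ``pseudo-complement'' characterization. That dichotomy is the $\kappa$-property of semidistributive (in particular congruence-uniform) lattices, and it fails for general extremal lattices. The paper's own example in Figure~\ref{fig:extremal_lattice} is a counterexample: with the canonical labelling $j_1,\dots,j_4$, $m_1,\dots,m_4$ shown there, take $x$ to be the element carrying the labels $j_4$ and $m_2$. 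One computes $X_x=\{1,4\}$ and $Y_x=\{2\}$, so $\lvert X_x\rvert+\lvert Y_x\rvert=3<4$, and indeed $j_3\not\leq x$ while $x\not\leq m_3$. Figure~\ref{fig:lattice_maximal_orthogonal_pairs} lists this element as $(14,2)$, confirming the computation. Thus the inequality that is supposed to ``pin down the pair'' is simply false, even though the pair is in fact maximal.

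The correct maximality argument needs neither the dichotomy nor the cardinality bound. If $i\notin X_x$, then $j_i\not\leq x$; since every lattice element is the meet of the meet-irreducibles above it, $x=\bigwedge_{k\in Y_x}m_k$, so $j_i\not\leq m_k$ for some $k\in Y_x$, which is precisely an edge $i\to k$ in $\Galois(\Lattice)$ forbidding the addition of $i$ to $X_x$. Dually, if $i\notin Y_x$, use $x=\bigvee_{k\in X_x}j_k$ to produce $k\in X_x$ with $j_k\not\leq m_i$, an edge $k\to i$ forbidding the addition of $i$ to $Y_x$. This repairs the step with no extra hypotheses. Beyond that, two further items remain open in your sketch: the identification of $\JI\bigl(\Lattice(G)\bigr)$ with the closures of singletons is asserted rather than argued (in particular, that each $J_i$ has a unique lower cover, and that the $J_i$ exhaust the join-irreducibles), and you explicitly flag the surjectivity of $\Phi$ as unresolved. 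The forward-direction chain $\bigl([k],[n]\setminus[k]\bigr)_{k=0}^{n}$ is sound, but it only gives $\ell(\Lattice(G))\geq n$; one still needs $\lvert\JI(\Lattice(G))\rvert\leq n$ to conclude $\ell(\Lattice(G))=n$.
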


The lattice of maximal orthogonal pairs of the directed graph in Figure~\ref{fig:galois_graph} is shown in Figure~\ref{fig:lattice_maximal_orthogonal_pairs}.  In this figure, we have omitted set parentheses and commas for brevity.

\begin{remark}\label{rem:isomorphic_graphs_isomorphic_lattices}
	Let $\Lattice,\Lattice'$ be two extremal lattices with corresponding Galois graphs $\Galois(\Lattice),\Galois(\Lattice')$.  If $f\colon\Galois(\Lattice)\to\Galois(\Lattice')$ is an isomorphism of directed graphs, then it is clear that $(X,Y)$ is a maximal orthogonal pair of $\Galois(\Lattice)$ if and only if $\bigl(f(X),f(Y)\bigr)$ is a maximal orthogonal pair of $\Galois(\Lattice')$.  It follows that $\Lattice\bigl(\Galois(\Lattice)\bigr)\cong\Lattice\bigl(\Galois(\Lattice')\bigr)$, and in view of Theorem~\ref{thm:markowskys_representation} we conclude that $\Lattice\cong\Lattice'$.
\end{remark}

\begin{remark}\label{rem:galois_difficulty}
	Observe that not every isomorphism of Galois graphs extends directly to an isomorphism of the corresponding extremal lattices.  Consider for instance the pentagon lattice together with the bijection that swaps top and bottom and fixes everything else.  This map is the identity on the Galois graph, but is not a lattice isomorphism.
\end{remark}

\subsection{Congruence-Uniform Lattices}
	\label{sec:congruence_uniform_lattices}
Let us recall the following doubling construction due to Day~\cite{day79characterizations}.  Let $\Poset=(P,\leq)$ be a finite poset.  For $X\subseteq P$ we define 
\begin{displaymath}
	P_{\leq X} \defs \{y\in P\mid y\leq x\;\text{for some}\;x\in X\}.
\end{displaymath}
Let $\mathbf{2}$ denote the unique two-element lattice on the ground set $\{0,1\}$.  The \tdef{doubling} of $\Poset$ by $X$ is the subposet of the direct product $\Poset\times\mathbf{2}$ induced by the set
\begin{displaymath}
	\Bigl(P_{\leq X}\times\{0\}\Bigr)\uplus\Bigl(\bigl((P\setminus P_{\leq X})\cup X\bigr)\times\{1\}\Bigr).
\end{displaymath}
A lattice is \tdef{congruence uniform} if it can be constructed from the singleton lattice by a sequence of doublings by intervals~\cite[Theorem~5.1]{day79characterizations}.  Figure~\ref{fig:doubling_sequence} shows an example of a congruence-uniform lattice and the sequence of doublings from which it is constructed.

\begin{figure}
	\centering
	\begin{tikzpicture}
		\def\d{2};
		\tikzstyle{fposet}=[fill,circle,scale=\s]
		\tikzstyle{poset}=[draw,circle,scale=\s]
		\draw(1*\d,1*\d) node{
			\begin{tikzpicture}\small
				\def\x{.75};
				\def\s{.4};
				\draw(1*\x,5.35*\x) node{};
				\draw(1*\x,1*\x) node[fposet]{};
			\end{tikzpicture}
		};
		\draw(1.25*\d,.25*\d) node{\tiny $\rightarrow$};
		\draw(1.5*\d,1*\d) node{
			\begin{tikzpicture}\small
				\def\x{.75};
				\def\s{.4};
				\draw(1*\x,5.35*\x) node{};
				\draw(1*\x,1*\x) node[poset](n1){};
				\draw(1*\x,2*\x) node[fposet](n2){};
				\draw(n1) -- (n2);
			\end{tikzpicture}
		};
		\draw(1.75*\d,.25*\d) node{\tiny $\rightarrow$};
		\draw(2*\d,1*\d) node{
			\begin{tikzpicture}\small
				\def\x{.75};
				\def\s{.4};
				\draw(1*\x,5.35*\x) node{};
				\draw(1*\x,1*\x) node[poset](n1){};
				\draw(1*\x,2*\x) node[fposet](n2){};
				\draw(1*\x,3*\x) node[fposet](n3){};
				\draw(n1) -- (n2);
				\draw(n2) -- (n3);
			\end{tikzpicture}
		};
		\draw(2.3*\d,.25*\d) node{\tiny $\rightarrow$};
		\draw(2.75*\d,1*\d) node{
			\begin{tikzpicture}\small
				\def\x{.75};
				\def\s{.4};
				\draw(1*\x,5.35*\x) node{};
				\draw(1*\x,1*\x) node[fposet](n1){};
				\draw(1*\x,2*\x) node[fposet](n2){};
				\draw(2*\x,2.5*\x) node[poset](n3){};
				\draw(1*\x,3*\x) node[fposet](n4){};
				\draw(2*\x,3.5*\x) node[poset](n5){};
				\draw(n1) -- (n2);
				\draw(n2) -- (n4);
				\draw(n2) -- (n3);
				\draw(n3) -- (n5);
				\draw(n4) -- (n5);
			\end{tikzpicture}
		};
		\draw(3*\d,.25*\d) node{\tiny $\rightarrow$};
		\draw(3.75*\d,1*\d) node{
			\begin{tikzpicture}\small
				\def\x{.75};
				\def\s{.4};
				\draw(1*\x,5.35*\x) node{};
				\draw(1*\x,1*\x) node[poset](n1){};
				\draw(2*\x,1.5*\x) node[poset](n2){};
				\draw(1*\x,2*\x) node[poset](n3){};
				\draw(2*\x,2.5*\x) node[poset](n4){};
				\draw(1*\x,3*\x) node[fposet](n5){};
				\draw(3*\x,3*\x) node[poset](n6){};
				\draw(2*\x,3.5*\x) node[poset](n7){};
				\draw(3*\x,4*\x) node[poset](n8){};
				\draw(n1) -- (n2);
				\draw(n1) -- (n3);
				\draw(n2) -- (n4);
				\draw(n3) -- (n4);
				\draw(n3) -- (n5);
				\draw(n4) -- (n6);
				\draw(n4) -- (n7);
				\draw(n5) -- (n7);
				\draw(n6) -- (n8);
				\draw(n7) -- (n8);
			\end{tikzpicture}
		};
		\draw(4.25*\d,.25*\d) node{\tiny $\rightarrow$};
		\draw(5*\d,1*\d) node{
			\begin{tikzpicture}\small
				\def\x{.75};
				\def\s{.4};
				\draw(1*\x,5.35*\x) node{};
				\draw(1*\x,1*\x) node[poset](n1){};
				\draw(2*\x,1.5*\x) node[poset](n2){};
				\draw(1*\x,2*\x) node[poset](n3){};
				\draw(2*\x,2.5*\x) node[poset](n4){};
				\draw(1*\x,3*\x) node[fposet](n5){};
				\draw(1*\x,3.5*\x) node[fposet](n6){};
				\draw(3*\x,3*\x) node[poset](n7){};
				\draw(2*\x,4*\x) node[fposet](n8){};
				\draw(3*\x,4.5*\x) node[poset](n9){};
				\draw(n1) -- (n2);
				\draw(n1) -- (n3);
				\draw(n2) -- (n4);
				\draw(n3) -- (n4);
				\draw(n3) -- (n5);
				\draw(n4) -- (n7);
				\draw(n4) -- (n8);
				\draw(n5) -- (n6);
				\draw(n6) -- (n8);
				\draw(n7) -- (n9);
				\draw(n8) -- (n9);
			\end{tikzpicture}
		};
		\draw(5.5*\d,.25*\d) node{\tiny $\rightarrow$};
		\draw(6.25*\d,1*\d) node{
			\begin{tikzpicture}\small
				\def\x{.75};
				\def\s{.4};
				\draw(1*\x,5.35*\x) node{};
				\draw(1*\x,1*\x) node[poset](n1){};
				\draw(2*\x,1.5*\x) node[poset,label=below right:{\color{white!50!black}\scriptsize $d$}](n2){};
				\draw(1*\x,2*\x) node[poset,label=below left:{\color{white!50!black}\scriptsize $a$}](n3){};
				\draw(2*\x,2.5*\x) node[poset](n4){};
				\draw(1*\x,3*\x) node[poset,label=below left:{\color{white!50!black}\scriptsize $b$}](n5){};
				\draw(3*\x,3*\x) node[poset,label=below right:{\color{white!50!black}\scriptsize $c$}](n6){};
				\draw(.5*\x,3.5*\x) node[poset,label=below left:{\color{white!50!black}\scriptsize $f$}](n7){};
				\draw(1*\x,3.5*\x) node[poset,label=below right:{\color{white!50!black}\scriptsize $e$}](n8){};
				\draw(.5*\x,4*\x) node[poset](n9){};
				\draw(2*\x,4*\x) node[poset](n10){};
				\draw(1.5*\x,4.5*\x) node[poset](n11){};
				\draw(3*\x,5.25*\x) node[poset](n12){};
				\draw(n1) -- (n2);
				\draw(n1) -- (n3);
				\draw(n2) -- (n4);
				\draw(n3) -- (n4);
				\draw(n3) -- (n5);
				\draw(n4) -- (n6);
				\draw(n4) -- (n10);
				\draw(n5) -- (n7);
				\draw(n5) -- (n8);
				\draw(n7) -- (n9);
				\draw(n8) -- (n9);
				\draw(n8) -- (n10);
				\draw(n9) -- (n11);
				\draw(n6) -- (n12);
				\draw(n10) -- (n11);
				\draw(n11) -- (n12);
			\end{tikzpicture}
		};
	\end{tikzpicture}
	\caption{A sequence of six interval doublings that yields a congruence-uniform lattice with twelve elements.  At each step we double by the highlighted interval.  Moreover, we have labeled the join-irreducible elements in the rightmost lattice.}
	\label{fig:doubling_sequence}
\end{figure}
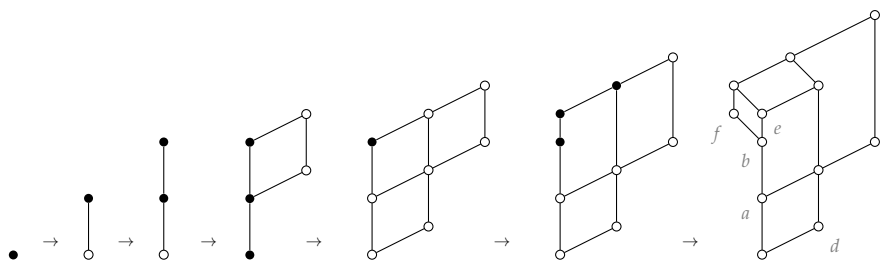

If a congruence-uniform lattice is also extremal, then we can realize the corresponding Galois graph as a directed graph whose vertices \emph{are} the join-irreducible elements.  

\begin{proposition}[{\cite[Corollary~2.17]{muehle18noncrossing}}]\label{prop:extremal_cu_galois}
	Let $\Lattice=(L,\leq)$ be an extremal, congruence-uniform lattice.  The Galois graph of $\Lattice$ is the directed graph whose vertex set is $\JI(\Lattice)$, and which has a directed edge $i\to k$ if and only if $i\neq k$ and $k\leq k_{*}\vee i$.
\end{proposition}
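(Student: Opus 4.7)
The plan is to invoke the canonical bijection between join-irreducibles and meet-irreducibles that exists in every congruence-uniform lattice, and then show that under extremality this bijection aligns with the chain-indexings defining the Galois graph, so that the graph can be rephrased purely in terms of join-irreducibles.

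First, I would recall the canonical bijection $\kappa\colon\JI(\Lattice)\to\MI(\Lattice)$ associated with any congruence-uniform lattice $\Lattice$. Its existence can be extracted by induction on the doubling sequence defining $\Lattice$: each interval doubling simultaneously introduces one new join-irreducible $j$ and one new meet-irreducible $m$, and these are paired by $\kappa$. The characteristic property I would exploit is that $\kappa(j)$ is the unique maximum of the set $\{x\in L:j\not\leq x\}$; in particular $j\not\leq\kappa(j)$ while $j\leq\kappa(j)^{*}$, and the principal congruences generated by $(j_{*},j)$ and by $(\kappa(j),\kappa(j)^{*})$ coincide. Combined with extremality, which gives $\lvert\JI(\Lattice)\rvert=\lvert\MI(\Lattice)\rvert=\ell(\Lattice)=n$, this shows $\kappa$ is indeed a bijection.

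Second, I would verify that the orderings $j_{1},\ldots,j_{n}$ and $m_{1},\ldots,m_{n}$ induced by a maximal chain $x_{0}\lessdot x_{1}\lessdot\cdots\lessdot x_{n}$ witnessing extremality satisfy $m_{i}=\kappa(j_{i})$ for every $i$. By construction, $j_{i}$ is the unique join-irreducible with $j_{i}\leq x_{i}$ but $j_{i}\not\leq x_{i-1}$, and dually $m_{i}$ is the unique meet-irreducible with $x_{i-1}\leq m_{i}$ but $x_{i}\not\leq m_{i}$. Combining these yields $j_{i}\not\leq m_{i}$, and any $y>m_{i}$ satisfies $y\geq m_{i}^{*}\geq x_{i}\geq j_{i}$, so $m_{i}$ is the maximum of $\{x\in L:j_{i}\not\leq x\}$, which is precisely $\kappa(j_{i})$.

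Third, I would translate the Galois edge condition using the identification $m_{k}=\kappa(j_{k})$. The edge $i\to k$ exists iff $i\neq k$ and $j_{i}\not\leq\kappa(j_{k})$. Because $\kappa(j_{k})$ has a unique upper cover $\kappa(j_{k})^{*}\geq j_{k}$, the condition $j_{i}\not\leq\kappa(j_{k})$ is equivalent to $j_{i}\vee\kappa(j_{k})\geq j_{k}$. I would then observe that $(j_{k})_{*}\leq\kappa(j_{k})$, because $j_{k}\not\leq(j_{k})_{*}$ places $(j_{k})_{*}$ in the set whose maximum is $\kappa(j_{k})$; this immediately gives the easy implication $j_{k}\leq j_{i}\vee(j_{k})_{*}\Rightarrow j_{k}\leq j_{i}\vee\kappa(j_{k})$. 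For the reverse implication I would argue by contrapositive: if $j_{k}\not\leq j_{i}\vee(j_{k})_{*}$, then $j_{i}\vee(j_{k})_{*}$ lies in $\{x:j_{k}\not\leq x\}$ and is therefore $\leq\kappa(j_{k})$; in particular $j_{i}\leq\kappa(j_{k})$, whence $j_{i}\vee\kappa(j_{k})=\kappa(j_{k})\not\geq j_{k}$, as needed.

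The main obstacle is step one: establishing the bijection $\kappa$ together with its maximizing characterization, which is the only place the congruence-uniform hypothesis is used. Once $\kappa$ is in hand, steps two and three are formal consequences that use only the definition of extremality and elementary lattice properties. In practice, step one is standard in the theory of congruence-uniform lattices going back to Day and refined by Reading, and the cleanest route is to cite that body of work rather than reprove it.
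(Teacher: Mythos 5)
Since the paper cites this statement from \cite[Corollary~2.17]{muehle18noncrossing} without reproducing a proof, I evaluate the proposal on its own terms. The overall route---pair the irreducibles via the canonical map $\kappa$, align it with the chain-indexing so that $m_i=\kappa(j_i)$, and translate the edge condition---is sensible, and step three is essentially correct once $\kappa$ is correctly defined. However, your characterization of $\kappa(j)$ as \emph{the unique maximum of} $\{x\in L:j\not\leq x\}$ is false even for extremal congruence-uniform lattices: in the pentagon $N_5$, with chain $\hat 0<a<c<\hat 1$ and the incomparable atom $b$ (a congruence-uniform lattice of length $3$ with three join- and three meet-irreducibles, hence extremal), the set $\{x:c\not\leq x\}=\{\hat 0,a,b\}$ has the two maximal elements $a$ and $b$. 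The correct characterization in a finite meet-semidistributive lattice is $\kappa(j)=\max\{x:j_*\leq x\ \text{and}\ j\not\leq x\}$, with the extra constraint $j_*\leq x$ being essential; your step-three deductions happen to survive this correction because the particular elements you compare already lie above $(j_k)_*$, but the statement you repeatedly lean on is wrong as written.

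More seriously, step two contains an unproved and essentially circular inequality. You correctly deduce $j_i\not\leq m_i$, but then assert that ``any $y>m_i$ satisfies $y\geq m_i^*\geq x_i\geq j_i$''; the middle inequality $m_i^*\geq x_i$ is not justified. Since $x_i=x_{i-1}\vee j_i$ and $x_{i-1}\leq m_i$, one has $m_i\vee x_i=m_i\vee j_i$, so $m_i^*\geq x_i$ is equivalent to $m_i\vee j_i$ covering $m_i$, which is precisely the substance of the identity $m_i=\kappa(j_i)$ you are trying to establish. Moreover, with the corrected definition of $\kappa$ one must additionally verify $(j_i)_*\leq m_i$, which you never address and which does not follow from the chain conditions alone. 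The alignment $m_i=\kappa(j_i)$ is the genuine content of this proposition and the place where extremality and congruence uniformity actually interact; it requires a real argument---for instance, showing that the cover $(x_{i-1},x_i)$ carries the join-irreducible label $j_i$ because the labels along a maximal chain of length $\lvert\JI(\Lattice)\rvert$ in a congruence-uniform lattice must be pairwise distinct, so that $(j_i)_*\leq x_{i-1}$ and the rest follows---rather than a bare assertion.
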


Let us illustrate that the construction in Proposition~\ref{prop:extremal_cu_galois} does not in general yield the Galois graph of an extremal lattice.  Let $\Lattice$ be the extremal lattice from Figure~\ref{fig:extremal_lattice}.  We can check that $\Lattice$ is not congruence uniform.  If we construct a directed graph on the join-irreducible elements of $\Lattice$ according to the rules in Proposition~\ref{prop:extremal_cu_galois}, we get the graph in Figure~\ref{fig:non_extremal_graph}, which is not the same graph as $\Galois(\Lattice)$ shown in Figure~\ref{fig:galois_graph}.  

\begin{figure}
	\centering
	\begin{tikzpicture}\small
		\def\x{1};
		\def\y{1};
		\draw(2*\x,1*\y) node(n1){$4$};
		\draw(1*\x,2*\y) node(n2){$3$};
		\draw(3*\x,2*\y) node(n3){$1$};
		\draw(2*\x,3*\y) node(n4){$2$};
		\draw[->](n1) -- (n3);
		\draw[->](n2) -- (n4);
		\draw[->](n4) -- (n3);
	\end{tikzpicture}
	\caption{The graph of the lattice in Figure~\ref{fig:extremal_lattice} constructed according to the rules in Proposition~\ref{prop:extremal_cu_galois}.}
	\label{fig:non_extremal_graph}
\end{figure}
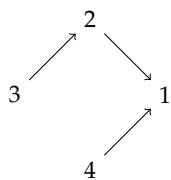

Conversely, the congruence-uniform lattice $\Lattice$ on the right of Figure~\ref{fig:doubling_sequence} is also extremal.  The directed graph $G(\Lattice)$ constructed according to the rules in Proposition~\ref{prop:extremal_cu_galois} is shown in Figure~\ref{fig:extremal_cu_graph}.  Figure~\ref{fig:extremal_cu_mo_lattice} shows the lattice of maximal orthogonal pairs of $G(\Lattice)$, which is isomorphic to $\Lattice$.  This proves that $G(\Lattice)$ is indeed (isomorphic to) the Galois graph of $\Lattice$.  As an example, the map $(a,b,c,d,e,f)\mapsto(1,2,6,4,3,5)$ realizes $G(\Lattice)$ in Figure~\ref{fig:extremal_cu_graph} as a graph that satisfies the conditions of Theorem~\ref{thm:markowskys_representation}.

\begin{figure}
	\centering
	\begin{subfigure}[t]{.35\textwidth}
		\centering
		\begin{tikzpicture}\small
			\def\x{1};
			\draw(2*\x,1.5*\x) node(na){$a$};
			\draw(1*\x,1*\x) node(nb){$b$};
			\draw(3*\x,2*\x) node(nc){$c$};
			\draw(2*\x,2.5*\x) node(nd){$d$};
			\draw(1*\x,2*\x) node(ne){$e$};
			\draw(3*\x,1*\x) node(nf){$f$};
			\draw[->](nb) -- (na);
			\draw[->](nc) -- (na);
			\draw[->](nc) -- (nd);
			\draw[->](nc) -- (ne);
			\draw[->](nc) -- (nf);
			\draw[->](nd) -- (ne);
			\draw[->](ne) -- (nb);
			\draw[->](ne) -- (na);
			\draw[->](nf) -- (nb);
			\draw[->](nf) -- (na);
		\end{tikzpicture}
		\caption{The Galois graph of the lattice on the right of Figure~\ref{fig:doubling_sequence}.}
		\label{fig:extremal_cu_graph}
	\end{subfigure}
	\hspace*{1cm}
	\begin{subfigure}[t]{.55\textwidth}
		\centering
		\begin{tikzpicture}\small
			\def\x{2.25};
			\def\y{1.35};
			\draw(1*\x,1*\y) node(n1){\scriptsize $(-,abcdef)$};
			\draw(2*\x,1.5*\y) node(n2){\scriptsize $(d,abcf)$};
			\draw(1*\x,2*\y) node(n3){\scriptsize $(a,bcdef)$};
			\draw(2*\x,2.5*\y) node(n4){\scriptsize $(ad,bcf)$};
			\draw(1*\x,3*\y) node(n5){\scriptsize $(ab,cdef)$};
			\draw(3*\x,3*\y) node(n6){\scriptsize $(acd,b)$};
			\draw(.5*\x,3.5*\y) node(n7){\scriptsize $(abf,cde)$};
			\draw(1*\x,3.5*\y) node(n8){\scriptsize $(abe,cdf)$};
			\draw(.5*\x,4*\y) node(n9){\scriptsize $(abef,cd)$};
			\draw(2*\x,4*\y) node(n10){\scriptsize $(abde,cf)$};
			\draw(1.5*\x,4.5*\y) node(n11){\scriptsize $(abdef,c)$};
			\draw(3*\x,5.25*\y) node(n12){\scriptsize $(abcdef,-)$};
			\draw(n1) -- (n2);
			\draw(n1) -- (n3);
			\draw(n2) -- (n4);
			\draw(n3) -- (n4);
			\draw(n3) -- (n5);
			\draw(n4) -- (n6);
			\draw(n4) -- (n10);
			\draw(n5) -- (n7);
			\draw(n5) -- (n8);
			\draw(n7) -- (n9);
			\draw(n8) -- (n9);
			\draw(n8) -- (n10);
			\draw(n9) -- (n11);
			\draw(n6) -- (n12);
			\draw(n10) -- (n11);
			\draw(n11) -- (n12);
		\end{tikzpicture}
		\caption{The lattice of maximal orthogonal pairs of the graph in Figure~\ref{fig:extremal_cu_graph}.}
		\label{fig:extremal_cu_mo_lattice}
	\end{subfigure}
	\caption{An illustration of Proposition~\ref{prop:extremal_cu_galois}.}
\end{figure}
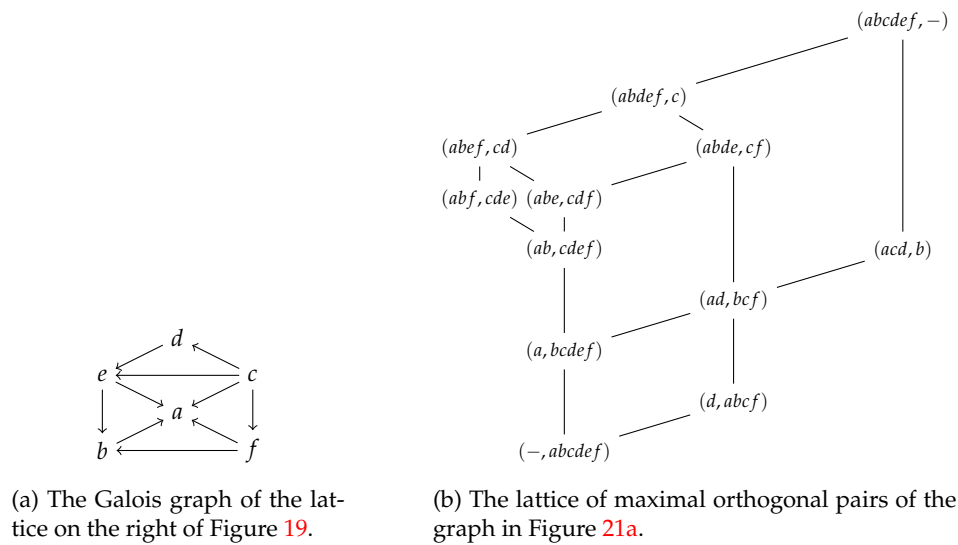

\bibliographystyle{elsarticle-harv}
\bibliography{literature}

\end{document}